\documentclass[opre,nonblindrev]{informs3_hide}
\OneAndAHalfSpacedXI

\usepackage{amsmath,amsfonts,amssymb}
\usepackage{mathtools}
\usepackage{bm}
\usepackage[mathscr]{euscript}
\usepackage{enumitem}
\usepackage[dvipsnames]{xcolor}
\usepackage{graphicx}
\usepackage{float}
\usepackage{multirow}
\usepackage{booktabs}
\usepackage{array}
\usepackage[expansion=false]{microtype}
\usepackage{fix-cm}

\usepackage{xfrac}

\DeclareMathOperator{\pr}{\mathbb P}
\DeclareMathOperator{\E}{\mathbb E}
\DeclareMathOperator{\ind}{\mathbb I}

\newcommand{\dd}{\mathop{}\!\mathrm{d}}

\def\SFm{\mathsf{m}}
\def\SFb{\mathsf{b}}

\providecommand{\R}{\mathbb{R}}
\providecommand{\W}{\mathcal{W}}
\providecommand{\F}{\mathscr{F}}
\providecommand{\Borel}{\mathcal{B}}
\providecommand{\ip}[2]{\bigl\langle #1,#2\bigr\rangle}

\newcommand{\opnorm}[1]{{\lvert\kern-0.25ex
   \lvert\kern-0.25ex
   \lvert #1
   \rvert\kern-0.25ex
   \rvert\kern-0.25ex
   \rvert}}

\usepackage{natbib}
 \bibpunct[, ]{(}{)}{,}{a}{}{,}
 \def\bibfont{\small}

\TheoremsNumberedThrough

\theoremstyle{TH}

\ECRepeatTheorems

\EquationsNumberedThrough

\MANUSCRIPTNO{}

\begin{document}

\RUNAUTHOR{Li, Luo, and Zhang}
\RUNTITLE{Shrinking-Tube Concentration for Adaptive Markovian SA}

\TITLE{Shrinking-Tube Concentration for Adaptive Markovian Stochastic Approximation}

\ARTICLEAUTHORS{
\AUTHOR{Jin Li, Ye Luo}
\AFF{Faculty of Business and Economics, The University of Hong Kong, Pokfulam Road, Hong Kong SAR, \EMAIL{jli1@hku.hk}, \EMAIL{kurtluo@hku.hk}}
\AUTHOR{Xiaowei Zhang} 
\AFF{Department of Industrial Engineering and Decision Analytics, The Hong Kong University of Science and Technology, Clear Water Bay, Hong Kong SAR, \EMAIL{xiaoweiz@ust.hk}}
}

\ABSTRACT{Adaptive algorithms increasingly make decisions while reshaping the dynamics that generate their future data. 
We establish a shrinking-tube concentration bound for projected
stochastic approximation driven by an adaptive Markov chain.
The bound guarantees, with high probability, that every iterate
after a chosen time remains within a tolerance around the target that tightens
over time. The probability of any exit after the chosen time admits a
polynomially decaying upper bound, and a matching lower bound shows that its
polynomial exponent cannot be improved in general under finite second moments. The result therefore identifies a
sharp tradeoff between how quickly the tolerance shrinks and how rapidly the
probability of any future exit decreases. We also extend the analysis to
recursions with additional martingale-difference noise and predictable bias, showing how
growth in the martingale-difference noise scale slows the decay of the exit-probability bound while
predictable bias restricts the admissible tube shrinkage. The proof combines
backward kernel replacement, a finite-time
mean-squared-error bound, and a blockwise maximal first-exit argument.
We apply the theory to inventory learning with stockout-dependent demand
and fixed stockout costs, and quantify how numerical gradient accuracy
affects the all-future reliability of the resulting policies.
}

\KEYWORDS{stochastic approximation, adaptive Markov chain,
shrinking-tube concentration, martingale-difference, predictable bias}

\maketitle

\section{Introduction}\label{sec:introduction}

Adaptive algorithms increasingly operate inside the systems they seek to
optimize. Stochastic approximation (SA) and its variants update decisions from
streaming observations and provide an algorithmic foundation for online
optimization and reinforcement learning \citep{KushnerYin03}. In a service
system, a pricing or capacity update changes demand and congestion, while the
next waiting-time observation still reflects customers admitted under earlier
choices \citep{LiLiangChenZhang26}. In inventory management, stockouts can affect future demand, so a change
in the base-stock level alters both current costs and the observations
used in later updates \citep{CheDongTong26}.  Each iterate is therefore both a deployed decision
and part of the mechanism that generates future data.

Continuous adaptation calls for more than an accurate decision at one chosen time. An iterate may be close to the target when it is evaluated and later move away. Requiring every subsequent iterate to remain within one fixed tolerance rules out such departures, but the trajectory may continue to wander indefinitely within the same coarse neighborhood. A continuously operated learning system should deliver both lasting accuracy and progressive refinement. After a finite time, every subsequent decision should remain close to the target under a tolerance that tightens as information accumulates.

We formalize this requirement through the event
\begin{equation*}
\left\{
\lVert\theta_k-\theta^\star\rVert
\leq c k^{-\beta}
\text{ for all }k\geq k_0
\right\}.
\end{equation*}
On this event, \(\theta_k\) is confined to 
a ball centered at \(\theta^\star\) with radius \(c k^{-\beta}\) at iteration \(k\); viewed across iterations,
these balls form a tube around the target. For \(\beta>0\), its radius
decreases with \(k\), giving a \emph{shrinking tube}; for \(\beta=0\), the
radius remains fixed. This event requires the entire trajectory
from \(k_0\) onward to remain inside this tube.
Under adaptive feedback, a late excursion can degrade the current decision, change the transition distribution that generates later observations, and create a new transient in the learning process.
Our shrinking-tube concentration bound controls the probability that any
iterate from \(k_0\) onward leaves this tube.
Figure~\ref{fig:tube-vs-cylinder} illustrates the
difference between fixed containment and a guarantee whose certified tolerance
continues to tighten.

\begin{figure}[ht]
    \FIGURE{
    \includegraphics[width=0.8\textwidth]{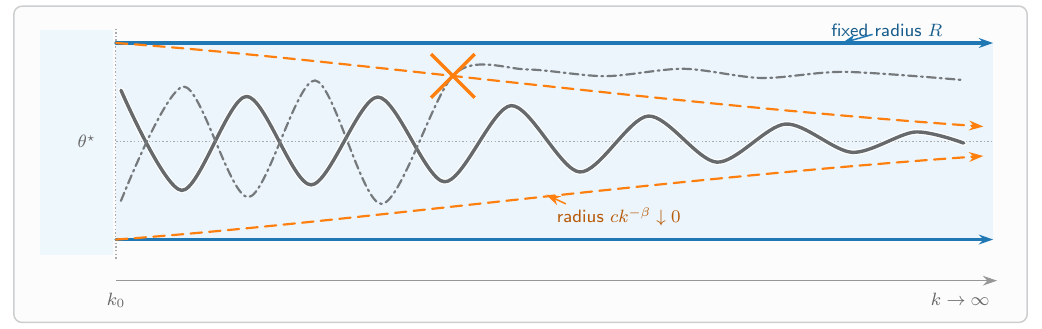}
    }{Shrinking Tube versus Fixed-Radius Cylinder.
    \label{fig:tube-vs-cylinder}}
    {}
\end{figure}

Three effects determine how quickly the bound on the probability of a future
tube exit decreases as \(k_0\) increases: diminishing step sizes attenuate new
stochastic fluctuations, the shrinking tube tightens the admissible error,
and an infinite horizon creates infinitely many opportunities for exit.
We quantify their combined effect for projected SA driven by an adaptive
Markov chain.

The first analytical challenge is a \emph{stationarity gap}. At time \(k\), the
iterate \(\theta_k\) selects the transition kernel \(P_{\theta_k}\), whereas the
current state still reflects kernels selected by earlier iterates. If the
parameter were held fixed, the stationary distribution of the corresponding
homogeneous chain would define the mean update direction. Along the adaptive
path, the observed update is generally not conditionally centered at the
mean field associated with \(\theta_k\). A change in one iterate can
affect subsequent transitions, and the Markov state may be unbounded.
The analysis must control this transient bias under moment conditions
while preserving the stability provided by the mean field.

The second challenge is a \emph{time-uniformity gap}. A fixed-time
mean-squared-error (MSE) bound can certify accuracy at a checkpoint but does not
directly control the probability that any later iterate leaves the prescribed
shrinking tube. Applying a pointwise probability bound at
every future iteration and summing the results generally produces a
nonsummable series in the parameter range of interest. Nearby exits also
depend on strongly overlapping portions of the trajectory. Treating them as
separate events discards the temporal structure that makes all-future control
possible.

We resolve these challenges through a sharp trajectory concentration theorem
and a proof framework that controls adaptive Markov bias and all future
stochastic excursions at their natural timescales.  The main contributions
are summarized next.

\subsection{Contributions}

First, we establish shrinking-tube concentration for projected SA driven by an adaptive Markov chain on a potentially unbounded state space and prove that its polynomial decay exponent is optimal.
For step size of order \(k^{-\delta}\) and tube radius of order
\(k^{-\beta}\), 
Theorem~\ref{thm:trajectory} bounds 
the probability that an SA trajectory exits the tube at any iteration
\(k\geq k_0\).
The upper bound decays with \(k_0\) with polynomial exponent \(2(\delta-\beta)-1\), up to a squared logarithmic factor, when 
\(1/2<\delta\leq1\) and \(0\leq\beta<\delta-1/2\). 
Theorem~\ref{thm:sharpness} constructs examples under the same assumptions for which the exit-probability lower bound has an exponent arbitrarily close to that of the upper bound. 
Consequently, no larger polynomial exponent can hold in general under these assumptions. 
The upper and lower bounds identify the worst-case 
tradeoff between how quickly the tube shrinks and how quickly its exit-probability bound decreases.

Second, we develop a proof framework for SA with 
adaptive Markovian data.  Backward kernel replacement converts the bias from
changing kernels into geometrically discounted local perturbations. Combined 
with reference-chain geometric ergodicity, the resulting adaptive-to-reference
comparison bound yields a Lyapunov drift condition used to derive finite-time
MSE bounds.  A blockwise maximal first-exit argument then controls all
crossings within each time interval and upgrades fixed-time checkpoint
accuracy to one all-future event.  

Third, we extend the SA recursion driven by an adaptive Markov chain to accommodate martingale-difference noise and predictable directional bias arising from simulation and numerical approximation. We establish finite-time MSE and
shrinking-tube bounds.  If the martingale-difference noise scale
grows as \((k+1)^{\omega_{\mathsf M}}\) and the predictable bias decays as \((k+1)^{-\omega_{\mathsf B}}\), then the shrinking-tube concentration bound has polynomial exponent
\(2(\delta-\beta-\omega_{\mathsf M})-1\), provided \(0\leq\omega_{\mathsf M}<\delta-\beta-1/2\),
and \(\omega_{\mathsf B}>\beta.\) 
We prove that this exponent cannot be improved in general.
We apply these results to an inventory management problem with
stockout-dependent demand and fixed stockout costs. Standard pathwise differentiation misses the effect of stock-level changes on the expected fixed stockout cost. A finite-difference estimator captures this effect but introduces a tradeoff between bias and sampling variability. Our bounds show how to choose the finite-difference radius schedule to obtain shrinking-tube guarantees for the learned stock levels.

\subsection{Relation to Prior Work}\label{sec:literature}

Recent finite-sample SA theory has clarified how Markov dependence, stability
geometry, and the tail behavior of stochastic updates shape convergence at a
prescribed iteration.
\citet{ChenZhangDoanClarkeMaguluri22} derive finite-sample rates for nonlinear
SA driven by a fixed Markov chain through Lyapunov drift, while
\citet{ChenMaguluriShakkottaiShanmugam24} develop an arbitrary-norm Lyapunov
framework for contractive mean operators.  \citet{LawWaltonYang26} identify a
different concentration regime: nonvanishing inward drift, together with
sub-exponential increments, yields exponential tails for the iterate error and
supports geometric high-probability convergence under staged step sizes.
These results make the roles of dependence and stability conditions explicit,
with the error at a selected iteration as their central probabilistic object.

A closely related line studies concentration over the future trajectory.
\citet{ChandakBorkarDodhia22} derive bounds from time \(n_0\) onward for
contractive SA with martingale-difference noise and an 
finite-state Markov chain with adaptive (i.e., iterate-dependent) kernels.  For each fixed \(n_0\), their bound on the
iterate error approaches a positive limit as the iteration index increases.
\citet{ChenMaguluriZubeldia25} establish maximal concentration for
contractive SA with potentially unbounded iterates under conditionally
centered additive or multiplicative noise.  Their results bound the trajectory's error by a decreasing function of time
and yield sub-Gaussian or Weibull tails under the
corresponding light-tail conditions.  Recent studies extend shrinking
maximal bounds to fixed Markovian sampling and heavier-tailed stochastic
noise
\citep{QianXieLiuZhang24,AgrawalMaguluriZubeldia26}.  This  literature develops
the two central ingredients of the present problem along separate directions:
an adaptive Markov kernel and a boundary that tightens over an
unbounded future.

This paper establishes their joint form under a finite-second-moment regime.
The iterate selects the kernel that generates the next observation, so the
observed update is not centered relative to the mean
field associated with the current parameter.  We control this feedback on a
potentially unbounded Markov state space under moment conditions.  Theorem~\ref{thm:trajectory} gives one event on which
\(\theta_k\) remains inside the prescribed tube \(c k^{-\beta}\) for all \(k\geq k_0\).  For
step-size exponent \(\delta\), the exit-probability bound has polynomial exponent
\(2(\delta-\beta)-1\), up to a squared logarithmic factor.
Theorem~\ref{thm:sharpness} shows that no larger polynomial exponent holds
in general under these assumptions.  The extension in
Section~\ref{sec:structured-extension} further
shows how growing martingale-difference noise shifts this exponent and how
predictable bias restricts the admissible tube rates.

The rest of the paper is organized as follows.
Section~\ref{sec:problem-setup} formulates the adaptive Markovian SA model
and states the assumptions. Section~\ref{sec:canonical-results} states
and interprets the shrinking-tube theorem and outlines its proof.
Section~\ref{sec:backward-telescoping} develops backward kernel
replacement and the adaptive-to-reference comparison bound.
Section~\ref{sec:MSE} proves the fixed-time MSE bound, and
Section~\ref{sec:concen-bounds} converts that bound into shrinking-tube
concentration. Section~\ref{sec:sharpness} establishes sharpness of the
polynomial exponent. Section~\ref{sec:structured-extension} extends the
results to incorporate additional martingale-difference noise and predictable bias.
Section~\ref{sec:inventory-application} develops the inventory
application, and Section~\ref{sec:conclusions} concludes. The e-companion
contains the complete proofs.

\section{Problem Formulation}
\label{sec:problem-setup}

SA learns a parameter by repeatedly turning observed
data into an update.  Its goal is to make an average update direction vanish;
the precise average for our setting is defined in Section~\ref{subsec:reference-chain-target}.  At
iteration \(k\), the projected update is
\begin{align}
    \theta_{k+1}
    =\Pi_\Theta\Bigl(
        \theta_k+\alpha_kF(\theta_k,W_k)
      \Bigr),
      \label{eq:update-scheme-projection}
\end{align}
for \(k\geq0\),
where \(\theta_k\in\Theta\subseteq\R^d\) is the current iterate,
\(\alpha_k\) is the step size, and \(\Pi_\Theta\) is the Euclidean projection
onto the feasible set \(\Theta\), defined by \(\Pi_\Theta(x):=\underset{\theta\in\Theta}{\arg\min}\,
    \lVert x-\theta\rVert\) for \(x\in\R^d\).

\subsection{Adaptive Markov Chain and Kernels}
\label{subsec:adaptive-noise}

The state \(W_k\in\W\) carries the data used in the update, where \(\W\) is
a Borel subset of a Euclidean space.  The map
\(F:\Theta\times\W\to\R^d\) is jointly Borel measurable.

As a simple benchmark, suppose each new state observation is drawn
independently from a fixed distribution. The observations
affect the iterates through \(F\), while subsequent observations continue to be
generated from the fixed distribution. Thus, the influence runs in one direction.

Here, the influence runs in both directions. The current state \(W_k\) enters
\(F(\theta_k,W_k)\) and shapes the next iterate \(\theta_{k+1}\). Meanwhile, \(\theta_k\), which may represent a decision or policy, selects
the transition rule for \(W_{k+1}\). Thus, the algorithm learns from the state
while also changing how the state evolves. 

To describe the second direction, associate each \(\theta\in\Theta\) with a
Markov transition kernel \(P_\theta\) on \(\W\).  For each
\(E\in\Borel(\W)\), the map
\((\theta,w)\mapsto P_\theta(w,E)\) is Borel measurable.  If the parameter
were held fixed at \(\theta\), as in the classical framework, then \(P_\theta(w,E)\)
would be the one-step probability of moving from state \(w\) into \(E\).
In the actual recursion, the current iterate \(\theta_k\) selects the
iterate-dependent Markov kernel \(P_{\theta_k}\) used for the next transition.

All variables live on \((\Omega,\mathscr A,\pr)\), and
\((\theta_0,W_0)\) is a \(\Theta\times\W\)-valued random element.
To specify the information available when the next state is generated, define
the filtration
\begin{equation*}
    \F_k:=\sigma(\theta_0,W_0,\theta_1,W_1,\ldots,\theta_k,W_k),
\end{equation*}
for \(k\geq0\). 
Unless stated otherwise, equalities and inequalities between random
quantities, including conditional expectations and conditional probabilities,
are understood \(\pr\)-almost surely.

We call \(\{W_k:k\geq 0\}\) the \emph{adaptive Markov chain} associated with the SA
recursion in \eqref{eq:update-scheme-projection} when
\begin{align*}
    \pr(W_{k+1}\in E\mid\F_k)
    &=P_{\theta_k}(W_k,E)
\end{align*}
for all \(E\in\Borel(\W)\) and \(k\geq0\). 
The transition identity is conditional on the joint history \(\F_k\);
the state sequence \(\{W_k:k\geq0\}\) need not be Markov with respect
to its own natural filtration.

Policy-dependent sampling also arises in reinforcement learning, where
updating the policy changes the distribution of subsequent state--action
observations even when the environment's transition rules remain fixed
\citep{KondaTsitsiklis03}. In inventory learning, the decision can affect
future demand through stockouts \citep{CheDongTong26}. These feedback
mechanisms motivate the iterate-dependent transition model.
Section~\ref{sec:inventory-application} develops a full inventory
application, including the conditions needed to apply the results.

\subsection{Mean Field and Target}
\label{subsec:reference-chain-target}

To define the target of the SA recursion, first fix the parameter at a deterministic value
\(\theta\).  The state then evolves under the same transition kernel
\(P_\theta\) at each step, forming a homogeneous Markov chain
that we call the \emph{reference chain}. 
Assuming  
this chain has a unique stationary distribution \(\nu_\theta\),
averaging the update direction under \(\nu_\theta\) defines the
mean field:
\begin{equation}\label{eq:mean-field}
    \bar F(\theta):=\int_\W F(\theta,w)\,\nu_\theta(dw).
\end{equation}
We call \(\theta^\star\) a target if it is a root of this field, i.e.,
\(\bar F(\theta^\star)=0\).  Existence and location of the designated target
are imposed in the basic assumptions below.
Thus, \(\bar F(\theta)\) describes the long-run update direction when the
parameter is held fixed at \(\theta\). This fixed-parameter average defines
the target, while the adaptive algorithm generally has a different one-step
conditional mean direction.

For \(k\geq1\), \(F(\theta_k,W_k)\) uses a transient state:
\(\bar F(\theta_k)\) averages under \(\nu_{\theta_k}\), whereas \(W_k\) was
generated using \(P_{\theta_{k-1}}\) and still carries the effects of earlier
transitions.  Consequently, the conditional center of the observed direction
generally differs from \(\bar F(\theta_k)\).  
Our analysis therefore compares the time-inhomogeneous transition product of
the adaptive Markov chain with a homogeneous reference chain associated with a
past iterate. A central challenge is to control how much the iterate changes
after the past value defining the reference chain. These changes affect the
transition kernels and thereby create a discrepancy between the adaptive and
reference dynamics.

\subsection{Basic Assumptions}

\begin{assumption}
    \label{assump:learning-rate}
There are positive constants \(\alpha_0\) and \(\delta\in(0,1]\) such that \(\alpha_k=\alpha_0 k^{-\delta}\) for all \(k\geq 1\).
\end{assumption}

Polynomially decaying step sizes are standard 
\citep{KushnerYin03}. Asymptotic analyses often impose
\(1/2<\delta\leq1\) to establish convergence of \(\theta_k\) to
\(\theta^\star\). In this range, \(\sum_k\alpha_k=\infty\) allows the
iterates to keep adjusting, while \(\sum_k\alpha_k^2<\infty\) limits the
accumulated effect of stochastic fluctuations. The shrinking-tube concentration result
in Theorem~\ref{thm:trajectory} also requires this range. By contrast, the
fixed-time MSE bound in Proposition~\ref{prop:expectation} applies throughout
the broader range \(0<\delta\leq1\).

\begin{assumption}
    \label{assump:compact}
The set \(\Theta\subseteq\R^d\) is nonempty, compact, and convex.  
\end{assumption}

Under Assumption~\ref{assump:compact}, $\Theta$ has a finite diameter 
\(
    d_\Theta:=\sup_{\theta,\theta'\in\Theta}
    \lVert\theta-\theta'\rVert<\infty
\), and the projection \(\Pi_\Theta\) is \emph{nonexpansive}: 
    \(\lVert\Pi_\Theta(x)-\Pi_\Theta(y)\rVert
    \leq \lVert x-y\rVert\) 
    for all \(x,y\in\R^d\).

To ensure that the mean field is well defined, we impose Lyapunov-type stability
conditions uniformly over the family of reference chains indexed by
\(\theta\in\Theta\).
For \(p\in\{1,2\}\), define the polynomial Lyapunov functions
\begin{equation*}
    U_p(w):=1+\lVert w\rVert^p.
\end{equation*}

\begin{assumption}
\label{assump:ergodic}
\begin{enumerate}[
label=\textnormal{(\roman*)},
ref=\textnormal{(\roman*)}
]
\item\label{part:ergodic-reference}
For each \(\theta\in\Theta\), the kernel \(P_\theta\) is
irreducible and aperiodic.  There are a positive integer \(N\),
a nonempty Borel set \(\W_0\subseteq\W\), a positive constant \(\epsilon\), and a
probability measure \(\varphi\) on \(\W\) such that \(P_\theta^N(w,E)\geq\epsilon\varphi(E)\) for all \(\theta\in\Theta\), \(w\in\W_0\), 
    and \(E\in\Borel(\W)\).
For a measurable set \(A\), define \(\ind_A\) as its indicator function.
Moreover, there are nonnegative constants \(\lambda<1\) and
\(b\) such that for all \(\theta\in\Theta\) and \(w\in\W\), 
\begin{equation}\label{eq:drift-condition}
    (P_\theta U_2)(w)
    \leq\lambda U_2(w)+b\ind_{\W_0}(w).
\end{equation}
\item\label{part:ergodic-initial} 
The initialization satisfies 
    \(\E[U_2(W_0)]<\infty\), where the expectation is taken under the initial distribution of
$(\theta_0,W_0)$.

\end{enumerate}
\end{assumption}

Assumption~\ref{assump:ergodic}\ref{part:ergodic-reference} ensures that
each \(P_\theta\) has a unique stationary distribution \(\nu_\theta\) and that
\(\sup_{\theta\in\Theta}\nu_\theta(U_2)<\infty\). 
Furthermore, \(P_\theta\) is \(U_1\)-geometrically ergodic
\citep{MeynTweedie09}: for each \(\theta\in\Theta\), there are positive constants \(C\) and \(\rho\in(0,1)\) such that
\begin{equation}\label{eq:uniform-geometric-ergodicity}
    \bigl|(P_\theta^k f)(w)-\nu_\theta(f)\bigr|
    \leq C\rho^kU_1(w),
\end{equation} 
for all \(w\in\W\), \(k\geq0\), and measurable \(f\) with
\(|f|\leq U_1\).  In general, the constants \(C\) and \(\rho\) in this
fixed-\(\theta\) statement may depend on \(\theta\); we suppress that
dependence in the notation.  Our additional assumptions remove this
dependence: the \(U_1\)-weighted kernel continuity in
Assumption~\ref{assump:kernel-Lipschitz}, together with compactness of
\(\Theta\), allows the two constants to be chosen independently of
\(\theta\).  Thus the same \(C\) and \(\rho\) in
\eqref{eq:uniform-geometric-ergodicity} work simultaneously for all
\(\theta\in\Theta\).  See Lemma~\ref{lemma:exp-ergodic} for details.  
Conditions similar to Assumption~\ref{assump:ergodic}\ref{part:ergodic-reference}
have been used to establish convergence of actor--critic algorithms
\citep{KondaTsitsiklis03} and to analyze stochastic gradient methods with
adaptive data \citep{CheDongTong26}.

Assumption~\ref{assump:ergodic}\ref{part:ergodic-initial} requires the adaptive Markov chain to have a finite
second moment at initialization.  Together with the uniform drift condition
\eqref{eq:drift-condition}, it ensures that \(W_k\) has a finite second moment conditional on the observed history.

\begin{assumption}
    [Kernel Continuity]
\label{assump:kernel-Lipschitz}
There is a positive constant \(L_P\) such that
\begin{equation*}
    \bigl|(P_\theta f)(w)-(P_{\theta'}f)(w)\bigr|
    \leq L_PU_1(w)\lVert\theta-\theta'\rVert,
\end{equation*} 
for all \(\theta,\theta'\in\Theta\), \(w\in\W\), and any Borel measurable
\(f:\W\to\R\) satisfying \(|f(y)|\leq U_1(y) \) for all \(y\in\W\),
where \((P_\theta f)(w):=\int_\W f(y)P_\theta(w,dy)\). 
\end{assumption}

\begin{assumption}
    \label{assump:growth}
There are positive constants \(L_F\) and \(C_F\) such that for all \(\theta,\theta'\in\Theta\) and \(w\in\W\),
\begin{align}
    \lVert F(\theta,w)-F(\theta',w)\rVert
    &\leq L_F(1+\lVert w\rVert)\lVert\theta-\theta'\rVert,
    \label{eq:Lipschitz-F}\\*
    \sup_{\theta\in\Theta}\lVert F(\theta,w)\rVert
    &\leq C_F(1+\lVert w\rVert).
    \label{eq:LinearGrowth-F}
\end{align}
\end{assumption}

Condition \eqref{eq:Lipschitz-F} controls how much the update direction changes when the current iterate is replaced by a past iterate. Under condition \eqref{eq:LinearGrowth-F}, the update direction in the SA recursion \eqref{eq:update-scheme-projection} may grow linearly with the norm of the unbounded Markov state.

\begin{assumption}
\label{assump:interior-target}
There is \(\theta^\star\in\operatorname{int}(\Theta)\) such that
\(\bar F(\theta^\star)=0\).
\end{assumption}

Because \(\theta^\star\) lies in the interior, it has a buffer contained in \(\Theta\).
For all sufficiently large \(k_0\), with high probability \(\theta_k\) remains in this buffer for all \(k\geq k_0\); the projection is then inactive, so excursions can be controlled by adding up the successive updates. This additive representation turns local update bounds into control of the entire future trajectory.

\begin{assumption}[Quasi-Strong Monotonicity (QSM)]
\label{assump:qsm}
There is a positive constant \(\zeta\) such that for all \(\theta\in\Theta\),
\begin{equation*}
    \ip{\theta-\theta^\star}{\bar F(\theta)}
    \leq-\zeta\lVert\theta-\theta^\star\rVert^2.
\end{equation*}
\end{assumption}

Assumption~\ref{assump:qsm} requires the mean field \(\bar F(\theta)\) to point sufficiently strongly toward \(\theta^\star\), thereby providing negative drift in the squared parameter error.\footnote{The
term \emph{quasi-strong monotonicity} comes from monotone-operator 
\citep{PengXuYanYin16}.  Strong monotonicity
imposes the corresponding inequality on every pair \(\theta,\theta'\), whereas
QSM anchors it at \(\theta^\star\) and is therefore weaker.}
Related conditions appear in analyses of stochastic gradient methods
and reinforcement learning algorithms
\citep{Spall03,ChenZhangDoanClarkeMaguluri22,CheDongTong26,LiLiangChenZhang26}.\footnote{An alternative approach to SA analysis assumes contraction of the operator
\(\mathcal T:=I+\bar F\), where \(I\) denotes the identity mapping
\citep{Borkar21,ChandakBorkarDodhia22,
ChenMaguluriShakkottaiShanmugam24,ChenMaguluriZubeldia25}.
If \(\mathcal T\) is a Euclidean contraction with modulus \(\kappa<1\) and
fixed point \(\theta^\star\), then QSM holds with
\(\zeta=1-\kappa\); the converse need not hold. This contraction provides
the deterministic decrease underlying a Lyapunov drift condition for the
stochastic recursion and hence supports finite-time error bounds.
Contraction in another norm, including the sup norm commonly used for
Bellman operators, is generally incomparable with Euclidean QSM.}
This condition also makes \(\theta^\star\) the unique root: if \(\bar F(\theta)=0\), then \(0\leq-\zeta\lVert\theta-\theta^\star\rVert^2\), and hence \(\theta=\theta^\star\).

\section{Shrinking-Tube Concentration: An Overview}
\label{sec:canonical-results}

We now turn to the main question of the paper: when does an adaptive SA
trajectory permanently enter an improving accuracy regime?  The challenge is
that the same iterates whose accuracy we seek to control also change the
Markov dynamics that generate future data.  Earlier kernel choices therefore
continue to influence current updates, while new stochastic fluctuations can
still produce later departures from the target.

\subsection{Main Result and Implications}
\label{subsec:canonical-main-result}
\begin{theorem}
\label{thm:trajectory}
Suppose Assumptions~\ref{assump:learning-rate}--\ref{assump:qsm} hold, with
\(1/2<\delta\leq1\).
When \(\delta=1\), also suppose that \(2\zeta\alpha_0>1\).
Fix \(0\leq\beta<\delta-1/2\) and \(c>0\). Then there are positive constants \(C\) and \(K\) such that, for all
\(k_0\geq K\),
\begin{equation*}
\pr\Bigl(
\lVert\theta_k-\theta^\star\rVert\leq c k^{-\beta}
\text{ for all }k\geq k_0
\,\Bigm|\,\F_0
\Bigr)
\geq
1-C U_2(W_0)(1+\log k_0)^2
k_0^{-(2(\delta-\beta)-1)}.
\end{equation*}
\end{theorem}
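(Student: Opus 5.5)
The plan follows the three ingredients named in the abstract: an MSE bound at checkpoints, a decomposition of the increments on each dyadic block, and a maximal inequality that controls every exit inside the block.

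\emph{Step 1: a conditional MSE bound.} Take as given the fixed-time bound of Proposition~\ref{prop:expectation}:
\[
\E\bigl[\lVert\theta_k-\theta^\star\rVert^2\mid\F_0\bigr]\le C U_2(W_0)\,\alpha_k\,(1+\log k).
\]
Here \(\alpha_k\asymp k^{-\delta}\). When \(\delta=1\) the condition \(2\zeta\alpha_0>1\) ensures the rate is \(k^{-1}\) up to logarithms. This bound comes from backward kernel replacement. The Markov bias term \(\ip{\theta_k-\theta^\star}{F(\theta_k,W_k)-\bar F(\theta_k)}\) is handled by solving the Poisson equation for a past iterate \(\theta_{k-\tau_k}\) with \(\tau_k\asymp\log k\). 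The adaptive chain is then compared with the reference chain through Assumption~\ref{assump:kernel-Lipschitz}. This leaves drift errors of order \(\alpha_k\tau_k\) together with moment bounds on \(U_2(W_k)\) obtained from \eqref{eq:drift-condition}.

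\emph{Step 2: block decomposition.} Partition \([k_0,\infty)\) into dyadic blocks \(B_j=[2^jk_0,2^{j+1}k_0)\). On \(B_j\) the tube radius is at least \(c(2^{j+1}k_0)^{-\beta}\). Let \(r\) be the radius of the interior buffer around \(\theta^\star\), and define the first exit time \(\sigma\) from \(\{\lVert\theta_k-\theta^\star\rVert\le\min(r,ck^{-\beta})\}\). Before \(\sigma\) the projection is inactive. The increments \(\theta_{k+1}-\theta_k=\alpha_kF(\theta_k,W_k)\) can therefore be summed and split into three parts. The first is the mean-field part \(\alpha_k\bar F(\theta_k)\), which by QSM does not increase the distance and is treated through the squared-norm recursion. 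The second is a martingale part \(\sum\alpha_k(\hat F(\theta_k,W_{k+1})-P_{\theta_k}\hat F(\theta_k,W_k))\) built from Poisson solutions \(\hat F\). The third is a remainder of telescoping and kernel-change terms, bounded by \(\sum\alpha_k^2(1+\lVert W_k\rVert)^2\) plus boundary terms of size \(\alpha_k(1+\lVert W_k\rVert)\).

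\emph{Step 3: maximal bounds and summation.} On \(B_j\) the event \(\{\sigma\in B_j\}\) has three possible sources. Either the start of the block already has error above half the radius, which Markov's inequality with Step 1 bounds by \(U_2(W_0)(1+\log)\,(2^jk_0)^{-\delta+2\beta}\). Or the martingale maximum exceeds a fraction of the radius. The stopped martingale is handled with Doob's \(L^2\) maximal inequality, whose quadratic variation \(\sum_{B_j}\alpha_k^2\E U_2(W_k)\asymp (2^jk_0)^{1-2\delta}\); divided by the squared radius this gives \((2^jk_0)^{1-2\delta+2\beta}\). Or the remainder exceeds its share, which Markov's inequality bounds by the same order times logarithmic factors. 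Since \(\delta\le1\), the term \(1-2\delta+2\beta\) is the dominant exponent. Summing over \(j\) gives a geometric series because \(2(\delta-\beta)-1>0\), which yields the claimed \(k_0^{-(2(\delta-\beta)-1)}(1+\log k_0)^2\). One log factor comes from the MSE bound and one from the mixing window \(\tau_k\) in the Poisson and kernel-replacement terms.

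\emph{Main obstacle.} The hardest step is the martingale control inside a block. The iterate drifts inward, but its error from the block start accumulates and is not a pure martingale. Rather than summing raw increments, I would first try a stopped, discounted recursion. Write \(e_k=\theta_k-\theta^\star\), so that \(e_{k+1}=e_k+\alpha_k\bar F(\theta_k)+\alpha_k\xi_k\). Using QSM,
\[
\lVert e_{k+1}\rVert^2\le(1-2\zeta\alpha_k+O(\alpha_k^2))\lVert e_k\rVert^2+2\alpha_k\ip{e_k}{\xi_k}+\alpha_k^2\lVert\xi_k\rVert^2.
\]
Apply Doob's maximal inequality to the cross-term martingale after stopping at \(\sigma\). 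On \(\{k<\sigma\}\) one has \(\lVert e_k\rVert\le ck^{-\beta}\), which bounds its variance. The Markovian cross term must again be centered through Poisson solutions. This requires uniform moment bounds for \(\sup_k\E[U_2(W_k)\mid\F_0]\lesssim U_2(W_0)\), which follow by iterating \eqref{eq:drift-condition}.
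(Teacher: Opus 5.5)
Your route differs from the paper's and can be made to work, but only in the form sketched in your last paragraph. Steps~2--3 as literally written do not close.

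\emph{Where Steps~2--3 fail.} Write \(n:=2^jk_0\) and \(\psi_j:=c(2n)^{-\beta}\) for the block radius. Suppose you sum raw increments over \(B_j\) and bound the three parts separately by the triangle inequality. Then QSM says nothing about \(\lVert\sum_{k\in B_j}\alpha_k\bar F(\theta_k)\rVert\); it only gives the sign of \(\ip{\theta_k-\theta^\star}{\bar F(\theta_k)}\). The only available norm bound, \(\sum_{k\in B_j}\alpha_k\lVert\theta_k-\theta^\star\rVert\lesssim n^{1-\delta}\psi_j\), exceeds \(\psi_j\) when \(\delta<1\). This is exactly why the paper bounds \(S_r^{\mathrm{field}}\) explicitly and is forced to blocks with \(\sum_{k\in\mathcal I_r}\alpha_k\asymp1\).

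\emph{Why the squared-norm version works.} Run the whole block argument on the squared norm:
sum \eqref{eq:proj-nonexpansive} over the block up to the first exit \(\sigma\);
drop the QSM term by its sign, with no discounting needed since Proposition~\ref{prop:expectation} supplies the restart at each block entrance;
compare \(\sum_k\alpha_k^2\lVert F(\theta_k,W_k)\rVert^2\) and \(\sum_k\ind\{k<\sigma\}\alpha_k\ip{\theta_k-\theta^\star}{F(\theta_k,W_k)-\bar F(\theta_k)}\) with \(\psi_j^2\).
Since \(\theta_k\) and \(\{\sigma>k\}\) are \(\F_k\)-measurable, the Poisson piece \(\ind\{k<\sigma\}\alpha_k\ip{\theta_k-\theta^\star}{\hat F_{\theta_k}(W_{k+1})-P_{\theta_k}\hat F_{\theta_k}(W_k)}\) is an exact martingale difference. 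Its conditional variance is \(\lesssim\alpha_k^2\psi_j^2U_2(W_k)\), so Doob's inequality gives \(\lesssim U_2(W_0)\,n^{1-2\delta}\psi_j^{-2}\asymp U_2(W_0)\,n^{1-2\delta+2\beta}\). That is the exponent you computed additively. Projection is absorbed by nonexpansiveness, so the interior buffer is not even needed in this step.

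\emph{Two ingredients you still must supply.} First, the telescoping and kernel-change remainders need a weighted Lipschitz bound \(\lVert\hat F_\theta(w)-\hat F_{\theta'}(w)\rVert\lesssim U_1(w)\lVert\theta-\theta'\rVert\). This is not in the paper. It follows from Lemma~\ref{lemma:exp-ergodic}, Assumption~\ref{assump:kernel-Lipschitz}, \eqref{eq:Lipschitz-F}, and Lemma~\ref{lemma:stationary-Lipschitz}, via the resolvent identity \((I-P_\theta)^{-1}-(I-P_{\theta'})^{-1}=(I-P_\theta)^{-1}(P_\theta-P_{\theta'})(I-P_{\theta'})^{-1}\) on the quotient space \(\mathsf Q\) used in that lemma's proof.

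Second, the boundary terms \(\alpha_k\lVert\theta_k-\theta^\star\rVert U_1(W_k)\) must be controlled in maximum over the block. Your stated tool, first-moment Markov on the maximum, gives only \(n^{1-\delta+\beta}\), which does not decay. A union bound with second moments does work: \(\pr(\max_{k\in B_j}U_1(W_k)>\psi_j/(C\alpha_n)\mid\F_0)\lesssim U_2(W_0)\,n\alpha_n^2\psi_j^{-2}\asymp U_2(W_0)\,n^{1-2\delta+2\beta}\), which is again the target order.

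\emph{Comparison with the paper.} The paper never needs a Poisson equation; it reuses backward kernel replacement. That yields only lagged centering \(\E[\xi_{k,\ell}^{\mathrm{cent}}\mid\F_{k-\ell}]=0\), so the centered sum is not a martingale. The paper therefore needs the conditional-median L\'evy inequality and covariance bands of width \(\ell\asymp\log n\), which is the source of its second logarithm. It also needs the interior target so that the additive four-event decomposition is valid. Its constant-algorithmic-time blocks are also what keep \(\sum_{\mathcal I_r}\alpha_k\mathsf{b}_k\) small in the predictable-bias extension of Section~\ref{sec:structured-extension}.

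Your route instead gives:
dyadic blocks for every \(\delta\);
no accumulated-field estimate;
a genuine martingale handled by Doob;
at most one logarithmic factor, since the Poisson remainders carry none (your attribution of a second log to a mixing window does not apply in the block step).
The cost is the Poisson-regularity lemma and the maximal control of boundary terms described above.
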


Theorem~\ref{thm:trajectory} controls the time at which the trajectory
enters and thereafter remains in the prescribed tube.  To make this interpretation
precise, define the \emph{permanent-entry index}
\begin{equation*}
\tau_{c,\beta}
:=
\inf\left\{
    n\geq1:
    \lVert\theta_k-\theta^\star\rVert
    \leq c k^{-\beta}
    \text{ for all }k\geq n
\right\}.
\end{equation*}
The event in
Theorem~\ref{thm:trajectory} is exactly \(\{\tau_{c,\beta}\leq k_0\}\), which
we call the \emph{tube-containment event}. 
Equivalently, the theorem gives the conditional tail bound
\begin{equation}
\pr\bigl(\tau_{c,\beta}>k_0\mid\F_0\bigr)
\leq
C U_2(W_0)(1+\log k_0)^2
k_0^{-(2(\delta-\beta)-1)}.
\label{eq:permanent-entry-tail}
\end{equation}
It therefore quantifies how long one must wait before the error remains
bounded by \(c k^{-\beta}\) at every subsequent iteration.

This interpretation separates the theorem from finite-time
guarantees.  A fixed-time bound gives an accuracy rate without protection
against later relapse.  Fixed-radius all-future containment gives permanence
at one accuracy level without continued refinement.  Theorem~\ref{thm:trajectory}
combines rate, permanence, and finite-time confidence on a single event \(\{\tau_{c,\beta}\leq k_0\}\).  When
\(\beta=0\), the event gives eventual containment in a fixed neighborhood.  When
\(\beta>0\), it gives containment within a radius that decreases with \(k\).
For \(\beta>0\) and any tolerance \(\varepsilon>0\), the tube-containment event
implies
\begin{equation*}
\lVert\theta_k-\theta^\star\rVert\leq\varepsilon
\qquad
\text{for all }
k\geq
\max\left\{
    k_0,
    \left\lceil(c/\varepsilon)^{1/\beta}\right\rceil
\right\}.
\end{equation*}

The concentration bound also yields a pathwise asymptotic rate.  For each
fixed \(0\leq\beta<\delta-1/2\), the future-exit events decrease as \(k_0\)
increases, and their conditional probabilities converge to zero by
\eqref{eq:permanent-entry-tail}.  Applying this conclusion to \(c=1/m\) for 
\(m=1,2,\ldots\), and taking an intersection gives
\begin{equation*}
\pr\left(
\lim_{k\to\infty}k^\beta\lVert\theta_k-\theta^\star\rVert=0
\,\middle|\,\F_0
\right)=1.
\end{equation*}
For each fixed \(c>0\), the tail bound in
\eqref{eq:permanent-entry-tail} quantifies the permanent-entry index \(\tau_{c,\beta}\) after which
\(\lVert\theta_k-\theta^\star\rVert\leq c k^{-\beta}\) holds permanently.

The exponent in \eqref{eq:permanent-entry-tail} has a second-moment
interpretation.  The recursion multiplies the centered stochastic
contribution to the update direction by the step size \(\alpha_k\).
Squaring this factor gives the polynomial scale \(\alpha_k^2\)
in the corresponding second-moment bound.  To compare this
contribution with the allowed error \(c k^{-\beta}\), normalize
by the squared tube radius.  Since \(\alpha_k\asymp k^{-\delta}\),
the resulting scale is
\(
\alpha_k^2/(c k^{-\beta})^2
\asymp k^{-2(\delta-\beta)}.
\)
Thus, the normalized second-moment scale decays more slowly
when the tube shrinks faster.

Summing these normalized scales over the future gives
\begin{equation}
\sum_{k\geq k_0}
\left(\frac{\alpha_k}{c k^{-\beta}}\right)^2
\asymp
\sum_{k\geq k_0} k^{-2(\delta-\beta)}
\asymp
k_0^{-(2(\delta-\beta)-1)}.
\label{eq:future-normalized-second-moment-budget}
\end{equation}
This series is finite exactly when \(\beta<\delta-1/2\), and
its tail has the polynomial order appearing in
Theorem~\ref{thm:trajectory}, providing a second-moment
interpretation of both the admissible tube-shrinkage rates
and the exit-probability exponent.

The result identifies a tradeoff among tube shrinkage, the
exit-probability bound, and the starting iteration of the
tube-containment guarantee. 
On the one hand, for a fixed starting iteration \(k_0\),
a larger \(\beta\) requires faster decay of the parameter error
but reduces the bound's polynomial decay exponent
\(2(\delta-\beta)-1\).
On the other hand, for a fixed upper bound \(q\in(0,1)\)
on the exit probability, faster tube shrinkage requires
a later starting iteration. At the level of polynomial orders, the required
starting iteration scales as
\(
k_0 \asymp q^{-1/(2(\delta-\beta)-1)},
\)
with constants and logarithmic factors suppressed.
Figure~\ref{fig:tube-failure-tradeoff} illustrates this tradeoff.

\begin{figure}[t]
    \FIGURE{
    \includegraphics[width=0.9\textwidth]{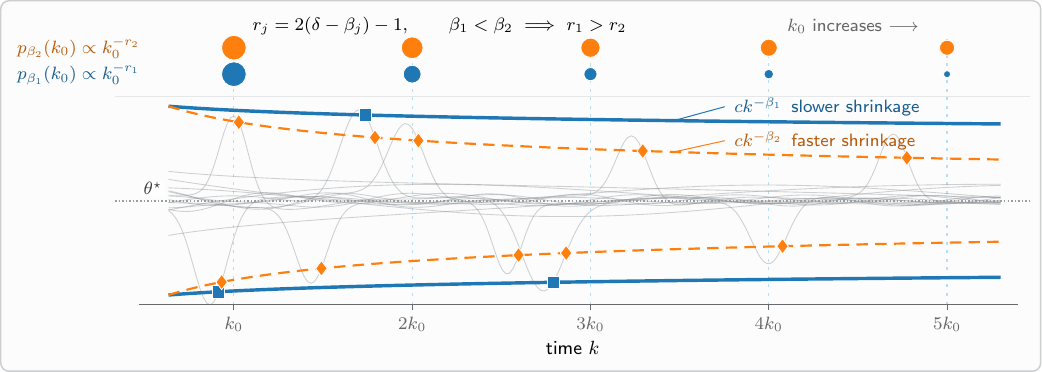}
    }{Nested Shrinking Tubes and Exit-Probability Bounds.
    \label{fig:tube-failure-tradeoff}}
    {The same trajectories are shown against two tubes with
    \(\beta_1<\beta_2\). Squares and diamonds mark final re-entry.
    The slower-shrinking tube has a faster-decaying exit-probability bound
    as \(k_0\) increases, since \(r_1>r_2\) for
    \(r_j=2(\delta-\beta_j)-1\).}
\end{figure}

The polynomial exponent \(2(\delta-\beta)-1\) cannot be improved
in general without additional assumptions.
Section~\ref{sec:sharpness} illustrates the construction using scalar
data-generating processes with independent signed-Pareto observations,
each satisfying the assumptions
of Theorem~\ref{thm:trajectory}.  Their exit-probability lower
bounds have polynomial exponents arbitrarily close to
\(2(\delta-\beta)-1\), ruling out any larger exponent
as a general guarantee under these assumptions.

The time-uniform nature of the bound also allows data-dependent
evaluation: on the containment event,
\(
\lVert\theta_T-\theta^\star\rVert
\leq c T^{-\beta}
\)
holds simultaneously for all \(T\geq k_0\).
Thus, the same probability guarantee applies whether \(T\) is
fixed in advance or selected from data observed while the
algorithm runs.

\subsection{Proof Strategy}
\label{subsec:canonical-proof-architecture}

The proof addresses two gaps.
The first is a \emph{stationarity gap}: QSM quantifies how strongly the
mean field points toward the target, but the algorithm updates using
observations from an adaptive Markov chain. We must relate these
observed updates to the mean field before using QSM to control the MSE.  The second is
a \emph{time-uniformity gap} between a fixed-time estimate and the
tube-containment event.

To close the stationarity gap, set \(t_k:=k-\ell_k\) and compare the
adaptive chain with a homogeneous reference chain starting from
\(W_{t_k}\) and governed by \(P_{\theta_{t_k}}\).
Through \emph{backward kernel replacement}, we replace the adaptive
kernels from the last transition backward, so all transitions following
each replacement use the reference kernel. Geometric ergodicity then reduces the
effect of of each discrepancy over the remaining reference transitions, while kernel continuity relates its magnitude to the parameter change
since \(t_k\). 
Geometric ergodicity also reduces the reference chain's
dependence on its starting state by a factor \(\rho^{\ell_k}\), bringing
its expected update close to the mean field \(\bar F(\theta_{t_k})\).

The look-back window must therefore be long enough for the reference
chain to forget its starting state, but short enough to keep expected
parameter changes small. We choose \(\ell_k\asymp\log k\), with a
sufficiently large proportionality constant, so that
\[
\rho^{\ell_k}\lesssim\alpha_k
\quad\mbox{and}\quad
\sum_{i=t_k}^{k-1}\alpha_i
\lesssim\alpha_k(1+\log k).
\]
The first bound controls the reference chain's initial-state effect
through geometric ergodicity. The second shows that the cumulative
step size over the look-back window tends to zero. Since each parameter
update has expected magnitude bounded by a multiple of its step size,
the expected parameter change across the window also tends to zero. The Lipschitz condition on \(F\) then controls the error from replacing
\(\theta_{t_k}\) by \(\theta_k\). 
After accounting for these errors,
QSM gives an inequality relating the MSE at iteration \(k+1\) to
the MSE at iteration \(k\). 
Iterating this inequality gives the fixed-time bound
\[
\E[\lVert\theta_k-\theta^\star\rVert^2\mid\F_0]
\lesssim
U_2(W_0)(1+\log k)k^{-\delta},
\]
stated in Proposition~\ref{prop:expectation} and proved in
Section~\ref{sec:MSE}.

We now address the \emph{time-uniformity gap}: converting this
fixed-time estimate into a guarantee that all iterates from \(k_0\)
onward remain inside the prescribed shrinking tube. A direct approach
bounds the exit probability at each iteration and sums over future
iterations. Markov's inequality and the MSE estimate yield
\[
\pr\Bigl(
  \lVert\theta_k-\theta^\star\rVert>
  c k^{-\beta}
  \,\Bigm|\,\F_0
\Bigr)
\lesssim
U_2(W_0)c^{-2}
(1+\log k)k^{-\delta+2\beta}.
\]
These pointwise bounds are not summable. 
Indeed, \(\delta-2\beta\leq1\), so
\(\sum_{k\geq k_0}(1+\log k)k^{-\delta+2\beta}=\infty\).
Summing these bounds therefore gives no useful lower bound on the
probability of the tube-containment event. Essentially, this is 
because it counts nearby crossings separately,
although they depend on strongly overlapping portions of the trajectory.
This motivates grouping nearby iterations into blocks.

The block boundaries are most easily understood by comparing two timescales.
The natural timescale is the iteration index \(k\).  The algorithmic
timescale measures accumulated step size:
\(
A_n:=\sum_{k=1}^{n-1}\alpha_k.
\)
When \(1/2<\delta<1\), we have \(A_n\asymp n^{1-\delta}\).
We choose each block to cover a fixed amount of algorithmic time. Thus the
boundaries satisfy \(A_{n_r}\asymp r\) as \(r\to\infty\). Inverting this
relation leads to the choice
\(
n_r:=\bigl\lceil r^{1/(1-\delta)}\bigr\rceil\). 
Let block \(r\) have index set
\(\mathcal I_r:=\{n_r,\ldots,n_{r+1}-1\}\). Accordingly,
\(A_{n_{r+1}}-A_{n_r}
=\sum_{k\in\mathcal I_r}\alpha_k\asymp1\).

On the natural timescale, the blocks become wider. Within block \(r\), the
step sizes are of order \(n_r^{-\delta}\). Covering a fixed amount of
algorithmic time therefore requires \(|\mathcal I_r|\asymp n_r^\delta\).
It follows that
\[
\sum_{k\in\mathcal I_r}\alpha_k^2
\asymp
|\mathcal I_r|n_r^{-2\delta}
\asymp n_r^{-\delta}.
\]
This sum determines the second-moment scale of the stochastic fluctuations accumulated within
one block.  Its polynomial order matches that of the MSE estimate at the
block entrance.  The later maximal estimate uses this balance to control all
possible exit times in the block jointly.

When \(\delta=1\), \(A_n\asymp\log n\).  Fixed increments on the algorithmic
timescale then correspond to geometrically growing block boundaries, leading
to \(n_r=2^r\).

\begin{figure}[t]
    \FIGURE{
    \includegraphics[width=0.9\textwidth]{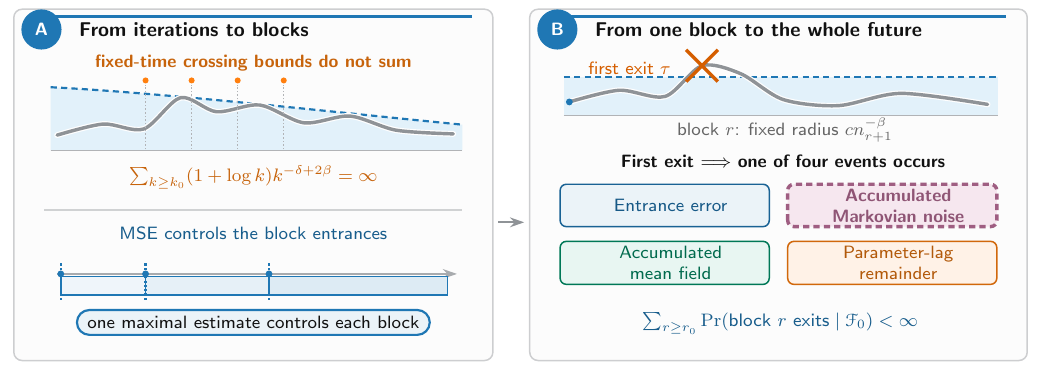}
    }{From Fixed-Time Bounds to Shrinking-Tube Concentration.  \label{fig:blocking}}
    {}\end{figure}

Within each block, we analyze the first exit, which captures exits followed by
a return to the tube, and decompose the parameter error at that exit into
four contributions: entrance error,
accumulated Markovian noise, accumulated mean field, and
parameter-lag remainder. The MSE estimate controls the entrance error.
Together with mean-field regularity, it also bounds the accumulated mean field.
The adaptive-to-reference comparison bound and a dependent maximal estimate
control the accumulated Markovian noise. Lipschitz regularity of \(F\) and bounds on
\(\lVert\theta_k-\theta_{k-\ell}\rVert\) control the parameter-lag remainder.
Section~\ref{sec:concen-bounds} develops
these estimates and shows that an exit is impossible when all four
contributions remain within their assigned bounds.

Finally, the corresponding block-exit probabilities form a series that
converges exactly over the range of
\(\delta\) and \(\beta\) stated in Theorem~\ref{thm:trajectory}.  We apply the union bound beginning with the
block that contains \(k_0\), including the part of that block after \(k_0\),
and then include all later blocks.  The resulting event keeps every iterate
from \(k_0\) onward inside the prescribed tube.

The next three sections carry out this argument.
Section~\ref{sec:backward-telescoping} develops backward kernel replacement technique to control the bias from the adaptive
Markov kernels.
Section~\ref{sec:MSE} incorporates this
control into the MSE recursion and proves the MSE bound.
Section~\ref{sec:concen-bounds} then converts the fixed-time estimate into the
shrinking-tube guarantee through a blockwise first-exit argument.

\section{Backward Kernel Replacement}
\label{sec:backward-telescoping}

The adaptive chain evolves through a sequence of kernels selected by the SA
iterates.  The transition from \(W_s\) to \(W_{s+1}\) uses
\(P_{\theta_s}\), so an expectation at time \(k\) involves the random,
time-inhomogeneous kernel product
\(
P_{\theta_0}P_{\theta_1}\cdots P_{\theta_{k-1}}
\).
The geometric-ergodicity bound in
\eqref{eq:uniform-geometric-ergodicity}, by contrast, applies to powers of a
single fixed kernel.  We therefore need to compare the adaptive product with a
homogeneous reference product.

We obtain this comparison via backward kernel replacement.  Fixing
\(P_{\theta_0}\) as the reference kernel, we replace the adaptive kernels one
at a time, beginning with the transition closest to time \(k\).  Consecutive
hybrid products differ at only one transition and share the same homogeneous
reference tail.  Geometric ergodicity along that tail reduces the effect of
each replacement, while telescoping the hybrid differences collects the local
errors into a single bound.  A final application of geometric ergodicity bounds the
difference between the reference product and its steady-state average.  The resulting
estimate expresses the bias from changing kernels as a geometrically weighted
sum of local parameter changes.

\subsection{Replacing the Kernels Backward in Time}
\label{subsec:kernel-telescoping-comparison}

Fix a horizon \(k\geq1\) and a Borel measurable test function
\(f\) satisfying \(|f|\leq U_1\).  We compare the actual terminal expectation
with the expectation under the homogeneous reference kernel: 
\begin{equation}\label{eq:telescoping-target}
\E[f(W_k)\mid\F_0]-(P_{\theta_0}^{k}f)(W_0).
\end{equation}
To connect these two quantities, define, for \(s=1,\ldots,k\),
\[
\Lambda_s(f)
:=
\E\bigl[
(P_{\theta_0}^{k-s}f)(W_s)
\,\bigm|\,\F_0
\bigr].
\]
The hybrid represented by \(\Lambda_s(f)\) follows the adaptive Markov chain
through time \(s\) and then uses \(P_{\theta_0}\) for the remaining \(k-s\)
transitions.  At the two endpoints,
\[
\Lambda_k(f)=\E[f(W_k)\mid\F_0]
\quad\mbox{and}\quad
\Lambda_1(f)=(P_{\theta_0}^{k}f)(W_0).
\]
The second identity uses the fact that the transition from \(W_0\) to \(W_1\)
is already governed by \(P_{\theta_0}\).  Hence the difference in
\eqref{eq:telescoping-target} is 
\(\Lambda_k(f)-\Lambda_1(f)\).
Figure~\ref{fig:backward-kernel-telescoping-main} illustrates this sequence of
hybrids.

\begin{figure}[t]
    \FIGURE{
    \includegraphics[width=0.9\textwidth]{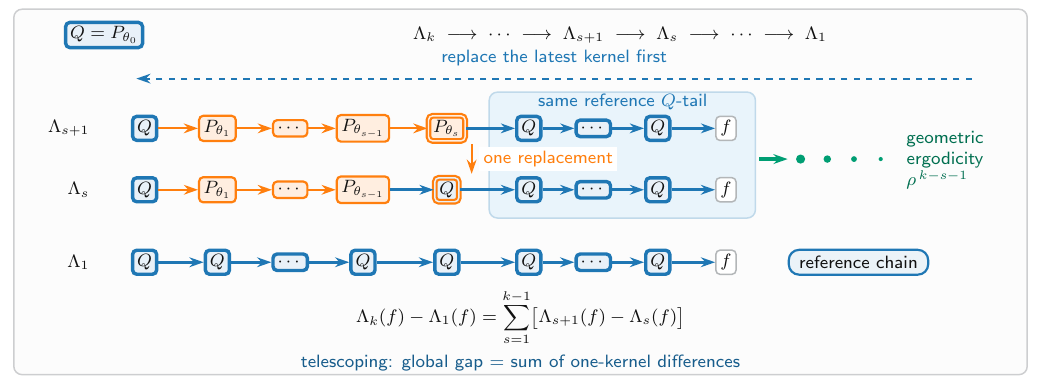}
    }{Backward Kernel Replacement.
      \label{fig:backward-kernel-telescoping-main}}
    {Starting from the adaptive product, the kernels are replaced by
    \(P_{\theta_0}\) from the last transition backward.  Adjacent hybrids share
    the same adaptive prefix and the same homogeneous reference tail.  They
    differ only at the transition from time \(s\) to time \(s+1\).  The
    reference tail has length \(k-s-1\), which yields the decay factor
    \(\rho^{k-s-1}\).}
\end{figure}

The order of replacement matters.  Read from \(\Lambda_k\) back to
\(\Lambda_1\), the sequence replaces the adaptive Markov kernels starting with the
last transition.  For each \(s\), the hybrids \(\Lambda_{s+1}\) and
\(\Lambda_s\) have the same evolution through \(W_s\).  They differ only in
the next transition, which uses \(P_{\theta_s}\) in one hybrid and
\(P_{\theta_0}\) in the other.  Both then use \(P_{\theta_0}\) for the
remaining transitions.  Beginning the replacements at the terminal end makes
each future tail homogeneous, allowing the geometric-ergodicity bound to
apply.

For \(s=1,\ldots,k-1\), define the centered reference tail 
\(
g_s
:=
P_{\theta_0}^{k-s-1}f-\nu_{\theta_0}(f)\).
This centering isolates the part that decays under geometric ergodicity.
Because Markov kernels preserve constants, it leaves the kernel difference
unchanged.  Conditioning first on \(\F_s\) yields 
\begin{align}
\Lambda_{s+1}(f)-\Lambda_s(f)
&=
\E\bigl[
\bigl(
(P_{\theta_s}-P_{\theta_0})
P_{\theta_0}^{k-s-1}f
\bigr)(W_s)
\,\bigm|\,\F_0
\bigr]
\nonumber\\
&=
\E\bigl[
\bigl((P_{\theta_s}-P_{\theta_0})g_s\bigr)(W_s)
\,\bigm|\,\F_0
\bigr].
\label{eq:one-step-diff}
\end{align}

Summing these adjacent differences telescopes from the homogeneous reference
expectation to the actual adaptive expectation:
\begin{align}
\E[f(W_k)\mid\F_0]-(P_{\theta_0}^k f)(W_0)
&=
\sum_{s=1}^{k-1}
\bigl[\Lambda_{s+1}(f)-\Lambda_s(f)\bigr].
\label{eq:gap-decomp}
\end{align}
Each summand now contains one local kernel difference and one centered
homogeneous reference tail.  
Next, we bound these two parts using
weighted kernel continuity and geometric ergodicity.

\subsection{An Adaptive-to-Reference Comparison Bound}

By the \(U_1\)-weighted geometric-ergodicity bound
\eqref{eq:uniform-geometric-ergodicity},
\(
|g_s(w)|
\leq C\rho^{k-s-1}U_1(w).
\)
Weighted kernel continuity then bounds the effect of the single kernel
replacement:
\[
\bigl|
\bigl((P_{\theta_s}-P_{\theta_0})g_s\bigr)(W_s)
\bigr|
\leq
C\rho^{k-s-1}
U_1(W_s)\lVert\theta_s-\theta_0\rVert.
\]
Combining this estimate with
\eqref{eq:one-step-diff} and \eqref{eq:gap-decomp} yields the following
comparison.  

\begin{lemma}
\label{lemma:kernel-telescoping}
Suppose
Assumptions~\ref{assump:compact}--\ref{assump:kernel-Lipschitz} hold.  Let
\(f:\W\to\R\) be Borel measurable with \(|f|\leq U_1\).  Then there is a positive constant \(C\) such that, for all \(k\geq1\),
\begin{align}
\left|
\E[f(W_k)\mid\F_0]-(P_{\theta_0}^k f)(W_0)
\right|
\leq
C\sum_{s=1}^{k-1}
\rho^{k-s-1}
\E\left[
U_1(W_s)\lVert\theta_s-\theta_0\rVert
\,\bigm|\,\F_0
\right].
\label{eq:kernel-telescoping-bound}
\end{align}
\end{lemma}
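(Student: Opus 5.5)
The plan is to formalize the telescoping sketched just before the statement, taking care with integrability and with the constants. We will use uniform-in-\(\theta\) constants \(C,\rho\) in \eqref{eq:uniform-geometric-ergodicity}; the paper defers this uniformity to Lemma~\ref{lemma:exp-ergodic} under Assumptions~\ref{assump:compact}--\ref{assump:kernel-Lipschitz}. We also need \(\E[U_1(W_s)\mid\F_0]<\infty\) for every \(s\), so that all the conditional expectations below are well defined and finite. To get this, we iterate the drift condition \eqref{eq:drift-condition} through the adaptive transition identity: \(\E[U_2(W_{s+1})\mid\F_s]=(P_{\theta_s}U_2)(W_s)\leq\lambda U_2(W_s)+b\). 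Hence \(\E[U_2(W_s)\mid\F_0]\leq U_2(W_0)+b/(1-\lambda)\), and \(U_1\leq 2U_2\) since \(\lVert w\rVert\leq 1+\lVert w\rVert^2\).

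Next we verify the hybrid identities. Because \(\theta_0\) is \(\F_0\)-measurable, \(P_{\theta_0}^{k-s}f\) is a random but \(\F_0\)-measurable function, bounded by a multiple of \(U_1\) by geometric ergodicity. For \(0\leq s<k\), the transition identity \(\pr(W_{s+1}\in\cdot\mid\F_s)=P_{\theta_s}(W_s,\cdot)\), extended from indicators to \(U_1\)-dominated functions by monotone class and truncation, gives
\[
\E\bigl[(P_{\theta_0}^{k-s-1}f)(W_{s+1})\,\bigm|\,\F_s\bigr]=\bigl(P_{\theta_s}P_{\theta_0}^{k-s-1}f\bigr)(W_s).
\]
The joint measurability of \((\theta,w)\mapsto P_\theta(w,E)\) makes the right side \(\F_s\)-measurable. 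The tower property then yields \eqref{eq:one-step-diff}. At \(s=0\) the same identity gives \(\Lambda_1(f)=(P_{\theta_0}^kf)(W_0)\). Summing the adjacent differences gives \eqref{eq:gap-decomp}, and inserting the constant \(\nu_{\theta_0}(f)\) does not change the kernel difference, since \((P_{\theta_s}-P_{\theta_0})1=0\).

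For the bound, \eqref{eq:uniform-geometric-ergodicity} gives \(|g_s|\leq C\rho^{k-s-1}U_1\). Applying Assumption~\ref{assump:kernel-Lipschitz} to the normalized function \(g_s/(C\rho^{k-s-1})\), which satisfies \(|\cdot|\leq U_1\), gives
\[
\bigl|\bigl((P_{\theta_s}-P_{\theta_0})g_s\bigr)(W_s)\bigr|\leq L_P C\rho^{k-s-1}U_1(W_s)\lVert\theta_s-\theta_0\rVert.
\]
Taking conditional expectations, using \(|\E[X\mid\F_0]|\leq\E[|X|\mid\F_0]\), and summing over \(s=1,\ldots,k-1\) yields \eqref{eq:kernel-telescoping-bound} with constant \(L_PC\). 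This constant is independent of \(k\) and of \(f\).

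The main obstacle is the uniformity of \(C\) and \(\rho\) over the random reference parameter \(\theta_0\). Pointwise geometric ergodicity alone would give constants depending on \(\theta_0\), so we rely on the uniform version from Lemma~\ref{lemma:exp-ergodic}. A secondary technical point is that \(g_s\) is a random function of \(\theta_0\). We handle this by working on \(\Theta\times\W\), treating \((\theta,w)\mapsto(P_\theta^{j}f)(w)\) as jointly measurable and conditioning with \(\theta_0\) frozen through \(\F_0\).
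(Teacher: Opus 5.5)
Your proposal is correct and follows the paper's proof essentially step for step. It uses the same hybrids \(\Lambda_s(f)\), the same single-replacement identity obtained by conditioning on \(\F_s\), centering at \(\nu_{\theta_0}(f)\), the uniform bound \(|g_s|\leq C\rho^{k-s-1}U_1\) from Lemma~\ref{lemma:exp-ergodic}, and Assumption~\ref{assump:kernel-Lipschitz} applied by homogeneity, which yields the constant \(L_PC\); your extra care with finiteness (iterating the drift through the adaptive transition) and with measurability (\(\theta_0\) frozen in \(\F_0\)) matches the paper's appeal to Lemmas~\ref{lemma:exp-ergodic}\ref{part:exp-ergodic-reference-moment} and~\ref{lemma:adaptive-moment-control}.
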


The bound separates three effects.  The magnitude
\(\lVert\theta_s-\theta_0\rVert\) of the parameter change bounds the
corresponding kernel change.  The weight \(U_1(W_s)\) accounts
for the size of the Markov state.  The factor \(\rho^{k-s-1}\) decreases with
the length of the remaining reference tail.  Kernel changes made earlier in
the product therefore have less effect on the terminal expectation.  The
total difference is bounded by summing these weighted local changes.

For later use, we shift the comparison to a window beginning at time \(t\) and
having length \(\ell\).  We use \(P_{\theta_t}\) as the homogeneous reference kernel and
take \(F(\theta_t,\cdot)\) as the terminal test function.  Applying the
time-shifted adaptive-to-reference comparison bound coordinatewise, followed by
geometric ergodicity of the reference chain, yields, for all \(t\geq0\) and
\(\ell\geq1\), 
\begin{equation}\label{eq:mse-adaptive-to-stationary}
\begin{aligned}
\bigl\lVert
\E[F(\theta_t,W_{t+\ell})\mid\F_t]-\bar F(\theta_t)
\bigr\rVert
\leq
\underbrace{C\rho^\ell U_1(W_t)}
_{\text{reference-chain transient}}
+
\underbrace{
C\sum_{s=1}^{\ell-1}
\rho^{\ell-s-1}
\E\bigl[
U_1(W_{t+s})
\lVert\theta_{t+s}-\theta_t\rVert
\,\bigm|\,\F_t
\bigr]}
_{\text{adaptive-to-reference difference}} .
\end{aligned}
\end{equation}
The first term is the error between the homogeneous reference chain and its
stationary distribution.  The second is the error between the adaptive Markov chain
and the reference chain.  It sums the kernel changes within the window, with
each change reduced according to the length of the remaining reference tail.

\section{Finite-Time MSE Bound with an Adaptive Markov Chain}
\label{sec:MSE}

The backward kernel replacement argument in
Section~\ref{sec:backward-telescoping} isolates the effect of the changing
kernels on the conditional update.  We now show that the resulting bias does
not change the leading polynomial order of the finite-time MSE.  Over a
logarithmic look-back window, this bias contributes only a higher-order term
to the squared-error recursion, while the mean field retains its full
QSM coefficient.  The resulting estimate is useful in its own right
and provides the block-entrance and lagged-error bounds needed for the
trajectory analysis.

\subsection{MSE Bound}
\label{subsec:canonical-mse-result}

Although the observed update is generally not centered at the mean field \(\bar F(\theta_k)\), its 
bias is small enough for the MSE to track the step-size scale.

\begin{proposition}
\label{prop:expectation}
Suppose
Assumptions~\ref{assump:learning-rate}--\ref{assump:qsm} hold, with
\(0<\delta\leq1\).  When \(\delta=1\), also suppose that
\(2\zeta\alpha_0>1\). Then there are positive constants \(C\) and \(K\) such that,
for all \(k\geq K\),
\begin{equation}\label{eq:risk-bound-theta}
\E[\lVert\theta_k-\theta^\star\rVert^2\mid\F_0]
\leq C U_2(W_0)(1+\log k)k^{-\delta}.
\end{equation}
\end{proposition}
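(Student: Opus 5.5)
We prove the bound by deriving a one-step recursion for \(e_k:=\E[\lVert\theta_k-\theta^\star\rVert^2\mid\F_0]\) and unrolling it. Nonexpansiveness of \(\Pi_\Theta\), together with \(\theta^\star\in\Theta\), gives
\[
\lVert\theta_{k+1}-\theta^\star\rVert^2\leq\lVert\theta_k-\theta^\star\rVert^2+2\alpha_k\ip{\theta_k-\theta^\star}{F(\theta_k,W_k)}+\alpha_k^2\lVert F(\theta_k,W_k)\rVert^2 .
\]
For the quadratic term, we first show \(\sup_k\E[U_2(W_k)\mid\F_0]\leq C U_2(W_0)\). This follows by iterating the uniform drift \eqref{eq:drift-condition} through the adaptive transitions, since \(\E[U_2(W_{k+1})\mid\F_k]=(P_{\theta_k}U_2)(W_k)\leq\lambda U_2(W_k)+b\). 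By \eqref{eq:LinearGrowth-F}, the last term then contributes at most \(C\alpha_k^2U_2(W_0)\).

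For the cross term, we use the look-back window \(t_k=k-\ell_k\) with \(\ell_k=\lceil \kappa\log k\rceil\), and we choose \(\kappa\) so that \(\rho^{\ell_k}\leq\alpha_k\). We split
\[
\ip{\theta_k-\theta^\star}{F(\theta_k,W_k)}=\ip{\theta_{t_k}-\theta^\star}{F(\theta_{t_k},W_k)}+R_k ,
\]
where \(R_k\) collects the replacements \(\theta_k\to\theta_{t_k}\) in both slots. By \eqref{eq:Lipschitz-F}, \eqref{eq:LinearGrowth-F}, and compactness, \(|R_k|\leq C(1+\lVert W_k\rVert)\lVert\theta_k-\theta_{t_k}\rVert\). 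Moreover \(\lVert\theta_k-\theta_{t_k}\rVert\leq\sum_{i=t_k}^{k-1}\alpha_iC_F(1+\lVert W_i\rVert)\). Cauchy--Schwarz and the uniform second-moment bound then give
\[
\E[|R_k|\mid\F_0]\leq C U_2(W_0)\sum_{i=t_k}^{k-1}\alpha_i\leq CU_2(W_0)\alpha_k(1+\log k).
\]
For the main piece, we condition on \(\F_{t_k}\) and apply \eqref{eq:mse-adaptive-to-stationary} with \(t=t_k\) and \(\ell=\ell_k\). The reference-chain transient is at most \(C\alpha_kU_1(W_{t_k})\). The adaptive-to-reference term is bounded by \(\sum_s\rho^{\ell-s-1}\E[U_1(W_{t+s})\lVert\theta_{t+s}-\theta_t\rVert]\), which again is \(\lesssim U_2(W_0)\alpha_k(1+\log k)\) by the same lag estimate and Cauchy--Schwarz. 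This leaves \(2\alpha_k\ip{\theta_{t_k}-\theta^\star}{\bar F(\theta_{t_k})}\leq-2\zeta\alpha_k\lVert\theta_{t_k}-\theta^\star\rVert^2\). We convert this back to time \(k\) using
\[
\lVert\theta_{t_k}-\theta^\star\rVert^2\geq\lVert\theta_k-\theta^\star\rVert^2-2d_\Theta\lVert\theta_k-\theta_{t_k}\rVert ,
\]
whose error is once more \(O(\alpha_k(1+\log k))\) in expectation.

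Collecting terms gives the recursion
\[
e_{k+1}\leq(1-2\zeta\alpha_k)e_k+C U_2(W_0)\alpha_k^2(1+\log k),
\]
valid for \(k\geq K\), where \(K\) is large enough that \(t_k\geq0\) and \(2\zeta\alpha_k<1\). Unrolling it is standard. For \(\delta<1\), the products \(\prod(1-2\zeta\alpha_i)\) decay like \(\exp(-c(k^{1-\delta}-j^{1-\delta}))\), and a comparison argument gives \(e_k\lesssim U_2(W_0)(1+\log k)k^{-\delta}\); the initial segment up to \(K\) is absorbed using \(e_K\leq d_\Theta^2\). For \(\delta=1\), the product behaves like \((j/k)^{2\zeta\alpha_0}\), and the condition \(2\zeta\alpha_0>1\) makes \(\sum_j j^{2\zeta\alpha_0-2}\log j\lesssim k^{2\zeta\alpha_0-1}\log k\), which yields the rate \(k^{-1}\log k\).

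The main obstacle is that the bias from the adaptive kernels enters only at first order, \(\alpha_k\cdot\alpha_k\log k\), which is why \(\ell_k\asymp\log k\) is needed. The second point needing care is that the cross terms \(U_1(W_s)\lVert\theta_s-\theta_t\rVert\) involve products of the unbounded state with parameter increments that themselves depend on the states. Only second moments of \(W\) are available, so every such product must be handled by Cauchy--Schwarz using the uniform bound \(\E[U_2(W_k)\mid\F_0]\leq CU_2(W_0)\), which keeps the dependence linear in \(U_2(W_0)\).
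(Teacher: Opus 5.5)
Your argument is correct. Its skeleton matches the paper's:
\begin{itemize}
\item the same nonexpansive expansion and the same logarithmic look-back window;
\item the adaptive-to-reference comparison \eqref{eq:mse-adaptive-to-stationary}, applied conditionally on \(\F_{t_k}\);
\item Cauchy--Schwarz with the uniform moment bound to handle the lag terms;
\item the identical recursion \(e_{k+1}\le(1-2\zeta\alpha_k)e_k+CU_2(W_0)\alpha_k^2(1+\log k)\), unrolled separately for \(\delta<1\) and \(\delta=1\).
\end{itemize}

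The one genuine difference is where you invoke QSM. The paper, in Lemma~\ref{lemma:lagged-restoring-drift}, uses a five-term decomposition that applies QSM at the current iterate \(\theta_k\). It pays for moving between \(\bar F(\theta_{k-\ell})\) and \(\bar F(\theta_k)\) with two ingredients:
\begin{itemize}
\item Lipschitz continuity of \(\bar F\) (Lemma~\ref{lemma:ec-stationary-mean-field-lipschitz}), which rests on the Poisson-equation argument for \(\theta\mapsto\nu_\theta\) in Lemma~\ref{lemma:stationary-Lipschitz};
\item a uniform bound on \(\lVert\bar F\rVert\).
\end{itemize}

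You instead apply QSM at the lagged iterate \(\theta_{t_k}\). You then return to time \(k\) with the elementary estimate \(\lVert\theta_{t_k}-\theta^\star\rVert^2\ge\lVert\theta_k-\theta^\star\rVert^2-2d_\Theta\lVert\theta_k-\theta_{t_k}\rVert\). This keeps the full coefficient \(\zeta\), which matters at \(\delta=1\). The error it introduces is of the same order \(\alpha_k(1+\log k)\). It needs no regularity of the mean field or of the stationary distributions in \(\theta\).

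For the MSE bound alone, your route is therefore more economical. The paper's decomposition keeps \(\bar F\) evaluated at the current iterate, matching how the mean field is handled later. The accumulated mean-field event in the block analysis needs Lipschitz continuity of \(\bar F\) anyway, so the paper loses nothing overall by using it here.

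When you write this up, choose \(K\) large enough that also \(t_k\ge k/2\). That is what gives \(\sum_{i=t_k}^{k-1}\alpha_i\le2^\delta\ell_k\alpha_k\) and hence the \(\alpha_k(1+\log k)\) scale you use repeatedly.
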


The estimate follows the step-size scale: apart from the logarithmic factor,
the MSE decays as \(k^{-\delta}\).  In particular,  step sizes with \(\delta=1\) yield
an order \((1+\log k)/k\) bound under the additional condition on
\(\alpha_0\). The proposition applies throughout \(0<\delta\leq1\), including
step-size schedules outside the range covered by
Theorem~\ref{thm:trajectory}.\footnote{Its proof allows \(\theta^\star\) to lie on
the boundary of \(\Theta\).}  The factor \(U_2(W_0)\) records the dependence on
the realized initial Markov state.

When \(P_\theta\equiv P\), the effect of kernel adaptation disappears and the
model returns to the fixed-kernel Markovian setting studied in finite-time
analyses of temporal-difference learning and contractive SA
\citep{BhandariRussoSingal21,
ChenMaguluriShakkottaiShanmugam24}.  Relative to these fixed-kernel results,
Proposition~\ref{prop:expectation} allows the transition kernel to change with
the iterate and permits a potentially unbounded Markov state under weighted
second-moment conditions.  The update need not be a gradient or arise from a
contractive operator.  

This result is closely related to the analysis of
\citet{CheDongTong26}, who establish finite-time guarantees for
stochastic gradient descent with parameter-dependent Markov sampling.
Under strong convexity and step sizes of order \(1/k\), they obtain
an MSE bound of order \(\tau(\log T)^2/T\), with explicit dependence
on the mixing time \(\tau\).  For the general SA recursion considered
here, Proposition~\ref{prop:expectation} gives an MSE bound of order
\((1+\log k)/k\) under QSM and the same step-size order.
The dependence on the uniform geometric-ergodicity constants is
absorbed into the multiplicative constant.

\subsection{Controlling Bias from Adaptive Markov Kernels}

To prove Proposition~\ref{prop:expectation}, we begin with the one-step
update in the squared distance to the target.  Because
\(\theta^\star\in\Theta\) and the Euclidean projection is nonexpansive,
\begin{align}
\lVert\theta_{k+1}-\theta^\star\rVert^2
\leq{}&
\lVert\theta_k-\theta^\star\rVert^2
+2\alpha_k
\langle\theta_k-\theta^\star,F(\theta_k,W_k)\rangle
+\alpha_k^2\lVert F(\theta_k,W_k)\rVert^2 .
\label{eq:proj-nonexpansive}
\end{align}
The final term is controlled directly.  The linear growth condition on \(F\) implies 
\[
\E[\lVert F(\theta_k,W_k)\rVert^2\mid\F_0]
\lesssim U_2(W_0)
,\]
so its contribution is of order \(\alpha_k^2\).
The main task is therefore to control the inner-product term.

If the observed update direction could be replaced by \(\bar F(\theta_k)\),
Assumption~\ref{assump:qsm} would give
\[
\langle\theta_k-\theta^\star,\bar F(\theta_k)\rangle
\leq
-\zeta\lVert\theta_k-\theta^\star\rVert^2.
\]
The actual update uses \(F(\theta_k,W_k)\), where \(W_k\) reflects the
sequence of kernels selected before time \(k\).  It is therefore generally
not conditionally centered at \(\bar F(\theta_k)\).  We show that this
difference contributes only a higher-order remainder, while the full
QSM coefficient \(\zeta\) is retained.

Fix \(k>\ell\geq1\).  The parameter
\(\theta_{k-\ell}\) is known at the beginning of the interval \([k-\ell,k]\), which 
allows us to compare the adaptive dynamics over this interval with the
reference chain governed by the fixed kernel \(P_{\theta_{k-\ell}}\).
Adding and subtracting the update \(F(\theta_{k-\ell}, W_{k-\ell})\) and mean field \(\bar F(\theta_{k-\ell})\) decomposes
the inner-product term into three parts: the mean-field contribution at
\(\theta_k\), errors caused by the parameter change from \(\theta_{k-\ell}\) to
\(\theta_k\), and the lagged bias 
\[
\langle\theta_{k-\ell}-\theta^\star,
F(\theta_{k-\ell},W_k)-\bar F(\theta_{k-\ell})\rangle.
\]
QSM controls the first part.  The Lipschitz and growth conditions, together
with the bound on parameter changes, control the second.  The
adaptive-to-reference comparison from
Section~\ref{sec:backward-telescoping} controls the lagged bias.

To bound the lagged bias, apply
\eqref{eq:mse-adaptive-to-stationary} with \(t=k-\ell\).  Its first term
measures the remaining transient error of the fixed-kernel reference chain.
Its second term measures the effect of changing kernels within the comparison
window and involves
\(
U_1(W_{t+s})\lVert\theta_{t+s}-\theta_t\rVert,
\)
for \(s=1,\ldots,\ell-1\).
Because \(\theta_t,\theta^\star\in\Theta\) and \(\Theta\) is compact,
\(\lVert\theta_t-\theta^\star\rVert\) is uniformly bounded.  The same
adaptive-to-reference estimate therefore controls the lagged inner product.
The bound on parameter changes gives
\[
\E\bigl[
U_1(W_{t+s})
\lVert\theta_{t+s}-\theta_t\rVert
\,\bigm|\,\F_0
\bigr]
\lesssim
U_2(W_0)
\sum_{i=t}^{t+s-1}\alpha_i.
\]
Applying this estimate to each term in the sum yields
\[
\begin{aligned}
\E\bigl[
\bigl\langle\theta_{k-\ell}-\theta^\star,
F(\theta_{k-\ell},W_k)-\bar F(\theta_{k-\ell})\bigr\rangle
\,\bigm|\,\F_0
\bigr] \lesssim
U_2(W_0)\biggl(
\rho^\ell+\sum_{i=k-\ell}^{k-1}\alpha_i
\biggr).
\end{aligned}
\]
Substituting the lagged-bias bound and the corresponding
bounds on parameter changes into the preceding decomposition yields the
following result.  The complete proof appears in 
Section~\ref{subsec:ec-canonical-mse-proofs}.

\begin{lemma}
\label{lemma:lagged-restoring-drift}
Suppose
Assumptions~\ref{assump:learning-rate}--\ref{assump:qsm} hold.  Then there
is a positive constant \(C\) such that, for all \(k>\ell\geq1\),
\begin{align}
\E[\langle\theta_k-\theta^\star,F(\theta_k,W_k)\rangle
\mid\F_0]
\leq
-\zeta \E[\lVert\theta_k-\theta^\star\rVert^2\mid\F_0]
+C U_2(W_0)\biggl(\rho^\ell
+\sum_{i=k-\ell}^{k-1}\alpha_i\biggr). \label{eq:lagged-restoring-drift}
\end{align}
\end{lemma}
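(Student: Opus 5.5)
Set \(t:=k-\ell\). I would bound the inner product by splitting it into three pieces, handle each with a tool already available, and then take \(\F_0\)-conditional expectations. Write
\[
\langle\theta_k-\theta^\star,F(\theta_k,W_k)\rangle
=\langle\theta_k-\theta^\star,\bar F(\theta_k)\rangle
+\mathrm{(I)}+\mathrm{(II)},
\]
where
\[
\mathrm{(I)}:=\langle\theta_k-\theta^\star,F(\theta_k,W_k)-\bar F(\theta_k)\rangle-\langle\theta_t-\theta^\star,F(\theta_t,W_k)-\bar F(\theta_t)\rangle
\]
is the parameter-lag error and
\[
\mathrm{(II)}:=\langle\theta_t-\theta^\star,F(\theta_t,W_k)-\bar F(\theta_t)\rangle
\]
is the lagged bias. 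Assumption~\ref{assump:qsm} bounds the first term by \(-\zeta\lVert\theta_k-\theta^\star\rVert^2\), so the full coefficient \(\zeta\) is kept exactly. All remaining effort goes into showing that \(\E[\mathrm{(I)}+\mathrm{(II)}\mid\F_0]\lesssim U_2(W_0)(\rho^\ell+\sum_{i=t}^{k-1}\alpha_i)\).

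\textbf{Auxiliary facts.} First, iterating the drift condition \eqref{eq:drift-condition} along the adaptive chain, which is legitimate because the drift is uniform in \(\theta\), gives \(\E[U_2(W_s)\mid\F_0]\le U_2(W_0)+b/(1-\lambda)\lesssim U_2(W_0)\) for all \(s\). The same argument conditioned on \(\F_t\) gives \(\E[U_2(W_{t+s})\mid\F_t]\lesssim U_2(W_t)\). Second, \(\bar F\) is Lipschitz on \(\Theta\). This follows by writing \(\bar F(\theta)-\bar F(\theta')\) as the sum of \(\nu_\theta(F(\theta,\cdot)-F(\theta',\cdot))\), which is bounded via \eqref{eq:Lipschitz-F} and \(\sup_\theta\nu_\theta(U_1)<\infty\), and \((\nu_\theta-\nu_{\theta'})(F(\theta',\cdot))\). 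The latter is controlled by the usual perturbation identity \(\nu_\theta-\nu_{\theta'}=\sum_{j\ge0}\nu_\theta(P_\theta-P_{\theta'})(P_{\theta'}^j-\nu_{\theta'})\), using Assumption~\ref{assump:kernel-Lipschitz} and uniform geometric ergodicity (Lemma~\ref{lemma:exp-ergodic}). Third, the parameter-change bound: by nonexpansiveness, \(\lVert\theta_{s+1}-\theta_s\rVert\le\alpha_sC_F U_1(W_s)\). Hence \(\lVert\theta_{t+s}-\theta_t\rVert\le C_F\sum_{i=t}^{t+s-1}\alpha_iU_1(W_i)\), and Cauchy--Schwarz with the moment bound gives \(\E[U_1(W_j)\lVert\theta_{t+s}-\theta_t\rVert\mid\F_0]\lesssim U_2(W_0)\sum_{i=t}^{t+s-1}\alpha_i\) for any \(j\).

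\textbf{Term (I).} Expand (I) as
\[
\langle\theta_k-\theta_t,F(\theta_k,W_k)-\bar F(\theta_k)\rangle
+\langle\theta_t-\theta^\star,[F(\theta_k,W_k)-F(\theta_t,W_k)]-[\bar F(\theta_k)-\bar F(\theta_t)]\rangle .
\]
By \eqref{eq:LinearGrowth-F}, \eqref{eq:Lipschitz-F}, Lipschitz continuity of \(\bar F\), and \(\lVert\theta_t-\theta^\star\rVert\le d_\Theta\), both pieces are bounded by \(C\,U_1(W_k)\lVert\theta_k-\theta_t\rVert\). Its conditional expectation is therefore \(\lesssim U_2(W_0)\sum_{i=t}^{k-1}\alpha_i\) by the third auxiliary fact.

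\textbf{Term (II), the main obstacle.} Since \(\theta_t\in\F_t\), conditioning on \(\F_t\) gives
\[
\E[\mathrm{(II)}\mid\F_t]=\bigl\langle\theta_t-\theta^\star,\E[F(\theta_t,W_{t+\ell})\mid\F_t]-\bar F(\theta_t)\bigr\rangle .
\]
I would apply \eqref{eq:mse-adaptive-to-stationary}, the time-shifted Lemma~\ref{lemma:kernel-telescoping} applied coordinatewise to \(F(\theta_t,\cdot)/C_F\), which has \(|\cdot|\le U_1\). This bounds \(|\E[\mathrm{(II)}\mid\F_t]|\) by
\[
d_\Theta C\Bigl(\rho^\ell U_1(W_t)+\sum_{s=1}^{\ell-1}\rho^{\ell-s-1}\E[U_1(W_{t+s})\lVert\theta_{t+s}-\theta_t\rVert\mid\F_t]\Bigr).
\]
Taking \(\E[\cdot\mid\F_0]\) by the tower property, the parameter-change bound gives each summand \(\lesssim U_2(W_0)\sum_{i=t}^{k-1}\alpha_i\), and \(\sum_s\rho^{\ell-s-1}\le1/(1-\rho)\) absorbs the weights. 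The delicate points are that the constants \(C,\rho\) must be uniform in \(\theta_t\), which Lemma~\ref{lemma:exp-ergodic} supplies, and that the random reference kernel \(P_{\theta_t}\) is \(\F_t\)-measurable, so the telescoping of Lemma~\ref{lemma:kernel-telescoping} applies conditionally on \(\F_t\) with \(\theta_t\) frozen.

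Summing the three bounds yields \eqref{eq:lagged-restoring-drift}.
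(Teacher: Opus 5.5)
Your proposal is correct and follows the paper's proof. Your term (II) is exactly the paper's lagged-bias term \(\Psi_1\), handled by the time-shifted kernel-replacement comparison conditional on \(\F_t\); your term (I) is the sum of the paper's \(\Psi_2,\Psi_3,\Psi_4\), each bounded by a multiple of \(U_1(W_k)\lVert\theta_k-\theta_{k-\ell}\rVert\) and then by the parameter-change estimate. The remaining differences are cosmetic: you group three of the five terms, scalarize coordinatewise instead of over a dense set of unit vectors, and write the Lipschitz continuity of \(\bar F\) as a perturbation series rather than in the paper's Poisson-equation form.
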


It remains to choose the lag \(\ell\). The two terms in \eqref{eq:lagged-restoring-drift} involving \(\ell\) reflect different timescales. The term \(\rho^\ell\) measures convergence on the reference-chain timescale and decreases as the window grows. The sum \(\sum_{i=k-\ell}^{k-1}\alpha_i\) controls the bound on parameter change over the comparison window and increases with the window length. Balancing these two
effects leads to the logarithmic choice
\begin{equation}\label{eq:main-log-lag}
\ell_k:=
\max\!\biggl\{
1,
\biggl\lceil\frac{\delta\log k}{|\log\rho|}\biggr\rceil
\biggr\}
\quad\mbox{and}\quad
t_k:=k-\ell_k.
\end{equation}
Since \(\ell_k=\mathcal O(\log k)\), the condition \(k>\ell_k\) in Lemma~\ref{lemma:lagged-restoring-drift} holds for all \(k\) large enough. For this choice,
\[
\rho^{\ell_k}\leq k^{-\delta}=\alpha_k/\alpha_0
\quad\mbox{and}\quad
\sum_{i=t_k}^{k-1}\alpha_i
\lesssim
\alpha_k(1+\log k).
\]
Thus, the remainder \(\bigl(\rho^\ell
+\sum_{i=k-\ell}^{k-1}\alpha_i\bigr)\) in \eqref{eq:lagged-restoring-drift}  is of order
\(\alpha_k(1+\log k)\).
In \eqref{eq:proj-nonexpansive}, this remainder is multiplied by \(2\alpha_k\), contributing a term of order \(\alpha_k^2(1+\log k)\) to the MSE recursion. The QSM coefficient \(\zeta\) remains unchanged.

\subsection{MSE Recursion}
\label{subsec:mse-recursion}

Apply Lemma~\ref{lemma:lagged-restoring-drift} with \(\ell=\ell_k\) as defined 
in \eqref{eq:main-log-lag} to \eqref{eq:proj-nonexpansive}, and use the
quadratic-term bound above. 
Then there are positive constants \(C\) and \(K\) such that, for all
\(k\geq K\),
\begin{equation}\label{eq:recursion}
\E[\lVert\theta_{k+1}-\theta^\star\rVert^2\mid\F_0]
\leq
\bigl(1-2\zeta\alpha_0k^{-\delta}\bigr)
\E[\lVert\theta_k-\theta^\star\rVert^2\mid\F_0]
+C U_2(W_0)(1+\log k)k^{-2\delta}.
\end{equation}

For \(0<\delta<1\), iterating \eqref{eq:recursion} yields the MSE bound \eqref{eq:risk-bound-theta}. For \(\delta=1\), the same bound holds under \(2\zeta\alpha_0>1\). The full calculation appears in Section~\ref{subsec:MSE-bound}.  

The MSE bound holds throughout \(0<\delta\leq1\); the stronger condition
\(\delta>1/2\) enters only when fixed-time control is upgraded to an
all-future guarantee.  Proposition~\ref{prop:expectation} controls the iterate
at the entrances to the future blocks, but it does not rule out an excursion
between two such entrances.  Section~\ref{sec:concen-bounds} provides the
required within-block maximal control to prove Theorem~\ref{thm:trajectory}.

\section{From MSE to Shrinking-Tube Concentration}
\label{sec:concen-bounds}

We first reduce an exit within one block to four
component events.  We then develop the maximal estimate for accumulated
Markovian noise, the only component that requires a new time-uniform
argument.  The final subsection combines the component estimates to bound the exit
probability within each block and sums these bounds over the future blocks.

\subsection{A Four-Event Decomposition of Block Exit}

Because \(\theta^\star\in\operatorname{int}(\Theta)\), choose
\(d_\star>0\) such that
\(\{\theta:\lVert\theta-\theta^\star\rVert\leq d_\star\}
\subseteq\operatorname{int}(\Theta)\),
and set
\(\widetilde c:=\min\{c,d_\star\}\).  Containment in the
\(\widetilde c\)-tube implies containment in the prescribed
\(c\)-tube.  We therefore use \(\widetilde c\) as the tube
constant below and write it again as \(c\).

For \(r\geq1\), let block \(r\) have index set
\(\mathcal I_r:=\{n_r,\ldots,n_{r+1}-1\}\), and set
\(\psi_r:=c n_{r+1}^{-\beta}\). Fix \(k_0\),
and let \(r_0\) be the index of the block containing \(k_0\).
If the trajectory
exits the shrinking tube after \(k_0\), then an exit occurs in some block
\(r\geq r_0\). 
Enlarging the relevant part of block \(r_0\) to the full block
and applying the union bound yields
\begin{align}
\pr\Bigl(
\exists k\geq k_0:
\lVert\theta_k-\theta^\star\rVert>c k^{-\beta}
\,\Bigm|\,\F_0
\Bigr)
&\leq
\sum_{r=r_0}^{\infty}
\pr\Bigl(
\exists k\in\mathcal I_r:
\lVert\theta_k-\theta^\star\rVert>c k^{-\beta}
\,\Bigm|\,\F_0
\Bigr) \nonumber\\ 
&\leq
\sum_{r=r_0}^{\infty}
\pr\Bigl(
\exists k\in\mathcal I_r:
\lVert\theta_k-\theta^\star\rVert>c n_{r+1}^{-\beta}
\,\Bigm|\,\F_0
\Bigr)  \nonumber\\
&=\sum_{r=r_0}^{\infty}\pr\Bigl(
\max_{k\in\mathcal I_r}
\lVert\theta_k-\theta^\star\rVert>\psi_r
\,\Bigm|\,\F_0
\Bigr). \label{eq:all-future-block-reduction}
\end{align}
The second inequality holds because \(k<n_{r+1}\) for all
\(k\in\mathcal I_r\).
Thus it suffices to bound the exit probability with radius \(\psi_r\) for each
block \(r\geq r_0\).

Fix a block \(r\geq r_0\). An exit during block \(r\) can arise in two
broad ways. The iterate may enter the block too far from \(\theta^\star\), or
the parameter may change too much after block entrance.
We control the first possibility through the entrance event
\[
\mathcal E_r^{\mathrm{ent}}
:=\bigl\{\lVert\theta_{n_r}-\theta^\star\rVert
>\psi_r/8\bigr\}.
\]
The entrance threshold is smaller than the block radius \(\psi_r\) to
leave room for subsequent parameter changes within the block. 
The particular 
fraction \(1/8\) is a convenient choice.

Even when the entrance event does not occur, subsequent parameter changes
can take the iterate outside the tube. To express these changes as a sum,
first ignore the projection. The induction below handles the projection
formally. In this unprojected picture, the parameter change from block
entrance through update \(m\in\mathcal I_r\) is
\begin{align*}
    \theta_{m+1}-\theta_{n_r}
=
\sum_{k=n_r}^{m}\alpha_kF(\theta_k,W_k) 
=S_r^{\mathrm{markov}}(m)
+S_r^{\mathrm{field}}(m)
+S_r^{\mathrm{lag}}(m).
\end{align*}
The second equality decomposes this parameter change into three contributions:
accumulated Markovian noise, accumulated mean field, and parameter-lag
remainder. 
We use the same lag \(\ell_{n_r}\) throughout block \(r\). 
Specifically,
\begin{gather*}
S_r^{\mathrm{markov}}(m)
:=\sum_{k=n_r}^{m}\alpha_k
[F(\theta_{k-\ell_{n_r}},W_k)-\bar F(\theta_{k-\ell_{n_r}})],\\
S_r^{\mathrm{field}}(m)
:=\sum_{k=n_r}^{m}\alpha_k\bar F(\theta_{k-\ell_{n_r}}), 
\quad\mbox{and}\quad 
S_r^{\mathrm{lag}}(m)
:=\sum_{k=n_r}^{m}\alpha_k
[F(\theta_k,W_k)-F(\theta_{k-\ell_{n_r}},W_k)].
\end{gather*}
Furthermore, define the
three partial-sum events by
\[
\mathcal E_r^{\mathord{\bullet}}
:=
\biggl\{
\max_{m\in\mathcal I_r}
\lVert S_r^{\mathord{\bullet}}(m)\rVert
>\psi_r/8
\biggr\},
\]
where \(\mathord{\bullet}\) stands for
\(\mathrm{markov}\), \(\mathrm{field}\), or \(\mathrm{lag}\).
Together with \(\mathcal E_r^{\mathrm{ent}}\), these are the four component
events. We next show that they cover the block-exit event:
\begin{equation}\label{eq:block-exit-decomp}
\biggl\{
\max_{k\in\mathcal I_r}
\lVert\theta_k-\theta^\star\rVert>\psi_r
\biggr\}
\subseteq
\mathcal E_r^{\mathrm{ent}}
\cup
\mathcal E_r^{\mathrm{markov}}
\cup
\mathcal E_r^{\mathrm{field}}
\cup
\mathcal E_r^{\mathrm{lag}}.    
\end{equation}

To prove this inclusion, fix an outcome outside the four component events.
Then the entrance error and all three partial-sum maxima are at most
\(\psi_r/8\).

We show by induction that
\(
\lVert\theta_k-\theta^\star\rVert\leq \psi_r\) for all \(k\in\mathcal I_r\).
The entrance bound establishes the base case \(k=n_r\):
\(\lVert\theta_{n_r}-\theta^\star\rVert\leq \psi_r/8<\psi_r\).

Now fix \(m\in\{n_r,\ldots,n_{r+1}-2\}\) and suppose that 
\(\lVert\theta_k-\theta^\star\rVert\leq \psi_r\) for all
\(k=n_r,\ldots,m\). 
By construction, \(\psi_r\leq c\leq d_\star\), so
\(\theta_{n_r},\ldots,\theta_m\) are interior points of \(\Theta\). The
projections at updates
\(k=n_r,\ldots,m-1\) therefore have interior outputs. If a projection
changes its input, its output lies on the boundary of \(\Theta\).
Hence these projections are inactive, and we may sum the updates:
\[
\theta_m
=
\theta_{n_r}
+\sum_{k=n_r}^{m-1}\alpha_kF(\theta_k,W_k).
\]

The projection at update \(m\) may still be active. The nonexpansiveness of
\(\Pi_\Theta\), followed by the three-part decomposition, yields
\begin{align*}
\lVert\theta_{m+1}-\theta^\star\rVert
&\leq \bigl\lVert\theta_m+\alpha_mF(\theta_m,W_m)-\theta^\star\bigr\rVert\\
&=
\biggl\lVert
\theta_{n_r}-\theta^\star
+\sum_{k=n_r}^{m}\alpha_kF(\theta_k,W_k)
\biggr\rVert\\
&\leq
\lVert\theta_{n_r}-\theta^\star\rVert
+\lVert S_r^{\mathrm{markov}}(m)\rVert
+\lVert S_r^{\mathrm{field}}(m)\rVert
+\lVert S_r^{\mathrm{lag}}(m)\rVert\\
&\leq 4(\psi_r/8)=\psi_r/2
<\psi_r.
\end{align*}
Thus \(\theta_{m+1}\) also satisfies the induction bound, so the induction
closes. Hence, outside the four component events, every iterate in block \(r\)
remains within \(\psi_r\) of \(\theta^\star\). This proves
\eqref{eq:block-exit-decomp}.

Taking conditional probabilities in \eqref{eq:block-exit-decomp} and applying
the union bound yields
\begin{align}
\pr\Bigl(
\max_{k\in\mathcal I_r}
\lVert\theta_k-\theta^\star\rVert>\psi_r
\,\Bigm|\,\F_0
\Bigr)
\leq
\pr(\mathcal E_r^{\mathrm{ent}}\mid\F_0)
+\pr(\mathcal E_r^{\mathrm{markov}}\mid\F_0)
+\pr(\mathcal E_r^{\mathrm{field}}\mid\F_0)
+\pr(\mathcal E_r^{\mathrm{lag}}\mid\F_0). \label{eq:block-exit-decomp-2}
\end{align}

\subsection{Maximal Control of Accumulated Markovian Noise}

To control \(\mathcal E_r^{\mathrm{markov}}\), we separate the accumulated
Markovian noise \(S_r^{\mathrm{markov}}(m)\) into a conditional-mean component and a centered component.
Fix a block \(r\), and write \(I:=n_r\), \(J:=n_{r+1}-1\), and
\(L:=J-I+1=\lvert\mathcal I_r\rvert\). Recall that the logarithmic lag
\(\ell:=\ell_{n_r}\) is fixed throughout this block; take \(r\) large enough
that \(I\geq2\ell\). For \(k\in\mathcal I_r\), let
\(\xi_{k,\ell}:=F(\theta_{k-\ell},W_k)-\bar F(\theta_{k-\ell})\), and define
\[
\xi_{k,\ell}^{\mathrm{pred}}
:=\E[\xi_{k,\ell}\mid\F_{k-\ell}]
\quad\mbox{and}\quad
\xi_{k,\ell}^{\mathrm{cent}}
:=\xi_{k,\ell}-\xi_{k,\ell}^{\mathrm{pred}}.
\]
This \emph{lagged centering} gives the decomposition: 
\[
S_r^{\mathrm{markov}}(m)
=
\underbrace{\sum_{k=I}^{m}\alpha_k\xi_{k,\ell}^{\mathrm{pred}}}
_{S_r^{\mathrm{pred}}(m)}
+
\underbrace{\sum_{k=I}^{m}\alpha_k\xi_{k,\ell}^{\mathrm{cent}}}
_{S_r^{\mathrm{cent}}(m)},
\]
for \(I\leq m\leq J\).
By the triangle inequality and the union bound, for
any \(x>0\),
\[
\pr\biggl(
\max_{m\in\mathcal I_r}\lVert S_r^{\mathrm{markov}}(m)\rVert\geq x
\,\biggm|\,\F_0\biggr)
\leq
\pr\biggl(
\max_{m\in\mathcal I_r}\lVert S_r^{\mathrm{pred}}(m)\rVert\geq x/2
\,\biggm|\,\F_0\biggr)
+
\pr\biggl(
\max_{m\in\mathcal I_r}\lVert S_r^{\mathrm{cent}}(m)\rVert\geq x/2
\,\biggm|\,\F_0\biggr).
\]

The two components are controlled by different mechanisms. The conditional
mean \(\xi_{k,\ell}^{\mathrm{pred}}\) is small because the reference chain approaches stationarity while the
iterates change little over the lag interval. Its accumulated contribution
can therefore be bounded by summing first moments. The centered component
satisfies \(\E[\xi_{k,\ell}^{\mathrm{cent}}\mid\F_{k-\ell}]=0\), which makes
cross terms between sufficiently separated indices vanish. This cancellation
allows a second-moment analysis, followed by L\'evy's inequality to control
the running maximum.

\subsubsection{Maximal Bound for the Conditional Mean.}

Fix \(k\in\mathcal I_r\). The conditional mean
\(\xi_{k,\ell}^{\mathrm{pred}}
=\E[F(\theta_{k-\ell},W_k)\mid\F_{k-\ell}]-\bar F(\theta_{k-\ell})\)
differs from zero for two reasons. Even if the chain used the fixed kernel
\(P_{\theta_{k-\ell}}\) throughout the interval, its distribution after \(\ell\)
transitions could still depend on \(W_{k-\ell}\). Reference-chain geometric
ergodicity bounds this transient discrepancy by a constant times
\(\rho^\ell U_1(W_{k-\ell})\). The actual chain also changes its kernel as the
iterate changes. Backward kernel replacement from
Section~\ref{sec:backward-telescoping} bounds this additional discrepancy
by weighting the parameter change at time \(k-\ell+s\) by
\(\rho^{\ell-s-1}\), the geometric decay over the remaining reference
transitions.

To bound each weighted parameter change, the SA update and adaptive moment
control give
\[\E[U_1(W_{k-\ell+s})\lVert\theta_{k-\ell+s}-\theta_{k-\ell}\rVert\mid\F_0]
\lesssim U_2(W_0)\sum_{i=k-\ell}^{k-\ell+s-1}\alpha_i
\leq U_2(W_0)s\alpha_{k-\ell},\]
as established in Lemma~\ref{lemma:ec-canonical-movement}.
Since the geometric weights sum to at most \((1-\rho)^{-1}\),
their weighted sum of \(s\) is at most \(\ell/(1-\rho)\).
Taking conditional expectations given \(\F_0\) therefore yields the
first-moment bound in
Lemma~\ref{lemma:ec-canonical-lagged-centering}:
\[
\E[\lVert\xi_{k,\ell}^{\mathrm{pred}}\rVert\mid\F_0]
\lesssim U_2(W_0)\bigl(\rho^\ell+\ell\alpha_{k-\ell}\bigr).
\]

For every partial sum, applying the triangle inequality and Markov's inequality yields 
\begin{equation}\label{eq:conditional-mean-maximal-bound}
\pr\biggl(
\max_{I\leq m\leq J}\lVert S_r^{\mathrm{pred}}(m)\rVert\geq x
\,\biggm|\,\F_0\biggr)
\leq x^{-1}\sum_{k=I}^{J}\alpha_k
\E[\lVert\xi_{k,\ell}^{\mathrm{pred}}\rVert\mid\F_0]
\lesssim U_2(W_0)x^{-1}L\alpha_I
\bigl(\rho^\ell+\ell\alpha_{I-\ell}\bigr).
\end{equation}

\subsubsection{L\'evy's Inequality for the Centered Component.}

We first reduce the running-maximum bound to two moment estimates, which
we establish in the next two subsections.
Write \(S_m:=S_r^{\mathrm{cent}}(m)\) for \(I-1\leq m\leq J\), and let
\(V_m:=\E[\lVert S_J-S_m\rVert^2\mid\F_m]\) be the conditional second
moment of the remaining sum after time \(m\). 
Lemma~\ref{lemma:conditional-levy-remainder} gives, for all \(x>0\),
\begin{equation}\label{eq:centered-conditional-levy}
\pr\biggl(
\max_{I-1\leq m\leq J}\lVert S_m\rVert\geq x
\,\biggm|\,\F_0\biggr)
\leq
8x^{-2}\E[\lVert S_J\rVert^2\mid\F_0]
+
\pr\biggl(
\max_{I-1\leq m\leq J}V_m\geq x^2/8
\,\biggm|\,\F_0\biggr).
\end{equation}

This inequality is the key reduction in the maximal analysis.  
It replaces
the running maximum by two second-moment quantities: the second moment of
the full-block sum \(S_J\), and the conditional second moments \(V_m\) of
the remaining sums. Lagged centering makes both quantities tractable
because cross-product terms between indices at least \(\ell\) apart vanish
in their second-moment expansions.

The condition
\(\E[\xi_{k,\ell}^{\mathrm{cent}}\mid\F_{k-\ell}]=0\), however, does not
generally make \(\{S_m\}\) a martingale with respect to \(\{\F_m\}\).
We therefore cannot obtain \eqref{eq:centered-conditional-levy} from a
standard martingale maximal inequality. Instead, we use the
conditional-median correction underlying the dependent-sequence L\'evy
inequality in Lemma~\ref{lemma:extended-levy-dependent}
\citep[Section~32.1]{LoeveProbBook}.

Let \(q_m\) denote the conditional
median  of \(\lVert S_m\rVert-\lVert S_J\rVert\) given
\(\F_m\). Then,
\[
\pr\bigl(
\lVert S_J\rVert\geq\lVert S_m\rVert-q_m
\,\bigm|\,\F_m\bigr)\geq\frac12.
\]
Fix \(z>0\), and let \(T\) be the first index in
\(\{I-1,\ldots,J\}\) at which
\(\lVert S_m\rVert-q_m\geq z\), with \(T=\infty\) if there is no such
index. Since the event \(\{T=m\}\) is \(\F_m\)-measurable, the conditional-median property implies
that, on this event, the conditional probability of \(\lVert S_J\rVert\geq z\) is at least
\(1/2\). Summing over the disjoint
first-crossing events therefore gives
\[
\begin{aligned}
\frac12\pr(T\leq J\mid\F_0)
=\frac12\sum_{m=I-1}^{J}\pr(T=m\mid\F_0)
\leq\sum_{m=I-1}^{J}
\pr(T=m,\, \lVert S_J\rVert\geq z\mid\F_0)
\leq\pr(\lVert S_J\rVert\geq z\mid\F_0).
\end{aligned}
\]
Because \(\{T\leq J\}\) is exactly the event that \(\lVert S_m\rVert-q_m\) reaches
\(z\) somewhere in the block, Markov's inequality gives
\begin{equation}\label{eq:bound-corrected-value}
\pr\biggl(
\max_{I-1\leq m\leq J}(\lVert S_m\rVert-q_m)\geq z
\,\biggm|\,\F_0\biggr)
\leq
2\pr(\lVert S_J\rVert\geq z\mid\F_0)
\leq2z^{-2}\E[\lVert S_J\rVert^2\mid\F_0].
\end{equation}

It remains to control the conditional median \(q_m\). By the median
property, 
\(\bigl|\lVert S_m\rVert-\lVert S_J\rVert\bigr|\geq\lvert q_m\rvert\)
with conditional probability at least \(1/2\) given \(\F_m\). 
Therefore, 
\[
q_m^2
\leq 2\E\bigl[
(\lVert S_m\rVert-\lVert S_J\rVert)^2\mid\F_m\bigr]
\leq 2\E[\lVert S_J-S_m\rVert^2\mid\F_m]
=2V_m,
\]
where the two inequalities follow from Markov's inequality and the triangle inequality, respectively. 
Hence, if \(\max_{I-1\leq m\leq J}V_m<x^2/8\), then
\(\lvert q_m\rvert<x/2\) for every \(m\). Any crossing
\(\lVert S_m\rVert\geq x\) must then imply
\(\lVert S_m\rVert-q_m\geq x/2\). Consequently,
\[
\Bigl\{\max_{I-1\leq m\leq J}\lVert S_m\rVert\geq x\Bigr\}
\subseteq
\Bigl\{\max_{I-1\leq m\leq J}
(\lVert S_m\rVert-q_m)\geq x/2\Bigr\} \cup
\Bigl\{\max_{I-1\leq m\leq J}V_m\geq x^2/8\Bigr\}.
\]
Applying the bound \eqref{eq:bound-corrected-value} with \(z=x/2\) to the first
event on the right yields \eqref{eq:centered-conditional-levy}.

\subsubsection{Second Moment of the Full Block Sum.}

If \(i<k\) and \(k-i\geq\ell\), then
\(\xi_{i,\ell}^{\mathrm{cent}}\) is known at time \(k-\ell\), whereas
\(\E[\xi_{k,\ell}^{\mathrm{cent}}\mid\F_{k-\ell}]=0\). The
tower property of conditional expectations gives
\[
\E\bigl[
(\xi_{k,\ell}^{\mathrm{cent}})^\intercal\xi_{i,\ell}^{\mathrm{cent}}
\mid\F_t
\bigr]=0,
\qquad I\leq i<k\leq J,\quad 0\leq t\leq i,\quad k-i\geq\ell.
\]
Thus, expanding \(\E[\lVert S_J\rVert^2\mid\F_0]\) leaves only cross
terms between indices less than \(\ell\) apart. Each index belongs to
at most \(2(\ell-1)\) such pairs. Cauchy--Schwarz bounds their contributions by the individual second
moments, which satisfy
\(\E[\lVert\xi_{k,\ell}^{\mathrm{cent}}\rVert^2\mid\F_0]
\lesssim U_2(W_0)\) by
Lemma~\ref{lemma:ec-canonical-lagged-centering}. Consequently,
\begin{equation}\label{eq:centered-full-block-second-moment}
\E[\lVert S_J\rVert^2\mid\F_0]
\lesssim U_2(W_0)\ell L\alpha_I^2.
\end{equation}

\subsubsection{Conditional Second Moments of the Remaining Sums and the Maximal Bound.}

Applying the pair-counting argument above 
conditional on \(\F_m\) gives
\[
V_m\leq(2\ell-1)\sum_{k=m+1}^{J}
\alpha_k^2
\E[\lVert\xi_{k,\ell}^{\mathrm{cent}}\rVert^2\mid\F_m],
\]
for all \(I-1\leq m\leq J\).
For \(k\geq m+\ell\), we have
\(\F_m\subseteq\F_{k-\ell}\), and conditional centering gives
\(\E[\|\xi_{k,\ell}^{\mathrm{cent}}\|^2\mid\F_m]
\leq\E[\|\xi_{k,\ell}\|^2\mid\F_m]\).
Indeed, expanding
\(\|\xi_{k,\ell}\|^2
=\|\xi_{k,\ell}^{\mathrm{cent}}+\xi_{k,\ell}^{\mathrm{pred}}\|^2\)
gives a cross term with conditional mean zero given \(\F_{k-\ell}\),
since \(\xi_{k,\ell}^{\mathrm{pred}}\) is measurable there and
\(\E[\xi_{k,\ell}^{\mathrm{cent}}\mid\F_{k-\ell}]=0\).
The tower property preserves this cancellation given \(\F_m\), while
the squared norm of \(\xi_{k,\ell}^{\mathrm{pred}}\) contributes a
nonnegative term.
For \(m<k<m+\ell\), the conditional mean
\(\xi_{k,\ell}^{\mathrm{pred}}
=\E[\xi_{k,\ell}\mid\F_{k-\ell}]\)
is \(\F_m\)-measurable, so we bound its squared norm separately.
The linear-growth condition \eqref{eq:LinearGrowth-F} and
drift condition \eqref{eq:drift-condition} then yield
\[
\E[\|\xi_{k,\ell}^{\mathrm{cent}}\|^2\mid\F_m]
\lesssim
\begin{cases}
1+\lambda^{k-m}U_2(W_m), & k\geq m+\ell,\\
1+U_2(W_m)+U_2(W_{k-\ell}), & m<k<m+\ell,
\end{cases}
\]
where \(\lambda<1\) is the coefficient in the
drift condition \eqref{eq:drift-condition}.
The proof is given in
Lemma~\ref{lemma:ec-canonical-lagged-centering}.

We sum the bounds on
\(\E[\|\xi_{k,\ell}^{\mathrm{cent}}\|^2\mid\F_m]\)
over \(k=m+1,\ldots,J\), which yields 
\[
V_m\lesssim
\ell L\alpha_I^2
+\ell\alpha_I^2
\biggl[
\ell U_2(W_m)
+\sum_{i=m-\ell+1}^{m-1}U_2(W_i)
+\ell
\biggr].
\]
Iterating the uniform drift condition \eqref{eq:drift-condition} gives
\(\E[U_2(W_i)\mid\F_0]\lesssim U_2(W_0)\), uniformly in \(i\geq0\);
see Lemma~\ref{lemma:adaptive-moment-control}.
Thus, the expression in brackets has conditional expectation at most
a constant times \(\ell U_2(W_0)\), uniformly in \(m\).
Bounding the maximum of these nonnegative expressions by their sum
over \(m\), and then applying Markov's inequality, we obtain that for any \(y>0\), 
\begin{equation}\label{eq:remaining-sum-moment-tail}
\pr\biggl(
\max_{I-1\leq m\leq J}V_m\geq y
\,\biggm|\,\F_0\biggr)
\leq y^{-1} \E\biggl[\max_{I-1\leq m\leq J}V_m\biggm|\F_0\biggr]
\lesssim U_2(W_0)y^{-1}\ell^2L\alpha_I^2.
\end{equation}

Substituting the second-moment bound \eqref{eq:centered-full-block-second-moment}
and the tail bound \eqref{eq:remaining-sum-moment-tail} with \(y=x^2/8\)
into \eqref{eq:centered-conditional-levy}, and using \(\ell\geq1\) yields that for any \(x>0\),
\begin{equation}\label{eq:centered-noise-maximal-bound}
\pr\biggl(
\max_{m\in\mathcal I_r}\lVert S_r^{\mathrm{cent}}(m)\rVert\geq x
\,\biggm|\,\F_0\biggr)
\lesssim U_2(W_0)x^{-2}\ell^2L\alpha_I^2.
\end{equation}
Furthermore, 
combining the two maximum bounds \eqref{eq:conditional-mean-maximal-bound}
and \eqref{eq:centered-noise-maximal-bound}
at threshold \(x/2\) gives the
block-maximal estimate of Lemma~\ref{lemma:ec-canonical-block-maximal}.
Figure~\ref{fig:noise-event} summarizes the argument.

\begin{figure}[t]
    \FIGURE{
    \includegraphics[width=\textwidth]{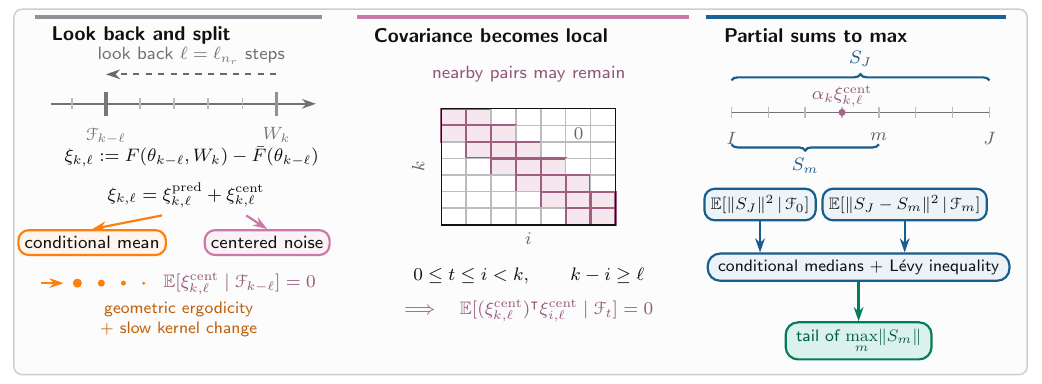}
    }{Lagged Centering and the Block Maximum of Centered Markovian Noise.
    \label{fig:noise-event}}
    {Lagged centering separates the conditional mean from centered noise.
    For \(I:=n_r\), \(J:=n_{r+1}-1\), and
    \(S_m:=\sum_{k=I}^{m}\alpha_k\xi_{k,\ell}^{\mathrm{cent}}\),
    distant cross terms vanish in the second moments of \(S_J\) and \(S_J-S_m\).
    Conditional medians and L\'evy's inequality
    convert these estimates into control of
    \(\pr(\max_{I\leq m\leq J}\lVert S_m\rVert\geq x\mid\F_0)\).}
\end{figure}

We now apply the maximal estimates at the block threshold,
leaving the block length \(L=n_{r+1}-n_r\) unspecified.
Suppose that \(n_{r+1}\leq2n_r\) for all sufficiently large \(r\).
For \(x=\psi_r/8\), combining \eqref{eq:conditional-mean-maximal-bound}
and \eqref{eq:centered-noise-maximal-bound} at threshold \(x/2\), with the logarithmic lag defined in
\eqref{eq:main-log-lag}, gives
\begin{equation}\label{eq:block-markov-bound}
\pr(\mathcal E_r^{\mathrm{markov}}\mid\F_0)
\lesssim
U_2(W_0)\ell_{n_r}^2L n_r^{-2\delta+2\beta},
\end{equation}
for all sufficiently large \(r\).
Section~\ref{subsec:ec-poly-block-noise} gives the complete maximal argument.

\subsection{Block Exit Bounds and Summation}

The remaining three component events in the 
decomposition \eqref{eq:block-exit-decomp-2} require only estimates established
earlier in the analysis. For all \(r\) large enough,
\begin{equation}\label{eq:remaining-block-event-bounds}
\begin{aligned}
\pr(\mathcal E_r^{\mathrm{ent}}\mid\F_0)
&\lesssim
U_2(W_0)(1+\log n_r)n_r^{-\delta+2\beta},\\
\pr(\mathcal E_r^{\mathrm{field}}\mid\F_0)
&\lesssim
U_2(W_0)(1+\log n_r)L^2n_r^{-3\delta+2\beta},\\
\pr(\mathcal E_r^{\mathrm{lag}}\mid\F_0)
&\lesssim
U_2(W_0)\ell_{n_r}L n_r^{-2\delta+\beta}.
\end{aligned}
\end{equation}
The first bound follows from Proposition~\ref{prop:expectation} and
Markov's inequality at the block entrance.  The second uses Lipschitz
continuity of \(\bar F\), the MSE bound at the lagged iterates, and
Cauchy--Schwarz over the block.  The third follows from the
Lipschitz condition on \(F\) and the bound on parameter changes over the
logarithmic lag. These calculations appear in the proofs in
Sections~\ref{subsec:ec-poly-block-entrance},
\ref{subsec:ec-poly-block-drift}, and
\ref{subsec:ec-poly-block-adaptation}, respectively, before the specific
block lengths are substituted.

Combining \eqref{eq:block-markov-bound} and
\eqref{eq:remaining-block-event-bounds} with
\eqref{eq:block-exit-decomp-2}, using
\(\ell_{n_r}\lesssim1+\log n_r\) and \(\beta\geq0\), gives
\[
\pr\biggl(
\max_{k\in\mathcal I_r}
\lVert\theta_k-\theta^\star\rVert>\psi_r
\,\biggm|\,\F_0
\biggr)
\lesssim
U_2(W_0)(1+\log n_r)^2n_r^{-\delta+2\beta}
\left[
1+\frac{L}{n_r^\delta}
+\left(\frac{L}{n_r^\delta}\right)^2
\right].
\]

Longer blocks reduce the number of entrance bounds but increase
the mean-field bound quadratically in \(L\).
Since the number of blocks over a comparable range of iterations
is inversely proportional to \(L\), balancing these contributions
gives \(L\asymp n_r^\delta\).
The Markovian-noise and parameter-lag bounds are linear in
\(L\), so their summed contributions do not affect this balance.

This choice gives a cumulative step size of constant order per block.
We implement it with polynomial blocks (\(
n_r:=
\lceil r^{1/(1-\delta)}\rceil\)) 
 when \(1/2<\delta<1\),
and dyadic blocks (\(n_r:=
2^r\)) when \(\delta=1\).
The combined bound therefore becomes
\begin{equation}\label{eq:one-block-exit-bound}
\pr\biggl(
\max_{k\in\mathcal I_r}
\lVert\theta_k-\theta^\star\rVert
>c n_{r+1}^{-\beta}
\,\biggm|\,\F_0
\biggr)
\lesssim
U_2(W_0)(1+\log n_r)^2n_r^{-\delta+2\beta}.
\end{equation}

Let \(r_0\) denote the block containing \(k_0\).  For \(k_0\) sufficiently
large, \eqref{eq:all-future-block-reduction} and
\eqref{eq:one-block-exit-bound} yield
\begin{equation}\label{eq:block-tail-sum}
\pr\Bigl(
\exists k\geq k_0:
\lVert\theta_k-\theta^\star\rVert>c k^{-\beta}
\,\Bigm|\,\F_0
\Bigr)
\lesssim
U_2(W_0)
\sum_{r=r_0}^{\infty}
(1+\log n_r)^2n_r^{-\delta+2\beta}.
\end{equation}

Suppose first that \(1/2<\delta<1\).  The relation
\(n_r\asymp r^{1/(1-\delta)}\) implies
\begin{equation}\label{eq:block-tail-sum-1}
\sum_{r=r_0}^{\infty}
(1+\log n_r)^2n_r^{-\delta+2\beta}
\lesssim
(1+\log r_0)^2
r_0^{-\{2(\delta-\beta)-1\}/(1-\delta)} \lesssim (1+\log k_0)^2
k_0^{-(2(\delta-\beta)-1)}.
\end{equation}
Here, for the first step,
the series is summable precisely when \(\beta<\delta-1/2\), and 
the second step holds 
because the block containing \(k_0\) satisfies
\(r_0\asymp k_0^{1-\delta}\).

When \(\delta=1\), the dyadic choice \(n_r=2^r\) gives
\begin{equation}\label{eq:block-tail-sum-2}
\sum_{r=r_0}^{\infty}
(1+\log n_r)^2n_r^{-1+2\beta}
\lesssim
(1+r_0)^2 2^{-r_0(1-2\beta)}
\lesssim
(1+\log k_0)^2k_0^{-(1-2\beta)}.
\end{equation}
Substituting \eqref{eq:block-tail-sum-1} and \eqref{eq:block-tail-sum-2} into \eqref{eq:block-tail-sum} proves
Theorem~\ref{thm:trajectory}.

\section{Sharpness of the Polynomial Decay Exponent}
\label{sec:sharpness}

Theorem~\ref{thm:trajectory} bounds the all-future exit probability with
polynomial exponent \(2(\delta-\beta)-1\). The following result shows
that this exponent cannot be increased under the same assumptions.

\begin{theorem}
\label{thm:sharpness}
Fix the step-size and tube parameters as in
Theorem~\ref{thm:trajectory}.There is a class of processes
satisfying recursion~\eqref{eq:update-scheme-projection} and
that theorem's hypotheses with common numerical assumption
constants. For each \(\epsilon>0\), some process in this
class satisfies 
\begin{equation}\label{eq:sharpness-failure-lower-bound}
\pr\Bigl(
\exists k\geq k_0:
\|\theta_k-\theta^\star\|>c k^{-\beta}
\,\Bigm|\,\F_0
\Bigr)
\geq
C_\epsilon
k_0^{-\left(2(\delta-\beta)-1+\epsilon\right)},
\end{equation}
for all \(k_0\ge K_\epsilon\), where 
\(C_\epsilon\) and \(K_\epsilon\) are some positive constants
\end{theorem}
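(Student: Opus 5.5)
We construct the class explicitly, following the hint in Section~\ref{subsec:canonical-main-result}: scalar recursions with i.i.d.\ signed-Pareto data. Take \(d=1\), \(\Theta=[-1,1]\), \(\theta^\star=0\), and \(F(\theta,w)=-\theta+w\). The kernel is \(P_\theta(w,\cdot)=\mu_\gamma\), independent of both \(\theta\) and \(w\). Here \(\mu_\gamma\) is the symmetric law of \(W=\sigma R\), where \(\sigma\) is a Rademacher sign, \(R\geq1\), and \(\pr(R>x)=x^{-\gamma}\) with \(\gamma=2+\eta\) for a small \(\eta\in(0,1]\). We also set \(W_0=0\).

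\textbf{Step 1: verify the hypotheses with constants common to the class.}
\begin{itemize}
\item Assumptions~\ref{assump:learning-rate}, \ref{assump:compact}, \ref{assump:interior-target}, and \ref{assump:qsm} (with \(\zeta=1\)) are immediate, since \(\nu_\theta=\mu_\gamma\) has mean zero and so \(\bar F(\theta)=-\theta\).
\item Assumption~\ref{assump:kernel-Lipschitz} holds trivially because \(P_\theta\) does not depend on \(\theta\).
\item Assumption~\ref{assump:growth} holds with \(L_F=1\) and \(C_F=1\).
\item For Assumption~\ref{assump:ergodic}, the kernel is independent of \(w\), so minorization holds with \(N=1\), \(\epsilon=1\), and \(\varphi=\mu_\gamma\). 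Drift holds with \(\lambda=1/2\), a bounded \(\W_0\), and \(b\geq\sup_{\eta\in(0,1]}\mu_{2+\eta}(U_2)\). This supremum is finite because \(\E R^2=\gamma/(\gamma-2)\) blows up as \(\eta\downarrow0\). To keep \(b\) uniform, I will instead truncate the tail exponent from below: take \(\gamma=2+\eta\) with \(\eta\in[\eta_{\min},1]\), or equivalently fix \(b\) large and note that each \(\epsilon\) needs only one \(\eta\). Either way the constants stay common.
\item When \(\delta=1\), the condition \(2\zeta\alpha_0>1\) is the theorem's hypothesis and needs no further check.
\end{itemize}

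\textbf{Step 2: the lower-bound mechanism is a single big jump.} Fix \(k\geq k_0\) large. Consider the event \(B_k\) that \(|W_k|>x_k\), with \(x_k:=4c\,k^{-\beta}/\alpha_k\asymp k^{\delta-\beta}\). Also consider the event \(G_k\) that \(|\theta_k|\leq c k^{-\beta}\). We have \(x_k\to\infty\) because \(\beta<\delta-1/2<\delta\). Keep \(k\) large enough that \(x_k\alpha_k\ll1\), so that the jump does not hit the boundary of \(\Theta\) too hard. On \(B_k\cap G_k\), the unprojected step has magnitude at least
\[
\alpha_k|W_k|-(1+\alpha_k)c k^{-\beta}>2ck^{-\beta}.
\]
If the projection is active, it maps to \(\pm1\), which is still far outside the tube. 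Hence \(|\theta_{k+1}|>c(k+1)^{-\beta}\) and an exit occurs.

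Because \(W_k\) is independent of \(\F_{k-1}\supseteq\sigma(\theta_k)\), we get \(\pr(B_k\cap G_k)=\pr(B_k)\pr(G_k)\). By Proposition~\ref{prop:expectation} and Markov's inequality, \(\pr(G_k)\geq1/2\) eventually.

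\textbf{Step 3: combine the events over \(k\).} The events \(\{B_k\cap G_k\}\) for \(k\geq k_0\) each imply an exit. To lower-bound the probability of their union without overlap issues, I will use the first index argument. Since the \(B_k\) are independent of the past, the Bonferroni-type estimate
\[
\pr\Bigl(\bigcup_{k}(B_k\cap G_k)\Bigr)\geq\sum_k\pr(B_k)\pr(G_k)-\sum_{j<k}\pr(B_j)\pr(B_k)
\]
works. The second sum is \(O\bigl((\sum\pr(B_k))^2\bigr)\), which is negligible once the first sum is small. It is even simpler to restrict to the window \(k\in[k_0,2k_0]\). This gives
\[
\text{exit prob}\gtrsim\sum_{k=k_0}^{2k_0}x_k^{-\gamma}\asymp k_0\cdot k_0^{-(2+\eta)(\delta-\beta)}=k_0^{-(2(\delta-\beta)-1+\eta(\delta-\beta))}.
\]
Choosing \(\eta=\epsilon/(\delta-\beta)\) yields \eqref{eq:sharpness-failure-lower-bound}.

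\textbf{Main obstacle.} The delicate point is reconciling "common numerical assumption constants" with \(\eta=\epsilon/(\delta-\beta)\downarrow0\). The second moment \(\mu_{2+\eta}(U_2)\) diverges, so the drift constant \(b\) (and the bound on \(\E U_2(W_0)\)) cannot literally be uniform over all \(\eta>0\). I would handle this in one of two ways. The first option is to interpret the class as indexed by \(\epsilon\) with constants chosen uniformly over \(\eta\geq\eta_0\), since for a given \(\epsilon\) only one member is needed. The second option is to mix the Pareto law with a bounded law: use \(W=\sigma R\) with probability \(p_\eta\) proportional to \((\gamma-2)\), and \(W=0\) otherwise. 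This keeps \(\E W^2\) bounded uniformly in \(\eta\) while retaining the tail \(\pr(|W|>x)\gtrsim\eta x^{-2-\eta}\). The second option is preferable: the factor \(\eta\) is harmless because it is absorbed into \(C_\epsilon\). I would adopt this mixture construction and recheck the drift constant accordingly.
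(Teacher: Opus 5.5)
Your mechanism is the same as the paper's: a single Pareto jump of order $k^{\delta-\beta}$ with tail index $2+\epsilon/(\delta-\beta)$. Your atom-at-zero mixture with weight $p_\eta\propto\eta$ does the job of the paper's rescaled threshold $x_\epsilon$ (which forces $\E[Z^2]=1$) in keeping second moments uniform. However, the process as you wrote it fails in two admissible parameter cases.

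\emph{First gap: the tube can contain all of $\Theta$.} With $\Theta=[-1,1]$ and $\theta^\star=0$, take $\beta=0$ and $c\geq1$. Then $|\theta_k-\theta^\star|\leq1\leq ck^{-\beta}$ for every $k$, so the exit probability is identically zero. Your claim that a projected jump to $\pm1$ lands outside the tube needs $1>c(k+1)^{-\beta}$. Your instruction to keep $x_k\alpha_k\ll1$ cannot be met when $\beta=0$, since $x_k\alpha_k=4ck^{-\beta}$. The fix is a feasible ball of radius $\kappa>c$, as in the paper.

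\emph{Second gap: the $\delta=1$ hypothesis is not inherited.} Your $F(\theta,w)=-\theta+w$ has QSM constant at most $1$. So when $\delta=1$ and $\alpha_0\leq1/2$, your process violates the hypothesis $2\zeta\alpha_0>1$ of Theorem~\ref{thm:trajectory}. That condition constrains the process you build; it is not a given, so it does need checking. As written, the process then lies outside the class, and your appeal to Proposition~\ref{prop:expectation} for $\pr(G_k\mid\F_0)\geq1/2$ has no basis. The fix is $F(\theta,w)=-\zeta(\theta-\theta^\star)+w$ with $\zeta>1/(2\alpha_0)$. Then $|1-\zeta\alpha_k|\leq1$ for large $k$, and your jump estimate survives. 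With both repairs the argument goes through.

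\emph{Comparison of the aggregation step.} Your way of combining over $k$ differs from the paper's but is valid. You lower-bound $\pr(G_k\mid\F_0)$ via the MSE bound. You then apply a second-order Bonferroni inequality, using independence of the $B_k$ and $\sum_{k\geq k_0}\pr(B_k)\to0$. The paper instead proves the multiplicative survival bound
\[
\pr(\mathcal S_{n+1}\mid\F_0)\leq\pr(\mathcal S_n\mid\F_0)\bigl(1-\pr(|W_n|>L_n)\bigr),
\]
where $\mathcal S_n$ is the event of staying in the tube on $\{k_0,\ldots,n\}$, and $L_n\asymp n^{\delta-\beta}$ is the jump size that forces an exit from inside the tube. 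It then iterates this bound. That route needs no MSE input, because being outside the tube at some $k\geq k_0$ is already an exit. It also yields $C_\epsilon$ and $K_\epsilon$ independent of the realized initial pair. Your route, as written, inherits a threshold that depends on $U_2(W_0)$ through Proposition~\ref{prop:expectation}, which is why fixing $W_0=0$ matters. That choice suffices for Theorem~\ref{thm:sharpness} as stated, but not for the paper's version with an arbitrary prescribed initial distribution.
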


The lower-bound exponent can be made arbitrarily close to
\(2(\delta-\beta)-1\). Consequently, no larger polynomial exponent holds
for all processes satisfying the assumptions, even when the bound's
constant and iteration threshold may depend on the process. Together,
Theorems~\ref{thm:trajectory} and~\ref{thm:sharpness} identify the
worst-case tradeoff at the level of polynomial exponents. Finite second moments control the
average squared size of the noise but still permit rare observations
large enough to cause late exits. Faster tube shrinkage makes smaller
updates sufficient for an exit and reduces the polynomial decay
exponent of the all-future guarantee. This limitation persists even
with fixed noise variance and a uniformly stable mean field.

To illustrate the proof, consider the one-dimensional recursion 
\(\theta_{k+1}
=\Pi_\Theta(
\theta_k-\alpha_k\zeta(\theta_k-\theta^\star)+\alpha_kW_k)\). The observations after initialization are independent and identically distributed (iid) with a symmetric, signed-Pareto distribution which has zero mean and unit variance. 
For each \(\epsilon>0\), let their tail 
index be \(2+\epsilon/(\delta-\beta)\), which approaches the
finite-variance boundary as \(\epsilon\to0\).

Conditional on the trajectory remaining within the tube through iteration \(k\),
the noise contribution to the unprojected update is \(\alpha_kW_k\).  A
sufficiently large noise realization on the scale
\(
k^{-\beta}\alpha_k^{-1}
\asymp
k^{\delta-\beta}
\)
can therefore drive the next iterate outside a tube whose radius is of order
\(k^{-\beta}\).  The probability of a
realization on this scale is of order
\(
k^{-\left(2(\delta-\beta)+\epsilon\right)}.
\)
Iterating the corresponding one-step containment bounds gives a lower
bound proportional to
\(
\sum_{k\geq k_0}k^{-\left(2(\delta-\beta)+\epsilon\right)}
\asymp
k_0^{-\left(2(\delta-\beta)-1+\epsilon\right)}
\). 
The proof makes this argument precise by bounding the probability of
remaining in the tube from one iteration to the next, and extending the one-dimensional example to multiple dimensions. 

Theorem~\ref{thm:ec-sharpness-fixed-class} in
Section~\ref{sec:EC-sharpness-proofs} gives a more formal, worst-case statement over
an admissible class of data-generating processes with fixed numerical bounds and a common
joint initial distribution. The same lower
bound holds for the essential supremum of the conditional exit
probabilities over this class. Moreover, for each \(\epsilon>0\),
a single process in the class satisfies the lower bound for every
sufficiently large \(k_0\). Thus, the lower bound does not rely on
changing the problem parameters or initialization as \(\epsilon\)
decreases.

\section{Extension: Martingale-Difference Noise and Predictable Bias}
\label{sec:structured-extension}

The SA recursion~\eqref{eq:update-scheme-projection} forms its update direction from the adaptive Markovian
noise through \(F(\theta_k,W_k)\).  Many algorithms introduce an
additional update-stage error through simulation, batching, smoothing,
or an auxiliary estimate.  We decompose this error into
martingale-difference noise  and predictable bias.

The added martingale-difference noise has zero mean given the information
available before each update. Allowing its conditional second-moment bound
to grow can make the tube-exit probability bound decay more slowly.
The predictable bias is the conditional mean of the added error and can
repeatedly push the iterate away from the target. The rate at which the
bias bound decreases limits how quickly the tube may shrink in our
concentration bound.

\subsection{Extended SA Recursion}
\label{subsec:extension-recursion-timing}

The extended SA recursion is
\begin{equation}\label{eq:extension-recursion}
\theta_{k+1}
=\Pi_\Theta\!\Bigl(
\theta_k+\alpha_k\bigl(F(\theta_k,W_k)+M_{k+1}+B_k\bigr)
\Bigr),
\end{equation}
where \(M_{k+1}\) is the added martingale-difference noise and
\(B_k\) is the predictable bias. 
For example, in stochastic optimization, a gradient may be
estimated using finite differences of noisy objective evaluations.
The resulting sampling variability and approximation bias
contribute to \(M_{k+1}\) and \(B_k\), respectively.
Reducing the finite-difference increment can decrease the bias
while increasing the variance \citep{Spall03}.

Each iteration of the recursion \eqref{eq:extension-recursion} reveals new information in two stages: first while
forming \(\theta_{k+1}\), and then when drawing \(W_{k+1}\).
We distinguish these stages because the added noise is centered
before the update is observed, whereas the Markov transition
condition must hold after all update information has been revealed.

Let \(\mathscr G_k^-\) denote the information before update \(k\).
The variables \(\theta_k\), \(W_k\), and \(B_k\) depend only on
this information. Let \(\chi_{k+1}\) collect all information
revealed during the update, including \(M_{k+1}\), and let
\(\mathscr G_k^+\) include this new information.
At this point, \(\theta_{k+1}\) is determined, and the next state
\(W_{k+1}\) is drawn using the stored iterate \(\theta_k\).
Formally, define
\begin{equation}\label{eq:extension-interlacing}
\mathscr G_0^-:=\sigma(\theta_0,W_0),\quad 
\mathscr G_k^+:=\mathscr G_k^-\vee\sigma(\chi_{k+1}),\quad\mbox{and}\quad
\mathscr G_{k+1}^-:=\mathscr G_k^+\vee\sigma(W_{k+1}),
\end{equation}
for all \( k\geq0\).
We assume that the next-state transition retains its prescribed
distribution conditional on the complete update information: for all
\(E\in\Borel(\W)\) and \(k\geq0\),
\begin{equation}\label{eq:extension-post-update-transition}
\pr(W_{k+1}\in E\mid\mathscr G_k^+)
=P_{\theta_k}(W_k,E).
\end{equation}
Thus, after all information from update \(k\) has been revealed,
the conditional distribution of \(W_{k+1}\) is still
\(P_{\theta_k}(W_k,\cdot)\). 
Figure~\ref{fig:extension-filtration} summarizes the update sequence
and the available information.

\begin{figure}[ht]
    \FIGURE{
    \includegraphics[width=\textwidth]{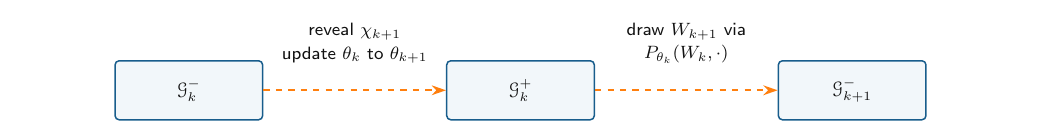}
    }{Update Timing and the Nested Filtration.
    \label{fig:extension-filtration}}
    {}
\end{figure}

The following assumption specifies the magnitude of \(M_{k+1}\) and 
\(B_k\). 

\begin{assumption}
\label{assump:structured-errors}
\begin{enumerate}[label=(\roman*)]
    \item
For all \(k\geq0\),
\(M_{k+1}\) is \(\mathscr G_k^+\)-measurable and conditionally square integrable with 
\begin{equation}\label{eq:extension-martingale-conditions}
\E[M_{k+1}\mid\mathscr G_k^-]=0
\quad\mbox{and}\quad
\E[\lVert M_{k+1}\rVert^2\mid\mathscr G_k^-]
\leq \SFm_k^2U_2(W_k).
\end{equation}

    \item
For all \(k\geq0\), the variable \(B_k\) is
\(\mathscr G_k^-\)-measurable and satisfies
\(\lVert B_k\rVert\leq\SFb_k\).

    \item
There are nonnegative constants \(C_{\mathsf M}\),
\(C_{\mathsf B}\), and \(\omega_{\mathsf M}\), and a
positive constant \(\omega_{\mathsf B}\) such that, for all
\(k\geq0\),
\begin{equation}\label{eq:extension-power-laws}
\SFm_k\leq C_{\mathsf M}(k+1)^{\omega_{\mathsf M}}
\quad\mbox{and}\quad
\SFb_k\leq C_{\mathsf B}(k+1)^{-\omega_{\mathsf B}}.
\end{equation}
\end{enumerate}
\end{assumption}

Let \(\Delta_{k+1}:=M_{k+1}+B_k\) denote the total added direction.
Assumption~\ref{assump:structured-errors} implies 
\(
B_k=\E[\Delta_{k+1}\mid\mathscr G_k^-]\) and
\(
M_{k+1}=\Delta_{k+1}-B_k
\).
Thus, \(B_k\) is the average added direction given the information
before the update, and \(M_{k+1}\) is the fluctuation around that
average.

The process
\(\{
\sum_{i=1}^n\alpha_iM_{i+1},\mathscr G_n^+\}_{n\geq1}
\)
is a martingale by the conditional centering in
Assumption~\ref{assump:structured-errors}\textup{(i)} and the field
inclusions in \eqref{eq:extension-interlacing}.
Equation~\eqref{eq:extension-post-update-transition} also
preserves the adaptive Markov structure, so the backward kernel replacement
argument in Section~\ref{sec:backward-telescoping} remains available under the enlarged information fields.

\subsection{Effects on MSE and Shrinking-Tube Concentration}
\label{subsec:extension-results}

We now give MSE and shrinking-tube concentration bounds for the
extended recursion, conditional on \(\mathscr G_0^+\).
These bounds describe the trajectory after the first update,
when \(\theta_1\) is known and \(W_1\) has yet to be drawn.
Averaging over the first update's randomness gives the same
bounds conditional on the initial information
\(\mathscr G_0^-\).

\begin{proposition}
\label{thm:structured-mse}
Consider the extended SA recursion~\eqref{eq:extension-recursion}.  Suppose
Assumptions~\ref{assump:learning-rate}--\ref{assump:structured-errors} hold,
with \(0<\delta\leq 1\).  When \(\delta=1\), also suppose
\(2\zeta\alpha_0>1\).  Fix
\(
0\leq\omega_{\mathsf M}<\delta/2
\)
and \(\omega_{\mathsf B}>0\).  Then there are positive constants \(C\) and \(K\) such that,
for all \(k\geq K\),
\begin{equation}\label{eq:structured-mse-bound}
\E[\lVert\theta_k-\theta^\star\rVert^2\mid\mathscr G_0^+]
\leq C U_2(W_0)
\bigl[
(1+\log k)k^{-\delta}
+(1+\log k)k^{-\delta+2\omega_{\mathsf M}}
+k^{-2\omega_{\mathsf B}}
\bigr].
\end{equation}
\end{proposition}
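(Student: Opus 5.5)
We follow the route of Proposition~\ref{prop:expectation}: derive a one-step recursion for \(e_k:=\E[\lVert\theta_k-\theta^\star\rVert^2\mid\mathscr G_0^+]\) and then iterate it. The added terms \(M_{k+1}\) and \(B_k\) contribute three new pieces to the recursion. One is a quadratic noise term of order \(\alpha_k^2\SFm_k^2\). One is a bias cross term \(2\alpha_k\langle\theta_k-\theta^\star,B_k\rangle\). The third is the perturbation that \(M\) and \(B\) cause in the parameter-movement estimates used by the lagged-bias argument.

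\textbf{Step 1 (one-step expansion).} Nonexpansiveness of \(\Pi_\Theta\) gives the analogue of \eqref{eq:proj-nonexpansive}, with \(F(\theta_k,W_k)\) replaced by \(F(\theta_k,W_k)+M_{k+1}+B_k\). The cross term \(\langle\theta_k-\theta^\star,M_{k+1}\rangle\) vanishes after conditioning on \(\mathscr G_k^-\), because \(\theta_k\) is \(\mathscr G_k^-\)-measurable. For the quadratic term we use Assumption~\ref{assump:structured-errors} and the drift condition. We need the moment bound \(\E[U_2(W_k)\mid\mathscr G_0^+]\lesssim U_2(W_0)\), the analogue of Lemma~\ref{lemma:adaptive-moment-control}; it holds because \eqref{eq:extension-post-update-transition} keeps the same transition law, so the drift iteration goes through unchanged. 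The quadratic term is then \(\lesssim \alpha_k^2(1+\SFm_k^2+\SFb_k^2)U_2(W_0)\). For the bias cross term we use Young's inequality:
\[
2\alpha_k\lVert\theta_k-\theta^\star\rVert\SFb_k\leq \zeta\alpha_k\lVert\theta_k-\theta^\star\rVert^2+\zeta^{-1}\alpha_k\SFb_k^2 .
\]
This sacrifices half of the QSM coefficient, which causes trouble when \(\delta=1\) (see Step~4).

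\textbf{Step 2 (lagged drift).} Next we redo Lemma~\ref{lemma:lagged-restoring-drift}. The backward kernel replacement bound \eqref{eq:mse-adaptive-to-stationary} still applies with \(\F\) replaced by the \(\mathscr G^-\) fields. The only change is in the movement bound of Lemma~\ref{lemma:ec-canonical-movement}:
\[
\E[U_1(W_{t+s})\lVert\theta_{t+s}-\theta_t\rVert\mid\mathscr G_0^+]\lesssim U_2(W_0)\sum_{i=t}^{t+s-1}\alpha_i(1+\SFm_i+\SFb_i).
\]
The extra factor comes from Cauchy--Schwarz applied to \(U_1(W_{t+s})\lVert\alpha_iM_{i+1}\rVert\), using second moments of \(W\) and \(M\). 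With the lag \(\ell_k\) from \eqref{eq:main-log-lag}, the drift remainder becomes \(\lesssim U_2(W_0)\alpha_k(1+\log k)(1+k^{\omega_{\mathsf M}})\). After multiplication by \(2\alpha_k\), this contributes \((1+\log k)k^{-2\delta+\omega_{\mathsf M}}\), which is dominated by \((1+\log k)k^{-2\delta+2\omega_{\mathsf M}}\).

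\textbf{Step 3 (recursion).} Combining Steps 1 and 2 gives
\[
e_{k+1}\leq(1-c_\zeta\alpha_k)e_k+CU_2(W_0)\bigl[(1+\log k)k^{-2\delta}(1+k^{2\omega_{\mathsf M}})+k^{-\delta-2\omega_{\mathsf B}}\bigr],
\]
where \(c_\zeta\) is slightly less than \(2\zeta\).

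\textbf{Step 4 (iteration).} We iterate with the standard lemma for \(x_{k+1}\leq(1-a k^{-\delta})x_k+bk^{-\delta-\gamma}\), which yields \(x_k\lesssim k^{-\gamma}\). For \(\delta<1\) this holds for any \(\gamma>0\). For \(\delta=1\) it requires \(a>\gamma\), and each forcing term is handled separately. The forcing terms give \(\gamma=\delta\), \(\gamma=\delta-2\omega_{\mathsf M}\) (positive because \(\omega_{\mathsf M}<\delta/2\)), and \(\gamma=2\omega_{\mathsf B}\). Logarithmic factors are carried along monotonically. The problem is the case \(\delta=1\), where we need \(c_\zeta\alpha_0>\max\{1,2\omega_{\mathsf B}\}\).

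\textbf{Main obstacle.} I expect this to be the hardest step. I would first split Young's inequality with a tunable weight \(\eta\):
\[
2\alpha_k\lVert\theta_k-\theta^\star\rVert\SFb_k\leq \eta\alpha_k\lVert\theta_k-\theta^\star\rVert^2+\eta^{-1}\alpha_k\SFb_k^2,
\]
so that the contraction \(2\zeta\alpha_0-\eta\alpha_0\) still exceeds \(1\). This leaves a bias-driven rate of \(k^{-\min\{2\omega_{\mathsf B},\,(2\zeta-\eta)\alpha_0\}}\), which may fall short of \(k^{-2\omega_{\mathsf B}}\).

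If that happens, I would work with \(\sqrt{e_k}\) instead. The bias then enters linearly as \(\alpha_k\SFb_k\), and the mean-field contraction dominates once \(k\) is large. Concretely, apply the recursion to \(x_k:=\lVert\theta_k-\theta^\star\rVert^2\) and treat the bias term as \(\alpha_k\SFb_k\sqrt{e_k}\). An induction hypothesis \(e_k\leq A\bigl[\cdots+k^{-2\omega_{\mathsf B}}\bigr]\) closes for large \(A\) whenever \(2\zeta\alpha_0>\omega_{\mathsf B}\). In the regime relevant to the later tube theorem, \(\omega_{\mathsf B}\leq 1/2\) suffices, and larger \(\omega_{\mathsf B}\) are dominated by the \(k^{-\delta}\) term anyway. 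The constant \(K\) is chosen so that \(k>\ell_k\) and \(c_\zeta\alpha_k<1\).
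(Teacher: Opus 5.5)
Your Steps 1--3 match the paper's argument. It likewise bounds the moments of \(W_k\) given \(\mathscr G_0^+\) through the post-update transition identity and enlarges the movement bound by the factor \((1+\SFm_i+\SFb_i)\). It then proves the lagged-drift estimate and obtains a recursion with forcing \((1+\log k)k^{-2\delta}(1+k^{2\omega_{\mathsf M}})+k^{-\delta-2\omega_{\mathsf B}}\). For \(0<\delta<1\) it also uses Young's inequality with weight \(\zeta\).

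The obstacle you expect at \(\delta=1\) does not arise, and the tunable-weight route you abandon is exactly the paper's. The paper takes weight \(\zeta-1/(2\alpha_0)\), which leaves contraction \((\zeta\alpha_0+1/2)k^{-1}\) with \(\zeta\alpha_0+1/2>1\). It then enlarges the direct bias forcing to \(k^{-\delta}k^{-\min\{2\omega_{\mathsf B},\delta\}}\). Every exponent in the rate profile \((1+\log k)(k^{-\delta}+k^{-\delta+2\omega_{\mathsf M}})+k^{-\min\{2\omega_{\mathsf B},\delta\}}\) then lies in \((0,\delta]\), so the induction closes with gap \(\zeta\alpha_0-1/2>0\). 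The inequality \(k^{-\min\{2\omega_{\mathsf B},\delta\}}\leq k^{-2\omega_{\mathsf B}}+k^{-\delta}\) finishes the proof. In your own terms: once \((2\zeta-\eta)\alpha_0>1\), the rate \(k^{-\min\{2\omega_{\mathsf B},(2\zeta-\eta)\alpha_0\}}\) equals \(k^{-2\omega_{\mathsf B}}\) when \(2\omega_{\mathsf B}\leq1\). Otherwise it is \(o(k^{-1})\), even with a logarithm at equality. So it never exceeds the claimed bound, which already contains \((1+\log k)k^{-1}\).

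Your square-root fallback is a genuinely different treatment of the bias, and it also works. Bounding \(\E[\langle\theta_k-\theta^\star,B_k\rangle\mid\mathscr G_0^+]\leq\SFb_k\sqrt{e_k}\) keeps the full contraction \(2\zeta\alpha_k\). Under the hypothesis \(e_k\leq AU_2(W_0)r_k\) with \(r_k\geq k^{-2\omega_{\mathsf B}}\), the bias term is \(O(\sqrt A\,\alpha_kU_2(W_0)r_k)\), so it is absorbed for large \(A\). Substituting the hypothesis is legitimate because \(e\mapsto(1-2\zeta\alpha_k)e+2\alpha_k\SFb_k\sqrt e\) is increasing for large \(k\).

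However, you misstate its closing condition. At \(\delta=1\) the contraction must exceed every exponent in \(r_k\), i.e.\ \(2\zeta\alpha_0>\max\{1,2\omega_{\mathsf B}\}\), not \(2\zeta\alpha_0>\omega_{\mathsf B}\). The route therefore becomes unconditional only after the same cap. State that cap as a general reduction rather than as a remark about the tube theorem's regime: since \(\SFb_k\) is only an upper bound, you may replace \(\omega_{\mathsf B}\) by \(\min\{\omega_{\mathsf B},\delta/2\}\).

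What each route buys: the square-root route avoids tuning a separate Young weight for \(\delta=1\). The paper's route keeps the recursion linear in \(e_k\), so it iterates by the same elementary induction as Proposition~\ref{prop:expectation}.
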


The first term in \eqref{eq:structured-mse-bound} retains the order
\((1+\log k)k^{-\delta}\) from Proposition~\ref{prop:expectation} for
the SA recursion with Markovian noise. The second term reflects the
added martingale-difference noise \(M_{k+1}\).
Assumption~\ref{assump:structured-errors} bounds its conditional mean
squared magnitude by \(\SFm_k^2U_2(W_k)\). The power-law bound on
\(\SFm_k\) permits the factor \(k^{2\omega_{\mathsf M}}\) in the second
term; the condition \(\omega_{\mathsf M}<\delta/2\) ensures that this
term tends to zero. The third term, \(k^{-2\omega_{\mathsf B}}\),
accounts for predictable bias. If the bias bound decreases slowly,
this term can decay most slowly and determine the order of the MSE
bound, even when \(\omega_{\mathsf M}=0\).

The following theorem gives the corresponding shrinking-tube bound.

\begin{theorem}
\label{thm:structured-trajectory}
Consider the extended SA recursion~\eqref{eq:extension-recursion}.  Suppose
Assumptions~\ref{assump:learning-rate}--\ref{assump:structured-errors} hold,
with \(1/2<\delta\leq1\).  When \(\delta=1\), also suppose
\(2\zeta\alpha_0>1\).  Fix
\(0\leq\omega_{\mathsf M}<\delta-1/2\) and
\(0\leq\beta<\min\bigl\{\omega_{\mathsf B},
\delta-\omega_{\mathsf M}-1/2\bigr\}\), and \(c>0\).
If \(B_k=0\) almost surely for all \(k\), interpret
\(\omega_{\mathsf B}=\infty\); if
\(M_{k+1}=0\) almost surely for all \(k\), take
\(\omega_{\mathsf M}=0\).  There are positive constants \(C\) and \(K\) such that, for all
\(k_0\geq K\),
\begin{equation}\label{eq:structured-trajectory-bound}
\pr\Bigl(
\lVert\theta_k-\theta^\star\rVert\leq c k^{-\beta}
\text{ for all }k\geq k_0
\,\Bigm|\,\mathscr G_0^+
\Bigr)
\geq
1-C U_2(W_0)(1+\log k_0)^2
k_0^{-(2(\delta-\beta-\omega_{\mathsf M})-1)}.
\end{equation}
\end{theorem}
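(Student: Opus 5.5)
The proof will mirror the argument for Theorem~\ref{thm:trajectory} in Section~\ref{sec:concen-bounds}, with two additional component events for the added noise. After replacing \(c\) by \(\min\{c,d_\star\}\), we use the reduction \eqref{eq:all-future-block-reduction} with the same blocks \(n_r\) (polynomial for \(\delta<1\), dyadic for \(\delta=1\)) and the same radius \(\psi_r=c n_{r+1}^{-\beta}\). The induction argument leading to \eqref{eq:block-exit-decomp} goes through once the unprojected increment \(\sum_{k=n_r}^m\alpha_k(F(\theta_k,W_k)+M_{k+1}+B_k)\) is split into five pieces: the three existing ones plus
\[
S_r^{\mathrm{mart}}(m):=\sum_{k=n_r}^m\alpha_kM_{k+1}
\quad\mbox{and}\quad
S_r^{\mathrm{bias}}(m):=\sum_{k=n_r}^m\alpha_kB_k .
\]
Each of the six events (entrance plus five partial sums) receives threshold \(\psi_r/12\). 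All probabilities are taken conditional on \(\mathscr G_0^+\). The backward kernel replacement bound \eqref{eq:mse-adaptive-to-stationary} and the lagged-centering analysis remain valid under the interlaced filtration because of \eqref{eq:extension-post-update-transition}. I would define \(\xi^{\mathrm{pred}}\) and \(\xi^{\mathrm{cent}}\) relative to \(\mathscr G_{k-\ell}^-\).

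The bounds for the existing four events are reworked as follows.

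\textbf{Entrance event.} Proposition~\ref{thm:structured-mse} and Markov's inequality give
\[
U_2(W_0)\bigl[(1+\log n_r)n_r^{-\delta+2\omega_{\mathsf M}+2\beta}+n_r^{-2\omega_{\mathsf B}+2\beta}\bigr].
\]

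\textbf{Field and lag events.} These need modified parameter-movement bounds. The analogue of Lemma~\ref{lemma:ec-canonical-movement} now gives
\[
\E\lVert\theta_{k}-\theta_{k-\ell}\rVert\lesssim\sum\alpha_i(1+\SFm_i+\SFb_i),
\]
so every \(\alpha\) is effectively inflated by \(k^{\omega_{\mathsf M}}\). This changes only lower-order terms, since \(\omega_{\mathsf M}<\delta-1/2<\delta/2\)... the actual requirement is that \(\ell L n_r^{-2\delta+\omega_{\mathsf M}+\beta}\) be summable, and it is dominated by the main term below.

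\textbf{Markov-noise event.} This is unchanged in form.

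\textbf{Martingale event.} The added noise gives the new dominant contribution. Doob's \(L^2\) maximal inequality applies to the martingale \(S_r^{\mathrm{mart}}\) with respect to \(\{\mathscr G_k^+\}\), together with \(\E[\lVert M_{k+1}\rVert^2\mid\mathscr G_0^+]\lesssim \SFm_k^2U_2(W_0)\), which comes from Lemma~\ref{lemma:adaptive-moment-control} via the drift condition. This yields
\[
\pr(\mathcal E_r^{\mathrm{mart}}\mid\mathscr G_0^+)\lesssim U_2(W_0)Ln_r^{-2\delta+2\omega_{\mathsf M}+2\beta}.
\]

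\textbf{Bias event.} The bias needs no probability at all. The bound \(\max_m\lVert S_r^{\mathrm{bias}}(m)\rVert\le\sum_{k\in\mathcal I_r}\alpha_k\SFb_k\lesssim Ln_r^{-\delta-\omega_{\mathsf B}}\asymp n_r^{-\omega_{\mathsf B}}\) holds because \(L\asymp n_r^\delta\). Since \(\beta<\omega_{\mathsf B}\), this is eventually smaller than \(\psi_r/12\asymp n_r^{-\beta}\), so \(\mathcal E_r^{\mathrm{bias}}=\emptyset\) for large \(r\). This is exactly where the restriction \(\beta<\omega_{\mathsf B}\) enters.

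Collecting the bounds with \(L\asymp n_r^\delta\), the per-block bound becomes
\[
U_2(W_0)(1+\log n_r)^2\bigl[n_r^{-\delta+2\beta+2\omega_{\mathsf M}}+n_r^{-2\omega_{\mathsf B}+2\beta}\bigr].
\]
Summing over \(r\ge r_0\) as in \eqref{eq:block-tail-sum-1}--\eqref{eq:block-tail-sum-2}, the first term gives \((1+\log k_0)^2k_0^{-(2(\delta-\beta-\omega_{\mathsf M})-1)}\), and it is summable precisely because \(\beta<\delta-\omega_{\mathsf M}-1/2\).

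The main obstacle I expect is the entrance term from the bias, \(n_r^{-2(\omega_{\mathsf B}-\beta)}\). Summed over blocks it gives \(k_0^{-(2(\omega_{\mathsf B}-\beta)-(1-\delta))}\) when \(\delta<1\), which need not be dominated by the claimed exponent. I would handle this by avoiding Markov's inequality for the bias part of the entrance error. First I would write \(\theta_{n_r}-\theta^\star\) as a deterministic bias-driven component plus a stochastic remainder. Using QSM along a bias-perturbed comparison trajectory, the deterministic component is bounded pathwise by \(O(n_r^{-\omega_{\mathsf B}})\), which is eventually below \(\psi_r/24\). Only the stochastic remainder is then controlled in \(L^2\), with MSE of order \((1+\log n_r)n_r^{-\delta+2\omega_{\mathsf M}}\).

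If this decomposition proves awkward, the fallback is to prove a refined MSE bound on a high-probability event where the iterates stay in the interior. On that event the bias contributes a squared term \((\sum\alpha_k\SFb_k e^{-\zeta(A_n-A_k)})^2\lesssim n^{-2\omega_{\mathsf B}}\) deterministically, which again falls below the threshold without being converted into a probability.

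The remaining bookkeeping consists of moment control of \(W_k\) conditional on \(\mathscr G_0^+\) and the constants \(K\). It is routine once the \(W_1\)-moment is bounded by \(U_2(W_0)\) via the drift condition.
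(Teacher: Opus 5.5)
Your block architecture is the paper's: six component events, Doob's inequality for \(S_r^{\mathrm{mart}}\) under \((\mathscr G_m^+)\), an empty bias event for large \(r\) because \(\beta<\omega_{\mathsf B}\), movement bounds inflated by \(\SFm_i\) for the lag and Markov-noise terms, and the same block summation. You also correctly see the obstacle. Markov's inequality applied to Proposition~\ref{thm:structured-mse} at the block entrance leaves \(n_r^{2\beta-2\omega_{\mathsf B}}\), whose block sum can even diverge when \(\delta<1\) and \(2(\omega_{\mathsf B}-\beta)\leq1-\delta\). But you do not supply the step that removes this term, and the defect is wider than you state.

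First, the field event has the same problem. \(S_r^{\mathrm{field}}\) is controlled through \(\lVert\bar F(\theta_{k-\ell})\rVert\lesssim\lVert\theta_{k-\ell}-\theta^\star\rVert\), that is, through the lagged-iterate MSE and not through parameter-movement bounds as you write. The argument of Lemma~\ref{lemma:block-drift} therefore again produces \(n_r^{2\beta}(L\alpha_{n_r})^2n_r^{-2\omega_{\mathsf B}}\asymp n_r^{2\beta-2\omega_{\mathsf B}}\).

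Second, neither of your proposed fixes is a working argument. A decomposition of \(\theta_k-\theta^\star\) into a deterministic bias-driven component plus a stochastic remainder does not exist by superposition. \(\bar F\) is nonlinear, \(B_k\) is random with only \(\lVert B_k\rVert\leq\SFb_k\), the projection is present, and QSM is anchored at \(\theta^\star\). Comparing the iterates with a bias-perturbed comparison trajectory would therefore need two-point strong monotonicity, which is not assumed. A refined bound on the full second moment does not help either. Markov's inequality turns any squared bias contribution of order \(n_r^{-2\omega_{\mathsf B}}\) back into \(n_r^{2\beta-2\omega_{\mathsf B}}\), whether or not that contribution is deterministic.

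The missing idea is a one-sided radial split \(\lVert\theta_k-\theta^\star\rVert\leq\mathsf g_k+R_k\), with \(\mathsf g_k\) deterministic and only \(R_k\) controlled in \(L^2\). The paper takes \(\mathsf g_{k+1}=(1-\zeta\alpha_k)\mathsf g_k+\alpha_k\SFb_k\). This sequence is nonincreasing and satisfies \(\mathsf g_k\geq\SFb_k/\zeta\) and \(\mathsf g_k=o(k^{-\beta})\). For \(\delta=1\) the last property uses \(\zeta\alpha_0>1/2>\beta\), since \(\mathsf g_k\) need not be \(O(k^{-\omega_{\mathsf B}})\) there. The paper then tracks \(y_k:=\E[(\lVert\theta_k-\theta^\star\rVert-\mathsf g_k)_+^2\mid\mathscr G_0^+]\). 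Write \(\Phi_a(e)=(\lVert e\rVert-a)_+^2\) and \(h_a(e)=(1-a/\lVert e\rVert)_+e\). The bias is eliminated from the recursion by four ingredients:
\begin{itemize}
\item the inequality \(\Phi_a(x+u)\leq\Phi_a(x)+2\langle h_a(x),u\rangle+\lVert u\rVert^2\);
\item QSM in the form \(\langle h_a(e),\bar F(\theta^\star+e)\rangle\leq-\zeta\lVert e\rVert(\lVert e\rVert-a)_+\);
\item the lagged comparison applied to \(h_a\);
\item the scalar absorption inequality
\[
(r-\mathsf g_{k+1})_+^2-2\zeta\alpha_kr(r-\mathsf g_{k+1})_++2\alpha_k\SFb_k(r-\mathsf g_{k+1})_+\leq(1-2\zeta\alpha_k)(r-\mathsf g_k)_+^2 .
\]
\end{itemize}
Together these give \(y_k\lesssim U_2(W_0)(1+\log k)k^{-(\delta-2\omega_{\mathsf M})}\).

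With \(I=n_r\), the entrance event \(\{\lVert\theta_I-\theta^\star\rVert>xn_{r+1}^{-\beta}\}\) is then contained in \(\{\Phi_{\mathsf g_I}(\theta_I-\theta^\star)>x^2n_{r+1}^{-2\beta}/4\}\) once \(\mathsf g_I\leq xn_{r+1}^{-\beta}/2\). The field sum is split through \(\lVert\bar F(\theta_{k-\ell})\rVert\lesssim\sqrt{\Phi_{\mathsf g_{k-\ell}}(\theta_{k-\ell}-\theta^\star)}+\mathsf g_{k-\ell}\), whose second part contributes only a deterministic \(o(n_{r+1}^{-\beta})\). Without this device or an equivalent one, your argument does not reach the claimed exponent. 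For \(\beta\) near \(\omega_{\mathsf B}\) it does not even give a convergent block series.
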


When \(\omega_{\mathsf B}\geq
\delta-\omega_{\mathsf M}-1/2\), predictable bias neither
restricts the allowable tube rates (\(0\leq \beta < \delta-\omega_{\mathsf M}-1/2\)) nor changes the polynomial
decay exponent in \eqref{eq:structured-trajectory-bound}.
Every allowable \(\beta\) then satisfies
\(\beta<\omega_{\mathsf B}\).
Intuitively, the mean field counteracts the repeated biased
updates, allowing their effect to be accounted for within a
radius that shrinks faster than the prescribed tube.
The bias can still affect the constant \(C\) and threshold \(K\).

The martingale-difference contribution to an update is
\(\alpha_kM_{k+1}\). Its conditional mean squared magnitude is bounded
by \(\alpha_k^2\SFm_k^2U_2(W_k)\). The moment bounds for \(W_k\)
control the factor \(U_2(W_k)\). Dividing the remaining factor by the
squared tube radius and summing over future iterations gives
\[
\sum_{k\geq k_0}
\left(\frac{\alpha_k\SFm_k}{k^{-\beta}}\right)^2
\lesssim
\sum_{k\geq k_0}k^{-2(\delta-\beta-\omega_{\mathsf M})}
\asymp
k_0^{-\left(2(\delta-\beta-\omega_{\mathsf M})-1\right)}.
\]
The polynomial series is finite exactly when
\(\beta<\delta-\omega_{\mathsf M}-1/2\). Its tail explains the polynomial
exponent in \eqref{eq:structured-trajectory-bound}: the factor
\(\SFm_k^2\) contributes \(2\omega_{\mathsf M}\), and summing over
future iterations reduces the decay exponent by one. For a fixed tube
rate \(\beta\), the exponent in the exit-probability bound is therefore
\(2\omega_{\mathsf M}\) smaller than in Theorem~\ref{thm:trajectory}.

The original guarantees are preserved under different requirements on
bias decay. When \(\omega_{\mathsf M}=0\), the MSE order in
Proposition~\ref{prop:expectation} is retained whenever
\(\omega_{\mathsf B}\geq\delta/2\). In the trajectory regime
\(1/2<\delta\leq1\), the exit-probability exponent in
Theorem~\ref{thm:trajectory} is unchanged, and its full range
\(0\leq\beta<\delta-1/2\) remains available whenever
\(\omega_{\mathsf B}\geq\delta-1/2\). For \(1/2<\delta<1\), a bias rate
satisfying \(\delta-1/2\leq\omega_{\mathsf B}<\delta/2\) can therefore
make the MSE bound bias-dominated while preserving every original tube
rate. At \(\delta=1\), the two bias thresholds coincide.

The following result shows that the exponent
\(2(\delta-\beta-\omega_{\mathsf M})-1\), including the reduction
\(2\omega_{\mathsf M}\) associated with the permitted growth of the
martingale-difference second-moment bound, cannot be increased in general.

\begin{theorem}
\label{thm:structured-sharpness}
Fix the step-size, tube parameters, and the exponents
\(\omega_{\mathsf M},\omega_{\mathsf B}\) as in
Theorem~\ref{thm:structured-trajectory}.
There is a class of processes satisfying the extended SA
recursion~\eqref{eq:extension-recursion} and that theorem's hypotheses with
common numerical assumption constants. 
For each \(\epsilon>0\), some process in this class satisfies 
\begin{equation}\label{eq:structured-sharpness-martingale-lower-bound}
\pr\Bigl(
\exists k\geq k_0:
\|\theta_k-\theta^\star\|>c k^{-\beta}
\,\Bigm|\,
\mathscr G_0^+
\Bigr)
\geq
C_\epsilon
k_0^{-\left(2(\delta-\beta-\omega_{\mathsf M})-1+\epsilon\right)},
\end{equation}
for all \(k_0\geq K_\epsilon\), where \(C_\epsilon\) and \(K_\epsilon\) are some positive constants. 
\end{theorem}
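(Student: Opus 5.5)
We build on the construction behind Theorem~\ref{thm:sharpness}, now letting the martingale-difference term carry the heavy-tailed fluctuations. Take a scalar (then embedded in \(\R^d\)) instance with \(\Theta=[-1,1]\), \(\theta^\star=0\), \(F(\theta,w)=-\zeta\theta\), and a trivial kernel \(P_\theta(w,\cdot)=\delta_0\). Set \(W_k\equiv0\), so \(U_2(W_k)=1\) and Assumptions~\ref{assump:ergodic}--\ref{assump:qsm} hold with fixed constants. Take \(B_k=0\), so \(\omega_{\mathsf B}=\infty\) is admissible; a small deterministic bias of order \((k+1)^{-\omega_{\mathsf B}}\) could be added without changing the argument, since \(\beta<\omega_{\mathsf B}\).

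Let \(M_{k+1}=C_{\mathsf M}(k+1)^{\omega_{\mathsf M}}Z_{k+1}\). Here the \(Z_{k+1}\) are iid, independent of \(\mathscr G_0^-\), symmetric, signed-Pareto with unit variance, and tail index \(\gamma:=2+\epsilon/(\delta-\beta-\omega_{\mathsf M})\). Hence \(\pr(|Z|>t)\asymp t^{-\gamma}\) for large \(t\). Then Assumption~\ref{assump:structured-errors} holds with \(\SFm_k=C_{\mathsf M}(k+1)^{\omega_{\mathsf M}}\), and \eqref{eq:extension-post-update-transition} is immediate because \(W_{k+1}\) is deterministic. Fixing \(C_{\mathsf M}\), \(\zeta\), \(\alpha_0\) and the initial law across \(\epsilon\) gives the required class with common constants.

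The key one-step estimate is this. Suppose \(|\theta_k|\le ck^{-\beta}\le1/2\), so the iterate is interior. If
\[
\alpha_kC_{\mathsf M}(k+1)^{\omega_{\mathsf M}}Z_{k+1}>2c(k+1)^{-\beta}+1,
\]
the unprojected point exceeds \(1\), and projection sends \(\theta_{k+1}=1\), outside the tube. If instead
\[
\alpha_kC_{\mathsf M}(k+1)^{\omega_{\mathsf M}}Z_{k+1}\in\bigl(2c(k+1)^{-\beta},\,1/2\bigr),
\]
then \(\theta_{k+1}>c(k+1)^{-\beta}\) with no projection. In both cases an exit occurs at time \(k+1\). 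Taking only the event \(Z_{k+1}>t_k\) with \(t_k:=(2c+1)(k+1)^{-\beta}\alpha_k^{-1}C_{\mathsf M}^{-1}(k+1)^{-\omega_{\mathsf M}}\asymp k^{\delta-\beta-\omega_{\mathsf M}}\) is enough: by the first case with room to spare, any such jump forces an exit whether or not the projection is active. Let \(p_k:=\pr(Z>t_k)\asymp k^{-\gamma(\delta-\beta-\omega_{\mathsf M})}=k^{-(2(\delta-\beta-\omega_{\mathsf M})+\epsilon)}\).

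Now lower-bound the all-future exit probability. Since \(Z_{k+1}\) is independent of \(\mathscr G_k^-\supseteq\sigma(\theta_k)\), let \(E_k\) be the event that \(|\theta_j|\le cj^{-\beta}\) for \(k_0\le j\le k\). Then
\[
\pr(\text{no exit on }[k_0,k+1]\mid\mathscr G_0^+)\le(1-p_k)\,\pr(E_k\mid\mathscr G_0^+).
\]
Iterating gives \(\pr(\text{no exit after }k_0)\le\prod_{k\ge k_0}(1-p_k)\le\exp(-\sum_{k\ge k_0}p_k)\). Because \(\sum_{k\ge k_0}p_k\asymp k_0^{-(2(\delta-\beta-\omega_{\mathsf M})-1+\epsilon)}\to0\), the inequality \(1-e^{-x}\ge x/2\) for small \(x\) yields \eqref{eq:structured-sharpness-martingale-lower-bound}. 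Conditioning on \(\mathscr G_0^+\) is harmless, since \(\theta_1\) is fixed and the \(Z_{k+1}\) for \(k\ge1\) remain independent of it.

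The main obstacle is bookkeeping rather than analysis. The statement must hold on the exact event defined with \(ck^{-\beta}\), including after an earlier exit, which the product bound above handles because it only bounds the survival event. One must also check that the heavy-tailed jump cannot be undone by the projection, which is why we use the overshoot argument. If \(c\) is large relative to \(\Theta\) the tube may contain \(\Theta\) for small \(k\), but for \(k\ge K_\epsilon\) we have \(ck^{-\beta}\) small when \(\beta>0\). When \(\beta=0\), we instead enlarge \(\Theta\) to \([-R,R]\) with \(R>2c\) and use the same argument.
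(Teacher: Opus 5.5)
Your proposal is essentially the paper's proof. Both place unit-variance signed-Pareto innovations with index $2+\epsilon/(\delta-\beta-\omega_{\mathsf M})$ in $M_{k+1}$, scaled by $(k+1)^{\omega_{\mathsf M}}$, and force an exit on the survival event using independence of the new innovation from $\mathscr G_k^-$. Both then iterate to the bound $\exp(-\sum_{k\geq k_0}p_k)$ and finish with $1-e^{-x}\geq x/2$.

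Dropping the Markov state and the bias is harmless for the statement as posed. The paper keeps a bounded iid $\pm1$ state and $B_k=b_{\mathsf B}(k+1)^{-\omega_{\mathsf B}}$ only to show the exponent loss persists in their presence, and it absorbs them into the threshold constant. When $\delta=1$, you must still choose $\zeta>1/(2\alpha_0)$.

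One justification needs repair. For $\beta>0$, the event $Z_{k+1}>t_k$ only gives a jump above $(2c+1)(k+1)^{-\beta}$. That is below your first-case threshold $2c(k+1)^{-\beta}+1$, so the jump can land in $[1/2,\,2c(k+1)^{-\beta}+1]$, which neither of your cases covers. The conclusion still holds if you argue directly, as the paper does with projection radius $\kappa>c$:
\begin{itemize}
\item For large $k$, the unprojected point $u$ exceeds $c(k+1)^{-\beta}$.
\item In one dimension, $|\theta_{k+1}|=\min\{R,|u|\}$, which exceeds $c(k+1)^{-\beta}$ once the projection radius $R$ does.
\end{itemize}
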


The construction adapts the scalar signed-Pareto example for
Theorem~\ref{thm:sharpness} by placing the unit-variance heavy-tailed
variables in the martingale-difference term \(M_{k+1}\) and multiplying them by
\((k+1)^{\omega_{\mathsf M}}\). Their contribution to an update has scale
\(\alpha_k(k+1)^{\omega_{\mathsf M}}\), so an unscaled Pareto variable of
order \(k^{\delta-\beta-\omega_{\mathsf M}}\) can force an exit, in place
of the scale \(k^{\delta-\beta}\) in Section~\ref{sec:sharpness}.
After initialization, the Markov observations are bounded, nondegenerate,
and iid, and the construction retains a nonzero predictable bias.
The exponent loss therefore persists with Markovian noise and predictable
bias present. Section~\ref{sec:EC-structured-sharpness-proofs}
gives the fixed-class statement and complete proof.

\section{Application: Inventory Learning with Fixed Stockout Costs}
\label{sec:inventory-application}

We study stock-level learning when availability affects future demand and
each stockout incurs a fixed cost. The extended theory bounds stock-level
error at a specified update and the probability that any later stock-level
error exceeds a tolerance that decreases over time.

\subsection{Inventory Model and Objective}
\label{subsec:inventory-model}

Stockouts can affect future demand as well as current sales
\citep{AndersonFitzsimonsSimester06}. We use the stockout-damping model
of \citet{CheDongTong26}, with base-stock level
\(\theta_k\in\Theta\), for some interval \(\Theta\) to be specified later. Demand evolves as
\begin{equation}\label{eq:inventory-demand}
D_{k+1}
=\bigl[a\min\{D_k+u_{k+1},\theta_k\}+(1-a)m
+\varepsilon_{k+1}\bigr]^+.
\end{equation}
Here \(0<a<1\) controls demand feedback, \(m>0\) enters the baseline
demand term, and \(\sigma>0\) is the input standard deviation.
The inputs \(u_k\) and \(\varepsilon_k\) are mutually independent
\(N(0,\sigma^2)\) variables. The cap at \(\theta_k\) limits how much
current demand carries into the next period.

In addition to holding and proportional shortage costs, each stockout
incurs a fixed cost \(q>0\), regardless of the shortage size
\citep{BenkheroufSethi10}:
\begin{equation}\label{eq:inventory-fixed-cost}
\mathcal C(\theta,d)
=h(\theta-d)^++b(d-\theta)^+
+q\ind\{d>\theta\},
\end{equation}
for some positive constants \(h\) and \(b\).
For each fixed stock level \(\theta\geq0\), let \(D_\infty(\theta)\)
have the stationary demand distribution. The expected cost when this
stock level is held fixed is
\(f(\theta):=\E[\mathcal C(\theta,D_\infty(\theta))]\).
We seek its minimizer over \(\Theta\).

\subsection{SA Formulation}
\label{subsec:inventory-algorithm}

A natural approach to minimizing \(f\) over \(\Theta\) is projected
stochastic gradient descent, which requires an estimate of \(f'(\theta)\).
Infinitesimal perturbation analysis (IPA) obtains such estimates by
differentiating the sample cost along a demand trajectory
\citep{CheDongTong26}. This derivative must account for the dependence
of demand on the stock level.

For a fixed stock level \(\theta>0\), write \(D_k(\theta)\) for stationary
demand and \(L_k(\theta)=dD_k(\theta)/d\theta\) for its sensitivity,
with the Gaussian inputs held fixed. Differentiating
\eqref{eq:inventory-demand} gives sensitivity zero when the next demand
is zero. When the next demand is positive, the sensitivity is \(a\) if the stock
cap is active and \(aL_k(\theta)\) otherwise. During learning, carry
the same recursion at the current stock level:
\begin{equation}\label{eq:inventory-sensitivity}
L_{k+1}
=a\ind\{D_{k+1}>0\}
\bigl[\ind\{D_k+u_{k+1}>\theta_k\}
+L_k\ind\{D_k+u_{k+1}\leq\theta_k\}\bigr].
\end{equation}
The augmented state is \(W_k=(D_k,L_k)\), with any \(L_0\in[0,1]\).

In the stationary fixed-stock system, the chain rule gives the IPA estimate
\[
\bigl[h\ind\{D_k(\theta)<\theta\}
-b\ind\{D_k(\theta)\geq\theta\}\bigr]\,[1-L_k(\theta)].
\]
The two slopes account for holding and shortage costs; \(1-L_k(\theta)\)
is the derivative of net inventory \(\theta-D_k(\theta)\).
The fixed-cost term is absent because, at a fixed positive \(\theta\),
the stockout indicator is locally constant along almost every sample path.
Yet increasing the stock level reduces the stockout probability and hence
the expected fixed stockout cost. IPA therefore omits the fixed-cost
contribution to the stock-level update and has a persistent bias even in
stationarity \citep{XuZheng23}. If \(q=0\), this estimate would be unbiased in stationarity.

To capture the fixed-cost contribution, we compare costs at two nearby
stock levels. The finite difference detects the change in the fixed stockout
cost when demand lies between them, while retaining the demand-sensitivity factor.

Assume access to the current augmented state and an exact one-step
simulator of \eqref{eq:inventory-demand} and
\eqref{eq:inventory-sensitivity}, using the same fixed model parameters
as the operating process. All operating and auxiliary Gaussian input
pairs are mutually independent and independent of the full initial pair
\((\theta_0,W_0)\), whose coordinates may be dependent.
At update \(k\), generate
\(\widetilde W_{k+1}
=(\widetilde D_{k+1},\widetilde L_{k+1})
\sim P_{\theta_k}(W_k,\cdot)\), where \(P_\theta\) is the
augmented transition kernel. For a positive stock-level increment
\(\eta_k\), form
\begin{equation}\label{eq:inventory-fd-update}
    \begin{aligned}
    \theta_{k+1}&=\Pi_\Theta(\theta_k-\alpha_k\widehat g_k), \\ 
        \widehat g_k
&=(1-\widetilde L_{k+1})
\frac{\mathcal C(\theta_k+\eta_k,\widetilde D_{k+1})
-\mathcal C(\theta_k,\widetilde D_{k+1})}{\eta_k},
    \end{aligned}
\end{equation}
where \(\alpha_k=\alpha_0 k^{-\delta}\) and \(\eta_k=\eta_0 k^{-\gamma}\) for all \(k\geq 1\) and some \(\alpha_0>0\), \(0<\delta\leq1\), \(\eta_0>0\), and \(\gamma>0\).

Both costs use the same simulated demand. The fixed cost contributes
\(-q/\eta_k\) to the quotient when
\(\theta_k<\widetilde D_{k+1}\leq\theta_k+\eta_k\), and zero otherwise.
After the update, generate the operating state
\(W_{k+1}\) from \(W_k\) under \(\theta_k\), using inputs independent
of the simulation. The updated stock level \(\theta_{k+1}\) is used
in the following operating transition.

To express the update as SA, for fixed \(\theta,w\), let
\(W'=(D',L')\sim P_\theta(w,\cdot)\) and define 
\begin{equation}\label{eq:inventory-ideal-direction}
F(\theta,w)
:=-\lim_{\eta\downarrow0}
\E\!\left[
(1-L')\frac{\mathcal C(\theta+\eta,D')
-\mathcal C(\theta,D')}{\eta}
\right].
\end{equation}
Its stationary mean \(\bar F(\theta)=-f'(\theta)\) gives a descent
direction for the stationary expected cost.
Under the convention of Section~\ref{sec:structured-extension},
\(\mathscr G_k^-\) is the information before the auxiliary draw and
\(\mathscr G_k^+\) includes that draw and the update, before the next operating transition. The operating inputs are independent of
\(\mathscr G_k^+\). Define the centered estimation error and numerical
bias by
\begin{equation}\label{eq:inventory-decomposition}
M_{k+1}:=-\bigl(\widehat g_k-\E[\widehat g_k\mid\mathscr G_k^-]\bigr)
\quad\mbox{and}\quad
B_k:=-F(\theta_k,W_k)-\E[\widehat g_k\mid\mathscr G_k^-].
\end{equation}
Here \(B_k\) is the numerical approximation bias relative to the limiting
direction \(F(\theta_k,W_k)\); it tends to zero as \(\eta_k\to0\).
Then \(-\widehat g_k=F(\theta_k,W_k)+M_{k+1}+B_k\), giving the
extended SA recursion \eqref{eq:extension-recursion}.

\subsection{Performance Guarantees}
\label{subsec:inventory-guarantees}

We first state conditions and construct a projection interval containing
a unique interior minimizer of the stationary expected cost.
Define the stockout probability
\(s(\theta):=\pr(D_\infty(\theta)>\theta)\) and the stationary
marginal functions, 
\begin{equation}\label{eq:inventory-marginal-functions}
g_{\mathrm{h}}(\theta):=\frac{d}{d\theta}\E(\theta-D_\infty(\theta))^+,\quad
g_{\mathrm{b}}(\theta):=-\frac{d}{d\theta}\E(D_\infty(\theta)-\theta)^+,\quad\mbox{and}\quad
g_{\mathrm{s}}(\theta):=-s'(\theta),
\end{equation}
for \(\theta>0\).
They measure the marginal increase in holding quantity and reductions
in shortage quantity and stockout probability. Their values at zero
are right limits, and
\(f'=h g_{\mathrm{h}}-b g_{\mathrm{b}}-q g_{\mathrm{s}}\).
The cost derivative is positive for \(\theta\geq\bar\theta\), with
\(\bar\theta\) defined below, so we consider stationary points below this
bound. Write \(\phi,\Phi\) for the standard normal density and distribution
function, and \(\phi_\sigma(x):=\phi(x/\sigma)/\sigma\). Define
\begin{gather}
\bar\theta:=m+\frac{\sigma}{1-a}
\sqrt{2\log\left(1+\frac bh+
                    \frac{2q\phi_\sigma(0)}h\right)}, \label{eq:inventory-upper-stock} \\ 
\Theta_{\det}:=\left\{\theta\in(0,\bar\theta):
\det\begin{pmatrix}
g_{\mathrm{h}}(\theta)&g_{\mathrm{b}}(\theta)&g_{\mathrm{s}}(\theta)\\
g_{\mathrm{h}}'(\theta)&g_{\mathrm{b}}'(\theta)&g_{\mathrm{s}}'(\theta)\\
g_{\mathrm{h}}''(\theta)&g_{\mathrm{b}}''(\theta)&g_{\mathrm{s}}''(\theta)
\end{pmatrix}=0\right\}. \label{eq:inventory-critical-levels}
\end{gather}
We impose the two conditions
\begin{equation}\label{eq:inventory-stability-conditions}
\begin{aligned}
b g_{\mathrm{b}}(0)+q g_{\mathrm{s}}(0)&>h g_{\mathrm{h}}(0),\\
h g_{\mathrm{h}}(\theta)-b g_{\mathrm{b}}(\theta)-q g_{\mathrm{s}}(\theta)&\ne0,
\qquad \theta\in\Theta_{\det}.
\end{aligned}
\end{equation}
The first condition says that, near zero stock, the savings from reducing
shortages and stockouts outweigh the added holding cost. The second
excludes a marginal-cost balance at the levels in \(\Theta_{\det}\),
ruling out an interior minimum with zero curvature. These conditions
allow the interval construction below.

To construct the projection interval, for any \(\theta\geq 0\), define
\[
G(\theta):=
\frac{h g_{\mathrm{h}}(\theta)-q g_{\mathrm{s}}(\theta)}
     {g_{\mathrm{b}}(\theta)},
\]
where \(g_{\mathrm{b}}(\theta)>0\). Since \(f'=g_{\mathrm{b}}(G-b)\),
the cost decreases where \(G<b\) and increases where \(G>b\).
Among the maximal open intervals \((\theta_-,\theta_+)\subset(0,\bar\theta)\)
on which \(G'>0\) and \(G(\theta_-)<b<G(\theta_+)\), choose the leftmost;
endpoint values are defined by continuity.
Define the projection endpoints as the unique solutions in this interval of
\begin{equation}\label{eq:ec-inventory-projection-levels}
G(\theta_{\min})=\frac{G(\theta_-)+b}{2}\quad\mbox{and}\quad
G(\theta_{\max})=\frac{b+G(\theta_+)}{2}.
\end{equation}
Let \(\Theta=[\theta_{\min},\theta_{\max}]\) and \(\theta^\star\) be the unique solution of \(G(\theta)=b\) in
\(\Theta\).
The conditions \eqref{eq:inventory-stability-conditions} make this construction
well-defined, with \(\theta^\star\) the unique interior minimizer on \(\Theta\).
On this interval, the mean field \(-f'\) satisfies QSM with coefficient
\begin{equation}\label{eq:ec-inventory-qsm-coefficient}
\zeta:=\left[\min_{t\in\Theta}G'(t)\right](1-a)
\left[1-\Phi\!\left(\frac{\theta_{\max}}{\sigma}\right)\right]
\left[1-\Phi\!\left(
\frac{(1-a)(\theta_{\max}-m)}{\sigma}\right)\right]>0.
\end{equation}
Together with the stated initialization and schedules, the model and
estimator satisfy the remaining assumptions of the extended SA results.
Section~\ref{sec:EC-inventory-proofs} gives the proof
of the following theorem.

\begin{theorem}
\label{thm:inventory-accuracy}
Consider the inventory model and update
\eqref{eq:inventory-demand}--\eqref{eq:inventory-fd-update}. Suppose
\(\theta_0\in\Theta\),
\((D_0,L_0)\in[0,\infty)\times[0,1]\),
\(\E[U_2(W_0)]<\infty\), and
\eqref{eq:inventory-stability-conditions} holds. When
\(\delta=1\), also suppose \(2\zeta\alpha_0>1\).
\begin{enumerate}[label=\textnormal{(\roman*)}]
\item
If \(0<\gamma<\delta\), there are positive constants \(C,K\)
such that, for all \(k\geq K\),
\begin{equation}\label{eq:inventory-mse}
\E[|\theta_k-\theta^\star|^2\mid\mathscr G_0^+]
\leq C U_2(W_0)
\bigl[(1+\log k)k^{-\delta+\gamma}+k^{-2\gamma}\bigr].
\end{equation}
\item
Suppose \(1/2<\delta\leq1\) and \(0<\gamma<2\delta-1\).
For any \(c>0\) and
\(0\leq\beta<\min\{\gamma,\delta-\gamma/2-1/2\}\), there are
positive constants \(C,K\) such that, for all \(k_0\geq K\),
\begin{equation}\label{eq:inventory-tube}
\pr\Bigl(
|\theta_k-\theta^\star|\leq c k^{-\beta}
\text{ for all }k\geq k_0
\,\Bigm|\,\mathscr G_0^+
\Bigr)
\geq1-C U_2(W_0)(1+\log k_0)^2
k_0^{-(2\delta-2\beta-\gamma-1)}.
\end{equation}
\end{enumerate}
\end{theorem}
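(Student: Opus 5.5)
The plan is to derive Theorem~\ref{thm:inventory-accuracy} directly from Proposition~\ref{thm:structured-mse} and Theorem~\ref{thm:structured-trajectory}. The work is to verify Assumptions~\ref{assump:learning-rate}--\ref{assump:structured-errors} for the augmented chain \(W_k=(D_k,L_k)\) on \(\W=[0,\infty)\times[0,1]\), and to identify the exponents \(\omega_{\mathsf M}=\gamma/2\) and \(\omega_{\mathsf B}=\gamma\). Substituting these values gives the statement. In part (i), the martingale term \((1+\log k)k^{-\delta+2\omega_{\mathsf M}}=(1+\log k)k^{-\delta+\gamma}\) dominates \((1+\log k)k^{-\delta}\), and the bias term is \(k^{-2\gamma}\). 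The condition \(\omega_{\mathsf M}<\delta/2\) becomes \(\gamma<\delta\). In part (ii), \(\omega_{\mathsf M}<\delta-1/2\) becomes \(\gamma<2\delta-1\), the constraint \(\beta<\min\{\omega_{\mathsf B},\delta-\omega_{\mathsf M}-1/2\}\) becomes the stated range, and the exponent \(2(\delta-\beta-\omega_{\mathsf M})-1\) equals \(2\delta-2\beta-\gamma-1\).

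I would check the structural assumptions first.
\begin{itemize}
\item Assumption~\ref{assump:learning-rate} holds by construction. Assumption~\ref{assump:compact} holds because \(\Theta=[\theta_{\min},\theta_{\max}]\).
\item Assumption~\ref{assump:interior-target} and Assumption~\ref{assump:qsm}, with \(\zeta\) as in \eqref{eq:ec-inventory-qsm-coefficient}, come from the interval construction. Since \(\bar F=-f'=-g_{\mathrm b}(G-b)\), and \(G\) is increasing through \(b\) at \(\theta^\star\), the mean value theorem gives \((\theta-\theta^\star)\bar F(\theta)\leq -g_{\mathrm b}(\theta)\min G'\,(\theta-\theta^\star)^2\). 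A lower bound on \(g_{\mathrm b}\) then yields the Gaussian-tail factors. I would need to prove that bound, presumably using \(g_{\mathrm b}(\theta)\geq(1-a)\pr(D_\infty>\theta)\), and lower-bound \(\pr(D_\infty>\theta_{\max})\) by one step of the dynamics.
\item For Assumption~\ref{assump:ergodic}, the drift follows from \(D_{k+1}\leq a(D_k+|u_{k+1}|)+(1-a)m+|\varepsilon_{k+1}|\) with \(a<1\), so \(\E[D_{k+1}^2\mid D_k]\leq a'D_k^2+\text{const}\) for some \(a'<1\). Because \(L\in[0,1]\), this gives \(U_2\)-drift toward a compact set \(\W_0\).
\item For minorization, I would use \(N=1\) or \(N=2\) steps. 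The Gaussian densities give \(D'\) a density bounded below on an interval, uniformly over \(D\in\W_0\) and \(\theta\in\Theta\). The \(L\)-coordinate is harder: it jumps to exactly \(a\) when the cap is active, which happens with probability bounded below. The minorizing measure can therefore be taken as \(\varphi=\mathrm{Unif}(\text{interval})\otimes\delta_a\), restricted to the event that the cap is active.
\item Kernel continuity (Assumption~\ref{assump:kernel-Lipschitz}) comes from writing \(P_\theta f(w)\) as a Gaussian integral in which \(\theta\) enters only through \(\min\{D+u,\theta\}\) and the indicator \(\ind\{D+u>\theta\}\). Differentiating in \(\theta\) moves the dependence onto the smooth Gaussian density, which gives a \(U_1\)-weighted Lipschitz bound.
\end{itemize}

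Next come \(F\) and the errors. \(F(\theta,w)\) in \eqref{eq:inventory-ideal-direction} equals
\[
-\E\bigl[(1-L')\bigl(h\ind\{D'<\theta\}-b\ind\{D'>\theta\}\bigr)\bigr]+q\,\E[(1-L')\ind\{\cdot\}]\,p_{D'}(\theta),
\]
where the fixed-cost part becomes a conditional density of \(D'\) at \(\theta\). That density is a Gaussian convolution and is smooth and bounded in \((\theta,w)\). This gives the bounds in Assumption~\ref{assump:growth}, in fact with bounded \(F\). The bias \(B_k\) is the difference between the difference quotient over \([\theta_k,\theta_k+\eta_k]\) and its limit. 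A Taylor expansion with bounded second derivative of the conditional density and distribution function gives \(|B_k|\lesssim\eta_k\), so \(\omega_{\mathsf B}=\gamma\). For \(M_{k+1}\), \(\widehat g_k\) is bounded by \(h+b\) plus \(q/\eta_k\) on an event of conditional probability at most \(\sup p\cdot\eta_k\). Hence \(\E[M_{k+1}^2\mid\mathscr G_k^-]\lesssim1+q^2/\eta_k\lesssim k^{\gamma}\), so \(\SFm_k\asymp k^{\gamma/2}\) and \(\omega_{\mathsf M}=\gamma/2\). Centering and measurability, together with the transition identity \eqref{eq:extension-post-update-transition}, follow from the independence of the auxiliary and operating inputs.

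The main obstacle I expect is the part the paper defers to the construction: showing that the interval \((\theta_-,\theta_+)\) exists, that \(\theta^\star\) is interior and unique, and that \(\min_\Theta G'>0\). These depend on the smoothness of the stationary marginal functions \(g_{\mathrm h},g_{\mathrm b},g_{\mathrm s}\), including differentiability of \(s(\theta)\) through the \(\theta\)-dependent stationary law. I would try to get these from the stated conditions \eqref{eq:inventory-stability-conditions}, and obtain \(\bar\theta\) from a Gaussian tail bound showing \(f'>0\) beyond \(\bar\theta\). The minorization with the degenerate \(L\)-component is the secondary technical risk.
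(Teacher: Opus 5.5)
Your reduction is the paper's. You verify Assumptions~\ref{assump:learning-rate}--\ref{assump:structured-errors}, read off \(\omega_{\mathsf M}=\gamma/2\) and \(\omega_{\mathsf B}=\gamma\), and substitute into Proposition~\ref{thm:structured-mse} and Theorem~\ref{thm:structured-trajectory}. Your exponent bookkeeping is correct. Your handling of kernel continuity (moving \(\theta\) onto the Gaussian density), boundedness of \(F\), \(|B_k|\lesssim\eta_k\), and \(\E[M_{k+1}^2\mid\mathscr G_k^-]\lesssim1+q^2/\eta_k\) matches the paper.

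Two small corrections:
\begin{itemize}
\item The fixed-cost part of \(F\) is \(q\,p_\theta(\theta\mid w)\), where \(p_\theta\) is the \((1-L')\)-weighted density defined by \(\E[(1-L')\ind\{D'\in dy\}]=p_\theta(y\mid w)\,dy\). It does not factor as \(\E[1-L']\) times a density of \(D'\), because \(L'\) depends on whether the cap is active.
\item Assumption~\ref{assump:ergodic} is easier than you expect. The cap gives \(D'\leq[a\theta_{\max}+(1-a)m+\varepsilon]^+\) for every \(w\), so \(P_\theta U_2\) is uniformly bounded. You may therefore take \(\W_0=\W\), \(\lambda=0\), \(N=1\), and \(\varphi=\delta_{(0,0)}\), since zero demand (where \(L'=0\)) has probability at least \(\Phi(-(a\theta_{\max}+(1-a)m)/\sigma)\) from every state. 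Your \(\mathrm{Unif}(J)\otimes\delta_a\) choice also works, so the degenerate \(L\)-coordinate is not a real risk.
\end{itemize}

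The genuine gap is in Assumptions~\ref{assump:interior-target}--\ref{assump:qsm}. You take \(\bar F=-f'=-g_{\mathrm b}(G-b)\) and \(g_{\mathrm b}(\theta)\geq(1-a)\pr(D_\infty(\theta)>\theta)\) as given. You also leave the regularity of \(g_{\mathrm h},g_{\mathrm b},g_{\mathrm s}\) and the interval construction open. None of this is formal. \(\bar F(\theta)=\nu_\theta(F(\theta,\cdot))\) averages over the augmented chain, where \(L\) is merely a coordinate obeying \eqref{eq:inventory-sensitivity}. By contrast, \(f'\) differentiates \(\E[\mathcal C(\theta,D_\infty(\theta))]\) through a \(\theta\)-dependent stationary law. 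Without this identification, the solution of \(G=b\) need not be the root of \(\bar F\), and your QSM computation has no footing.

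The missing idea (Lemma~\ref{lemma:inventory-stationary-gradient}) is a coupling from the past that builds the stationary processes for all \(\theta\in[0,v]\) on one probability space from common two-sided Gaussian inputs:
\begin{itemize}
\item Any \(\varepsilon_j\leq-[av+(1-a)m]\) resets the state to \((0,0)\) for every such \(\theta\).
\item Prove the monotone coupling \(0\leq D_j(\theta')-D_j(\theta)\leq a(\theta'-\theta)\).
\item Differentiate the finitely many updates since the last reset, using that ties at the cap or at zero have probability zero. This gives \(dD_j/d\theta=L_j\) almost surely.
\item The Lipschitz bound justifies dominated convergence, which yields \(g_{\mathrm h},g_{\mathrm b}\) as \((1-L)\)-weighted stationary expectations of indicators.
\item Conditioning on all inputs before \(\varepsilon_0\) gives \(s'(\theta)=-\nu_\theta(p_\theta(\theta\mid\cdot))\).
\end{itemize}
Twice-differentiability of the marginals then follows from three facts: \(P_\theta\) is \(C^2\) in total-variation operator norm; the atom at \((0,0)\) gives a Doeblin contraction; and \(\nu_{\theta'}-\nu_\theta=\nu_{\theta'}(P_{\theta'}-P_\theta)(I-P_\theta)^{-1}\).

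Only once \(f\in C^3\) is established does the second condition in \eqref{eq:inventory-stability-conditions} exclude \(f'=f''=f'''=0\). That exclusion has two consequences:
\begin{itemize}
\item Stationary points are isolated, hence finitely many in the compact region cut out by \(f'(0+)<0\) and \(f'>0\) on \([\bar\theta,\infty)\).
\item At an interior minimizer, \(f''>0\), and hence \(G'=f''/g_{\mathrm b}>0\).
\end{itemize}
This is what produces \((\theta_-,\theta_+)\), \(\Theta\), and \(\min_\Theta G'>0\) (Lemma~\ref{lemma:inventory-stability}).
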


Part (i) bounds the stock-level MSE at update \(k\).
Part (ii) controls all later decisions together: it bounds the probability
that any stock level from \(k_0\) onward differs from \(\theta^\star\)
by more than \(c k^{-\beta}\). For \(\beta>0\), this tolerance decreases
over time.

Reducing \(\eta_k\) tightens the approximation-bias bound, but dividing
the jump in the fixed stockout cost by a smaller increment can produce larger gradient
estimates. The two MSE terms reflect random estimation error and numerical
bias. Among the power schedules in these bounds, balancing their polynomial
exponents gives \(\gamma=\delta/3\), with MSE decay exponent \(2\delta/3\)
and the logarithmic factor in \eqref{eq:inventory-mse}.

For part (ii), with \(1/2<\delta\leq1\), the restriction \(\beta<\gamma\)
makes the bias bound decrease faster than the tolerance, while
\(\beta<\delta-\gamma/2-1/2\) makes the bound on the probability of a later
exit decrease. Increasing \(\gamma\) relaxes the first restriction but
tightens the second. The largest permitted upper limit for \(\beta\)
is attained at \(\gamma=(2\delta-1)/3\), allowing any
\(0\leq\beta<(2\delta-1)/3\). This choice decreases \(\eta_k\) more slowly
than the MSE choice when \(1/2<\delta<1\); at \(\delta=1\), both give
\(\gamma=1/3\).

\section{Conclusion}\label{sec:conclusions}

This paper establishes a shrinking-tube concentration guarantee for the entire
future trajectory of projected SA under adaptive
Markovian feedback.  It bounds the probability that, from a chosen time onward,
every iterate remains within a tolerance that tightens over time, even when the
iterate changes the transition kernel and the Markov state is potentially
unbounded. The lower bounds show that the polynomial exponent in the
exit-probability upper bound cannot be improved in general under finite
second moments.  The proof combines backward kernel replacement, a finite-time MSE
bound, and a blockwise maximal first-exit argument.

Practical applications often require SA updates to incorporate additional
simulation or numerical approximation steps. Our theoretical framework
additionally covers the resulting martingale-difference noise and predictable bias.
As demonstrated through an inventory control problem with stockout-dependent
demand feedback and fixed stockout costs, reducing approximation bias can
magnify individual gradient estimates. The resulting bounds show that tuning
for MSE at a chosen iteration can differ from tuning to
control the probability of any later deviation beyond a decreasing tolerance.
Approximation accuracy and step sizes should therefore be selected together
according to the form of accuracy sought.

One direction for future research is to extend shrinking-tube guarantees to
multi-timescale SA, where coupled recursions use different step-size scales,
as in actor--critic reinforcement learning \citep{ZengDoanRomberg24} and
stochastic bilevel optimization \citep{HongWaiWangYang23}.
This requires quantifying how tracking errors in the faster recursion affect
the shrinking tolerances for the slower recursion.
A second direction is to determine the best trajectory guarantees achievable
by robust SA algorithms under finite second moments. Recent results on
clipped SGD provide high-probability accuracy guarantees
\citep{ChezhegovParlettaPaudiceGorbunov26}.
Our lower bounds motivate studying whether clipping or robust aggregation
can improve the tradeoff between shrinking accuracy tolerances and confidence
under adaptive Markov sampling. This requires controlling the bias introduced
by robust updates and establishing matching lower bounds under a common
sampling budget.

\bibliographystyle{informs2014}
\bibliography{RL}

\ECSwitch

\ECHead{Supplementary Materials}

\numberwithin{equation}{section}

Throughout the e-companion, \(C_0,C_1,\ldots\) denote nonnegative
constants.  The symbols \(K_0,K_1,\ldots\) and
\(R_0,R_1,\ldots\) denote integer thresholds for iteration and block
indices, respectively.  These quantities are local to each proof.  They may
depend on fixed model and result parameters and are uniform over running
indices, sample paths, and realizations of the initial variables.

\section{Preliminaries}
\label{sec:EC-preliminaries}

\subsection{\(U_1\)-Drift Condition}
\label{subsec:U1-drift-condition}

\begin{lemma}
\label{lemma:derived-U1-drift}
Suppose the $U_2$-drift condition \eqref{eq:drift-condition} holds.
Define \(\lambda_1:=
    \frac{\lambda+\sqrt{2\lambda}}
    {1+\sqrt{2\lambda}}\),  \(b_1:=1+\sqrt b\), \(\lambda_2:=\lambda\), and \(b_2:=b\).
Then \(0\leq\lambda_p<1\) for \(p\in\{1,2\}\), and, for all
\(\theta\in\Theta\), \(w\in\W\), and \(p\in\{1,2\}\),
\begin{equation*}
    (P_\theta U_p)(w)
    \leq\lambda_pU_p(w)+b_p\ind_{\W_0}(w),
    \qquad 
\end{equation*}
\end{lemma}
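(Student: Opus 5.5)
The case \(p=2\) is exactly the drift condition \eqref{eq:drift-condition}, so the plan concerns \(p=1\). Fix \(\theta\in\Theta\) and \(w\in\W\), write \(x:=\lVert w\rVert\), and let \(Y\sim P_\theta(w,\cdot)\).

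\textbf{Step 1: reduce to a second moment.} By Jensen's (or Cauchy--Schwarz's) inequality,
\[
(P_\theta U_1)(w)=1+\E\lVert Y\rVert\leq 1+\sqrt{\E\lVert Y\rVert^2}.
\]
The drift condition \eqref{eq:drift-condition} then gives
\[
\E\lVert Y\rVert^2=(P_\theta U_2)(w)-1\leq \lambda(1+x^2)-1+b\ind_{\W_0}(w).
\]

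\textbf{Step 2: an elementary comparison.} We check that \(\sqrt\lambda\leq\lambda_1<1\). Put \(s:=\sqrt\lambda\in[0,1)\). Then
\[
1-\lambda_1=\frac{1-\lambda}{1+\sqrt{2\lambda}}>0,
\qquad
\lambda_1-s=\frac{s(1-s)\bigl(\sqrt2(1+s)-1\bigr)}{1+\sqrt2 s}\geq0,
\]
where the second identity uses the factorization \(s+\sqrt2-1-\sqrt2 s^2=(1-s)(\sqrt2(1+s)-1)\).

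\textbf{Step 3: the case \(w\in\W_0\).} Use \(\lambda(1+x^2)-1\leq \lambda x^2\) and \(\sqrt{a+c}\leq\sqrt a+\sqrt c\). This gives
\[
(P_\theta U_1)(w)\leq 1+\sqrt\lambda\,x+\sqrt b\leq \lambda_1(1+x)+(1-\lambda_1)+\sqrt b\leq \lambda_1 U_1(w)+b_1,
\]
since \(b_1=1+\sqrt b\). This case is routine.

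\textbf{Step 4: the case \(w\notin\W_0\).} Since \((P_\theta U_2)(w)\geq1\), the drift inequality forces \(\lambda(1+x^2)\geq1\), that is, \(x^2\geq(1-\lambda)/\lambda\). (If \(\lambda=0\), no such \(w\) exists and there is nothing to prove.) It therefore suffices to show
\[
\sqrt{\lambda x^2+\lambda-1}\leq \lambda_1(1+x)-1
\qquad\text{for all } x\geq\sqrt{(1-\lambda)/\lambda}.
\]

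The crude bound \(\sqrt{\lambda x^2+\lambda-1}\leq\sqrt\lambda\,x\) fails near the threshold \(x_0=\sqrt{(1-\lambda)/\lambda}\). The argument must instead exploit that the left side vanishes at \(x_0\). I would proceed in three parts.

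First, verify that the right side is nonnegative on this range, i.e. \(\lambda_1(1+x_0)\geq1\).

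Second, square both sides. This yields a quadratic inequality
\[
(\lambda_1^2-\lambda)x^2+2\lambda_1(\lambda_1-1)x+(\lambda_1-1)^2+1-\lambda\geq0 .
\]
Its leading coefficient is nonnegative by Step 2.

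Third, show the quadratic is nonnegative on \([x_0,\infty)\). Either its vertex lies to the left of \(x_0\) and it is nonnegative at \(x_0\), or its discriminant is nonpositive. The specific value \(\lambda_1=(\lambda+\sqrt{2\lambda})/(1+\sqrt{2\lambda})\) is presumably chosen to make this discriminant/vertex check close. I expect this algebraic verification to be the main obstacle. I would first try the discriminant route in the variable \(s\), factoring out \((1-s)\) as in Step 2.

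If the algebra resists, there is a fallback. Split at \(x\geq 2x_0\), where \(\sqrt\lambda\,x\) already suffices together with the slack \(\lambda_1-\sqrt\lambda\) from Step 2. On the bounded range \(x\in[x_0,2x_0]\), use monotonicity in \(x\) of both sides.
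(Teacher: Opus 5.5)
Your main route (square, then certify a quadratic) is correct and genuinely different from the paper's, but it stops at the crux, so here is the missing check. The paper handles \(w\notin\W_0\), \(0<\lambda<1\), by calculus: it maximizes \(g_\lambda(x)=\bigl(1+\sqrt{\lambda(1+x^2)-1}\bigr)/(1+x)\), finds the unique critical point \(x^\star=1+\sqrt{2/\lambda}\), and gets \(g_\lambda(x^\star)=\lambda_1\).

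Write your squared form as \(Ax^2+2Bx+C\) with \(A=\lambda_1^2-\lambda\), \(B=\lambda_1(\lambda_1-1)\), \(C=(\lambda_1-1)^2+1-\lambda\). Then \(B^2-AC=\lambda(1-\lambda_1)^2+\lambda(1-\lambda)-\lambda_1^2(1-\lambda)\). Substitute \(1-\lambda_1=(1-\lambda)/(1+\sqrt{2\lambda})\) and multiply by \((1+\sqrt{2\lambda})^2/(1-\lambda)\). The statement \(B^2-AC=0\) becomes the identity \(\lambda(1-\lambda)+\lambda(1+\sqrt{2\lambda})^2=(\lambda+\sqrt{2\lambda})^2\).

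So the quadratic is exactly \((\lambda_1^2-\lambda)(x-x^\star)^2\), with \(\lambda_1^2-\lambda=\lambda(1-\lambda)/(1+\sqrt{2\lambda})^2>0\). Its double root is the paper's maximizer. The vertex-left-of-\(x_0\) alternative never occurs, since \(x^\star>x_0\). The sign check \(\lambda_1(1+x_0)\geq1\) reduces, with \(s=\sqrt\lambda\) and \(x_0=\sqrt{1-s^2}/s\), to \(\sqrt{1-s^2}\leq s+\sqrt2\), which is immediate.

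Completed this way, your perfect-square certificate is tidier than the paper's calculus, which asserts the maximizer without discussing the endpoint \(x_0\) or the behaviour at infinity. The paper's route shows more visibly that \(\lambda_1\) is the sharpest constant the Jensen bound allows. Your Step 3 also absorbs \(\lambda=0\), which the paper treats separately.

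Two corrections. First, the identity in Step 2 is wrong. The numerator of \(\lambda_1-s\) is \(s(s+\sqrt2-1-\sqrt2\,s)\): the last term is \(\sqrt2\,s\), not \(\sqrt2\,s^2\). Hence \(\lambda_1-s=(\sqrt2-1)s(1-s)/(1+\sqrt2\,s)\), as in the paper. Your formula already fails at \(s=1/2\), although the conclusion \(\sqrt\lambda\leq\lambda_1\) survives.

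Second, your fallback cannot work. The target inequality \(1+\sqrt{\lambda x^2+\lambda-1}\leq\lambda_1(1+x)\) holds with equality at \(x=x^\star\), so any argument that gives away slack near \(x^\star\) fails. Concretely, the crude bound \(\sqrt{\lambda x^2+\lambda-1}\leq\sqrt\lambda\,x\) suffices only for \(x\geq(1-\lambda_1)/(\lambda_1-\sqrt\lambda)=(\sqrt2+1)(1+1/\sqrt\lambda)\). That threshold exceeds both \(2x_0\) and \(x^\star\). Comparing monotone bounds on an interval containing this tangency point cannot close the argument either. The exact zero-discriminant computation is therefore not optional; it is the proof.
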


\proof{Proof of Lemma~\ref{lemma:derived-U1-drift}.}
The case \(p=2\) is precisely the $U_2$-drift condition \eqref{eq:drift-condition} in
Assumption~\ref{assump:ergodic}\ref{part:ergodic-reference}. 

Consider \(p=1\). Fix $\theta\in\Theta$ and $w\in\mathcal W$. Let $Y$ have distribution
$P_\theta(w,\cdot)$ and write $x:=\|w\|$. By the definition of \(U_1\) and \(U_2\),
\begin{equation}\label{eq:EC-derived-U1-CS}
(P_\theta U_1)(w)=1+\mathbb E\|Y\|
\leq1+\sqrt{\mathbb E\|Y\|^2}
=1+\sqrt{(P_\theta U_2)(w)-1}.
\end{equation}

First suppose that $0<\lambda<1$ and $w\notin\mathcal W_0$.
The $U_2$-drift condition implies
\[
1\leq (P_\theta U_2)(w)
\leq
\lambda(1+x^2).
\]
Hence,
\[
(P_\theta U_1)(w)/U_1(w)
\leq
\frac{1+\sqrt{\lambda(1+x^2)-1}}{1+x}.
\]
Consider the function
\[
g_\lambda(x)
:=
\frac{1+\sqrt{\lambda(1+x^2)-1}}{1+x},
\]
defined on the set where
$\lambda(1+x^2)\geq1$. Direct differentiation shows that its unique
maximizer is
\(
x_\lambda^\star=1+\sqrt{2/\lambda},
\)
and therefore
\begin{equation*}
\sup_{\substack{x\geq0\\\lambda(1+x^2)\geq1}}
g_\lambda(x)
=
\frac{\lambda+\sqrt{2\lambda}}
{1+\sqrt{2\lambda}}
=
\lambda_1 .
\end{equation*}
Thus,
\(
(P_\theta U_1)(w)
\leq
\lambda_1 U_1(w)\),
for all 
\(w\notin\mathcal W_0\).
Moreover,
\(
1-\lambda_1
=
\frac{1-\lambda}{1+\sqrt{2\lambda}}>0,
\)
which implies $\lambda_1<1$. Since $\lambda\geq0$, we also have
$\lambda_1\geq0$.

Now consider $w\in\mathcal W_0$. By
\eqref{eq:EC-derived-U1-CS} and the primitive drift condition,
\[
(P_\theta U_1)(w)\leq1+\sqrt{\lambda(1+x^2)+b}
\leq1+\sqrt{\lambda}(1+x)+\sqrt b
\leq\lambda_1 U_1(w)+1+\sqrt b .
\]
The last inequality follows from
$\lambda_1\geq\sqrt{\lambda}$, which can be verified directly:
\[
\lambda_1-\sqrt{\lambda}
=
\frac{
\sqrt{\lambda}(\sqrt2-1)(1-\sqrt{\lambda})
}
{1+\sqrt{2\lambda}}
\geq0 .
\]

It remains to treat the boundary case $\lambda=0$. In this case, if
$w\notin\mathcal W_0$, the drift condition would imply
\(
(P_\theta U_2)(w)\leq0,
\)
which contradicts $U_2\geq1$. Hence
$\mathcal W_0=\mathcal W$. The drift condition implies
\(
(P_\theta U_2)(w)\leq b,
\)
so necessarily \(b\geq1\).  Therefore, by
\eqref{eq:EC-derived-U1-CS},
\[
(P_\theta U_1)(w)
\leq
1+\sqrt{b-1}
\leq
1+\sqrt b
=
b_1 .
\]

Combining the cases establishes the $U_1$-drift condition. 
\Halmos\endproof

\subsection{Uniform Reference-Chain Moment and Geometric-Ergodicity Bounds}
\label{subsec:ec-proof-exp-ergodic}

\begin{lemma}
\label{lemma:exp-ergodic}
Suppose Assumption~\ref{assump:ergodic}\ref{part:ergodic-reference} holds.
For each \(\theta\in\Theta\), let \(\nu_\theta\) denote the unique stationary
distribution of \(P_\theta\).  
Then the following conclusions hold:
\begin{enumerate}[
label=(\roman*),
ref=\textnormal{(\roman*)},
noitemsep,
nolistsep
]
    \item\label{part:exp-ergodic-reference-moment}
    for all \(\theta\in\Theta\), \(w\in\W\), \(k\geq0\), and
    \(p\in\{1,2\}\),
    \begin{equation}\label{eq:reference-kernel-moment}
    P_\theta^kU_p(w)
    \leq
    \lambda_p^kU_p(w)
    +\frac{b_p}{1-\lambda_p}(1-\lambda_p^k).
    \end{equation}
    \item\label{part:exp-ergodic-invariant-moment}
    for each \(p\in\{1,2\}\),
    \begin{equation}\label{eq:uniform-invariant-moment}
    \sup_{\theta\in\Theta}\nu_\theta(U_p)
    <\infty;
    \end{equation}
    \item\label{part:exp-ergodic-uniform-mixing}
    if, in addition, Assumptions~\ref{assump:compact} and
    \ref{assump:kernel-Lipschitz} hold, then there are positive constants \(C\) and \(\rho\in(0,1)\), independent of \(\theta\), such that for all \(\theta\in\Theta\), \(w\in\W\), \(k\geq0\), and Borel
    measurable functions \(f\) satisfying \(|f|\leq U_1\), we have
    \[
        \bigl|(P_\theta^k f)(w)-\nu_\theta(f)\bigr|
        \leq C\rho^kU_1(w).
    \]
\end{enumerate}
\end{lemma}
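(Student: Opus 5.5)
Part~\ref{part:exp-ergodic-reference-moment} will follow by induction on \(k\) from Lemma~\ref{lemma:derived-U1-drift}. Dropping the indicator, \(P_\theta U_p\leq\lambda_pU_p+b_p\). Since \(P_\theta\) is a positive linear operator preserving constants, applying \(P_\theta\) to the bound at step \(k\) gives
\[
P_\theta^{k+1}U_p\leq\lambda_p^{k}P_\theta U_p+\tfrac{b_p}{1-\lambda_p}(1-\lambda_p^k)\leq\lambda_p^{k+1}U_p+b_p\lambda_p^k+\tfrac{b_p}{1-\lambda_p}(1-\lambda_p^k),
\]
and the last two terms sum to \(\tfrac{b_p}{1-\lambda_p}(1-\lambda_p^{k+1})\). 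For part~\ref{part:exp-ergodic-invariant-moment}, the drift condition together with irreducibility, aperiodicity and the uniform minorization on \(\W_0\) yields existence and uniqueness of \(\nu_\theta\) by standard theory \citep{MeynTweedie09}. To bound \(\nu_\theta(U_p)\) uniformly, I would integrate the truncated function \(U_p\wedge n\) against \(\nu_\theta\) and use invariance: \(\nu_\theta(U_p\wedge n)=\nu_\theta(P_\theta^k(U_p\wedge n))\leq\nu_\theta(\min\{P_\theta^kU_p,n\})\). Letting \(k\to\infty\) by dominated convergence (the integrand is bounded by \(n\)) gives \(\nu_\theta(U_p\wedge n)\leq b_p/(1-\lambda_p)\), and monotone convergence in \(n\) then gives \(\sup_\theta\nu_\theta(U_p)\leq b_p/(1-\lambda_p)\).

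Part~\ref{part:exp-ergodic-uniform-mixing} is the substantive step, and the main obstacle is making \(C,\rho\) independent of \(\theta\). My first approach is the quantitative geometric ergodicity theorem for drift plus minorization (Baxendale-type bounds, or the Hairer--Mattingly weighted total-variation contraction). These give explicit \(C,\rho\) depending only on \((\lambda_1,b_1,N,\epsilon,\W_0)\), which are uniform in \(\theta\) by Assumption~\ref{assump:ergodic} and Lemma~\ref{lemma:derived-U1-drift}. One technical point is that the minorization holds for \(P_\theta^N\) on \(\W_0\), while the drift is stated with respect to \(\W_0\) rather than a sublevel set of \(U_1\). I would therefore pass to the \(N\)-step kernel. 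Iterating the \(U_1\) drift \(N\) times gives \(P_\theta^NU_1\leq\lambda_1^NU_1+Nb_1\). I would then work on the sublevel set \(\{U_1\leq R\}\) with \(R\) large, so that outside it \(P_\theta^NU_1\leq\bar\lambda U_1\) for some \(\bar\lambda<1\).

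The catch is that the minorization is only given on \(\W_0\), not on this sublevel set. If \(\W_0\) is not contained in a sublevel set, or the sublevel set is not contained in \(\W_0\), the generic theorem does not apply directly. This is exactly where the paper signals that Assumption~\ref{assump:kernel-Lipschitz} and compactness are needed. My fallback plan therefore uses them. For each fixed \(\theta\), \(U_1\)-geometric ergodicity holds with some \(C_\theta,\rho_\theta\); I would pick a \(k^\ast\) with \(C_\theta\rho_\theta^{k^\ast}\leq1/4\), so that \(P_\theta^{k^\ast}\) is a \(1/4\)-contraction in the \(U_1\)-weighted norm on signed measures of zero mass. Next, from Assumption~\ref{assump:kernel-Lipschitz} I would show by telescoping that \(\lVert P_{\theta'}^{k^\ast}-P_\theta^{k^\ast}\rVert_{U_1}\leq k^\ast L_P\,C'\lVert\theta-\theta'\rVert\), with \(C'\) coming from the uniform moment bound of part~\ref{part:exp-ergodic-reference-moment}. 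Hence \(P_{\theta'}^{k^\ast}\) is a \(1/2\)-contraction for \(\theta'\) in a neighborhood of \(\theta\). A finite subcover of the compact \(\Theta\) gives a common \(k^\ast\) and contraction \(1/2\). Iterating this contraction, and controlling the remainder steps \(k\bmod k^\ast\) with part~\ref{part:exp-ergodic-reference-moment}, gives \(\lvert P_\theta^kf-\nu_\theta(f)\rvert\leq C\rho^kU_1\) with \(\rho=2^{-1/k^\ast}\), after centering using part~\ref{part:exp-ergodic-invariant-moment}.
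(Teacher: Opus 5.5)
Your fallback argument for (iii) is correct and is essentially the paper's proof, phrased on zero-mass signed measures instead of the paper's quotient space of \(U_1\)-weighted functions modulo constants, and your parts (i)--(ii) are fine; the truncation argument in (ii) even avoids the paper's appeal to Meyn--Tweedie for finiteness of \(\nu_\theta(U_2)\). One small fix: the finite subcover produces different block lengths \(n_i\), so a literal common \(k^\ast\) must be a common multiple of them rather than their maximum, or you can argue as the paper does, writing \(k=an_i+h\) with \(N=\max_i n_i\).

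Your first approach would fail for a more basic reason than the one you give. With an \(N\)-step minorization (\(N\geq2\)) and only qualitative aperiodicity, uniform drift and minorization constants do not yield a uniform \(\rho\). For example, take a two-state chain on \(\{0,x\}\) with \(x\) large, \(\mathcal{W}_0=\{0\}\), and \(N=2\), which switches state with probability \(1-\delta\). It satisfies the assumptions with constants uniform in \(\delta\in(0,1/2]\), yet its second eigenvalue \(2\delta-1\) tends to \(-1\). This lack of quantitative aperiodicity is the real obstruction that compactness and kernel continuity remove.
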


\proof{Proof of Lemma~\ref{lemma:exp-ergodic}.} 

For part~\ref{part:exp-ergodic-reference-moment},
\eqref{eq:reference-kernel-moment} is immediate when \(k=0\). Suppose it
holds for some \(k\geq0\).  Define \(\mathbf 1\) as the constant-one
function on \(\W\).  Using Lemma~\ref{lemma:derived-U1-drift} and
\(P_\theta^k\mathbf 1=\mathbf 1\),
\begin{align*}
P_\theta^{k+1}U_p(w)
&=P_\theta^k(P_\theta U_p)(w)
\leq \lambda_p P_\theta^kU_p(w)
    +b_p(P_\theta^k\ind_{\W_0})(w)\\
&\leq \lambda_p P_\theta^kU_p(w)+b_p\\
&\leq \lambda_p^{k+1}U_p(w)+b_p\sum_{j=0}^{k}\lambda_p^j
=\lambda_p^{k+1}U_p(w)
+\frac{b_p}{1-\lambda_p}(1-\lambda_p^{k+1}).
\end{align*}
Thus \eqref{eq:reference-kernel-moment} holds for each \(k\geq0\), proving
part~\ref{part:exp-ergodic-reference-moment}.

For part~\ref{part:exp-ergodic-invariant-moment}, \citet[Theorem 16.0.1]{MeynTweedie09} implies
\(\nu_\theta(U_2)<\infty\), and hence
\(\nu_\theta(U_p)<\infty\) for \(p\in\{1,2\}\).
By Lemma~\ref{lemma:derived-U1-drift} and invariance of \(\nu_\theta\),
\(
(1-\lambda_p)\nu_\theta(U_p)\leq b_p.
\)
Because \(\lambda_p\) and \(b_p\) are uniform in \(\theta\),
\eqref{eq:uniform-invariant-moment} follows.

For part~\ref{part:exp-ergodic-uniform-mixing}, let \(f\) be a Borel
measurable scalar function and write
\[
\lVert f\rVert_{U_1}
:=\sup_{w\in\W}|f(w)|/U_1(w),
\qquad
\mathsf B:=\{f:\lVert f\rVert_{U_1}<\infty\}.
\]
The map \(f\mapsto f/U_1\) is an isometry from \(\mathsf B\) onto the
bounded Borel functions, so \(\mathsf B\) is a Banach space.
By 
\citet[Theorem 16.0.1]{MeynTweedie09}, there are positive constants \(C_0(\theta)\) and \(\rho(\theta)\in(0,1)\) such that for all \(k\geq 0\),
\begin{equation}\label{eq:fixed-kernel-geometric}
    \lVert P_\theta^k f-\nu_\theta(f)\rVert_{U_1}
    \leq C_0(\theta)\rho(\theta)^k
    \lVert f\rVert_{U_1}.
\end{equation}

For a bounded linear operator \(\mathcal L\) on a normed space
\((\mathsf X,\lVert\cdot\rVert_{\mathsf X})\), write
\(
\opnorm{\mathcal L}_{\mathsf X}
:=\sup_{\lVert f\rVert_{\mathsf X}\leq1}
\lVert \mathcal Lf\rVert_{\mathsf X}.
\)
Let
\(
\mathsf C:=\{t\mathbf 1:t\in\R\}\), and
\(\mathsf Q:=\mathsf B/\mathsf C.
\)
The space \(\mathsf C\) is closed in \(\mathsf B\), so
\(\mathsf Q\) is a Banach space.  Equip it with the quotient norm
\(\lVert[f]\rVert_{\mathsf Q}:=\inf_{t\in\R}\lVert f-t\rVert_{U_1}\).
Each Markov kernel preserves constants and hence induces a bounded operator
on \(\mathsf Q\).  We use the same notation \(P_\theta\) for this
operator, with \(P_\theta[f]:=[P_\theta f]\); the norm subscript identifies
the space on which the operator acts.
The drift condition \eqref{eq:drift-condition} implies
\[
    |P_\theta f|\leq \lVert f\rVert_{U_1}P_\theta U_1
    \leq(\lambda_1+b_1)\lVert f\rVert_{U_1}U_1.
\]
Thus we may fix \(M\geq1\) such that for all \(\theta\in\Theta\),
\begin{equation}\label{eq:uniform-operator-bound}
    \opnorm{P_\theta}_{U_1}\leq M 
    \quad\mbox{and}\quad \opnorm{P_\theta}_{\mathsf Q}\leq M.
\end{equation}

For any constant \(t\),
\(P_\theta^k(f-t)-\nu_\theta(f-t)=P_\theta^k f-\nu_\theta(f)\).
Applying \eqref{eq:fixed-kernel-geometric} to \(f-t\) and taking the infimum over
\(t\) yields
\begin{equation}\label{eq:fixed-quotient-contraction}
    \opnorm{P_\theta^k}_{\mathsf Q}
    \leq C_0(\theta)\rho(\theta)^k.
\end{equation}
For a fixed \(\theta^{\circ}\in\Theta\), choose an integer
\(n(\theta^{\circ})\geq1\) so that the right-hand side of
\eqref{eq:fixed-quotient-contraction} is at most \(1/4\).

Assumption~\ref{assump:kernel-Lipschitz} and homogeneity imply
\begin{equation*}
\opnorm{P_\eta-P_{\theta^{\circ}}}_{\mathsf Q}
\leq\opnorm{P_\eta-P_{\theta^{\circ}}}_{U_1}
\leq L_P\lVert\eta-\theta^{\circ}\rVert.
\end{equation*}
For bounded operators \(A\) and \(B\) and every \(n\geq1\),
\[
A^n-B^n=\sum_{i=0}^{n-1}A^i(A-B)B^{n-i-1}.
\]
Applying this identity with \(A=P_\eta\) and
\(B=P_{\theta^{\circ}}\), and using
\eqref{eq:uniform-operator-bound}, gives
\begin{align*}
\opnorm{P_\eta^n-
P_{\theta^{\circ}}^n}_{\mathsf Q}
&\leq \sum_{i=0}^{n-1}
\opnorm{P_\eta}_{\mathsf Q}^{i}
\opnorm{P_\eta-P_{\theta^{\circ}}}_{\mathsf Q}
\opnorm{P_{\theta^{\circ}}}_{\mathsf Q}^{n-i-1}
\\
&\leq n M^{n-1}L_P\lVert\eta-\theta^{\circ}\rVert.
\end{align*}
Choose a relative neighborhood \(\mathcal N(\theta^{\circ})\) of
\(\theta^{\circ}\) in \(\Theta\) such that for all \(\eta\in\mathcal N(\theta^{\circ})\),
\[
n(\theta^{\circ})M^{n(\theta^{\circ})-1}L_P
\lVert\eta-\theta^{\circ}\rVert\leq\frac14.
\]
The triangle inequality, the choice of \(n(\theta^{\circ})\), and
the preceding perturbation bound now give
\begin{align}
\opnorm{P_\eta^{n(\theta^{\circ})}}_{\mathsf Q}
&\leq
\opnorm{P_{\theta^{\circ}}^{n(\theta^{\circ})}}_{\mathsf Q}
+\opnorm{P_\eta^{n(\theta^{\circ})}-
P_{\theta^{\circ}}^{n(\theta^{\circ})}}_{\mathsf Q}
\leq\frac14+\frac14=\frac12,
\label{eq:local-block-contraction}
\end{align}
for all \(\eta\in\mathcal N(\theta^{\circ})\).
This proves the required local block contraction.

By compactness of \(\Theta\), choose a finite subcover
\(\{\mathcal N_i:1\leq i\leq m\}\), with corresponding block lengths
\(n_i\).  Set \(N:=\max_i n_i\).  Given any \(\eta\in\Theta\), choose a
covering neighborhood \(\mathcal N_i\).  If \(k=a n_i+h\), where
\(0\leq h<n_i\), then \eqref{eq:uniform-operator-bound} and
\eqref{eq:local-block-contraction} imply
\begin{align}
    \opnorm{P_\eta^k}_{\mathsf Q}
    &\leq M^h2^{-a}
    \leq M^{N-1}2^{-\lfloor k/N\rfloor}
    \leq 2M^{N-1}\rho^k,
    \label{eq:global-quotient-contraction}
\end{align}
where we used
\(a=\lfloor k/n_i\rfloor\geq\lfloor k/N\rfloor\),
\(h<n_i\leq N\), and
\(\rho:=2^{-1/N}\in(0,1)\).

It remains to return from the quotient norm to centering at the invariant measure.
For any \(t\in\R\), invariance and \eqref{eq:uniform-invariant-moment} imply
\begin{align*}
\lVert P_\eta^k f-\nu_\eta(f)\rVert_{U_1}
&\leq \lVert P_\eta^k f-t\rVert_{U_1}
  +|\nu_\eta(P_\eta^k f-t)|\\
&\leq(1+\nu_\eta(U_1))\lVert P_\eta^k f-t\rVert_{U_1}
\leq \bigl(1+\sup_{\theta\in\Theta}\nu_\theta(U_1)\bigr)
\lVert P_\eta^k f-t\rVert_{U_1}.
\end{align*}
Taking the infimum over \(t\), applying
\eqref{eq:global-quotient-contraction}, and using
\(\lVert[f]\rVert_{\mathsf Q}\leq\lVert f\rVert_{U_1}\), define
\[
C:=2M^{N-1}
\bigl(1+\sup_{\theta\in\Theta}\nu_\theta(U_1)\bigr).
\]
Then
\begin{equation}\label{eq:p-specific-uniform-bound}
\lVert P_\eta^k f-\nu_\eta(f)\rVert_{U_1}
\leq C\rho^k\lVert f\rVert_{U_1},
\end{equation}
where the fixed prefactor in \eqref{eq:global-quotient-contraction} is included
in \(C\).  Once the finite cover is fixed, \(N\), \(\rho\), and \(C\)
are determined by the fixed kernel family,
the compact parameter set, and the primitive drift and kernel-continuity
constants, and are uniform in \(\eta\).  Since
\(|f|\leq U_1\) implies
\(\lVert f\rVert_{U_1}\leq1\),
\eqref{eq:p-specific-uniform-bound} implies
\eqref{eq:uniform-geometric-ergodicity}, proving
part~\ref{part:exp-ergodic-uniform-mixing}.
\Halmos\endproof

\subsection{Adaptive Moment Control}

\begin{lemma}
    \label{lemma:adaptive-moment-control}
Suppose Assumption~\ref{assump:ergodic}\ref{part:ergodic-reference} holds.
For each \(p\in\{1,2\}\) and all \(k,s\geq0\),
\begin{equation*}
\E[U_p(W_{k+s})\mid\F_k]
\leq \lambda_p^sU_p(W_k)
+\frac{b_p}{1-\lambda_p}(1-\lambda_p^s).
\end{equation*}
\end{lemma}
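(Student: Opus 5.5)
The plan is to prove the bound by induction on \(s\) for fixed \(k\), mirroring the argument for part~\ref{part:exp-ergodic-reference-moment} of Lemma~\ref{lemma:exp-ergodic}. The only new ingredient is that the kernel changes from step to step along the adaptive path. The key observation is that the drift inequality of Lemma~\ref{lemma:derived-U1-drift} holds \emph{uniformly} in \(\theta\in\Theta\). Consequently the random, \(\F_j\)-measurable choice \(\theta_j\) causes no difficulty once we condition on \(\F_j\).

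\textbf{One-step bound.} For any \(j\geq0\), the adaptive transition identity \(\pr(W_{j+1}\in E\mid\F_j)=P_{\theta_j}(W_j,E)\) gives
\begin{equation*}
\E[U_p(W_{j+1})\mid\F_j]=(P_{\theta_j}U_p)(W_j)\le\lambda_pU_p(W_j)+b_p\ind_{\W_0}(W_j)\le\lambda_pU_p(W_j)+b_p .
\end{equation*}
To justify the first equality, I would note that the regular conditional distribution of \(W_{j+1}\) given \(\F_j\) is \(P_{\theta_j}(W_j,\cdot)\). This follows because \((\theta,w)\mapsto P_\theta(w,E)\) is jointly measurable, so \(P_{\theta_j}(W_j,\cdot)\) is an \(\F_j\)-measurable probability kernel. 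The identity then extends from indicators to nonnegative measurable \(U_p\) by monotone convergence. The inequality is Lemma~\ref{lemma:derived-U1-drift} evaluated at the random pair \((\theta_j,W_j)\); since it holds pointwise for every \(\theta\) and \(w\), it holds almost surely.

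\textbf{Induction.} The case \(s=0\) is trivial. Suppose the bound holds for \(s\). By the tower property, \(\F_k\subseteq\F_{k+s}\), the one-step bound with \(j=k+s\), and the induction hypothesis,
\begin{equation*}
\E[U_p(W_{k+s+1})\mid\F_k]\le\lambda_p\E[U_p(W_{k+s})\mid\F_k]+b_p\le\lambda_p^{s+1}U_p(W_k)+b_p\sum_{i=0}^{s}\lambda_p^i .
\end{equation*}
Summing the geometric series gives the stated bound. Conditional expectations of nonnegative variables are well defined even without integrability, so no integrability assumption is needed. Alternatively, Assumption~\ref{assump:ergodic}\ref{part:ergodic-initial} together with the bound itself gives finiteness.

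\textbf{Main obstacle.} I expect no real obstacle. The only point needing care is the measure-theoretic step identifying the conditional expectation with \((P_{\theta_j}U_p)(W_j)\) for a random parameter, and applying the deterministic drift inequality at a random \(\theta_j\). Both are handled by joint measurability and the uniformity of the drift constants \(\lambda_p,b_p\) over \(\Theta\).
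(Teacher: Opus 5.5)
Your proposal is correct and follows the paper's proof. Both derive the one-step bound \(\E[U_p(W_{j+1})\mid\F_j]\le\lambda_pU_p(W_j)+b_p\) from the uniform drift in Lemma~\ref{lemma:derived-U1-drift} evaluated at the random \(\theta_j\), then iterate via the tower property; your measure-theoretic remarks (joint measurability of the kernel, extension from indicators to \(U_p\)) only make explicit what the paper leaves implicit.
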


\proof{Proof of Lemma~\ref{lemma:adaptive-moment-control}.}
By Lemma~\ref{lemma:derived-U1-drift}, for each \(p\in\{1,2\}\) and all \(k\geq 0\),
\[
    \E[U_p(W_{k+1})\mid\F_k]
    \leq \lambda_pU_p(W_k)+b_p.
\]
Iteration yields, for \(s\geq0\),
\[
\E[U_p(W_{k+s})\mid\F_k]
\leq
\lambda_p^sU_p(W_k)+b_p\sum_{j=0}^{s-1}\lambda_p^j
=
\lambda_p^sU_p(W_k)
+\frac{b_p}{1-\lambda_p}(1-\lambda_p^s). \Halmos
\]
\endproof

\subsection{Stationary Distribution Continuity and Vector Consequences}

\begin{lemma}
    \label{lemma:stationary-Lipschitz}
Suppose Assumptions~\ref{assump:compact}, \ref{assump:ergodic}\ref{part:ergodic-reference}, and
\ref{assump:kernel-Lipschitz} hold. Then there is a positive constant \(C\) such that, for all \(\theta,\theta'\in\Theta\),
\begin{equation*}
    \sup_{\lVert f\rVert_{U_1}\leq1}
    |\nu_\theta(f)-\nu_{\theta'}(f)|
    \leq C\lVert\theta-\theta'\rVert.
\end{equation*}
\end{lemma}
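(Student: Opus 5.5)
The plan is to compare the two invariant measures through a telescoping identity that uses invariance and uniform geometric ergodicity. Fix \(f\) with \(\lVert f\rVert_{U_1}\leq1\) and \(\theta,\theta'\in\Theta\). By invariance, \(\nu_\theta(f)=\nu_\theta(P_\theta^n f)\) for every \(n\). Hence
\[
\nu_\theta(f)-\nu_{\theta'}(f)
=\nu_\theta\bigl(P_\theta^n f-P_{\theta'}^n f\bigr)
+\bigl[\nu_\theta(P_{\theta'}^n f)-\nu_{\theta'}(f)\bigr].
\]
By Lemma~\ref{lemma:exp-ergodic}\ref{part:exp-ergodic-uniform-mixing}, the second bracket is bounded by \(C\rho^n\nu_\theta(U_1)\). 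By Lemma~\ref{lemma:exp-ergodic}\ref{part:exp-ergodic-invariant-moment}, this is at most \(C'\rho^n\), uniformly in \(\theta\). Letting \(n\to\infty\), it vanishes, so only the first term matters.

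For the first term, I would use the operator identity
\[
P_\theta^n-P_{\theta'}^n=\sum_{i=0}^{n-1}P_\theta^{i}(P_\theta-P_{\theta'})P_{\theta'}^{n-i-1},
\]
and apply \(\nu_\theta\). Invariance collapses \(\nu_\theta P_\theta^i=\nu_\theta\). The \(i\)-th summand then becomes \(\nu_\theta\bigl((P_\theta-P_{\theta'})P_{\theta'}^{n-i-1}f\bigr)\). The key trick is to center first: since kernels preserve constants, we may replace \(P_{\theta'}^{n-i-1}f\) by
\[
g_j:=P_{\theta'}^{j}f-\nu_{\theta'}(f),\qquad j=n-i-1.
\]
Uniform geometric ergodicity gives \(|g_j|\leq C\rho^jU_1\). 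Assumption~\ref{assump:kernel-Lipschitz}, applied to \(g_j/(C\rho^j)\), then yields
\[
\bigl|(P_\theta-P_{\theta'})g_j\bigr|\leq L_PC\rho^jU_1\lVert\theta-\theta'\rVert.
\]
Integrating against \(\nu_\theta\) and using the uniform bound \(\sup_\theta\nu_\theta(U_1)<\infty\), the sum over \(j\) is at most \(L_PC(1-\rho)^{-1}\sup_\theta\nu_\theta(U_1)\lVert\theta-\theta'\rVert\). This bound is independent of \(n\) and of \(f\).

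Letting \(n\to\infty\) gives the claim with \(C:=L_PC(1-\rho)^{-1}\sup_\theta\nu_\theta(U_1)\), and taking the supremum over \(f\) finishes the proof. The only subtle point, and the one I regard as the main obstacle, is this centering step. Without it, the naive bound gives \(n\) terms of size \(O(\lVert\theta-\theta'\rVert)\), which grows with \(n\). It is the uniform (in \(\theta'\)) geometric ergodicity constant from Lemma~\ref{lemma:exp-ergodic}, not the pointwise one, that makes the series summable with a constant independent of \(\theta,\theta'\).

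I would also verify two integrability facts. First, every \(g_j\) and \(P_{\theta'}^{j}f\) lies in the \(U_1\)-weighted space, since \(\lvert P_{\theta'}^jf\rvert\leq P_{\theta'}^jU_1\leq \lambda_1^jU_1+b_1/(1-\lambda_1)\) by Lemma~\ref{lemma:exp-ergodic}\ref{part:exp-ergodic-reference-moment}. Second, all integrals against \(\nu_\theta\) are finite. Together these justify the interchanges.
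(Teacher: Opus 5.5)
Your proof is correct and is essentially the paper's argument. The paper sums the centered series \(h_{\theta',f}=\sum_{n\geq0}(P_{\theta'}^nf-\nu_{\theta'}(f))\), verifies the Poisson equation \((I-P_{\theta'})h_{\theta',f}=f-\nu_{\theta'}(f)\), and writes \(\nu_\theta(f)-\nu_{\theta'}(f)=\nu_\theta[(P_\theta-P_{\theta'})h_{\theta',f}]\); your telescoping produces exactly the partial sums of that series plus an \(O(\rho^n)\) remainder, which lets you skip the limit passage in the Poisson equation while giving the same constant \(L_PC(1-\rho)^{-1}\sup_\theta\nu_\theta(U_1)\).
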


\proof{Proof of Lemma~\ref{lemma:stationary-Lipschitz}.}
Fix \(\theta,\theta'\in\Theta\) and
\(f\in\mathsf B\) with \(\lVert f\rVert_{U_1}\leq1\).
Let \(C_0\) be the constant in
Lemma~\ref{lemma:exp-ergodic}\ref{part:exp-ergodic-uniform-mixing}.
Define
\[
    h_{\theta',f}
    :=\sum_{n=0}^{\infty}
    \bigl(P_{\theta'}^n f-\nu_{\theta'}(f)\bigr).
\]
By Lemma~\ref{lemma:exp-ergodic}\ref{part:exp-ergodic-uniform-mixing},
\[
\sum_{n=0}^{\infty}
\Bigl\lVert P_{\theta'}^n f-\nu_{\theta'}(f)\Bigr\rVert_{U_1}
\leq \sum_{n=0}^{\infty}C_0\rho^n
=C_0/(1-\rho).
\]
Hence the series converges absolutely in the Banach space
\(\mathsf B\), and
\begin{equation}\label{eq:poisson-series-norm}
    \lVert h_{\theta',f}\rVert_{U_1}
    \leq C_0/(1-\rho).
\end{equation}

For \(N\geq0\), let
\[
    h_{\theta',f}^{(N)}
    :=\sum_{n=0}^{N}\bigl(P_{\theta'}^n f-\nu_{\theta'}(f)\bigr).
\]
A finite telescoping calculation yields the exact identity
\begin{equation}\label{eq:poisson-partial-sum-identity}
    (I-P_{\theta'})h_{\theta',f}^{(N)}
    =f-P_{\theta'}^{N+1}f.
\end{equation}
Equation~\eqref{eq:uniform-operator-bound} shows that \(P_{\theta'}\) is a
bounded operator on \(\mathsf B\).  Thus it is continuous in the
\(U_1\)-norm and may be passed through the norm-convergent series.  Moreover,
Lemma~\ref{lemma:exp-ergodic}\ref{part:exp-ergodic-uniform-mixing} shows that
\(P_{\theta'}^{N+1}f\to\nu_{\theta'}(f)\) in \(\mathsf B\).
Passing to the limit in \eqref{eq:poisson-partial-sum-identity} yields
\((I-P_{\theta'})h_{\theta',f}=f-\nu_{\theta'}(f)\).

The \(U_1\) bounds make
\(h_{\theta',f}\), \(P_\theta h_{\theta',f}\), and
\(P_{\theta'}h_{\theta',f}\) all \(\nu_\theta\)-integrable.  Invariance
therefore implies
\(\nu_\theta(h_{\theta',f})=\nu_\theta(P_\theta h_{\theta',f})\), and
\[
\nu_\theta(f)-\nu_{\theta'}(f)
=\nu_\theta\!\bigl[(I-P_{\theta'})h_{\theta',f}\bigr]
=\nu_\theta\!\bigl[(P_\theta-P_{\theta'})h_{\theta',f}\bigr].
\]

Assumption~\ref{assump:kernel-Lipschitz}, applied by homogeneity to
\(h_{\theta',f}\), together with
Lemma~\ref{lemma:exp-ergodic}\ref{part:exp-ergodic-invariant-moment} and
\eqref{eq:poisson-series-norm}, allows
us to set
\[
C:=\frac{L_PC_0}{1-\rho}
\sup_{\eta\in\Theta}\nu_\eta(U_1).
\]
Then
\[
|\nu_\theta(f)-\nu_{\theta'}(f)|
\leq L_P\lVert h_{\theta',f}\rVert_{U_1}
\nu_\theta(U_1)\lVert\theta-\theta'\rVert
\leq C\lVert\theta-\theta'\rVert.
\]
Taking the supremum over \(\lVert f\rVert_{U_1}\leq1\) proves the lemma.
\Halmos\endproof

\begin{lemma}
    \label{lemma:vector-duality}
Fix \(d_H\in\mathbb N\).  Let
\(H:\W\to\R^{d_H}\) be Borel measurable and satisfy
\(\lVert H(w)\rVert\leq U_1(w)\).

\textup{(i)} Under Assumption~\ref{assump:kernel-Lipschitz},
\begin{equation}
\lVert(P_\theta H)(w)-(P_{\theta'}H)(w)\rVert
\leq L_PU_1(w)\lVert\theta-\theta'\rVert.
\label{eq:vector-kernel-duality}
\end{equation}

\textup{(ii)} If, in addition, Assumptions~\ref{assump:compact} and
\ref{assump:ergodic}\ref{part:ergodic-reference} hold, then there is a positive constant \(C\) such that
\begin{equation}
\lVert\nu_\theta(H)-\nu_{\theta'}(H)\rVert
\leq C\lVert\theta-\theta'\rVert.
\label{eq:vector-stationary-duality}
\end{equation}
\end{lemma}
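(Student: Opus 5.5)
The plan is to reduce both parts to the scalar statements already proved, applying them coordinatewise after a duality step. For part (i), fix \(w\in\W\) and \(\theta,\theta'\in\Theta\), and set \(v:=(P_\theta H)(w)-(P_{\theta'}H)(w)\). If \(v=0\) there is nothing to prove. Otherwise let \(u:=v/\lVert v\rVert\) and define the scalar function \(f_u(y):=\ip{u}{H(y)}\). This function is Borel measurable, and by Cauchy--Schwarz \(|f_u(y)|\leq\lVert H(y)\rVert\leq U_1(y)\). By linearity of the kernel integral, which is finite since \(P_\theta U_1<\infty\) by the drift bound of Lemma~\ref{lemma:derived-U1-drift}, we have \(\ip{u}{(P_\theta H)(w)}=(P_\theta f_u)(w)\). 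Hence
\[
\lVert v\rVert=\ip{u}{v}=(P_\theta f_u)(w)-(P_{\theta'}f_u)(w)\leq L_PU_1(w)\lVert\theta-\theta'\rVert
\]
by Assumption~\ref{assump:kernel-Lipschitz}. This gives \eqref{eq:vector-kernel-duality} with no dimensional constant.

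For part (ii), I use the same duality trick with the stationary measures. First, \(\nu_\theta(H)\) is well defined because \(\nu_\theta(U_1)<\infty\) by Lemma~\ref{lemma:exp-ergodic}\ref{part:exp-ergodic-invariant-moment}. Set \(v:=\nu_\theta(H)-\nu_{\theta'}(H)\) and, when \(v\neq0\), take \(u:=v/\lVert v\rVert\) and \(f_u\) as before. Then \(\lVert f_u\rVert_{U_1}\leq1\), so Lemma~\ref{lemma:stationary-Lipschitz} gives
\[
\lVert v\rVert=\nu_\theta(f_u)-\nu_{\theta'}(f_u)\leq C\lVert\theta-\theta'\rVert,
\]
with the constant \(C\) of that lemma. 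This constant is uniform in \(u\) and in \(H\), which yields \eqref{eq:vector-stationary-duality}.

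The only delicate point is that the unit direction \(u\) depends on \(w\), \(\theta\) and \(\theta'\). The scalar assumptions and lemmas, however, hold for every admissible test function with a uniform constant, so choosing \(u\) after fixing these quantities is legitimate. No measurability in \(u\) is needed because \(u\) is a fixed deterministic vector once the arguments are fixed. I therefore expect no real obstacle beyond checking integrability, so that \(\ip{u}{\cdot}\) can be moved inside the integrals.
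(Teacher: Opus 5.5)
Your proof is correct and follows the paper's argument. You scalarize via \(u^\intercal H\), apply Assumption~\ref{assump:kernel-Lipschitz} (resp.\ Lemma~\ref{lemma:stationary-Lipschitz}) to the resulting scalar test function, and recover the Euclidean norm by duality; the only cosmetic difference is that you choose the maximizing direction \(u=v/\lVert v\rVert\) directly, whereas the paper bounds \(|u^\intercal v|\) for every \(u\) in the unit ball and then takes the supremum. (In part (i), finiteness of \((P_\theta H)(w)\) already follows from Assumption~\ref{assump:kernel-Lipschitz} applied to the coordinates of \(H\), so you do not need the drift lemma there.)
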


\proof{Proof of Lemma~\ref{lemma:vector-duality}.}

First fix \(\theta,\theta'\in\Theta\) and \(w\in\W\).  For any
\(u\in\R^{d_H}\) with \(\lVert u\rVert\leq1\), define 
\(
    h_u(w):=u^\intercal H(w).
\)
Then
\(
    |h_u(w)|
    \leq
    U_1(w)
\), for all \(w\in\W\).
Hence Assumption~\ref{assump:kernel-Lipschitz} implies
\[
\biggl|
(P_\theta h_u)(w)-(P_{\theta'}h_u)(w)
\biggr|
\leq
L_PU_1(w)\lVert\theta-\theta'\rVert .
\]
By linearity, for each \(\lVert u\rVert\leq1\),
\[
\biggl|
u^\intercal \bigl((P_\theta H)(w)-(P_{\theta'}H)(w)\bigr)
\biggr|
\leq
L_PU_1(w)\lVert\theta-\theta'\rVert .
\]
The right-hand side is independent of \(u\).  Taking the supremum over
\(\lVert u\rVert\leq1\) and using the Euclidean duality identity
\(
    \lVert x\rVert
    =
    \sup_{\lVert u\rVert\leq1}|u^\intercal x|
\)
proves \eqref{eq:vector-kernel-duality}.

For part~\textup{(ii)},
let \(C_0\) be the constant in
Lemma~\ref{lemma:stationary-Lipschitz}.  Then
\(
\biggl|
\nu_\theta(h_u)-\nu_{\theta'}(h_u)
\biggr|
\leq
C_0\lVert\theta-\theta'\rVert .
\)
Taking \(C:=C_0\), the same scalarization and Euclidean-duality argument
proves \eqref{eq:vector-stationary-duality}.
\Halmos\endproof

\section{Backward Kernel Replacement}
\label{sec:EC-backward-telescoping}

\subsection{Proof of Lemma~\ref{lemma:kernel-telescoping}}
\label{subsec:ec-canonical-telescoping-proof}

\proof{Proof of Lemma~\ref{lemma:kernel-telescoping}.}
The case of \(k=1\) is trivial, with both sides of the inequality \eqref{eq:kernel-telescoping-bound} being zero. 
Assume henceforth that \(k\geq2\), and for each \(s=1,\ldots,k\), define
\(\Lambda_s(f):=\E[(P_{\theta_0}^{k-s}f)(W_s)\mid\F_0]\).
Lemmas~\ref{lemma:exp-ergodic}\ref{part:exp-ergodic-reference-moment} and
\ref{lemma:adaptive-moment-control} ensure that these conditional
expectations are finite.

Note that 
\begin{align}
   \bigl|\E[f(W_k)\mid\F_0] -  (P_{\theta_0}^kf)(W_0) \bigr| 
   = \bigl| \Lambda_k(f) - \Lambda_1(f) \bigr| 
   \leq & \sum_{s=1}^{k-1}  |\Lambda_{s+1}(f)-\Lambda_s(f)|.  \label{eq:Lambda-endpoints}
\end{align}

For each \(s\in\{1,\ldots,k-1\}\), conditioning through \(\F_s\) yields
\begin{align}
\Lambda_{s+1}(f)-\Lambda_s(f)
&=\E\biggl[
\bigl(P_{\theta_0}^{k-s-1}f\bigr)(W_{s+1})
\,\biggm|\,\F_0
\biggr]
-\E\biggl[
\bigl(P_{\theta_0}^{k-s}f\bigr)(W_s)
\,\biggm|\,\F_0
\biggr]
\nonumber\\
&=\E\biggl[
\E\biggl[
\bigl(P_{\theta_0}^{k-s-1}f\bigr)(W_{s+1})
\,\biggm|\,\F_s
\biggr]
-\bigl(P_{\theta_0}^{k-s}f\bigr)(W_s)
\,\biggm|\,\F_0
\biggr]
\nonumber\\
&=\E\biggl[
\bigl(P_{\theta_s}P_{\theta_0}^{k-s-1}f\bigr)(W_s)
-\bigl(P_{\theta_0}P_{\theta_0}^{k-s-1}f\bigr)(W_s)
\,\biggm|\,\F_0
\biggr]
\nonumber\\
&=\E\biggl[
\biggl(
\bigl(P_{\theta_s}-P_{\theta_0}\bigr)
P_{\theta_0}^{k-s-1}f
\biggr)(W_s)
\,\biggm|\,\F_0
\biggr].
\label{eq:single-replacement-identity}
\end{align}

Define 
\(
g_s:=P_{\theta_0}^{k-s-1}f-\nu_{\theta_0}(f)
\). Then, 
\[
\biggl(
\bigl(P_{\theta_s}-P_{\theta_0}\bigr)g_s
\biggr)(W_s) = \biggl(\bigl(P_{\theta_s}-P_{\theta_0}\bigr)
P_{\theta_0}^{k-s-1}f
\biggr)(W_s), 
\]
since \(\nu_{\theta_0}(f)\) is a constant and both kernels preserve constants.
Thus, it follows from 
\eqref{eq:single-replacement-identity} that
\begin{align}
|\Lambda_{s+1}(f)-\Lambda_s(f)|
= \biggl| \E\biggl[
\bigl(P_{\theta_s}-P_{\theta_0}\bigr)g_s(W_s)
\,\biggm|\,\F_0
\biggr] \biggr| 
 \leq \E\biggl[
\biggl|\bigl(P_{\theta_s}-P_{\theta_0}\bigr)
g_s(W_s)\biggr|
\,\biggm|\,\F_0
\biggr].\label{eq:single-replacement-centered}
\end{align}

Let \(C_0\) be the constant in
Lemma~\ref{lemma:exp-ergodic}\ref{part:exp-ergodic-uniform-mixing}. Then,
\(
|g_s(w)| \leq C_0\rho^{k-s-1} U_1(w)
\) for all \(w\in\W\). 
Set \(C:=L_PC_0\).  Thus, by homogeneity,
Assumption~\ref{assump:kernel-Lipschitz} implies
\begin{align*}
\biggl|\bigl(P_{\theta_s}-P_{\theta_0}\bigr)
g_s(w) \biggr| \leq C\rho^{k-s-1} U_1(w)\lVert\theta_s-\theta_0\rVert,
\end{align*}
which, together with \eqref{eq:single-replacement-centered}, yields 
\begin{align}
|\Lambda_{s+1}(f)-\Lambda_s(f)|
&\leq C\rho^{k-s-1}
\E\biggl[
U_1(W_s)\lVert\theta_s-\theta_0\rVert
\,\biggm|\,\F_0
\biggr].
\label{eq:single-replacement-bound}
\end{align}

Finally,
combining 
\eqref{eq:Lambda-endpoints} and \eqref{eq:single-replacement-bound} completes
the proof.
\Halmos\endproof

\subsection{Lagged Update-Field Comparison}
\label{subsec:ec-lagged-update-comparison}

\begin{corollary}
\label{corollary:lagged-update-field-comparison}
Suppose Assumptions~\ref{assump:compact}--\ref{assump:growth} hold.
There is a positive constant \(C\) such that, for all \(t\geq0\) and
\(\ell\geq1\),
\begin{align*}
\biggl\lVert
\E[F(\theta_t,W_{t+\ell})\mid\F_t]
-\bar F(\theta_t)
\biggr\rVert
\leq
C\rho^\ell U_1(W_t)
+C
\sum_{s=1}^{\ell-1}
\rho^{\ell-s-1}
\E\biggl[
U_1(W_{t+s})
\lVert\theta_{t+s}-\theta_t\rVert
\,\biggm|\,\F_t
\biggr].
\end{align*}
\end{corollary}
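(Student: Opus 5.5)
The plan is to split the error by inserting the reference-chain expectation. Condition on \(\F_t\) and write
\[
\E[F(\theta_t,W_{t+\ell})\mid\F_t]-\bar F(\theta_t)
=\Bigl(\E[H(W_{t+\ell})\mid\F_t]-(P_{\theta_t}^{\ell}H)(W_t)\Bigr)
+\Bigl((P_{\theta_t}^{\ell}H)(W_t)-\nu_{\theta_t}(H)\Bigr),
\]
where \(H(w):=F(\theta_t,w)\). Here \(\theta_t\) is \(\F_t\)-measurable, so \(H\) is frozen once we condition. By \eqref{eq:LinearGrowth-F}, \(\lVert H(w)\rVert\le C_F U_1(w)\), so \(H/C_F\) satisfies the hypotheses of Lemma~\ref{lemma:vector-duality}. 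Joint measurability of \(F\) justifies treating \(H\) as a fixed Borel test function under the regular conditional law.

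\textbf{Second bracket (reference-chain transient).} We scalarize: for each unit \(u\), apply Lemma~\ref{lemma:exp-ergodic}\ref{part:exp-ergodic-uniform-mixing} to \(u^\intercal H/C_F\). Taking the supremum over \(u\) gives the bound \(C_FC_0\rho^\ell U_1(W_t)\). The constants are uniform in \(\theta\), which is exactly what allows the random parameter \(\theta_t\) here.

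\textbf{First bracket (adaptive-to-reference difference).} We apply Lemma~\ref{lemma:kernel-telescoping} shifted to start at time \(t\). The process \((\theta_{t+j},W_{t+j})_{j\ge0}\) with filtration \((\F_{t+j})_{j\ge0}\) satisfies the same adaptive transition identity \(\pr(W_{t+j+1}\in E\mid\F_{t+j})=P_{\theta_{t+j}}(W_{t+j},E)\). Since the proof of Lemma~\ref{lemma:kernel-telescoping} uses only this identity, the tower property, geometric ergodicity, and kernel continuity, it carries over verbatim with \(\F_0\) replaced by \(\F_t\) and reference kernel \(P_{\theta_t}\). To be safe, I would rerun the hybrid telescoping \(\Lambda_s=\E[(P_{\theta_t}^{\ell-s}g)(W_{t+s})\mid\F_t]\) with \(g=u^\intercal H/C_F\), which is \(\F_t\)-measurably chosen. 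This avoids any issue about the lemma being stated for a deterministic test function.

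The one point needing care is that both \(\theta_t\) and the test function are random, though \(\F_t\)-measurable. In the single-replacement identity \eqref{eq:single-replacement-identity}, conditioning on \(\F_{t+s}\) still gives \(\E[(P_{\theta_t}^{\ell-s-1}g)(W_{t+s+1})\mid\F_{t+s}]=(P_{\theta_{t+s}}P_{\theta_t}^{\ell-s-1}g)(W_{t+s})\), because \(g\) and \(\theta_t\) are \(\F_{t+s}\)-measurable. This holds given measurability of \((\theta,w)\mapsto P_\theta(w,E)\) and joint measurability of \(F\), via a standard monotone-class argument. With that identity, each step yields \(L_PC_0\rho^{\ell-s-1}\E[U_1(W_{t+s})\lVert\theta_{t+s}-\theta_t\rVert\mid\F_t]\). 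Summing over \(s=1,\ldots,\ell-1\), multiplying by \(C_F\), and taking the supremum over \(u\) via Euclidean duality gives the second term.

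Adding the two brackets with \(C:=C_F\max\{C_0,L_PC_0\}\) proves the corollary. The main obstacle is this measurability and conditioning bookkeeping for the random frozen parameter; the rest follows directly from the earlier lemmas.
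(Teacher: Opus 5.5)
Your proposal is correct and follows essentially the same route as the paper. Both split through \((P_{\theta_t}^{\ell}F(\theta_t,\cdot))(W_t)\), bound the first piece by time-shifted backward kernel replacement applied to the \(\F_t\)-frozen scalar test functions \(u^\intercal F(\theta_t,\cdot)\), bound the second by uniform geometric ergodicity, and recombine via Euclidean duality. The one detail the paper makes explicit and you skip is restricting \(u\) to a countable dense subset of the unit sphere, so that the almost-sure inequalities hold simultaneously before the supremum is taken; continuity in \(u\) then extends the bound to all unit vectors.
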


\proof{Proof of Corollary~\ref{corollary:lagged-update-field-comparison}.}
Let \(\mathsf D\) be a countable dense subset of
\(\{u\in\R^d:\lVert u\rVert=1\}\).  Define
\(
F_\theta(w):=F(\theta,w)\) and 
\(f_u(w):=u^\intercal F_{\theta_t}(w)\) for \(u\in\mathsf D\).
By Assumption~\ref{assump:growth}, 
\(
|f_u(w)|
\leq \lVert F(\theta_t,w)\rVert
\leq C_FU_1(w)
\) for all \(u\in\mathsf D\) and \(w\in\W\). 
Conditional on \(\F_t\), \(\theta_t\) and hence each \(f_u\) are fixed.
Thus, by homogeneity, the time-shifted proof of
Lemma~\ref{lemma:kernel-telescoping} yields, for each
\(u\in\mathsf D\), with a deterministic constant \(C_0\) uniform in
\(t\), \(\theta_t\), and \(u\),
\begin{align*}
\biggl|
u^\intercal
\biggl(
\E[F(\theta_t,W_{t+\ell})\mid\F_t]
-(P_{\theta_t}^{\ell}F_{\theta_t})(W_t)
\biggr)
\biggr|
={}&
\biggl|
\E[f_u(W_{t+\ell})\mid\F_t]
-(P_{\theta_t}^{\ell}f_u)(W_t)
\biggr|\\
\leq{}&
C_0
\sum_{s=1}^{\ell-1}\rho^{\ell-s-1}
\E\biggl[
U_1(W_{t+s})\lVert\theta_{t+s}-\theta_t\rVert
\,\biggm|\,\F_t
\biggr].
\end{align*}

Because \(\mathsf D\) is countable, these inequalities may be taken to
hold simultaneously.  Taking the supremum over \(u\in\mathsf D\) and
using the Euclidean duality identity
\(
    \lVert x\rVert
    =
    \sup_{u\in\mathsf D}|u^\intercal x|
\)
yields
\begin{align}
\biggl\lVert
\E[F(\theta_t,W_{t+\ell})\mid\F_t]
-(P_{\theta_t}^{\ell}F_{\theta_t})(W_t)
\biggr\rVert 
\leq
C_0
\sum_{s=1}^{\ell-1}\rho^{\ell-s-1}
\E\biggl[
U_1(W_{t+s})\lVert\theta_{t+s}-\theta_t\rVert
\,\biggm|\,\F_t
\biggr]. \label{eq:lagged-update-field-comparison-part-1}
\end{align}

Lemma~\ref{lemma:exp-ergodic}\ref{part:exp-ergodic-uniform-mixing}, again applied by homogeneity to
\(f_u\), implies the following bound for a deterministic constant \(C_1\)
uniform in \(t\), \(\theta_t\), and \(u\):
\begin{align*}
\biggl|
u^\intercal
\biggl(
(P_{\theta_t}^{\ell}F_{\theta_t})(W_t)
-\bar F(\theta_t)
\biggr)
\biggr|
=
\biggl|
(P_{\theta_t}^{\ell}f_u)(W_t)
-\nu_{\theta_t}(f_u)
\biggr|
\leq
C_1\rho^\ell U_1(W_t),
\end{align*}
where
\(
\nu_{\theta_t}(f_u)
=u^\intercal\nu_{\theta_t}(F_{\theta_t})
=u^\intercal\bar F(\theta_t).
\)
Taking the supremum over \(u\in\mathsf D\) therefore yields
\begin{align}
    \biggl\lVert
(P_{\theta_t}^{\ell}F_{\theta_t})(W_t)-\bar F(\theta_t)
\biggr\rVert
\leq
C_1\rho^\ell U_1(W_t).
    \label{eq:lagged-update-field-comparison-part-2}
\end{align}
Set \(C:=\max\{C_0,C_1\}\).  The triangle inequality, combined with
\eqref{eq:lagged-update-field-comparison-part-1} and
\eqref{eq:lagged-update-field-comparison-part-2}, proves
Corollary~\ref{corollary:lagged-update-field-comparison} with \(C\).
\Halmos\endproof

\section{Finite-Time MSE Bound}
\label{sec:EC-MSE-proofs}

\subsection{Basic Estimates}
\label{subsec:ec-canonical-mse-recursion}

\begin{lemma}
\label{lemma:ec-canonical-movement}
Suppose Assumptions~\ref{assump:learning-rate}, \ref{assump:compact},
\ref{assump:ergodic}\ref{part:ergodic-reference}, and
\ref{assump:growth} hold.  Then, for some \(C\) and all \(k,s\geq0\),
\begin{align*} \E[U_1(W_{k+s})\lVert\theta_{k+s}-\theta_k\rVert\mid\F_0]
\leq C U_2(W_0)\sum_{i=k}^{k+s-1}\alpha_i.
\end{align*}
\end{lemma}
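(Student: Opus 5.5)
The plan is to bound the parameter movement pathwise by accumulated update magnitudes, and then control the product with the state weight \(U_1(W_{k+s})\) using Cauchy--Schwarz together with adaptive moment control. The case \(s=0\) is trivial, so fix \(s\geq1\).

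\emph{Pathwise movement bound.} Since \(\theta_i\in\Theta\), nonexpansiveness of \(\Pi_\Theta\) gives
\[
\lVert\theta_{i+1}-\theta_i\rVert
=\lVert\Pi_\Theta(\theta_i+\alpha_iF(\theta_i,W_i))-\Pi_\Theta(\theta_i)\rVert
\leq\alpha_i\lVert F(\theta_i,W_i)\rVert
\leq C_F\alpha_iU_1(W_i),
\]
where the last step uses the linear growth condition \eqref{eq:LinearGrowth-F}. Summing this over \(i=k,\ldots,k+s-1\) and applying the triangle inequality yields
\[
\lVert\theta_{k+s}-\theta_k\rVert\leq C_F\sum_{i=k}^{k+s-1}\alpha_iU_1(W_i).
\]

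\emph{Product terms.} Multiplying by \(U_1(W_{k+s})\) and taking \(\E[\cdot\mid\F_0]\) reduces the problem to bounding \(\E[U_1(W_i)U_1(W_{k+s})\mid\F_0]\) uniformly in \(i\) and \(s\). By conditional Cauchy--Schwarz and \(U_1^2\leq 2U_2\), each term is at most \(2\bigl(\E[U_2(W_i)\mid\F_0]\,\E[U_2(W_{k+s})\mid\F_0]\bigr)^{1/2}\). Lemma~\ref{lemma:adaptive-moment-control} with \(p=2\), applied from time \(0\), gives \(\E[U_2(W_j)\mid\F_0]\leq U_2(W_0)+b/(1-\lambda)\leq C'U_2(W_0)\) for all \(j\), because \(U_2\geq1\). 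Each product term is therefore at most \(2C'U_2(W_0)\), and summing against \(\alpha_i\) gives the claim with \(C:=2C_FC'\).

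\emph{Main obstacle.} The only delicate point is that the conditional bound must be linear in \(U_2(W_0)\) rather than quadratic. The Cauchy--Schwarz step handles this: both factors are linear in \(U_2(W_0)\), so their geometric mean is too. A further check is that the tower property is legitimate, and here Lemma~\ref{lemma:adaptive-moment-control} holds conditionally on \(\F_k\) for every \(k\), so iterating it from \(\F_0\) is valid. Projection causes no difficulty, since only nonexpansiveness is used. Assumption~\ref{assump:learning-rate} enters only through the definition of the \(\alpha_i\).
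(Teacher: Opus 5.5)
Your proposal is correct and follows essentially the same route as the paper's proof. You bound each increment pathwise using projection nonexpansiveness and the linear growth condition, then control \(\E[U_1(W_i)U_1(W_{k+s})\mid\F_0]\) by conditional Cauchy--Schwarz with \(U_1^2\leq 2U_2\) and the uniform moment bound from Lemma~\ref{lemma:adaptive-moment-control}, which yields a constant linear in \(U_2(W_0)\).
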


\proof{Proof of Lemma~\ref{lemma:ec-canonical-movement}.}
For each \(i\geq0\), since \(\theta_i\in\Theta\), 
\begin{equation}\label{eq:canonical-increment-bound}
\lVert\theta_{i+1}-\theta_i\rVert
=\biggl\lVert
\Pi_\Theta\!\bigl(\theta_i+\alpha_iF(\theta_i,W_i)\bigr)
-\Pi_\Theta(\theta_i)
\biggr\rVert
\leq\alpha_i\lVert F(\theta_i,W_i)\rVert
\leq\alpha_iC_F(1+\lVert W_i\rVert),
\end{equation}
where the two inequalities follow from the projection
nonexpansiveness and Assumption~\ref{assump:growth}, respectively.  
Summing \eqref{eq:canonical-increment-bound} from \(i=k\) through \(k+s-1\) yields
\begin{align}
\E[U_1(W_{k+s})
\lVert\theta_{k+s}-\theta_k\rVert\mid\F_0]
\leq{}&\sum_{i=k}^{k+s-1}\E\biggl[
U_1(W_{k+s})
\alpha_{i}C_F(1+\lVert W_{i}\rVert)
\,\biggm|\,\F_0\biggr] \nonumber \\ 
={}& \sum_{i=k}^{k+s-1} \alpha_{i}C_F\E\biggl[
(1+\lVert W_{k+s}\rVert)
(1+\lVert W_{i}\rVert)
\,\biggm|\,\F_0\biggr].
\label{eq:W-times-theta-diff-bound}
\end{align}

By Lemma~\ref{lemma:adaptive-moment-control}, there is a positive constant \(C_0\)
such that
\(\E[(1+\lVert W_i\rVert)^2\mid\F_0]\leq C_0U_2(W_0)\) for all \(i\geq0\).
Applying Cauchy--Schwarz gives, for all \(i_1,i_2\geq0\),
\begin{align}
\E[(1+\lVert W_{i_1}\rVert)(1+\lVert W_{i_2}\rVert)\mid\F_0]
&\leq\sqrt{
\E[(1+\lVert W_{i_1}\rVert)^2\mid\F_0]
\E[(1+\lVert W_{i_2}\rVert)^2\mid\F_0]}\nonumber\\
&\leq \sqrt{\E[2(1+\lVert W_{i_1}\rVert^2)\mid\F_0]
\E[2(1+\lVert W_{i_2}\rVert^2)\mid\F_0]} \nonumber\\
&\leq C_0 U_2(W_0).
\label{eq:W-times-theta-diff-bound-1}
\end{align}
Set \(C:=C_FC_0\).
Combining \eqref{eq:W-times-theta-diff-bound} and
\eqref{eq:W-times-theta-diff-bound-1} finishes the proof with \(C\).
\Halmos\endproof

\begin{lemma}
\label{lemma:ec-stationary-mean-field-lipschitz}
Suppose Assumptions~\ref{assump:compact},
\ref{assump:ergodic}\ref{part:ergodic-reference},
\ref{assump:kernel-Lipschitz}, and \ref{assump:growth} hold.  Then, for some
\(C\) and all \(\theta,\theta'\in\Theta\),
\[
\lVert\bar{F}(\theta)-\bar{F}(\theta')\rVert
\leq C\lVert\theta-\theta'\rVert.
\]
\end{lemma}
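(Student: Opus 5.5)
The plan is to split the difference \(\bar F(\theta)-\bar F(\theta')\) into a change-of-measure part and a change-of-integrand part:
\[
\bar F(\theta)-\bar F(\theta')
=\bigl[\nu_\theta(F_\theta)-\nu_{\theta'}(F_\theta)\bigr]
+\nu_{\theta'}\bigl(F_\theta-F_{\theta'}\bigr),
\]
where \(F_\theta(w):=F(\theta,w)\). Each bracket will be bounded separately by a constant times \(\lVert\theta-\theta'\rVert\). Before doing so, note that \(\bar F\) is well defined and finite. By \eqref{eq:LinearGrowth-F}, \(\lVert F_\theta\rVert\leq C_FU_1\), and Lemma~\ref{lemma:exp-ergodic}\ref{part:exp-ergodic-invariant-moment} gives \(\sup_\eta\nu_\eta(U_1)<\infty\), so all the integrals above converge.

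For the first bracket, I will apply Lemma~\ref{lemma:vector-duality}(ii) to the function \(H:=F_\theta/C_F\). This function is Borel measurable in \(w\) because \(F\) is jointly Borel measurable. It also satisfies \(\lVert H\rVert\leq U_1\). The lemma then gives
\[
\lVert\nu_\theta(F_\theta)-\nu_{\theta'}(F_\theta)\rVert\leq C_FC_0\lVert\theta-\theta'\rVert.
\]
The constant \(C_0\) comes from Lemma~\ref{lemma:stationary-Lipschitz}. The point requiring care is that the same \(\theta\) appears both in the measure and in the integrand. This causes no problem: the constant in Lemma~\ref{lemma:stationary-Lipschitz} is uniform over all \(f\) with \(\lVert f\rVert_{U_1}\leq1\), so fixing \(\theta\) in the integrand first and then comparing the two measures is legitimate.

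For the second bracket, I will use \eqref{eq:Lipschitz-F} pointwise together with Jensen's inequality for the vector integral:
\[
\lVert\nu_{\theta'}(F_\theta-F_{\theta'})\rVert\leq L_F\,\nu_{\theta'}(U_1)\,\lVert\theta-\theta'\rVert
\leq L_F\sup_{\eta\in\Theta}\nu_\eta(U_1)\,\lVert\theta-\theta'\rVert.
\]
Adding the two bounds gives the claim with \(C:=C_FC_0+L_F\sup_\eta\nu_\eta(U_1)\).

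There is no real obstacle here. The only subtle point is the uniformity in the first bracket, which Lemma~\ref{lemma:stationary-Lipschitz} already provides through the Poisson-equation bound.
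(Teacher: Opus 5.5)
Your proof is correct and is essentially the paper's argument. The paper adds and subtracts \(\nu_\theta(F(\theta',\cdot))\), bounding the integrand change under \(\nu_\theta\) via \eqref{eq:Lipschitz-F} and the measure change via Lemma~\ref{lemma:vector-duality}(ii) applied to \(F(\theta',\cdot)\). You instead add and subtract \(\nu_{\theta'}(F(\theta,\cdot))\), which is the mirror-image split and uses the same two ingredients together with the same uniform invariant-moment bound.
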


\proof{Proof of Lemma~\ref{lemma:ec-stationary-mean-field-lipschitz}.}
By the definition of \(\bar F(\theta)\) in \eqref{eq:mean-field} and the triangle inequality,
\begin{align}
\lVert\bar{F}(\theta)-\bar{F}(\theta')\rVert
\leq
\biggl\lVert\E_{\nu_\theta}\!
\bigl[F(\theta,W_0)-F(\theta',W_0)\bigr]\biggr\rVert 
+\biggl\lVert\E_{\nu_\theta}[F(\theta',W_0)]
-\E_{\nu_{\theta'}}[F(\theta',W_0)]\biggr\rVert. \label{eq:bar-F-Lipschitz}
\end{align}
Let \(C_0\) be a constant obtained from
Assumption~\ref{assump:growth} and
Lemma~\ref{lemma:exp-ergodic}\ref{part:exp-ergodic-invariant-moment}.
For the first term on the right-hand side of \eqref{eq:bar-F-Lipschitz},
\begin{align}
    \biggl\lVert\E_{\nu_\theta}\!
\bigl[F(\theta,W_0)-F(\theta',W_0)\bigr]\biggr\rVert  \leq{}& 
\E_{\nu_\theta}\! \Bigl[L_F (1+\Bigl\lVert W_0\Bigr\rVert ) \Bigl\lVert \theta-\theta'\Bigr\rVert  \Bigr] \nonumber \\ 
={}& L_F\nu_\theta(U_1)\lVert\theta-\theta'\rVert \nonumber \\
\leq{}& C_0\lVert\theta-\theta'\rVert, \label{eq:bar-F-Lipschitz-1st-term}
\end{align}
where the first inequality uses the triangle inequality for the integral and
Assumption~\ref{assump:growth}, and the second uses
Lemma~\ref{lemma:exp-ergodic}\ref{part:exp-ergodic-invariant-moment}.

For the second term, set \(H(w):=F(\theta',w)\).  Since
\(\lVert H(w)\rVert\leq C_FU_1(w)\) by Assumption~\ref{assump:growth},
Lemma~\ref{lemma:vector-duality}\textup{(ii)}, applied by homogeneity, yields
a constant \(C_1\) such that
\begin{align}
\biggl\lVert\E_{\nu_\theta}[F(\theta',W_0)]
-\E_{\nu_{\theta'}}[F(\theta',W_0)]\biggr\rVert = 
\Bigl\lVert\nu_\theta(H)-\nu_{\theta'}(H)\Bigr\rVert
\leq C_1\lVert\theta-\theta'\rVert. \label{eq:bar-F-Lipschitz-2nd-term}
\end{align}
Set \(C:=C_0+C_1\).  Combining \eqref{eq:bar-F-Lipschitz},
\eqref{eq:bar-F-Lipschitz-1st-term}, and
\eqref{eq:bar-F-Lipschitz-2nd-term} completes the proof with \(C\).
\Halmos\endproof

\begin{lemma}\label{lemma:update-field-bounds}
Suppose Assumptions~\ref{assump:ergodic}\ref{part:ergodic-reference}
and \ref{assump:growth} hold.  Then there is a positive constant \(C\) such that, for all
\(k\geq0\) and \(\theta\in\Theta\),
\begin{gather}
\E[\lVert F(\theta_k,W_k)\rVert^2\mid\F_0]
\leq C U_2(W_0), \label{eq:update-field-second-moment-bound} \\ 
\lVert\bar F(\theta)\rVert
\leq C. \label{eq:F_bar_bound}
\end{gather}
\end{lemma}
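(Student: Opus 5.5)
The two bounds are proved separately. Both rely only on the linear-growth condition \eqref{eq:LinearGrowth-F}, the uniform drift condition, and the uniform invariant-moment bound of Lemma~\ref{lemma:exp-ergodic}.

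For the second-moment bound \eqref{eq:update-field-second-moment-bound}, since \(\theta_k\in\Theta\), the sup form of \eqref{eq:LinearGrowth-F} applies pointwise, with no need to control \(\theta_k\) separately:
\[
\lVert F(\theta_k,W_k)\rVert^2\leq C_F^2(1+\lVert W_k\rVert)^2\leq 2C_F^2U_2(W_k).
\]
Taking conditional expectations given \(\F_0\), we apply Lemma~\ref{lemma:adaptive-moment-control} with \(p=2\), starting time \(0\), and horizon \(s=k\). This gives
\[
\E[U_2(W_k)\mid\F_0]\leq\lambda^kU_2(W_0)+\frac{b}{1-\lambda}\leq\Bigl(1+\frac{b}{1-\lambda}\Bigr)U_2(W_0),
\]
where the last step uses \(U_2\geq1\). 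Setting \(C\geq 2C_F^2(1+b/(1-\lambda))\) therefore yields \eqref{eq:update-field-second-moment-bound} uniformly in \(k\).

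For the mean-field bound \eqref{eq:F_bar_bound}, fix \(\theta\in\Theta\). By the definition \eqref{eq:mean-field} and Jensen's inequality for the vector integral,
\[
\lVert\bar F(\theta)\rVert\leq\int_\W\lVert F(\theta,w)\rVert\,\nu_\theta(dw)\leq C_F\nu_\theta(U_1).
\]
By Lemma~\ref{lemma:exp-ergodic}\ref{part:exp-ergodic-invariant-moment}, \(\sup_{\theta\in\Theta}\nu_\theta(U_1)<\infty\); explicitly, it is at most \(b_1/(1-\lambda_1)\) from Lemma~\ref{lemma:derived-U1-drift}. Taking \(C\) to be the larger of the two constants proves the lemma.

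This part of the argument needs only Assumption~\ref{assump:ergodic}\ref{part:ergodic-reference}, as the statement requires, because the invariant-moment part of Lemma~\ref{lemma:exp-ergodic} does not use kernel continuity.

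Both steps are routine. The one point needing care is that \(F\) must be integrable under \(\nu_\theta\) for \(\bar F\) to be defined. This follows from the same bound: \(\nu_\theta(U_1)\) is finite, and \(\nu_\theta\) exists and is unique by Assumption~\ref{assump:ergodic}\ref{part:ergodic-reference}.
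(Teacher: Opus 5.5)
Your proposal is correct and matches the paper's proof. The paper also bounds \(\lVert F(\theta_k,W_k)\rVert^2\) by \(2C_F^2U_2(W_k)\) and applies Lemma~\ref{lemma:adaptive-moment-control}. For the mean field it likewise uses \(\lVert\bar F(\theta)\rVert\leq C_F\nu_\theta(U_1)\) together with the uniform invariant-moment bound in Lemma~\ref{lemma:exp-ergodic}\ref{part:exp-ergodic-invariant-moment}.
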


\proof{Proof of Lemma~\ref{lemma:update-field-bounds}.}
The condition \eqref{eq:LinearGrowth-F} implies that 
\[
\lVert F(\theta_k,W_k)\rVert^2
\leq 
C_F^2 (1 + \lVert W_k\rVert)^2
\leq
2C_F^2 (1 + \lVert W_k\rVert^2) .
\]
By Lemma~\ref{lemma:adaptive-moment-control}, there is a positive constant \(C_0\) for which
\eqref{eq:update-field-second-moment-bound} holds.
Likewise, for each \(\theta\in\Theta\), by the definition of \(\bar F(\theta)\) in \eqref{eq:mean-field},
\[
\lVert\bar F(\theta)\rVert
\leq
\int_\W\lVert F(\theta,w)\rVert\,\nu_\theta(dw)
\leq
\int_\W C_F (1+ \lVert w \rVert)\,\nu_\theta(dw) = 
C_F \nu_\theta(U_1).
\]
Applying Lemma~\ref{lemma:exp-ergodic}\ref{part:exp-ergodic-invariant-moment} proves
\eqref{eq:F_bar_bound} with a constant \(C_1\).  Set
\(C:=\max\{C_0,C_1\}\).  Then both bounds in the lemma hold with \(C\).
\Halmos\endproof

\subsection{Bias from Adaptive Markov Kernels}
\label{subsec:ec-canonical-mse-proofs}

\proof{Proof of Lemma~\ref{lemma:lagged-restoring-drift}.}
Fix integers \(k>\ell\geq1\).  Decompose
\begin{align}
\langle\theta_k-\theta^\star,F(\theta_k,W_k)\rangle
={}&
\underbrace{\langle\theta_{k-\ell}-\theta^\star,
F(\theta_{k-\ell},W_k)-\bar F(\theta_{k-\ell})\rangle}_{\Psi_1}
+\underbrace{\langle\theta_{k-\ell}-\theta^\star,
F(\theta_k,W_k)-F(\theta_{k-\ell},W_k)\rangle}_{\Psi_2}
\nonumber\\
&+\underbrace{\langle\theta_{k-\ell}-\theta^\star,
\bar F(\theta_{k-\ell})-\bar F(\theta_k)\rangle}_{\Psi_3}
+\underbrace{\langle\theta_k-\theta_{k-\ell},
F(\theta_k,W_k)-\bar F(\theta_k)\rangle}_{\Psi_4}
\nonumber\\
&+\underbrace{\langle\theta_k-\theta^\star,
\bar F(\theta_k)\rangle}_{\Psi_5}.
\label{eq:inner-product-decomp}
\end{align}

We first control \(\Psi_1\). Define \(t:=k-\ell\).  Let \(C_0\) denote
\(d_\Theta\) times the constant in
Corollary~\ref{corollary:lagged-update-field-comparison}.  Conditioning on
\(\F_t\) yields
\begin{align}
\E[\Psi_1\mid\F_t]
&=\biggl\langle \theta_t-\theta^\star,
\E[F(\theta_t,W_k)\mid\F_t]-\bar F(\theta_t)
\biggr\rangle \nonumber \\
& \leq \lVert \theta_t-\theta^\star \rVert
\biggl\lVert
\E[F(\theta_t,W_k)\mid\F_t]-\bar F(\theta_t)
\biggr\rVert \nonumber \\ 
&\leq C_0
\biggl[\rho^\ell U_1(W_t)
+
\sum_{s=1}^{\ell-1}\rho^{\ell-s-1}
\E\biggl[
U_1(W_{t+s})\lVert\theta_{t+s}-\theta_t\rVert
\,\biggm|\,\F_t
\biggr]\biggr], \label{eq:canonical-fixed-state-comparison}
\end{align}
where the last step uses
\(\lVert\theta_t-\theta^\star\rVert\leq d_\Theta\) and
Corollary~\ref{corollary:lagged-update-field-comparison}.

For \(1\leq s\leq\ell-1\), the tower property and
Lemma~\ref{lemma:ec-canonical-movement} yield a constant \(C_1\) such that
\begin{align}
    \label{eq:canonical-fixed-state-comparison-1}
\E\biggl[
\E\biggl[
U_1(W_{t+s})\lVert\theta_{t+s}-\theta_t\rVert
\,\biggm|\,\F_t
\biggr]
\,\biggm|\,\F_0
\biggr]
=
\E[U_1(W_{t+s})\lVert\theta_{t+s}-\theta_t\rVert\mid\F_0]
\leq C_1 U_2(W_0)\sum_{i=t}^{t+s-1}\alpha_i
\end{align}
Moreover,
\begin{align}
    \label{eq:canonical-fixed-state-comparison-2}
\sum_{s=1}^{\ell-1}\rho^{\ell-s-1}
\sum_{i=t}^{t+s-1}\alpha_i
\leq\bigl[1/(1-\rho)\bigr]\sum_{i=t}^{k-1}\alpha_i.
\end{align}

By Lemma~\ref{lemma:adaptive-moment-control}, there is a positive constant \(C_2\)
such that \(\E[U_1(W_t)\mid\F_0]\leq C_2U_2(W_0)\) for all \(t\geq0\).
Let \(C_3\) be a constant
determined by \(C_0\), \(C_1\), \(C_2\), and \((1-\rho)^{-1}\).  Combining
\eqref{eq:canonical-fixed-state-comparison},
\eqref{eq:canonical-fixed-state-comparison-1}, and
\eqref{eq:canonical-fixed-state-comparison-2} with the tower property yields
\begin{align}
\E[\Psi_1\mid\F_0]
\leq{}&
C_3 U_2(W_0)
\biggl(\rho^\ell+\sum_{i=t}^{k-1}\alpha_i\biggr).
\label{eq:lagged-bias-bound}
\end{align}

For \(\Psi_2\), Assumption~\ref{assump:growth} and compactness of
\(\Theta\) imply
\[
|\Psi_2|\leq\lVert\theta_t-\theta^\star\rVert
\lVert F(\theta_k,W_k)-F(\theta_t,W_k)\rVert
\leq d_\Theta L_FU_1(W_k)\lVert\theta_k-\theta_t\rVert.
\]
Lemma~\ref{lemma:ec-canonical-movement} therefore yields a constant
\(C_4\) such that
\begin{align}
|\E[\Psi_2\mid\F_0]|
&\leq C_4 U_2(W_0)
\sum_{i=t}^{k-1}\alpha_i. \label{eq:lag-bound}
\end{align}

For \(\Psi_3\), let \(C_5\) denote the constant in
Lemma~\ref{lemma:ec-stationary-mean-field-lipschitz} multiplied by the fixed
diameter factor.  That lemma and \(U_1\geq1\) imply
\[
|\Psi_3|\leq\lVert\theta_t-\theta^\star\rVert
\lVert\bar F(\theta_t)-\bar F(\theta_k)\rVert
\leq C_5 U_1(W_k)\lVert\theta_k-\theta_t\rVert.
\]
Hence Lemma~\ref{lemma:ec-canonical-movement} yields a constant
\(C_6\) determined by \(C_1\) and \(C_5\),
\begin{align}
|\E[\Psi_3\mid\F_0]|
&\leq C_6 U_2(W_0)
\sum_{i=t}^{k-1}\alpha_i. \label{eq:window-bound}
\end{align}

For \(\Psi_4\), choose \(C_7\) so that
Assumption~\ref{assump:growth}, Lemma~\ref{lemma:update-field-bounds}, the triangle
inequality, and \(U_1\geq1\) imply
\[
|\Psi_4|\leq\lVert\theta_k-\theta_t\rVert
\bigl(\lVert F(\theta_k,W_k)\rVert+\lVert\bar F(\theta_k)\rVert\bigr)
\leq C_7 U_1(W_k)\lVert\theta_k-\theta_t\rVert.
\]
Another application of Lemma~\ref{lemma:ec-canonical-movement} yields, for a
constant \(C_8\) determined by \(C_1\) and \(C_7\),
\begin{align}
|\E[\Psi_4\mid\F_0]|
&\leq C_8 U_2(W_0)
\sum_{i=t}^{k-1}\alpha_i. \label{eq:correction-bound}
\end{align}

For \(\Psi_5\), Assumption~\ref{assump:qsm} implies
\begin{equation} \label{eq:drift-bound}
\E[\Psi_5\mid\F_0]
\leq-\zeta
\E[\lVert\theta_k-\theta^\star\rVert^2\mid\F_0].
\end{equation} 

Set \(C:=C_3+C_4+C_6+C_8\).  Combining
\eqref{eq:inner-product-decomp} and
\eqref{eq:lagged-bias-bound}--\eqref{eq:drift-bound} yields
\begin{align*}
\E[\langle\theta_k-\theta^\star,F(\theta_k,W_k)\rangle
\mid\F_0]
\leq
-\zeta \E[\lVert\theta_k-\theta^\star\rVert^2\mid\F_0]
+C U_2(W_0)
\biggl(\rho^\ell+\sum_{i=k-\ell}^{k-1}\alpha_i\biggr).
\Halmos
\end{align*}
\endproof

\subsection{MSE Recursion}
\label{subsec:ec-canonical-recursion-constants}

\begin{lemma}[MSE Recursion]
\label{lemma:recursion}
Suppose Assumptions~\ref{assump:learning-rate}--\ref{assump:qsm} hold.
Then there are positive constants \(C\) and \(K\) such that for all \(k\geq K\),
\[
\E[\lVert\theta_{k+1}-\theta^\star\rVert^2\mid\F_0]
\leq
\bigl(1-2\zeta\alpha_0k^{-\delta}\bigr)
\E[\lVert\theta_k-\theta^\star\rVert^2\mid\F_0]
+C U_2(W_0)(1+\log k)k^{-2\delta}.
\]

\end{lemma}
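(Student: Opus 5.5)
The plan is to start from the projected one-step inequality \eqref{eq:proj-nonexpansive}, which uses only $\theta^\star\in\Theta$ and nonexpansiveness of $\Pi_\Theta$. Taking conditional expectations given $\F_0$ gives
\[
\E[\lVert\theta_{k+1}-\theta^\star\rVert^2\mid\F_0]
\leq\E[\lVert\theta_k-\theta^\star\rVert^2\mid\F_0]
+2\alpha_k\E[\langle\theta_k-\theta^\star,F(\theta_k,W_k)\rangle\mid\F_0]
+\alpha_k^2\E[\lVert F(\theta_k,W_k)\rVert^2\mid\F_0].
\]
The quadratic term is handled by \eqref{eq:update-field-second-moment-bound} of Lemma~\ref{lemma:update-field-bounds}. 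This makes it at most $C U_2(W_0)\alpha_0^2k^{-2\delta}$, which is trivially dominated by $(1+\log k)k^{-2\delta}$.

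For the inner-product term, I will invoke Lemma~\ref{lemma:lagged-restoring-drift} with the logarithmic lag $\ell=\ell_k$ from \eqref{eq:main-log-lag}. This is legitimate once $k>\ell_k$, which fixes the threshold $K$, since $\ell_k=\mathcal O(\log k)$. Multiplying by $2\alpha_k=2\alpha_0k^{-\delta}$ produces the contraction factor $1-2\zeta\alpha_0k^{-\delta}$ exactly, plus a remainder of size $2\alpha_k C U_2(W_0)\bigl(\rho^{\ell_k}+\sum_{i=k-\ell_k}^{k-1}\alpha_i\bigr)$.

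The remaining work is to show this remainder is $\lesssim U_2(W_0)(1+\log k)k^{-2\delta}$, in two estimates.
\begin{enumerate}
\item By the choice of $\ell_k$, $\rho^{\ell_k}\leq\rho^{\delta\log k/|\log\rho|}=k^{-\delta}$.
\item For $k\geq K$ with $K$ large enough that $\ell_k\leq k/2$, every $i\in[k-\ell_k,k-1]$ satisfies $i\geq k/2$. Hence $\alpha_i\leq 2^\delta\alpha_0k^{-\delta}$, so $\sum_{i=k-\ell_k}^{k-1}\alpha_i\leq 2^\delta\alpha_0\ell_k k^{-\delta}$. Since $\ell_k\leq 1+\delta\log k/|\log\rho|\lesssim 1+\log k$, this gives a bound of order $(1+\log k)k^{-\delta}$.
\end{enumerate}
Multiplying by $2\alpha_0k^{-\delta}$ yields the claimed $(1+\log k)k^{-2\delta}$ term. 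The constants depend only on $\alpha_0,\delta,\rho$ and the constant from Lemma~\ref{lemma:lagged-restoring-drift}.

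There is no real obstacle, since the hard work is in Lemma~\ref{lemma:lagged-restoring-drift}. The only points needing care are:
\begin{itemize}
\item choosing $K$ so that both $k>\ell_k$ and $\ell_k\leq k/2$ hold, and also $2\zeta\alpha_0k^{-\delta}\leq1$ if one wants the factor to be nonnegative (not strictly needed for the inequality);
\item noting that the sign of the QSM term is used in the correct direction, since $-\zeta\E[\cdot]$ is multiplied by the positive quantity $2\alpha_k$.
\end{itemize}
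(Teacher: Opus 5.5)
Your proposal is correct and matches the paper's proof essentially step for step. Like the paper, you apply Lemma~\ref{lemma:lagged-restoring-drift} with the logarithmic lag $\ell_k$, use $\rho^{\ell_k}\leq k^{-\delta}$ and $k-\ell_k\geq k/2$ to bound the window sum by $2^\delta\ell_k\alpha_k\lesssim\alpha_k(1+\log k)$, and control the quadratic term with Lemma~\ref{lemma:update-field-bounds}. Your remark that nonnegativity of $1-2\zeta\alpha_0k^{-\delta}$ is not needed here is also right; it matters only when the recursion is iterated.
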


\proof{Proof of Lemma~\ref{lemma:recursion}.}

Recall the logarithmic lag
\begin{equation}\label{eq:log-lag}
\ell_k:=
\max\!\biggl\{1,
\Bigl\lceil\frac{\delta\log k}{|\log\rho|}\Bigr\rceil\biggr\},
\end{equation}
and choose an integer \(K\geq2\) so that \(n\geq2\ell_n\) for all \(n\geq K\).
Such a \(K\) can be chosen because \(\ell_n=\mathcal O(\!\log n)\).

Let \(C_0\) be the constant in
Lemma~\ref{lemma:lagged-restoring-drift}.  Applying that lemma with
\(\ell=\ell_k\) yields
\begin{align}
\E[\langle\theta_k-\theta^\star,F(\theta_k,W_k)\rangle
\mid\F_0]
\leq
-\zeta \E[\lVert\theta_k-\theta^\star\rVert^2\mid\F_0]
&+C_0 U_2(W_0)
\biggl(\rho^{\ell_k}+\sum_{i=k-\ell_k}^{k-1}\alpha_i\biggr).
\label{eq:restoring-drift-log-specialization}
\end{align}

Fix \(k\geq K\).  The definition of
\(K\) ensures that the look-back window lies within the most
recent half of the trajectory: \(k\geq2\ell_k\), so
\(k-\ell_k\geq k/2\) and
\(
\alpha_{k-\ell_k}
\leq\alpha_0(k/2)^{-\delta}
=2^\delta\alpha_k.
\)
Monotonicity of the step sizes therefore implies
\begin{equation*}
\sum_{i=k-\ell_k}^{k-1}\alpha_i
\leq\ell_k\alpha_{k-\ell_k}
\leq2^\delta\ell_k\alpha_k\leq 2^\delta \alpha_k
\bigl(1+\delta\log k/|\log\rho|\bigr).
\end{equation*}
Moreover, \(\rho^{\ell_k}
\leq k^{-\delta}=\alpha_k/\alpha_0\). 
Hence there is a positive constant \(C_1\) such that
\begin{align}
U_2(W_0)\biggl(\rho^{\ell_k}
+\sum_{i=k-\ell_k}^{k-1}\alpha_i\biggr)
\leq C_1 U_2(W_0)\alpha_k(1+\log k).
\label{eq:restoring-drift-log-specialization-2}
\end{align}

Combining \eqref{eq:restoring-drift-log-specialization} and \eqref{eq:restoring-drift-log-specialization-2}
yields, with \(C_2:=C_0C_1\),
\begin{align}
\E[\langle\theta_k-\theta^\star,F(\theta_k,W_k)\rangle\mid\F_0]
\leq C_2 U_2(W_0)\alpha_k(1+\log k)
-\zeta\E[\lVert\theta_k-\theta^\star\rVert^2\mid\F_0].
\label{eq:log-lag-restoring-bound}
\end{align}

We now insert \eqref{eq:log-lag-restoring-bound} into
\eqref{eq:proj-nonexpansive}, reproduced below:
\begin{align*}
    \lVert\theta_{k+1}-\theta^\star\rVert^2 
\leq
\lVert\theta_k-\theta^\star\rVert^2
+
2\alpha_k
\langle\theta_k-\theta^\star,F(\theta_k,W_k)\rangle
+
\alpha_k^2\lVert F(\theta_k,W_k)\rVert^2.
\end{align*}
Let 
\(
x_k:=\E[\lVert\theta_k-\theta^\star\rVert^2\mid\F_0].
\)
Let \(C_3\) be the constant in
Lemma~\ref{lemma:update-field-bounds}.
Set \(C_4:=C_3+2C_2\) and \(C:=\alpha_0^2C_4\).  Then
\begin{align*}
x_{k+1}
&\leq x_k+C_3 U_2(W_0)\alpha_k^2
+2\alpha_k\biggl[
C_2 U_2(W_0)\alpha_k(1+\log k)-\zeta x_k
\biggr] \\
&\leq(1-2\zeta\alpha_k)x_k
+C_4 U_2(W_0)\alpha_k^2(1+\log k)\\
&\leq \bigl(1-2\zeta\alpha_0k^{-\delta}\bigr)
x_k
+C U_2(W_0)(1+\log k)k^{-2\delta}
. \Halmos
\end{align*}
\endproof

\subsection{Proof of Proposition~\ref{prop:expectation}}
\label{subsec:MSE-bound}

We treat the cases \(0<\delta<1\) and \(\delta=1\) separately. The corresponding
MSE bounds are given in Propositions~\ref{prop:theta_upper_bound} and
\ref{prop:theta_upper_bound2}, respectively.

\begin{proposition}
    \label{prop:theta_upper_bound}
Suppose Assumptions~\ref{assump:learning-rate}--\ref{assump:qsm} hold with 
\(0<\delta<1\). Let \(\Gamma:=2\zeta\alpha_0\). Then there are positive constants \(C\) and \(K\)
such that, for all \(k\geq K\),
\begin{equation}\label{eq:theta_t_rate}
\E[\lVert\theta_k-\theta^\star\rVert^2\mid\F_0]
\leq d_\Theta^2\exp\biggl(
\frac{\Gamma(K^{1-\delta}-k^{1-\delta})}{1-\delta}
\biggr)
+C U_2(W_0)(1+\log k)k^{-\delta}.
\end{equation}
\end{proposition}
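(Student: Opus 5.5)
The plan is to unroll the recursion of Lemma~\ref{lemma:recursion} from the threshold \(K\) onward and bound the homogeneous and forced parts separately. Write \(x_k:=\E[\lVert\theta_k-\theta^\star\rVert^2\mid\F_0]\) and \(\Gamma=2\zeta\alpha_0\). Enlarge \(K\) if necessary so that \(\Gamma k^{-\delta}<1\) for \(k\geq K\). Iterating the recursion gives
\[
x_k\leq x_K\prod_{j=K}^{k-1}\bigl(1-\Gamma j^{-\delta}\bigr)
+C_0U_2(W_0)\sum_{i=K}^{k-1}(1+\log i)i^{-2\delta}\prod_{j=i+1}^{k-1}\bigl(1-\Gamma j^{-\delta}\bigr).
\]
Since \(\theta_K,\theta^\star\in\Theta\), we have \(x_K\leq d_\Theta^2\). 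Using \(1-u\leq e^{-u}\) and the integral comparison \(\sum_{j=K}^{k-1}j^{-\delta}\geq\int_K^k t^{-\delta}\dd t=(k^{1-\delta}-K^{1-\delta})/(1-\delta)\), the first term is at most the exponential term in \eqref{eq:theta_t_rate}.

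For the forced term, I would avoid computing the sum directly and instead use a comparison (induction) argument. Set \(y_k:=A(1+\log k)k^{-\delta}\). I would show that \(y\) is a supersolution of the forced recursion \(z_{k+1}=(1-\Gamma k^{-\delta})z_k+C_0U_2(W_0)(1+\log k)k^{-2\delta}\) for \(k\geq K\), with \(A\) a large multiple of \(U_2(W_0)\). The required inequality is \(y_{k+1}\geq(1-\Gamma k^{-\delta})y_k+C_0U_2(W_0)(1+\log k)k^{-2\delta}\). The change \(y_k-y_{k+1}\) is \(O\bigl((1+\log k)k^{-1-\delta}\bigr)\), which is \(o\bigl((1+\log k)k^{-2\delta}\bigr)\) precisely because \(\delta<1\). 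The inequality therefore reduces to \(\Gamma A(1+\log k)k^{-2\delta}(1-o(1))\geq C_0U_2(W_0)(1+\log k)k^{-2\delta}\), which holds for \(A=2C_0U_2(W_0)/\Gamma\) once \(k\) exceeds some threshold; enlarge \(K\) accordingly.

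By linearity, the forced part started from zero at \(K\) is dominated by \(y_k\): the difference \(y_k-z_k\) satisfies a recursion with nonnegative coefficients and nonnegative inputs, and \(z_K=0\leq y_K\). Adding the homogeneous bound then yields \eqref{eq:theta_t_rate} with \(C=2C_0/\Gamma\).

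The main obstacle is the step \(y_k-y_{k+1}=o(\alpha_k y_k)\). This is exactly where \(\delta<1\) enters. At \(\delta=1\), the decrement of \(y\) is of the same order as \(\Gamma y_k/k\), which is why \(\Gamma>1\) is needed in the companion \(\delta=1\) case. The constant dependence needs care, since \(C\) must be independent of \(W_0\); this works because the forcing term is linear in \(U_2(W_0)\) and \(A\) scales accordingly.
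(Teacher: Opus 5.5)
Your proposal is correct and is essentially the paper's argument. The paper runs a single induction on the full bound, whereas you unroll the recursion and handle the forced part by a separate supersolution comparison. The ingredients are identical: \(1-\Gamma k^{-\delta}\leq\exp\bigl(-\int_k^{k+1}\Gamma u^{-\delta}\,\mathrm{d}u\bigr)\) for the exponential term, the estimate \(k^{\delta}\bigl(1-k^{\delta}(k+1)^{-\delta}\bigr)\leq\delta k^{\delta-1}\leq\Gamma/2\) for \(k\geq K\) (which is where \(\delta<1\) enters), and the choice \(C=2C_0/\Gamma\).
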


\proof{Proof of Proposition~\ref{prop:theta_upper_bound}.}
Let \(C_0\) and \(K_0\) be constants for which
Lemma~\ref{lemma:recursion} holds.  Choose an integer \(K\geq K_0\) so that
\(K^\delta>\Gamma\) and
\(K^{1-\delta}\geq2\delta/\Gamma\).  We verify
\eqref{eq:theta_t_rate} with \(K\) and a constant \(C\) chosen below.
When \(k=K\),
\eqref{eq:theta_t_rate} holds 
because 
\(\theta_K,\theta^\star\in\Theta\) and the definition of
\(d_\Theta\) imply
\(
\lVert\theta_K-\theta^\star\rVert^2
\leq d_\Theta^2
\).

Now assume \eqref{eq:theta_t_rate} holds for some \(k\geq K\) and
consider \(k+1\).
Lemma~\ref{lemma:recursion} yields
\begin{align}
    \E[\lVert\theta_{k+1}-\theta^\star\rVert^2\mid\F_0] 
    \leq{}&  \bigl(1 - \Gamma k^{-\delta}\bigr) \E [\lVert\theta_{k}-\theta^\star\rVert^2\mid\F_0] + C_0 U_2(W_0)(1+\log k)k^{-2\delta}\nonumber \\    
    \leq{}& \underbrace{\bigl(1 - \Gamma k^{-\delta} \bigr)d_\Theta^2 \exp\biggl(\frac{\Gamma (K^{1-\delta} - k^{1-\delta})}{1-\delta}\biggr)}_{A_1} \nonumber \\ 
    & +
    \underbrace{\bigl(1 - \Gamma k^{-\delta} \bigr) C U_2(W_0)(1+\log k)k^{-\delta} + C_0 U_2(W_0)(1+\log k)k^{-2\delta}}_{A_2}, \label{eq:bound-decomp}
\end{align}
where the second inequality follows from the induction hypothesis and
\(1-\Gamma/k^\delta\geq1-\Gamma/K^\delta>0\).

For \(A_1\), we have
\[
    1-\Gamma k^{-\delta} \leq  \exp\bigl(-\Gamma k^{-\delta}\bigr) \leq \exp\biggl(-\int_k^{k+1} \Gamma u^{-\delta}\dd{u}\biggr) = \exp\biggl(\frac{\Gamma (k^{1-\delta} - (k+1)^{1-\delta})}{1-\delta}\biggr),
\]
where the first inequality follows from the basic relation that $1+u\leq e^u$ for all $u\in\R$.
Thus,
\begin{align}
    A_1 \leq d_\Theta^2 \exp\Bigl(\frac{\Gamma (K^{1-\delta} - (k+1)^{1-\delta})}{1-\delta}\Bigr). \label{eq:D1-bound}
\end{align}

For \(A_2\), direct algebra yields
\begin{align}
    & A_2 - C U_2(W_0)(1+\log k)(k+1)^{-\delta} \notag\\ 
    ={}& U_2(W_0)(1+\log k)k^{-2\delta}
    \biggl[C_0-\Gamma C+Ck^\delta
    \bigl(1-k^\delta(k+1)^{-\delta}\bigr)\biggr].
    \label{eq:D2-difference-bound}
\end{align}
Using \((1+1/k)^k\leq e\) for positive integers \(k\) and
\(1+u\leq e^u\) for each \(u\in\R\), we have
\[
    k^\delta \bigl(1-k^\delta(k+1)^{-\delta}\bigr)
    = k^\delta \bigl[1-(1+k^{-1})^{-\delta}\bigr]
    \leq k^\delta \bigl(1- e^{-\delta/k}\bigr)
    \leq \delta k^{\delta-1} \leq \delta K^{\delta-1}\leq \Gamma/2,
\]
where the last inequality follows from the choice of \(K\).  Thus,
the right-hand side of \eqref{eq:D2-difference-bound} is bounded by 
\[
U_2(W_0)(1+\log k)k^{-2\delta}
\bigl(C_0-\Gamma C/2\bigr).
\]
Choose \(C\geq2C_0/\Gamma\).  The last display is then nonpositive, and
\begin{equation}\label{eq:D2-bound}
    A_2 \leq C U_2(W_0)(1+\log k)(k+1)^{-\delta}.
\end{equation}

Finally, \eqref{eq:bound-decomp}, \eqref{eq:D1-bound},
\eqref{eq:D2-bound}, and \(\log k\leq\log(k+1)\) yield
\begin{align*}
\E[\lVert\theta_{k+1}-\theta^\star\rVert^2\mid\F_0]
&\leq d_\Theta^2
\exp\biggl(\frac{\Gamma (K^{1-\delta}-(k+1)^{1-\delta})}{1-\delta}\biggr)
+C U_2(W_0)(1+\log(k+1))(k+1)^{-\delta}.
\end{align*}
Thus, \eqref{eq:theta_t_rate} holds for \(k+1\), completing the induction.
\Halmos\endproof

\begin{proposition}
    \label{prop:theta_upper_bound2}
Suppose Assumptions~\ref{assump:learning-rate}--\ref{assump:qsm} hold with \(\delta=1\) and \(\Gamma:=2\zeta\alpha_0>1\). Then there are positive constants \(C\) and \(K\) such that for each \(k\geq K\),
\begin{equation}\label{eq:theta_t_rate2}
\E[\lVert\theta_k-\theta^\star\rVert^2\mid\F_0]
\leq d_\Theta^2\bigl(K/k\bigr)^{\Gamma}
+C U_2(W_0)(1+\log k)k^{-1}.
\end{equation}
\end{proposition}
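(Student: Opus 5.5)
We argue by induction on \(k\), as in the proof of Proposition~\ref{prop:theta_upper_bound}, now with the harmonic step size \(\alpha_k=\alpha_0/k\) and the recursion of Lemma~\ref{lemma:recursion} in the form
\[
x_{k+1}\leq(1-\Gamma/k)\,x_k+C_0U_2(W_0)(1+\log k)k^{-2},
\qquad x_k:=\E[\lVert\theta_k-\theta^\star\rVert^2\mid\F_0],
\]
valid for \(k\geq K_0\). Choose \(K\geq K_0\) with \(K>\Gamma\), so that \(1-\Gamma/k>0\) for all \(k\geq K\), and choose \(C\) later. The base case \(k=K\) is immediate: \(x_K\leq d_\Theta^2=d_\Theta^2(K/K)^\Gamma\).

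For the inductive step, assume \eqref{eq:theta_t_rate2} holds at \(k\geq K\) and split the right side of the recursion into \(A_1+A_2\) as before.

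For the transient term, use \(1-\Gamma/k\leq e^{-\Gamma/k}\) together with
\[
\log(1+1/k)\leq 1/k,
\qquad\text{so}\qquad
e^{-\Gamma/k}\leq (k/(k+1))^{\Gamma}.
\]
This gives
\[
A_1=(1-\Gamma/k)\,d_\Theta^2(K/k)^\Gamma\leq d_\Theta^2\bigl(K/(k+1)\bigr)^\Gamma.
\]

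For the stochastic term, write
\[
A_2=(1-\Gamma/k)\,C U_2(W_0)(1+\log k)k^{-1}+C_0U_2(W_0)(1+\log k)k^{-2}.
\]
It suffices to show \(A_2\leq CU_2(W_0)(1+\log k)(k+1)^{-1}\), since \(\log k\leq\log(k+1)\) then finishes the step. Using \(1/k-1/(k+1)=1/(k(k+1))\leq 1/k^2\), we get
\[
A_2-CU_2(W_0)(1+\log k)(k+1)^{-1}\leq U_2(W_0)(1+\log k)k^{-2}\bigl[C_0-\Gamma C+C\bigr].
\]
This is nonpositive once \(C(\Gamma-1)\geq C_0\), i.e. \(C\geq C_0/(\Gamma-1)\). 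That is exactly where \(\Gamma>1\) enters.

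\emph{Main obstacle.} The delicate point is this last comparison. For \(\delta=1\) the decrease \(k^{-1}-(k+1)^{-1}\) is of the same order \(k^{-2}\) as the contraction gain \(\Gamma k^{-2}\). Unlike the case \(\delta<1\), it cannot be absorbed by taking \(K\) large, so the margin \(\Gamma-1>0\) is genuinely needed.
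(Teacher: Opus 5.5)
Your proof is correct, but it follows a different route from the paper's. For \(\delta=1\) the paper does not induct. It unrolls the recursion from \(K\) to \(k\) and bounds the products by \(\prod_{i=n}^{k-1}(1-\Gamma/i)\leq(n/k)^\Gamma\). It then controls the resulting convolution \(\sum_{n=K}^{k-1}n^{-2}\prod_{i=n+1}^{k-1}(1-\Gamma/i)\) using \(n+1\leq(1+1/K)n\) and the integral comparison \(\sum_{n=K}^{k-1}n^{\Gamma-2}\leq k^{\Gamma-1}/(\Gamma-1)\). That comparison needs a case split between \(1<\Gamma<2\) and \(\Gamma\geq2\), and the argument ends with \(C=C_0(1+1/K)^\Gamma/(\Gamma-1)\).

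Your one-step induction mirrors the paper's own argument for \(0<\delta<1\). It replaces all of the above with two elementary comparisons, \(1-\Gamma/k\leq(k/(k+1))^\Gamma\) and \(1/(k(k+1))\leq k^{-2}\). It is shorter, avoids the case split, and gives the slightly smaller constant \(C_0/(\Gamma-1)\).

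The hypothesis \(\Gamma>1\) enters at the same structural point in both arguments. In the paper it appears as the growth \(k^{\Gamma-1}\) of the partial sums of \(n^{\Gamma-2}\); in yours it appears as the margin condition \(C(\Gamma-1)\geq C_0\).

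The unrolled form buys transparency about the mechanism. It exhibits the forcing term convolved against the contraction kernel \((n/k)^\Gamma\), so the same computation shows directly that the bound degrades to order \((1+\log k)k^{-\Gamma}\) when \(\Gamma<1\). Your induction, by contrast, only certifies a rate profile that must be guessed in advance.
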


\proof{Proof of Proposition~\ref{prop:theta_upper_bound2}.} 
Let \(C_0\) and \(K_0\) be constants for which
Lemma~\ref{lemma:recursion} holds.  Choose an integer \(K\geq K_0\) with
\(K>\Gamma\).  When \(k=K\),
\eqref{eq:theta_t_rate2} holds because
\(\theta_K,\theta^\star\in\Theta\) and the definition of
\(d_\Theta\) imply
\(
\lVert\theta_K-\theta^\star\rVert^2
\leq d_\Theta^2
\).

For \(k\geq K+1\), applying Lemma~\ref{lemma:recursion} at index \(k-1\)
first yields
\begin{align*}
\E[\lVert\theta_k-\theta^\star\rVert^2\mid\F_0]
&\leq\bigl(1-\Gamma(k-1)^{-1}\bigr)
\E[\lVert\theta_{k-1}-\theta^\star\rVert^2\mid\F_0]
+C_0 U_2(W_0)(1+\log(k-1))(k-1)^{-2}\\
&\leq\bigl(1-\Gamma(k-1)^{-1}\bigr)
\E[\lVert\theta_{k-1}-\theta^\star\rVert^2\mid\F_0]
+C_0 U_2(W_0)(1+\log k)(k-1)^{-2}.
\end{align*}
Iterating this bound yields
\begin{align}
\E[\lVert\theta_k-\theta^\star\rVert^2\mid\F_0]
&\leq d_\Theta^2\prod_{i=K}^{k-1}\bigl(1-\Gamma i^{-1}\bigr)
+C_0 U_2(W_0)(1+\log k)
\sum_{n=K}^{k-1}n^{-2}
\prod_{i=n+1}^{k-1}\bigl(1-\Gamma i^{-1}\bigr).
\label{eq:iterative_inequality}
\end{align}
All factors are positive because \(K>\Gamma\).

For each \(n\geq K\),
\begin{equation}\label{eq:prod_upper_bound}
\prod_{i=n}^{k-1}\bigl(1-\Gamma i^{-1}\bigr)
\leq\exp\biggl(-\Gamma\sum_{i=n}^{k-1}i^{-1}\biggr)
\leq\exp\biggl(-\Gamma\int_n^k u^{-1}\dd{u}\biggr)
=\bigl(n/k\bigr)^\Gamma.
\end{equation}
Hence,
\begin{align*}
\sum_{n=K}^{k-1}n^{-2}
\prod_{i=n+1}^{k-1}\bigl(1-\Gamma i^{-1}\bigr) 
\leq\bigl((1+1/K)k^{-1}\bigr)^\Gamma
\sum_{n=K}^{k-1}n^{\Gamma-2},
\end{align*}
because \(n+1\leq(1+1/K)n\).  Applying the left-end integral comparison
when \(1<\Gamma<2\) and the right-end comparison when \(\Gamma\geq2\)
yields
\[
\sum_{n=K}^{k-1}n^{\Gamma-2}\leq k^{\Gamma-1}/(\Gamma-1).
\]
Thus, in both cases,
\begin{equation}\label{eq:prod_upper_bound-3}
\sum_{n=K}^{k-1}n^{-2}
\prod_{i=n+1}^{k-1}\bigl(1-\Gamma i^{-1}\bigr)
\leq\bigl((1+1/K)^\Gamma/(\Gamma-1)\bigr)k^{-1}.
\end{equation}
Set
\(
C:=C_0(1+1/K)^\Gamma/(\Gamma-1).
\)
Substituting \eqref{eq:prod_upper_bound} and  \eqref{eq:prod_upper_bound-3} into
\eqref{eq:iterative_inequality} proves \eqref{eq:theta_t_rate2}.
\Halmos\endproof 

\proof{Proof of Proposition~\ref{prop:expectation}.}
For \(0<\delta<1\), let \(C_0\) and \(K_0\) be the constant and iteration
threshold in Proposition~\ref{prop:theta_upper_bound}.  For \(\delta=1\), let
\(C_0\) and \(K_0\) be the constant and iteration threshold in
Proposition~\ref{prop:theta_upper_bound2}.  The first term on the right-hand
side of \eqref{eq:theta_t_rate} is \(o(k^{-\delta})\), while the first term on
the right-hand side of \eqref{eq:theta_t_rate2} is
\(\mathcal O(k^{-\Gamma})=o(k^{-1})\).  In either case, choose an integer
\(K\geq K_0\) so that the corresponding first term is at most
\((1+\log k)k^{-\delta}\) for all \(k\geq K\), and set \(C:=C_0+1\).  Since
\(U_2(W_0)\geq1\),
\begin{equation*}
\E[\lVert\theta_k-\theta^\star\rVert^2\mid\F_0]
\leq
 C U_2(W_0)(1+\log k)k^{-\delta},
\end{equation*}
for all \(k\geq K\).
\Halmos\endproof

\section{Shrinking-Tube Concentration}
\label{sec:EC-trajectory-proofs}

\subsection{Block Decomposition}
\label{subsec:ec-canonical-block-decomposition}

We divide the trajectory of SA iterates into blocks. 
For each \(r\geq 1\), block \(r\) consists of iterates at times  \(k\in 
\mathcal I_r:=\{n_r,\ldots,n_{r+1}-1\}\). 
If \(1/2<\delta<1\), we use polynomial blocks: 
\begin{equation}\label{eq:block-definition}
n_r:=\Bigl\lceil r^{1/(1-\delta)}\Bigr\rceil;
\end{equation}
if \(\delta=1\), we use dyadic blocks: 
\begin{equation}\label{eq:dyadic-block-definition}
n_r:=2^r.
\end{equation}
For either block formulation, let
\(\psi_r:=c n_{r+1}^{-\beta}.
\)
For an integer \(k_0\geq n_1\), let \(r_0=r_0(k_0)\) be the index of the
block containing \(k_0\), so \(n_{r_0}\leq k_0<n_{r_0+1}\).

The following lemma reduces a shrinking-tube exit to a block exit.

\begin{lemma}
\label{lemma:blockwise-tube-reduction}
For either block construction and every integer \(k_0\geq n_1\),
\begin{equation*}
\biggl\{
\lVert\theta_k-\theta^\star\rVert>c k^{-\beta}
\text{ for some }k\geq k_0
\biggr\}
\subseteq
\bigcup_{r\geq r_0}
\biggl\{
\max_{k\in\mathcal I_r}
\lVert\theta_k-\theta^\star\rVert>\psi_r
\biggr\}.
\end{equation*}
\end{lemma}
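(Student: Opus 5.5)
The inclusion is deterministic and pathwise, so I would fix an outcome in the left-hand event and locate a block in which the block-maximum event occurs. On this outcome there is an index \(k\geq k_0\) with \(\lVert\theta_k-\theta^\star\rVert>c k^{-\beta}\). The first step is to show that the blocks \(\{\mathcal I_r:r\geq1\}\) partition \(\{n_1,n_1+1,\ldots\}\). This requires checking, for each construction, that \(\{n_r\}\) is strictly increasing and unbounded.

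For dyadic blocks \(n_r=2^r\), the check is immediate. For polynomial blocks \(n_r=\lceil r^{1/(1-\delta)}\rceil\) with \(1/2<\delta<1\), the exponent \(1/(1-\delta)\) is at least \(2\). Hence \((r+1)^{1/(1-\delta)}-r^{1/(1-\delta)}\geq1\), so the ceilings are strictly increasing, and they tend to infinity. This is also what makes \(r_0(k_0)\) well defined for \(k_0\geq n_1\). Given this, the exiting index \(k\geq k_0\geq n_{r_0}\) lies in a unique block \(\mathcal I_r\), namely the one with \(n_r\leq k<n_{r+1}\). Since \(k\geq k_0\geq n_{r_0}\) and the \(n_r\) are increasing, that block satisfies \(r\geq r_0\).

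It then remains to compare thresholds within that block. Because \(k<n_{r+1}\) and \(\beta\geq0\), we have \(k^{-\beta}\geq n_{r+1}^{-\beta}\), so \(c k^{-\beta}\geq c n_{r+1}^{-\beta}=\psi_r\). Therefore
\[
\max_{j\in\mathcal I_r}\lVert\theta_j-\theta^\star\rVert\geq\lVert\theta_k-\theta^\star\rVert>c k^{-\beta}\geq\psi_r,
\]
and the outcome belongs to the \(r\)-th event in the union with \(r\geq r_0\).

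The only point needing care is the monotonicity of the polynomial block boundaries, which ensures the blocks are nonempty and cover every index from \(n_1\) on. The convexity argument above handles it, so I do not expect a genuine obstacle. Part of block \(r_0\) lies before \(k_0\), which only enlarges the right-hand side and is harmless for an inclusion.
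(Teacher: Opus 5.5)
Your proof is correct and follows essentially the same argument as the paper's: take the block \(r\) containing the exiting index \(k\), note that \(r\geq r_0\), and use \(k<n_{r+1}\) with \(\beta\geq0\) to get \(c k^{-\beta}\geq c n_{r+1}^{-\beta}=\psi_r\). You also check explicitly that the polynomial boundaries \(n_r\) are strictly increasing and unbounded, so the blocks partition \(\{n_1,n_1+1,\ldots\}\); the paper leaves this implicit, and adding it does not change the route.
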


\proof{Proof of Lemma~\ref{lemma:blockwise-tube-reduction}.}
Choose \(k\geq k_0\) at which  \(\lVert\theta_k-\theta^\star\rVert>c k^{-\beta}\).
Let \(r\) be the block containing \(k\).  Then
\(r\geq r_0\).  Since \(k<n_{r+1}\) and \(\beta\geq0\),
\(
k^{-\beta}\geq n_{r+1}^{-\beta},
\)
and hence
\[
\lVert\theta_k-\theta^\star\rVert
>c k^{-\beta}
\geq c n_{r+1}^{-\beta}=\psi_r.
\]
Thus the maximum over block \(r\) exceeds \(\psi_r\), which completes the proof.
\Halmos\endproof

Fix a block \(r\) satisfying
\(n_r\geq\ell_{n_r}\), where \(\ell_{n_r}\) is the logarithmic lag defined in \eqref{eq:log-lag} with \(k=n_r\). 
For
\(m\in\mathcal I_r\), define
\begin{align}
S_r^{\mathrm{markov}}(m)
&:=
\sum_{k=n_r}^{m}
\alpha_k\bigl(
F(\theta_{k-\ell_{n_r}},W_k)
-\bar F(\theta_{k-\ell_{n_r}})
\bigr),
\label{eq:def-S-rm}\\
S_r^{\mathrm{field}}(m)
&:=
\sum_{k=n_r}^{m}
\alpha_k\bar F(\theta_{k-\ell_{n_r}}),
\\
S_r^{\mathrm{lag}}(m)
&:=
\sum_{k=n_r}^{m}
\alpha_k\bigl(
F(\theta_k,W_k)-F(\theta_{k-\ell_{n_r}},W_k)
\bigr).
\label{eq:def-S-adapt-rm}
\end{align}
For the same block, define the four component events
\begin{align*}
\mathcal E_r^{\mathrm{ent}}
:=
\biggl\{
\lVert\theta_{n_r}-\theta^\star\rVert
>\psi_r/8
\biggr\},\qquad 
\mathcal E_r^{\mathord{\bullet}}
:=
\biggl\{
\max_{m\in\mathcal I_r}
\lVert S_r^{\mathord{\bullet}}(m)\rVert
>\psi_r/8
\biggr\}.
\end{align*}
Here \(\mathord{\bullet}\) stands for
\(\mathrm{markov}\), \(\mathrm{field}\), or \(\mathrm{lag}\).  The following lemma reduces a block exit to the four component events.

Because \(\theta^\star\in\operatorname{int}(\Theta)\), let
\(d_\star
:=
\inf_{\theta\in\Theta^{\complement}}
\Bigl\lVert \theta^\star-\theta\Bigr\rVert
>0\).
By monotonicity of the tube radius in \(c\), it suffices to prove
Theorem~\ref{thm:trajectory} for \(0<c\leq d_\star/2\).

\begin{lemma}
\label{lemma:canonical-block-containment}
Suppose Assumptions~\ref{assump:compact} and
\ref{assump:interior-target} hold. Fix
\(0<c\leq d_\star/2\), \(\beta\geq0\), and \(r\). Then
\begin{equation}
\biggl\{
\max_{k\in\mathcal I_r}
\lVert\theta_k-\theta^\star\rVert>\psi_r
\biggr\}
\subseteq
\mathcal E_r^{\mathrm{ent}}
\cup\mathcal E_r^{\mathrm{markov}}
\cup\mathcal E_r^{\mathrm{field}}
\cup\mathcal E_r^{\mathrm{lag}}.
\label{eq:full-block-failure-decomposition}
\end{equation}
\end{lemma}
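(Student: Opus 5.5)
The plan is to prove the inclusion by contraposition: fix an outcome outside all four component events and show by induction on \(k\in\mathcal I_r\) that \(\lVert\theta_k-\theta^\star\rVert\leq\psi_r\). This formalizes the informal argument of Section~\ref{sec:concen-bounds}. Outside the events we have \(\lVert\theta_{n_r}-\theta^\star\rVert\leq\psi_r/8\), and \(\max_{m\in\mathcal I_r}\lVert S_r^{\bullet}(m)\rVert\leq\psi_r/8\) for \(\bullet\in\{\mathrm{markov},\mathrm{field},\mathrm{lag}\}\). The base case \(k=n_r\) is then immediate.

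For the inductive step, fix \(m\in\{n_r,\ldots,n_{r+1}-2\}\) and suppose \(\lVert\theta_k-\theta^\star\rVert\leq\psi_r\) for \(k=n_r,\ldots,m\).

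\emph{Step 1: the radius fits inside \(\Theta\).} Since \(\psi_r=cn_{r+1}^{-\beta}\leq c\leq d_\star/2<d_\star\), every \(\theta_k\) with \(n_r\leq k\leq m\) lies in the open ball of radius \(d_\star\) around \(\theta^\star\). By the definition of \(d_\star\), that ball is contained in \(\operatorname{int}(\Theta)\). This is the only place the interior-target assumption and the restriction \(c\leq d_\star/2\) are used.

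\emph{Step 2: the earlier projections are inactive.} For \(n_r\leq k\leq m-1\), set \(x_k:=\theta_k+\alpha_kF(\theta_k,W_k)\), so that \(\theta_{k+1}=\Pi_\Theta(x_k)\) is interior. If \(x_k\notin\Theta\), then \(\Pi_\Theta(x_k)\) would lie on \(\partial\Theta\), because the Euclidean projection onto a closed convex set of a point outside it lands on the boundary. Hence \(x_k\in\Theta\) and \(\theta_{k+1}=x_k\). Telescoping gives
\[
\theta_m=\theta_{n_r}+\sum_{k=n_r}^{m-1}\alpha_kF(\theta_k,W_k).
\]

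\emph{Step 3: bound \(\theta_{m+1}\).} The last projection may be active. Since \(\theta^\star=\Pi_\Theta(\theta^\star)\), nonexpansiveness gives \(\lVert\theta_{m+1}-\theta^\star\rVert\leq\lVert\theta_m+\alpha_mF(\theta_m,W_m)-\theta^\star\rVert\). Now write \(F(\theta_k,W_k)\) as the sum of three pieces: \(F(\theta_{k-\ell},W_k)-\bar F(\theta_{k-\ell})\), \(\bar F(\theta_{k-\ell})\), and \(F(\theta_k,W_k)-F(\theta_{k-\ell},W_k)\), with \(\ell=\ell_{n_r}\). This is an exact algebraic identity. The condition \(n_r\geq\ell_{n_r}\) guarantees that the lagged indices are nonnegative. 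Summing from \(k=n_r\) to \(m\) therefore gives
\[
\theta_m+\alpha_mF(\theta_m,W_m)-\theta^\star
=(\theta_{n_r}-\theta^\star)+S_r^{\mathrm{markov}}(m)+S_r^{\mathrm{field}}(m)+S_r^{\mathrm{lag}}(m).
\]
The triangle inequality then bounds the norm by \(4\psi_r/8=\psi_r/2<\psi_r\), which closes the induction. The definitions of the \(S_r^{\bullet}\) are purely algebraic, so no measurability or probabilistic input is needed.

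The one point I would check carefully is the boundary fact in Step 2: if \(x\notin\Theta\), then \(\Pi_\Theta(x)\in\partial\Theta\). To see it, suppose \(p:=\Pi_\Theta(x)\) were interior. Then \(p+t(x-p)\in\Theta\) for small \(t>0\), and this point is strictly closer to \(x\) than \(p\) is, contradicting minimality. Everything else is bookkeeping of indices. In particular, the statement concerns only iterates with indices in \(\mathcal I_r\), so the induction stops at \(m=n_{r+1}-2\), which yields \(\theta_{n_{r+1}-1}\), and \(\theta_{n_{r+1}}\) is never needed.
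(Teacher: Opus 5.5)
Your proof is correct and follows the paper's argument: the main text (Section~\ref{sec:concen-bounds}) gives exactly this forward induction, and the e-companion proof repackages it as a contradiction at the first exit time \(\tau>n_r\). Both rely on the same three steps you use: the radius \(\psi_r\leq c<d_\star\) keeps iterates in the interior, the segment argument shows the earlier projections are inactive, and nonexpansiveness handles the last projection before the four-way triangle inequality.
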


\proof{Proof of Lemma~\ref{lemma:canonical-block-containment}.}
Fix an outcome outside all four events.  If
\(\lVert\theta_k-\theta^\star\rVert\leq \psi_r\) for all
\(k\in\mathcal I_r\), then the event on the left-hand side of
\eqref{eq:full-block-failure-decomposition} does not occur.  Otherwise, define
the first exit time
\[
\tau:=\min\biggl\{k\in\{n_r,\ldots,n_{r+1}-1\}:
\lVert\theta_k-\theta^\star\rVert>\psi_r\biggr\}.
\]
Because the outcome is outside \(\mathcal E_r^{\mathrm{ent}}\),
\(\lVert\theta_{n_r}-\theta^\star\rVert\leq \psi_r/8<\psi_r\), and hence
\(\tau>n_r\).  Since \(\tau\) is the first exit time,
\[
\lVert\theta_j-\theta^\star\rVert
\leq \psi_r
\leq c
\leq d_\star/2
<d_\star,
\]
for all \(j=n_r,\ldots,\tau-1\).
Thus, all these iterates belong to \(\operatorname{int}(\Theta)\).

For \(k=n_r,\ldots,\tau-1\), define the preprojection point
\[
u_{k+1}:=\theta_k+\alpha_kF(\theta_k,W_k),\]
so \(
\theta_{k+1}=\Pi_\Theta(u_{k+1}).
\)
For all \(k=n_r,\ldots,\tau-2\), the projected output
\(\theta_{k+1}\) is in the interior.  If \(u_{k+1}\neq\theta_{k+1}\), then, for
sufficiently small \(\varepsilon\in(0,1)\),
\(
z:=\theta_{k+1}+\varepsilon(u_{k+1}-\theta_{k+1})\in\Theta
\)
and
\[
\lVert u_{k+1}-z\rVert
=(1-\varepsilon)\lVert u_{k+1}-\theta_{k+1}\rVert
<\lVert u_{k+1}-\theta_{k+1}\rVert.
\]
This contradicts the fact that
\(\theta_{k+1}=\Pi_\Theta(u_{k+1})\) is the closest point in \(\Theta\) to
\(u_{k+1}\).
Hence these preceding projections are inactive and
\(\theta_{\tau-1}=\theta_{n_r}
+\sum_{k=n_r}^{\tau-2}\alpha_kF(\theta_k,W_k)\).
At the possibly exiting update, \(\theta_\tau=\Pi_\Theta(u_\tau)\), while
\(\theta^\star=\Pi_\Theta(\theta^\star)\).  Projection nonexpansiveness yields
\begin{align*}
\lVert\theta_\tau-\theta^\star\rVert
\leq
\lVert u_\tau-\theta^\star\rVert
=
\biggl\lVert
\theta_{n_r}-\theta^\star
+\sum_{k=n_r}^{\tau-1}\alpha_kF(\theta_k,W_k)
\biggr\rVert.
\end{align*}
Adding and subtracting \(F(\theta_{k-\ell_{n_r}},W_k)\) and
\(\bar F(\theta_{k-\ell_{n_r}})\) in each summand yields
\[
\sum_{k=n_r}^{\tau-1}\alpha_kF(\theta_k,W_k)
=S_r^{\mathrm{markov}}(\tau-1)
+S_r^{\mathrm{field}}(\tau-1)
+S_r^{\mathrm{lag}}(\tau-1).
\]
Because \(\tau>n_r\) and \(\tau\leq n_{r+1}-1\), one has
\(\tau-1\in\mathcal I_r\).  The complement of the four component events and
the triangle inequality now imply
\begin{align*}
\lVert\theta_\tau-\theta^\star\rVert
&\leq
\lVert\theta_{n_r}-\theta^\star\rVert
+\lVert S_r^{\mathrm{markov}}(\tau-1)\rVert
+\lVert S_r^{\mathrm{field}}(\tau-1)\rVert
+\lVert S_r^{\mathrm{lag}}(\tau-1)\rVert\\
&\leq
4\cdot \psi_r/8 <\psi_r,
\end{align*}
contradicting the definition of \(\tau\).  This completes the proof.
\Halmos\endproof

\subsection{Maximal Bound for Accumulated Markovian Noise}
\label{subsec:ec-poly-block-noise}

\begin{lemma}
\label{lemma:ec-canonical-lagged-centering}
Suppose Assumptions~\ref{assump:learning-rate}--\ref{assump:growth} hold. Fix
integers \(\ell\geq1\) and \(I\leq J\) with \(I\geq\ell+1\).  For
\(k=I,\ldots,J\), define the lagged Markovian noise and its
lagged-centering decomposition by
\begin{equation}\label{eq:lag-centering-main}
\xi_{k,\ell}:=F(\theta_{k-\ell},W_k)-\bar F(\theta_{k-\ell}),\quad
\xi_{k,\ell}^{\mathrm{pred}}:=\E[\xi_{k,\ell}\mid\F_{k-\ell}],\quad\mbox{and}\quad
\xi_{k,\ell}^{\mathrm{cent}}:=\xi_{k,\ell}-\xi_{k,\ell}^{\mathrm{pred}}.
\end{equation}
These random vectors are square integrable.  There is a positive constant \(C\) such that, for all \(k=I,\ldots,J\),
\begin{equation}
\E[\lVert\xi_{k,\ell}^{\mathrm{pred}}\rVert\mid\F_0]
\leq C U_2(W_0)\bigl(\rho^\ell+\ell\alpha_{k-\ell}\bigr)
\label{eq:lag-predictable-bias}.
\end{equation}
The same \(C\) also satisfies, for \(0\leq t<k\),
\begin{equation}
\E[\lVert\xi_{k,\ell}^{\mathrm{cent}}\rVert^2\mid\F_t]
\leq
C
\begin{cases}
\lambda_2^{k-t}U_2(W_t)+1,
&0\leq t\leq k-\ell,\\
U_2(W_t)+U_2(W_{k-\ell})+1,
&k-\ell<t<k
\end{cases}.
\label{eq:lag-centered-conditional-moment}
\end{equation}
If \(I\leq i<k\leq J\), \(0\leq t\leq i\), and
\(k-i\geq\ell\), then
\begin{equation}\label{eq:lag-covariance-cancellation}
\E[(\xi_{k,\ell}^{\mathrm{cent}})^\intercal
\xi_{i,\ell}^{\mathrm{cent}}\mid\F_t]=0.
\end{equation}
\end{lemma}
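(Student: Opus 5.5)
Three claims are needed: square integrability with the predictable-bias bound \eqref{eq:lag-predictable-bias}, the conditional second-moment bound \eqref{eq:lag-centered-conditional-moment}, and the covariance cancellation \eqref{eq:lag-covariance-cancellation}.

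\emph{Square integrability and the predictable bias.} Square integrability is immediate: Assumption~\ref{assump:growth} and Lemma~\ref{lemma:update-field-bounds} give \(\lVert\xi_{k,\ell}\rVert\leq C_FU_1(W_k)+C\), and Lemma~\ref{lemma:adaptive-moment-control} together with Assumption~\ref{assump:ergodic}\ref{part:ergodic-initial} makes \(U_2(W_k)\) integrable. Conditional Jensen then handles \(\xi^{\mathrm{pred}}\) and \(\xi^{\mathrm{cent}}\). For \eqref{eq:lag-predictable-bias}, apply Corollary~\ref{corollary:lagged-update-field-comparison} with \(t=k-\ell\). This bounds \(\lVert\xi_{k,\ell}^{\mathrm{pred}}\rVert\) by \(C\rho^\ell U_1(W_{k-\ell})\) plus the geometrically weighted sum of \(\E[U_1(W_{t+s})\lVert\theta_{t+s}-\theta_t\rVert\mid\F_t]\). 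Take \(\E[\cdot\mid\F_0]\) via the tower property. Lemma~\ref{lemma:adaptive-moment-control} bounds the first term by \(C\rho^\ell U_2(W_0)\). Lemma~\ref{lemma:ec-canonical-movement} bounds each summand by \(CU_2(W_0)\sum_{i=t}^{t+s-1}\alpha_i\leq CU_2(W_0)\,s\,\alpha_{k-\ell}\), using monotonicity of the step sizes. Since \(s\leq\ell\) and \(\sum_s\rho^{\ell-s-1}\leq(1-\rho)^{-1}\), the second term is at most \(CU_2(W_0)\ell\alpha_{k-\ell}\).

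\emph{Conditional second moment.} Start from the pointwise bound \(\lVert\xi_{k,\ell}\rVert^2\leq C\,U_2(W_k)\), which holds because \(U_1^2\leq2U_2\) and \(\bar F\) is bounded. The two cases differ in how they handle \(\xi_{k,\ell}^{\mathrm{pred}}\).

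\begin{itemize}
\item Case \(t\leq k-\ell\). Expand \(\lVert\xi^{\mathrm{cent}}\rVert^2=\lVert\xi\rVert^2-2\langle\xi,\xi^{\mathrm{pred}}\rangle+\lVert\xi^{\mathrm{pred}}\rVert^2\) and condition on \(\F_{k-\ell}\). This gives \(\E[\lVert\xi^{\mathrm{cent}}\rVert^2\mid\F_{k-\ell}]=\E[\lVert\xi\rVert^2\mid\F_{k-\ell}]-\lVert\xi^{\mathrm{pred}}\rVert^2\leq\E[\lVert\xi\rVert^2\mid\F_{k-\ell}]\). The tower property then yields \(\E[\lVert\xi^{\mathrm{cent}}\rVert^2\mid\F_t]\leq C\,\E[U_2(W_k)\mid\F_t]\leq C(\lambda_2^{k-t}U_2(W_t)+1)\) by Lemma~\ref{lemma:adaptive-moment-control}.
\item Case \(k-\ell<t<k\). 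Here \(\xi^{\mathrm{pred}}\) is \(\F_t\)-measurable, so use \(\lVert\xi^{\mathrm{cent}}\rVert^2\leq2\lVert\xi\rVert^2+2\lVert\xi^{\mathrm{pred}}\rVert^2\). The first piece is bounded as before by \(C(U_2(W_t)+1)\). For the second, conditional Jensen gives \(\lVert\xi^{\mathrm{pred}}\rVert^2\leq\E[\lVert\xi\rVert^2\mid\F_{k-\ell}]\leq C(U_2(W_{k-\ell})+1)\).
\end{itemize}

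Finally, take \(C\) as the maximum of the constants arising in the three claims.

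\emph{Covariance cancellation.} Let \(k-i\geq\ell\) and \(t\leq i\). Then \(\xi_{i,\ell}^{\mathrm{cent}}\) is \(\F_i\subseteq\F_{k-\ell}\)-measurable, because it depends only on \(\theta_{i-\ell}\), \(W_i\), \(\theta_{i-\ell}\) and a conditional expectation given \(\F_{i-\ell}\). Conditioning on \(\F_{k-\ell}\) kills the product, since \(\E[\xi_{k,\ell}^{\mathrm{cent}}\mid\F_{k-\ell}]=0\). Integrability of the product comes from Cauchy--Schwarz and the second-moment bounds. The tower property down to \(\F_t\) gives \eqref{eq:lag-covariance-cancellation}.

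The main technical care lies in the predictable-bias step. There one must pass the corollary's \(\F_t\)-conditional bound through \(\E[\cdot\mid\F_0]\) and convert the window sums into \(\ell\alpha_{k-\ell}\); the remaining steps are routine.
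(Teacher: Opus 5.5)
Your proposal is correct and follows the paper's proof essentially step for step. The steps match: Corollary~\ref{corollary:lagged-update-field-comparison} at \(t=k-\ell\) combined with Lemmas~\ref{lemma:adaptive-moment-control} and~\ref{lemma:ec-canonical-movement} for the predictable bias, Jensen (your conditional-variance identity is the same fact) plus the splitting \(2\lVert\xi\rVert^2+2\lVert\xi^{\mathrm{pred}}\rVert^2\) for the two second-moment cases, and exact lagged centering with the tower property for the cross terms. Your explicit Cauchy--Schwarz justification of the product's integrability is a small point the paper leaves implicit.
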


\proof{Proof of Lemma~\ref{lemma:ec-canonical-lagged-centering}.}
By Assumption~\ref{assump:growth} and
Lemma~\ref{lemma:update-field-bounds}, choose a constant \(C_0\) such that
\(\lVert\xi_{k,\ell}\rVert\leq C_0 U_1(W_k)\).  Set
\(C_1:=2C_0^2\).  Since
\(U_1(W_k)^2\leq2U_2(W_k)\), we have
\begin{equation}\label{eq:lag-xi-growth}
    \lVert\xi_{k,\ell}\rVert^2
    \leq C_1 U_2(W_k).
\end{equation}
Set \(C_2:=b_2/(1-\lambda_2)\).  Lemma~\ref{lemma:adaptive-moment-control},
the tower property,
and Assumption~\ref{assump:ergodic}\ref{part:ergodic-initial} imply
\(\E[U_2(W_k)]\leq\E[U_2(W_0)]+C_2<\infty\).
Thus \(\xi_{k,\ell}\in L^2\) by \eqref{eq:lag-xi-growth}, and Jensen's
inequality extends this conclusion to
\(\xi_{k,\ell}^{\mathrm{pred}},\xi_{k,\ell}^{\mathrm{cent}}\).  By definition,
\(\E[\xi_{k,\ell}^{\mathrm{cent}}\mid\F_{k-\ell}]=0\).

To bound \(\xi_{k,\ell}^{\mathrm{pred}}\), fix \(k\) and set
\(t=k-\ell\).  The identity
\[
\xi_{k,\ell}^{\mathrm{pred}}
=\E[F(\theta_t,W_k)\mid\F_t]-\bar F(\theta_t)
\]
holds.  Let \(C_3\) be the constant in
Corollary~\ref{corollary:lagged-update-field-comparison}. Then,
\begin{align*}
\lVert\xi_{k,\ell}^{\mathrm{pred}}\rVert
\leq C_3\rho^\ell U_1(W_t)
+C_3
\sum_{s=1}^{\ell-1}\rho^{\ell-s-1}
\E[U_1(W_{t+s})\lVert\theta_{t+s}-\theta_t\rVert
\mid\F_t].
\end{align*}
By Lemmas~\ref{lemma:adaptive-moment-control}
and~\ref{lemma:ec-canonical-movement}, together with the tower property, there is a positive constant \(C_4\) such that
\begin{align*}
\E[\lVert\xi_{k,\ell}^{\mathrm{pred}}\rVert\mid\F_0]
&\leq C_4 U_2(W_0)\rho^\ell
+C_4 U_2(W_0)
\sum_{s=1}^{\ell-1}\rho^{\ell-s-1}
\sum_{i=t}^{t+s-1}\alpha_i\\
&\leq C_4 U_2(W_0)\bigl(\rho^\ell+\ell\alpha_t\bigr),
\end{align*}
where monotonicity implies
\(\sum_{i=t}^{t+s-1}\alpha_i\leq s\alpha_t\) and
\(\sum_{s=1}^{\ell-1}s\rho^{\ell-s-1}\leq
\ell/(1-\rho)\).  This is \eqref{eq:lag-predictable-bias}.

For the second-moment claims, apply Jensen's inequality:
\begin{equation}\label{eq:lag-centered-projection}
\E[\lVert\xi_{k,\ell}^{\mathrm{cent}}\rVert^2\mid\F_{k-\ell}]
\leq\E[\lVert\xi_{k,\ell}\rVert^2\mid\F_{k-\ell}].
\end{equation}
If \(t\leq k-\ell\), take the conditional expectation of
\eqref{eq:lag-centered-projection} with respect to \(\F_t\), and
apply Lemma~\ref{lemma:adaptive-moment-control}.  There is a positive constant \(C_5\) for which the first case of
\eqref{eq:lag-centered-conditional-moment} holds.  If
\(k-\ell<t<k\), Jensen's inequality and
\eqref{eq:lag-xi-growth} imply, for a constant \(C_6\),
\begin{align*}
\E[\lVert\xi_{k,\ell}^{\mathrm{cent}}\rVert^2\mid\F_t]
&\leq2\E[\lVert\xi_{k,\ell}\rVert^2\mid\F_t]
+2\lVert\xi_{k,\ell}^{\mathrm{pred}}\rVert^2\\
&\leq C_6
\bigl(\lambda_2^{k-t}U_2(W_t)+1\bigr)+C_6
\bigl(\lambda_2^\ell U_2(W_{k-\ell})+1\bigr),
\end{align*}
which implies the second case of
\eqref{eq:lag-centered-conditional-moment}.

Finally, suppose \(t\leq i<k\) and \(k-i\geq\ell\).  By exact centering and
the tower property,
\begin{align*}
\E[(\xi_{k,\ell}^{\mathrm{cent}})^\intercal
\xi_{i,\ell}^{\mathrm{cent}}\mid\F_t]
&=\E\biggl[
\E[(\xi_{k,\ell}^{\mathrm{cent}})^\intercal
\xi_{i,\ell}^{\mathrm{cent}}
\mid\F_{k-\ell}]
\,\biggm|\,\F_t
\biggr]\\
&=\E\biggl[
(\xi_{i,\ell}^{\mathrm{cent}})^\intercal
\E[\xi_{k,\ell}^{\mathrm{cent}}\mid\F_{k-\ell}]
\,\biggm|\,\F_t
\biggr]=0,
\end{align*}
which proves
\eqref{eq:lag-covariance-cancellation}.  Taking
\(C:=\max\{C_4,C_5,C_6\}\) proves the lemma.
\Halmos\endproof

\begin{lemma}[L\'evy Inequality for Dependent Sequences]
\label{lemma:extended-levy-dependent}
Let \(\{\mathcal G_0,\ldots,\mathcal G_n\}\) be a filtration, and suppose
\(\mathcal H\subseteq\mathcal G_0\). For each \(m=0,\ldots,n\), let \(Y_m\) be a
finite, real-valued, \(\mathcal G_m\)-measurable random variable, and let \(q_m\) be a
\(\mathcal G_m\)-measurable conditional median of \(Y_m-Y_n\) given
\(\mathcal G_m\), that is,
\begin{equation}\label{eq:conditional-median-definition}
\pr(Y_m-Y_n\leq q_m\mid\mathcal G_m)\geq1/2 \quad\mbox{and}\quad 
\pr(Y_m-Y_n\geq q_m\mid\mathcal G_m)\geq1/2,
\end{equation}
with \(q_n=0\). Then, for all \(z>0\),
\begin{equation}\label{eq:extended-levy-dependent}
\pr\biggl(
\max_{0\leq m\leq n}|Y_m-q_m|\geq z
\,\biggm|\,\mathcal H
\biggr)
\leq
2\pr(|Y_n|\geq z\mid\mathcal H).
\end{equation}
\end{lemma}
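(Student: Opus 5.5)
The plan is the classical first-passage argument behind L\'evy's inequality. The conditional-median property replaces symmetry, and conditioning on \(\mathcal H\subseteq\mathcal G_0\) is handled by the tower property. Fix \(z>0\) and define the first crossing index
\[
T:=\min\bigl\{m\in\{0,\ldots,n\}:|Y_m-q_m|\geq z\bigr\},
\]
with \(T=\infty\) if no such index exists. Because \(Y_j\) and \(q_j\) are \(\mathcal G_j\)-measurable and \(\mathcal G_j\subseteq\mathcal G_m\) for \(j\leq m\), each event \(\{T=m\}\) is \(\mathcal G_m\)-measurable. These events are disjoint, and their union is exactly the event \(\{\max_{0\leq m\leq n}|Y_m-q_m|\geq z\}\) on the left-hand side of \eqref{eq:extended-levy-dependent}.

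The key step is to show that, almost surely on \(\{T=m\}\),
\[
\pr(|Y_n|\geq z\mid\mathcal G_m)\geq1/2 .
\]
Split \(\{T=m\}\) into the \(\mathcal G_m\)-measurable pieces \(A_m^+:=\{T=m,\;Y_m-q_m\geq z\}\) and \(A_m^-:=\{T=m,\;Y_m-q_m\leq -z\}\).

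On \(A_m^+\), the first condition in \eqref{eq:conditional-median-definition} says that \(Y_n\geq Y_m-q_m\) with conditional probability at least \(1/2\). On that event \(Y_n\geq z\). Hence \(\ind_{A_m^+}\pr(Y_n\geq z\mid\mathcal G_m)\geq \tfrac12\ind_{A_m^+}\). To make this rigorous without regular conditional distributions, I would observe that
\[
\ind_{A_m^+}\ind\{Y_m-Y_n\leq q_m\}\leq \ind_{A_m^+}\ind\{Y_n\geq z\},
\]
take \(\E[\cdot\mid\mathcal G_m]\), and pull out the \(\mathcal G_m\)-measurable indicator.

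Symmetrically, on \(A_m^-\) the second condition in \eqref{eq:conditional-median-definition} gives \(Y_n\leq Y_m-q_m\leq -z\) with conditional probability at least \(1/2\). The case \(m=n\) is consistent, because \(q_n=0\) and then \(|Y_n|\geq z\) holds outright. Combining the two pieces gives the displayed inequality on all of \(\{T=m\}\).

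Next I would integrate against \(\mathcal H\). Since \(\mathcal H\subseteq\mathcal G_0\subseteq\mathcal G_m\), the tower property gives
\[
\pr(T=m,\;|Y_n|\geq z\mid\mathcal H)
=\E\bigl[\ind_{\{T=m\}}\pr(|Y_n|\geq z\mid\mathcal G_m)\bigm|\mathcal H\bigr]
\geq\tfrac12\pr(T=m\mid\mathcal H).
\]
Summing over \(m=0,\ldots,n\) and using disjointness of the events \(\{T=m\}\) yields
\[
\tfrac12\pr(T\leq n\mid\mathcal H)\leq\pr(|Y_n|\geq z\mid\mathcal H),
\]
which is \eqref{eq:extended-levy-dependent}.

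The only delicate point is the one-sided use of the median in the key step. Each sign case needs the matching half of \eqref{eq:conditional-median-definition}, which is why both inequalities are assumed. This bookkeeping must be done with \(\mathcal G_m\)-measurable indicators rather than pointwise conditional probabilities. All remaining steps are routine.
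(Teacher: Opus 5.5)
Your proposal is correct and follows essentially the same first-crossing argument as the paper, which adapts Lo\`eve: split \(\{T=m\}\) into \(A_m^\pm\), apply the matching half of the conditional-median condition through \(\mathcal G_m\), and then sum over \(m\) conditionally on \(\mathcal H\). The only difference is cosmetic: you merge the two sign cases into the single bound \(\pr(|Y_n|\geq z\mid\mathcal G_m)\geq1/2\) on \(\{T=m\}\) before summing, whereas the paper sums the \(A_m^+\) and \(A_m^-\) bounds separately and adds them at the end.
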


\proof{Proof of Lemma~\ref{lemma:extended-levy-dependent}.}
The proof follows the extended L\'evy inequality in
\citet[Section~32.1]{LoeveProbBook}.
Define \(T:=\inf\{m\geq 0: |Y_m-q_m|\geq z\}\), 
\[
A_m^+:=\{T=m,\ Y_m-q_m\geq z\}\quad\mbox{and}\quad
A_m^-:=\{T=m,\ Y_m-q_m\leq-z\}.
\]
On
\(A_m^+\cap\{Y_m-Y_n\leq q_m\}\), one has \(Y_n\geq z\).  Hence, by
\eqref{eq:conditional-median-definition} and conditioning through
\(\mathcal G_m\),
\begin{align*}
\pr(A_m^+\cap\{Y_n\geq z\}\mid\mathcal H)
&\geq
\E\biggl[
\ind(A_m^+)\pr(Y_m-Y_n\leq q_m\mid\mathcal G_m)
\,\biggm|\,\mathcal H
\biggr]\\
&\geq1/2\pr(A_m^+\mid\mathcal H).
\end{align*}
Summing over \(m\) yields
\(
\pr(\cup_m A_m^+\mid\mathcal H)
\leq2\pr(Y_n\geq z\mid\mathcal H)
\).
Likewise, 
\(
\pr(\cup_m A_m^-\mid\mathcal H)
\leq2\pr(Y_n\leq-z\mid\mathcal H)
\).
Adding the two bounds proves \eqref{eq:extended-levy-dependent}.
\Halmos\endproof

\begin{lemma}
\label{lemma:conditional-levy-remainder}
Let \(\{\mathcal G_0,\ldots,\mathcal G_n\}\) be a filtration, and suppose
\(\mathcal H\subseteq\mathcal G_0\). For each \(
m=1,\ldots,n\), let \(X_m\) be a 
square-integrable, \(\R^d\)-valued, \(\mathcal G_m\)-measurable random vector. Let \(S_m:=\sum_{k=1}^{m}X_k\) for \(m=1,\ldots,n\) and 
\(
S_0:=0\). 
Then, for any \(x>0\),
\begin{align*}
\pr\biggl(
\max_{0\leq m\leq n}\lVert S_m\rVert\geq x
\,\biggm|\,\mathcal H
\biggr) \leq
8x^{-2}\E[\lVert S_n\rVert^2\mid\mathcal H]
+
\pr\biggl(
\max_{0\leq m\leq n}
\E[\lVert S_n-S_m\rVert^2\mid\mathcal G_m]
\geq x^2/8
\,\biggm|\,\mathcal H
\biggr).
\end{align*}
\end{lemma}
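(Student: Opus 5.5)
The plan follows the reduction sketched in Section~\ref{sec:concen-bounds}: apply Lemma~\ref{lemma:extended-levy-dependent} to the scalar process \(Y_m:=\lVert S_m\rVert\), then bound the conditional medians it introduces by conditional second moments of the remaining sums.

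\emph{Step 1 (medians).} Set \(Y_m:=\lVert S_m\rVert\) for \(m=0,\ldots,n\). Each \(Y_m\) is finite, real-valued, and \(\mathcal G_m\)-measurable. For each \(m\), fix a \(\mathcal G_m\)-measurable conditional median \(q_m\) of \(Y_m-Y_n\) given \(\mathcal G_m\); one exists by taking a measurable version of the regular conditional distribution (for instance, the smallest point where the conditional distribution function reaches \(1/2\)). Take \(q_n=0\), which is legitimate because \(Y_n-Y_n=0\). Lemma~\ref{lemma:extended-levy-dependent} then gives, for every \(z>0\),
\[
\pr\Bigl(\max_{0\le m\le n}|Y_m-q_m|\ge z\,\Bigm|\,\mathcal H\Bigr)\le 2\pr(\lVert S_n\rVert\ge z\mid\mathcal H)\le 2z^{-2}\E[\lVert S_n\rVert^2\mid\mathcal H],
\]
where the last step is the conditional Markov inequality.

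\emph{Step 2 (size of the medians).} Write \(V_m:=\E[\lVert S_n-S_m\rVert^2\mid\mathcal G_m]\). By the median property, the event \(\{|Y_m-Y_n|\ge|q_m|\}\) has conditional probability at least \(1/2\) given \(\mathcal G_m\). Apply the conditional Markov inequality to \((Y_m-Y_n)^2\), noting that \(|q_m|\) is \(\mathcal G_m\)-measurable; on \(\{q_m\ne0\}\) this gives \(\tfrac12 q_m^2\le \E[(Y_m-Y_n)^2\mid\mathcal G_m]\). The reverse triangle inequality gives \(|Y_m-Y_n|\le\lVert S_n-S_m\rVert\). Together these yield \(q_m^2\le 2V_m\).

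\emph{Step 3 (combining).} Take \(z=x/2\). On the event \(\{\max_m V_m<x^2/8\}\), every \(m\) satisfies \(|q_m|<x/2\). So if \(\lVert S_m\rVert\ge x\) for some \(m\), then \(|Y_m-q_m|\ge Y_m-|q_m|>x/2\). This gives the inclusion
\[
\Bigl\{\max_m\lVert S_m\rVert\ge x\Bigr\}\subseteq\Bigl\{\max_m|Y_m-q_m|\ge x/2\Bigr\}\cup\Bigl\{\max_m V_m\ge x^2/8\Bigr\}.
\]
Taking conditional probabilities given \(\mathcal H\) and using Step~1 with \(z=x/2\) bounds the first event by \(2(x/2)^{-2}\E[\lVert S_n\rVert^2\mid\mathcal H]=8x^{-2}\E[\lVert S_n\rVert^2\mid\mathcal H]\). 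This is the claim.

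\emph{Main obstacle.} The only delicate point is measure-theoretic: ensuring that \(\mathcal G_m\)-measurable versions of the conditional medians exist, and that the conditional Markov argument in Step~2 holds almost surely. The second issue is handled by working on \(\{q_m\neq0\}\) and multiplying by the indicator of the median event before taking conditional expectations. Square integrability of the \(X_k\) keeps every conditional expectation finite. No martingale structure is needed, which is why this route is used in place of Doob's inequality.
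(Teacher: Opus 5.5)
Your proposal is correct and follows the paper's proof essentially step for step: it applies Lemma~\ref{lemma:extended-levy-dependent} to \(\lVert S_m\rVert\) with conditional medians \(q_m\), bounds \(q_m^2\le 2\E[\lVert S_n-S_m\rVert^2\mid\mathcal G_m]\) via conditional Markov and the reverse triangle inequality, and concludes with the same two-event inclusion at \(z=x/2\). One minor point: restricting to \(\{q_m\neq0\}\) is unnecessary, since the bound \(q_m^2\le 2\E[(Y_m-Y_n)^2\mid\mathcal G_m]\) holds trivially where \(q_m=0\), which the paper notes explicitly.
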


\proof{Proof of Lemma~\ref{lemma:conditional-levy-remainder}.}
For each \(m<n\), let 
\(q_m\) denote the conditional median of \(\lVert S_m\rVert-\lVert S_n\rVert\) given
\(\mathcal G_m\), and set \(q_n:=0\).
Markov's inequality gives
\[
\E[(\lVert S_m\rVert-\lVert S_n\rVert)^2\mid\mathcal G_m] \geq
q_m^2  \pr\bigl(
\bigl|\lVert S_m\rVert-\lVert S_n\rVert\bigr|\geq|q_m|
\mid\mathcal G_m\bigr)\geq \frac{q_m^2}{2},
\]
where the second inequality follows from the definition of a conditional median. 
Using the triangle inequality yields
\begin{align*}
q_m^2
\leq
2\E[(\lVert S_m\rVert-\lVert S_n\rVert)^2\mid\mathcal G_m]
\leq
2\E[\lVert S_n-S_m\rVert^2\mid\mathcal G_m].
\end{align*}
This bound also holds when \(q_m=0\).
Consequently,
\begin{align*}
\Bigl\{\max_{0\leq m\leq n}\lVert S_m\rVert\geq x\Bigr\}
& \subseteq \Bigl\{ \max_{0\leq m\leq n}\Bigl|\lVert S_m\rVert - q_m\Bigr|\geq x/2 \Bigr\} 
\cup 
 \bigl\{ \max_{0\leq m\leq n}\bigl| q_m \bigr|\geq x/2 \bigr\}  \\
& \subseteq 
\Bigl\{\max_{0\leq m\leq n}\bigl|\lVert S_m\rVert-q_m\bigr|\geq x/2\Bigr\}
\cup
\Bigl\{\max_{0\leq m\leq n} \E[\lVert S_n-S_m\rVert^2\mid\mathcal G_m] \geq x^2/8\Bigr\}    
\end{align*}
Applying Lemma~\ref{lemma:extended-levy-dependent} and Markov's inequality completes the proof.
\Halmos\endproof

\begin{lemma}[Block-Maximal Bound for Accumulated Markovian Noise]
\label{lemma:ec-canonical-block-maximal}
Suppose 
Assumptions~\ref{assump:learning-rate}--\ref{assump:growth} hold.
Fix integers \(\ell\geq1\) and \(I\leq J\) with \(I\geq2\ell\). 
Set \(L:=J-I+1\) and define 
\[
S^{\mathrm{markov}}(m):=\sum_{k=I}^{m}\alpha_k
\bigl(F(\theta_{k-\ell},W_k)-\bar F(\theta_{k-\ell})\bigr),
\]
for \(I\leq m\leq J\). Then
there is a positive constant \(C\) such that, for all \(x>0\),
\begin{align*}
\pr\biggl(
\max_{I\leq m\leq J}\lVert S^{\mathrm{markov}}(m)\rVert\geq x
\,\biggm|\,\F_0
\biggr)\leq
C U_2(W_0)
\biggl[
x^{-2}\ell^2L\alpha_I^2
+x^{-1}L\alpha_I
\biggl(
\rho^\ell+\ell\alpha_{I-\ell}
\biggr)
\biggr].
\end{align*}
\end{lemma}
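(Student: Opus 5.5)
We split \(S^{\mathrm{markov}}(m)=S^{\mathrm{pred}}(m)+S^{\mathrm{cent}}(m)\) using the lagged centering of Lemma~\ref{lemma:ec-canonical-lagged-centering}. This is legitimate because \(I\geq2\ell\geq\ell+1\). The triangle inequality and a union bound then give
\[
\pr\Bigl(\max_{I\leq m\leq J}\lVert S^{\mathrm{markov}}(m)\rVert\geq x\Bigm|\F_0\Bigr)
\leq
\pr\Bigl(\max_{m}\lVert S^{\mathrm{pred}}(m)\rVert\geq x/2\Bigm|\F_0\Bigr)
+\pr\Bigl(\max_{m}\lVert S^{\mathrm{cent}}(m)\rVert\geq x/2\Bigm|\F_0\Bigr).
\]

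\emph{Predictable part.} Bound \(\max_m\lVert S^{\mathrm{pred}}(m)\rVert\) by \(\sum_{k=I}^J\alpha_k\lVert\xi^{\mathrm{pred}}_{k,\ell}\rVert\). Conditional Markov's inequality and \eqref{eq:lag-predictable-bias} then give a bound of \(2x^{-1}CU_2(W_0)\sum_k\alpha_k(\rho^\ell+\ell\alpha_{k-\ell})\). Since the step sizes are monotone, \(\alpha_k\leq\alpha_I\) and \(\alpha_{k-\ell}\leq\alpha_{I-\ell}\), so this term is at most a constant times \(U_2(W_0)x^{-1}L\alpha_I(\rho^\ell+\ell\alpha_{I-\ell})\).

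\emph{Centered part.} Apply Lemma~\ref{lemma:conditional-levy-remainder} with \(\mathcal H=\F_0\), \(\mathcal G_j=\F_{I-1+j}\), and \(X_j=\alpha_{I-1+j}\xi^{\mathrm{cent}}_{I-1+j,\ell}\). Each \(X_j\) is \(\F_{I-1+j}\)-measurable, since \(\xi^{\mathrm{cent}}_{k,\ell}\) is \(\F_k\)-measurable. The lemma reduces the problem to two quantities. The first is \(\E[\lVert S_J\rVert^2\mid\F_0]\). Expand it and use \eqref{eq:lag-covariance-cancellation} to kill all pairs with \(|k-i|\geq\ell\). Bound the remaining pairs by Cauchy--Schwarz together with the first case of \eqref{eq:lag-centered-conditional-moment} at \(t=0\), which gives \(\E[\lVert\xi^{\mathrm{cent}}_{k,\ell}\rVert^2\mid\F_0]\lesssim U_2(W_0)\). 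This yields \(\lesssim U_2(W_0)\ell L\alpha_I^2\).

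The second quantity is \(V_m:=\E[\lVert S_J-S_m\rVert^2\mid\F_m]\) for \(I-1\leq m\leq J\). The same pair counting, now conditional on \(\F_m\), is valid because \eqref{eq:lag-covariance-cancellation} holds for every \(t\leq i\); pairs with \(i\leq m\) do not appear in \(S_J-S_m\). It gives \(V_m\leq(2\ell-1)\alpha_I^2\sum_{k>m}\E[\lVert\xi^{\mathrm{cent}}_{k,\ell}\rVert^2\mid\F_m]\). Split this sum using the two cases of \eqref{eq:lag-centered-conditional-moment}. For \(k\geq m+\ell\), the geometric factor \(\lambda_2^{k-m}\) sums to give \(\lesssim L+U_2(W_m)\). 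For the fewer than \(\ell\) indices with \(m<k<m+\ell\), the terms give \(\lesssim\ell U_2(W_m)+\sum_{i=m-\ell+1}^{m-1}U_2(W_i)+\ell\). Hence
\[
V_m\lesssim\ell\alpha_I^2\Bigl[L+\ell U_2(W_m)+\sum_{i=m-\ell+1}^{m-1}U_2(W_i)+\ell\Bigr].
\]
All indices here are nonnegative because \(m-\ell+1\geq I-2\ell\geq0\), which is where \(I\geq2\ell\) is used.

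This maximal step is the only delicate point. We bound \(\max_mV_m\) by \(\ell\alpha_I^2\) times the \(U_2\)-independent constants plus \(\sum_m\) of the state-dependent brackets, rather than summing the full \(V_m\) over \(m\). Then Lemma~\ref{lemma:adaptive-moment-control} gives \(\E[U_2(W_i)\mid\F_0]\lesssim U_2(W_0)\). Each \(U_2(W_i)\) appears in at most about \(\ell\) of the brackets, and \(m\) ranges over \(L+1\) values, so \(\E[\max_mV_m\mid\F_0]\lesssim U_2(W_0)\ell^2L\alpha_I^2\), using \(\ell\leq\ell^2\) and \(L\geq1\). Markov's inequality at level \(x^2/32\) (we use threshold \(x/2\)) bounds the second probability by \(\lesssim U_2(W_0)x^{-2}\ell^2L\alpha_I^2\), and the first term contributes \(32x^{-2}\E[\lVert S_J\rVert^2\mid\F_0]\) of the same order. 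Finally, \(\lVert S^{\mathrm{cent}}\rVert\) over \(m\in\mathcal I\) is dominated by the maximum over \(I-1\leq m\leq J\). Adding the two parts and absorbing constants, which depend only on the assumption constants, gives the claim.
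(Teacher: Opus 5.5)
Your proposal is correct and follows the paper's proof essentially step for step: the lagged-centering split, Markov's inequality for the predictable part, and the conditional L\'evy reduction. That reduction uses the full-block second moment and the near-pair bound on \(V_m\). The only cosmetic difference is how you bound \(\max_m V_m\): you control \(\E[\max_m V_m\mid\F_0]\) directly, keeping the deterministic part \(\ell\alpha_I^2(L+\ell)\) outside the sum over \(m\) as in the main-text sketch. The e-companion proof instead splits into cases according to whether that deterministic part is at most \(y/2\), then applies a union bound; both routes give \(\lesssim U_2(W_0)y^{-1}\ell^2L\alpha_I^2\).
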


\proof{Proof of Lemma~\ref{lemma:ec-canonical-block-maximal}.}
Let \(C_0\) be the constant in Lemma~\ref{lemma:ec-canonical-lagged-centering}.
Fix a block as in the lemma.  By hypothesis, \(I\geq2\ell\).  Use
\((\xi_{k,\ell},\xi_{k,\ell}^{\mathrm{pred}},
\xi_{k,\ell}^{\mathrm{cent}})\) from
\eqref{eq:lag-centering-main} and define 
\begin{align*}
S^{\mathrm{pred}}(m):=\sum_{k=I}^{m}\alpha_k\xi_{k,\ell}^{\mathrm{pred}} 
\quad \mbox{and} \quad
S^{\mathrm{cent}}(m):=\sum_{k=I}^{m}\alpha_k\xi_{k,\ell}^{\mathrm{cent}},
\end{align*} 
for \(I\leq m \leq J\), 
with \(S^{\mathrm{cent}}(I-1)=S^{\mathrm{pred}}(I-1)=0\).  Thus
\(S^{\mathrm{markov}}(m)=S^{\mathrm{pred}}(m)+S^{\mathrm{cent}}(m)\).

We first
expand \(\E[\lVert S^{\mathrm{cent}}(J)\rVert^2\mid\F_0]\).  By
Lemma~\ref{lemma:ec-canonical-lagged-centering}, all cross terms with
\(|k-i|\geq \ell\) vanish.  For the remaining pairs, Cauchy--Schwarz and
\(2ab\leq a^2+b^2\) imply
\begin{align*}
2\alpha_k\alpha_i
\biggl|\E[(\xi_{k,\ell}^{\mathrm{cent}})^\intercal
\xi_{i,\ell}^{\mathrm{cent}}
\mid\F_0]\biggr|
\leq
\alpha_k^2\E[\lVert\xi_{k,\ell}^{\mathrm{cent}}\rVert^2\mid\F_0]
+\alpha_i^2\E[\lVert\xi_{i,\ell}^{\mathrm{cent}}\rVert^2\mid\F_0].
\end{align*}
Each index occurs in at most \(2(\ell-1)\) such near pairs.  Therefore, using
Lemma~\ref{lemma:ec-canonical-lagged-centering} at \(t=0\),
\(U_2(W_0)\geq1\), and monotonicity of the step sizes,
\begin{equation}\label{eq:cen-full-block-second-moment}
\E[\lVert S^{\mathrm{cent}}(J)\rVert^2\mid\F_0]
\leq 4C_0 U_2(W_0)\ell L\alpha_I^2.
\end{equation}

For \(m=I-1,\ldots,J\), let
\begin{equation*}
V_m:=\E[\lVert S^{\mathrm{cent}}(J)-S^{\mathrm{cent}}(m)\rVert^2\mid\F_m].
\end{equation*}
In particular, \(V_J=0\).
The same covariance cancellation, now conditional on \(\F_m\),
implies
\begin{equation}\label{eq:V-m-near-pairs}
V_m\leq(2\ell-1)\sum_{k=m+1}^{J}
\alpha_k^2\E[\lVert\xi_{k,\ell}^{\mathrm{cent}}\rVert^2\mid\F_m].
\end{equation}
For both \(k\geq m+\ell\) and \(m<k<m+\ell\), use
Lemma~\ref{lemma:ec-canonical-lagged-centering}.  For the near terms, the
valid indices \(k-\ell\) form a
subset of \(\{(m-\ell+1)\vee0,\ldots,m-1\}\).  Because \(U_2\geq0\), their
contribution is bounded by the sum over this full set.  Since
\(2\ell-1\leq2\ell\),
\(\alpha_k\leq\alpha_I\), and
\(\sum_{h=\ell}^{\infty}\lambda_2^h\leq1/(1-\lambda_2)\),
\eqref{eq:V-m-near-pairs} yields
\begin{align*}
V_m
& \leq 2C_0\ell\alpha_I^2
\Bigl(\frac{U_2(W_m)}{1-\lambda_2}+L\Bigr)+2C_0\ell\alpha_I^2
\biggl(\underbrace{\ell U_2(W_m)
+\sum_{i=(m-\ell+1)\vee0}^{m-1}U_2(W_i)
+\ell}_{:=Z_\ell(m)}\biggr)\\ 
& \leq C_1\ell(L+\ell)\alpha_I^2+C_2\ell\alpha_I^2 Z_\ell(m),
\end{align*}
for suitable \(C_1\) and \(C_2\).
By Lemma~\ref{lemma:adaptive-moment-control}, \(U_2(W_0)\geq1\), and the
definition of \(Z_\ell(m)\), there is a positive constant \(C_3\) such that
\[
\E[Z_\ell(m)\mid\F_0]
\leq C_3\ell U_2(W_0),
\qquad I-1\leq m\leq J-1.
\]
Let \(C_4\) be a
constant determined by \(C_2\) and \(C_3\).  Suppose first that
\(C_1\ell(L+\ell)\alpha_I^2\leq y/2\). Markov's inequality and a
union bound over \(m\) then yield
\begin{align*}
\pr\biggl(\max_{I-1\leq m\leq J}V_m\geq y
\,\biggm|\,\F_0\biggr)
&\leq
2C_2 y^{-1}\ell\alpha_I^2
\sum_{m=I-1}^{J-1}\E[Z_\ell(m)\mid\F_0]\\
&\leq
C_4 U_2(W_0)y^{-1}\ell^2L\alpha_I^2.
\end{align*}
If instead \(C_1\ell(L+\ell)\alpha_I^2>y/2\), then
\[
2y^{-1}\ell^2L\alpha_I^2
\geq y^{-1}\ell(L+\ell)\alpha_I^2
>\frac{1}{2C_1},
\]
where the first inequality uses \(L,\ell\geq1\). Since
\(U_2(W_0)\geq1\), set \(C_5:=\max\{C_4,4C_1\}\). 
Thus, for all \(y>0\),
\begin{equation}
\pr\biggl(\max_{I-1\leq m\leq J}V_m\geq y
\,\biggm|\,\F_0\biggr)
\leq
C_5 U_2(W_0)y^{-1}\ell^2L\alpha_I^2.
\label{eq:V-max-probability}
\end{equation}

Apply Lemma~\ref{lemma:conditional-levy-remainder} with
\(n=L\), \(\mathcal G_q:=\F_{I+q-1}\) for \(q=0,\ldots,L\),
\(\mathcal H:=\F_0\), and
\(X_q:=\alpha_{I+q-1}\xi_{I+q-1,\ell}^{\mathrm{cent}}\) for
\(q=1,\ldots,L\). Its partial sum at index \(q\) is
\(S^{\mathrm{cent}}(I+q-1)\). Hence, for any \(x>0\),
\begin{align}
&\pr\biggl(
\max_{I-1\leq m\leq J}\lVert S^{\mathrm{cent}}(m)\rVert\geq x
\,\biggm|\,\F_0
\biggr) \leq
8x^{-2}
\E[\lVert S^{\mathrm{cent}}(J)\rVert^2\mid\F_0]
+
\pr\biggl(
\max_{I-1\leq m\leq J}V_m\geq x^2/8
\,\biggm|\,\F_0
\biggr).
\label{eq:cen-max-Levy-bound}
\end{align}
Equations \eqref{eq:cen-full-block-second-moment},
\eqref{eq:V-max-probability}, and \eqref{eq:cen-max-Levy-bound} imply that there is a positive constant \(C_6\) such that 
\begin{equation}\label{eq:cen-max-final}
\pr\biggl(
\max_{I\leq m\leq J}\lVert S^{\mathrm{cent}}(m)\rVert\geq x
\,\biggm|\,\F_0
\biggr)
\leq
C_6 U_2(W_0)x^{-2}\ell^2L\alpha_I^2.
\end{equation}

Let \(C_7\) be the constant obtained by applying Markov's inequality, the
triangle inequality, and Lemma~\ref{lemma:ec-canonical-lagged-centering}.  Then
\begin{align}
\pr\biggl(
\max_{I\leq m\leq J}\lVert S^{\mathrm{pred}}(m)\rVert\geq x
\,\biggm|\,\F_0
\biggr)
\leq{}&
x^{-1}\sum_{k=I}^{J}\alpha_k
\E[\lVert\xi_{k,\ell}^{\mathrm{pred}}\rVert\mid\F_0]\nonumber\\
\leq{}&
C_7 U_2(W_0)x^{-1}L\alpha_I
\bigl(\rho^\ell+\ell\alpha_{I-\ell}\bigr).
\label{eq:pred-max-final}
\end{align}

Choose \(C\) large enough to dominate \(4C_6\) and \(2C_7\).
Combining \eqref{eq:cen-max-final} and
\eqref{eq:pred-max-final} with threshold \(x/2\) in each term completes the proof.
\Halmos\endproof

\subsection{Block Arithmetic}
\label{subsec:ec-block-arithmetic}

\begin{lemma}
\label{lemma:block-arithmetic}
Suppose Assumption~\ref{assump:learning-rate} holds with
\(1/2<\delta\leq1\), and fix \(\rho\in(0,1)\).
Then there are positive constants \(C\) and \(R\) such that, in either block formulation, for all
\(r\geq R\),
\begin{gather*}
\frac{n_{r+1}}{n_r}\leq2,
\quad n_{r+1}-n_r\leq Cn_r^\delta,
\quad n_r\geq2\ell_{n_r}, \\
\quad \alpha_{n_r-\ell_{n_r}}\leq2^\delta\alpha_{n_r},
\quad \ell_{n_r}\leq C(1+\log n_r),
\quad \rho^{\ell_{n_r}}\leq n_r^{-\delta}.
\end{gather*}
\end{lemma}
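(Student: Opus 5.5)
The plan is to verify each of the six inequalities directly from the explicit block definitions \eqref{eq:block-definition}, \eqref{eq:dyadic-block-definition} and the lag \eqref{eq:log-lag}. Then take \(C\) as the maximum and \(R\) as the maximum of the thresholds obtained. The dyadic case \(\delta=1\) is immediate for the block inequalities: \(n_{r+1}/n_r=2\), and \(n_{r+1}-n_r=2^r=n_r=n_r^\delta\). So the work concentrates on the polynomial case \(1/2<\delta<1\). Set \(p:=1/(1-\delta)>2\).

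\textbf{Polynomial blocks.} Write \(n_r=\lceil r^p\rceil\), so \(r^p\leq n_r<r^p+1\).

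\emph{Ratio bound.} We have \(n_{r+1}/n_r\leq((r+1)^p+1)/r^p\to1\), so it is at most \(2\) for large \(r\).

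\emph{Length bound.} The mean value theorem gives \((r+1)^p-r^p\leq p(r+1)^{p-1}\). Hence
\[
n_{r+1}-n_r\leq p(r+1)^{p-1}+1 .
\]
Since \(p-1=\delta p\), we have \((r+1)^{p-1}=((r+1)^p)^\delta\leq (2r^p)^\delta\leq 2n_r^\delta\) for large \(r\). This yields \(n_{r+1}-n_r\leq Cn_r^\delta\).

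\textbf{Lag inequalities.} These use only \(n_r\to\infty\), which holds in both formulations.

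\emph{Lag versus logarithm.} From \eqref{eq:log-lag}, \(\ell_n\leq 1+1+\delta\log n/|\log\rho|\leq C(1+\log n)\) for all \(n\geq1\).

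\emph{Geometric factor.} For \(n\geq2\), \(\ell_n\geq \delta\log n/|\log\rho|\). Hence \(\rho^{\ell_n}=e^{-\ell_n|\log\rho|}\leq n^{-\delta}\). For \(n=1\) both sides are harmless, and the claim holds since \(\ell_1=1\) and \(\rho<1\).

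\emph{Lag against block entrance.} Because \(\ell_n=\mathcal O(\log n)\), there is \(N\) with \(n\geq2\ell_n\) for all \(n\geq N\). Choosing \(R\) so that \(n_R\geq N\) (with \(n_r\) increasing) gives \(n_r\geq2\ell_{n_r}\).

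\emph{Step-size comparison.} Given \(n_r\geq 2\ell_{n_r}\), we get \(n_r-\ell_{n_r}\geq n_r/2\geq1\). Then \(\alpha_{n_r-\ell_{n_r}}=\alpha_0(n_r-\ell_{n_r})^{-\delta}\leq\alpha_0(n_r/2)^{-\delta}=2^\delta\alpha_{n_r}\), exactly as in the proof of Lemma~\ref{lemma:recursion}.

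No step is a genuine obstacle. The only mildly delicate point is the length bound \(n_{r+1}-n_r\lesssim n_r^\delta\). This requires handling the ceiling correctly and using the identity \(p-1=\delta p\), which links the block growth to the step-size exponent. I would handle it first and make explicit the threshold on \(r\) needed for \((r+1)^p\leq 2r^p\).
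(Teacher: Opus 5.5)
Your proposal is correct and follows essentially the same route as the paper. In the polynomial case, the ceiling bounds \(r^{p}\le n_r<r^{p}+1\) together with the mean value theorem control the block length, and the ratio bound follows from the same estimates. The dyadic case is immediate. The lag inequalities come straight from the definition of \(\ell_n\) and \(\ell_n=\mathcal O(\log n)\), followed by \(n_r-\ell_{n_r}\ge n_r/2\) for the step-size comparison.

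The only difference is cosmetic. The paper bounds \((r+1)^{\delta/(1-\delta)}\le 2^{\delta/(1-\delta)}n_r^{\delta}\) for all \(r\ge1\). You instead use the large-\(r\) step \((r+1)^p\le 2r^p\) with \(p-1=\delta p\), which gives the same conclusion.
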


\proof{Proof of Lemma~\ref{lemma:block-arithmetic}.}
If \(1/2<\delta<1\), then \(n_r=\lceil r^{1/(1-\delta)}\rceil\) as defined in \eqref{eq:block-definition}. 
Thus, 
\(r^{1/(1-\delta)}\leq n_r\leq r^{1/(1-\delta)}+1\), and the mean-value
theorem gives
\[
n_{r+1}-n_r
\leq (r+1)^{1/(1-\delta)}-r^{1/(1-\delta)}+1
\leq (r+1)^{\delta/(1-\delta)}/(1-\delta)+1
\leq \Bigl(\frac{2^{\delta/(1-\delta)}}{1-\delta}+1\Bigr)n_r^\delta.
\]
Dividing by \(n_r\) also shows that \(n_{r+1}/n_r\to1\).

If \(\delta=1\), then \(n_r=2^r\) as defined in \eqref{eq:dyadic-block-definition}. Thus, 
\(n_{r+1}=2n_r\) and
\(n_{r+1}-n_r=n_r=n_r^\delta\).
Thus, for the applicable construction, choose \(C_0\) and \(R_0\) so that
the first two comparisons in the lemma hold for all \(r\geq R_0\).

In both cases of \(\delta\), the definition of \(\ell_k\) yields a constant
\(C_1\) such that
\[
\ell_{n_r}
\leq1+\delta\log n_r/|\log\rho|
\leq C_1(1+\log n_r),
\qquad
\rho^{\ell_{n_r}}
\leq n_r^{-\delta}.
\]
Since \((\log n_r)n_r^{-1}\to0\), there is a positive block-index threshold \(R_1\) such that, for
\(r\geq R_1\),
\(n_r\geq2\ell_{n_r}\).  Hence
\(n_r-\ell_{n_r}\geq n_r/2\), and monotonicity yields
\[
\alpha_{n_r-\ell_{n_r}}
\leq\alpha_0(n_r/2)^{-\delta}
=2^\delta\alpha_{n_r}.
\]
Taking \(C:=\max\{1,C_0,C_1\}\) and
\(R:=\max\{R_0,R_1\}\) proves the lemma.
\Halmos\endproof

\subsection{Accumulated Markovian Noise}
\label{subsec:ec-block-noise}

\begin{lemma}
\label{lemma:block-noise}
Suppose Assumptions~\ref{assump:learning-rate}--\ref{assump:growth} hold.
Fix \(1/2<\delta\leq1\), \(0\leq\beta<\delta-1/2\), and
\(c>0\). Then there are positive constants \(C\) and \(R\) such that, for all \(r\geq R\),
\begin{equation*}
\pr(\mathcal E_r^{\mathrm{markov}}\mid\F_0)
\leq C U_2(W_0)\ell_{n_r}^2n_r^{-\delta+2\beta}.
\end{equation*}
\end{lemma}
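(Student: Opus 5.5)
The plan is to apply Lemma~\ref{lemma:ec-canonical-block-maximal} to block \(r\) with \(I:=n_r\), \(J:=n_{r+1}-1\), \(L:=n_{r+1}-n_r\), \(\ell:=\ell_{n_r}\) and threshold \(x:=\psi_r/8=c n_{r+1}^{-\beta}/8\). The event \(\mathcal E_r^{\mathrm{markov}}\) uses a strict inequality \(>\psi_r/8\), which is contained in the \(\geq\) event bounded by that lemma. I first take \(R\) at least the threshold of Lemma~\ref{lemma:block-arithmetic}, so that \(n_r\geq2\ell_{n_r}\) (the hypothesis \(I\geq2\ell\)), \(n_{r+1}\leq2n_r\), \(L\leq Cn_r^\delta\), \(\alpha_{n_r-\ell_{n_r}}\leq2^\delta\alpha_{n_r}\), and \(\rho^{\ell_{n_r}}\leq n_r^{-\delta}\). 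The lemma then gives
\[
\pr(\mathcal E_r^{\mathrm{markov}}\mid\F_0)\leq C U_2(W_0)\Bigl[x^{-2}\ell_{n_r}^2L\alpha_{n_r}^2+x^{-1}L\alpha_{n_r}\bigl(\rho^{\ell_{n_r}}+\ell_{n_r}\alpha_{n_r-\ell_{n_r}}\bigr)\Bigr].
\]

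Next I substitute the block arithmetic. Because \(n_{r+1}\leq2n_r\) and \(\beta\geq0\), we have \(x^{-1}\leq(8/c)2^\beta n_r^{\beta}\) and \(x^{-2}\lesssim n_r^{2\beta}\), with constants depending on \(c\) and \(\beta\). The first term is then
\[
\lesssim n_r^{2\beta}\,\ell_{n_r}^2\,n_r^{\delta}\,n_r^{-2\delta}=\ell_{n_r}^2n_r^{-\delta+2\beta}.
\]
In the second term, \(\rho^{\ell_{n_r}}+\ell_{n_r}\alpha_{n_r-\ell_{n_r}}\lesssim\ell_{n_r}n_r^{-\delta}\) since \(\ell_{n_r}\geq1\). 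The second term is therefore
\[
\lesssim n_r^{\beta}\,n_r^{\delta}\,n_r^{-\delta}\,\ell_{n_r}n_r^{-\delta}=\ell_{n_r}n_r^{-\delta+\beta}.
\]

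The only remaining issue is comparing the two terms, and I expect this to be the sole point needing care. Since \(\beta\geq0\) and \(\ell_{n_r}\geq1\), we have \(n_r^{\beta}\leq n_r^{2\beta}\) and \(\ell_{n_r}\leq\ell_{n_r}^2\). Hence the first-moment term is dominated by \(\ell_{n_r}^2n_r^{-\delta+2\beta}\). Absorbing all constants, which depend only on \(c\), \(\beta\), \(\delta\), \(\alpha_0\), \(\rho\) and the assumption constants, gives the claim uniformly for \(r\geq R\).

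The restriction \(\beta<\delta-1/2\) is not used here beyond fixing parameters; it matters only later, when the bounds are summed over blocks.
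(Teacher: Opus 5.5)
Your proposal is correct and follows the paper's proof essentially step for step. The paper also applies Lemma~\ref{lemma:ec-canonical-block-maximal} with $I=n_r$, $J=n_{r+1}-1$, $\ell=\ell_{n_r}$ and $x=\psi_r/8$, uses Lemma~\ref{lemma:block-arithmetic} to bound the two terms by multiples of $\ell^2 n_r^{-\delta+2\beta}$ and $\ell n_r^{-\delta+\beta}$, and absorbs the second into the first via $\ell\geq1$ and $n_r^{\beta}\geq1$; your remark on the strict versus non-strict threshold is a harmless detail the paper leaves implicit.
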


\proof{Proof of Lemma~\ref{lemma:block-noise}.}
Let 
\(
I:=n_r\), \(J:=n_{r+1}-1\),
\(L:=J-I+1=n_{r+1}-n_r\),
 \(\ell:=\ell_{n_r}\), and 
\(x:=\psi_r/8=(c/8)n_{r+1}^{-\beta}\).
Let \(C_0\) and \(R_0\) be the constants from
Lemma~\ref{lemma:block-arithmetic}.  Then, for all \(r\geq R_0\),  
\[
L\leq C_0 I^\delta,\quad
n_{r+1}\leq2I,\quad
I\geq2\ell,\quad
\rho^\ell\leq I^{-\delta},\quad
\alpha_{I-\ell}\leq2^\delta\alpha_I,
\quad
\ell\leq C_0(1+\log I).
\]
Let \(C_1\) be the constant in
Lemma~\ref{lemma:ec-canonical-block-maximal}. Then,
\[
\pr\biggl(
\max_{I\leq m\leq J}\lVert S_r^{\mathrm{markov}}(m)\rVert\geq x
\,\biggm|\,\F_0
\biggr)
\leq C_1 U_2(W_0)
\biggl[
x^{-2}\ell^2L\alpha_I^2
+x^{-1}L\alpha_I
\bigl(\rho^\ell+\ell\alpha_{I-\ell}\bigr)
\biggr].
\]
We bound the two terms separately.  Since \(n_{r+1}\leq2I\),
\(L\leq C_0I^\delta\), and
\(\alpha_I\leq\alpha_0I^{-\delta}\).  Then
\begin{align*}
x^{-2}\ell^2L\alpha_I^2
=\frac{64}{c^2}n_{r+1}^{2\beta}
\ell^2L\alpha_I^2
\leq C_2\ell^2 I^{2\beta}I^\delta I^{-2\delta}
 = C_2\ell^2 I^{-\delta+2\beta},
\end{align*}
where \(C_2:=2^{2\beta+6}c^{-2}C_0\alpha_0^2\).
Similarly, using \(\rho^\ell\leq I^{-\delta}\) and
\(\alpha_{I-\ell}\leq2^\delta\alpha_I\),
\begin{align*}
x^{-1}L\alpha_I
\bigl(\rho^\ell+\ell\alpha_{I-\ell}\bigr)
&=\frac{8}{c}n_{r+1}^\beta L\alpha_I
\bigl(\rho^\ell+\ell\alpha_{I-\ell}\bigr)\\
&\leq \frac{2^{\beta+3}}{c} C_0\alpha_0
\bigl(1+2^\delta\alpha_0\ell\bigr)I^{-\delta+\beta}\leq C_3\ell I^{-\delta+\beta}
 \leq C_3\ell^2I^{-\delta+2\beta},
\end{align*}
where \(C_3:=(2^{\beta+3}/c)C_0\alpha_0
(1+2^\delta\alpha_0)\), and  the last two inequalities use \(\ell\geq1\) and \(I^\beta\geq1\).
Combining the two estimates and setting \(C:=C_1(C_2+C_3)\) and \(R:=R_0\) complete the proof.
\Halmos\endproof

\subsection{Block Entrance}
\label{subsec:ec-poly-block-entrance}

\begin{lemma}
\label{lemma:block-entrance}
Suppose Assumptions~\ref{assump:learning-rate}--\ref{assump:qsm} hold.
Fix \(1/2<\delta\leq1\), \(0\leq\beta<\delta-1/2\), and
\(c>0\). When \(\delta=1\), also suppose
\(2\zeta\alpha_0>1\). Then there are positive constants \(C\) and \(R\) such that, for all
\(r\geq R\),
\begin{equation*}
\pr(\mathcal E_r^{\mathrm{ent}}\mid\F_0)
\leq C U_2(W_0)(1+\log n_r)n_r^{-\delta+2\beta}.
\end{equation*}
\end{lemma}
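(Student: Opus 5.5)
The entrance event is a single fixed-time deviation, so the plan is to combine the conditional Markov inequality at the block entrance \(n_r\) with the fixed-time MSE bound of Proposition~\ref{prop:expectation}. By definition, \(\mathcal E_r^{\mathrm{ent}}=\{\lVert\theta_{n_r}-\theta^\star\rVert>\psi_r/8\}\) with \(\psi_r=c n_{r+1}^{-\beta}\). The conditional Markov inequality for squares gives
\[
\pr(\mathcal E_r^{\mathrm{ent}}\mid\F_0)
\leq \frac{64}{c^2}\,n_{r+1}^{2\beta}\,
\E[\lVert\theta_{n_r}-\theta^\star\rVert^2\mid\F_0].
\]

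Next, I would take the constants \(C_0,K_0\) from Proposition~\ref{prop:expectation}. Its hypotheses are exactly those assumed here, including \(2\zeta\alpha_0>1\) when \(\delta=1\). Since \(n_r\to\infty\) in either block construction, I would choose \(R_1\) so that \(n_r\geq K_0\) for all \(r\geq R_1\). This yields
\[
\E[\lVert\theta_{n_r}-\theta^\star\rVert^2\mid\F_0]\leq C_0U_2(W_0)(1+\log n_r)n_r^{-\delta}.
\]

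It remains to replace \(n_{r+1}^{2\beta}\) by a multiple of \(n_r^{2\beta}\). By Lemma~\ref{lemma:block-arithmetic}, there is \(R_0\) with \(n_{r+1}\leq 2n_r\) for \(r\geq R_0\), so \(n_{r+1}^{2\beta}\leq 4^{\beta}n_r^{2\beta}\). This uses \(\beta\geq0\); the upper bound \(\beta<\delta-1/2\) plays no role in this lemma beyond matching the ambient setting. Taking \(R:=\max\{R_0,R_1\}\) and \(C:=64\cdot4^\beta c^{-2}C_0\) gives the claim.

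One point must be checked. Proposition~\ref{prop:expectation} bounds the conditional MSE at a deterministic time, and \(n_r\) is deterministic, so it applies directly. There is no genuine obstacle here; the only care needed is choosing thresholds uniformly in the sample path, which the proposition's constants already provide.
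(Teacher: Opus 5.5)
Your proposal is correct and follows essentially the same route as the paper: conditional Markov's inequality at the deterministic entrance time \(n_r\), the MSE bound of Proposition~\ref{prop:expectation} once \(n_r\geq K_0\), and \(n_{r+1}\leq 2n_r\) from Lemma~\ref{lemma:block-arithmetic}. Your constant \(64\cdot4^\beta c^{-2}C_0\) is exactly the paper's \(2^{2\beta+6}c^{-2}C_0\).
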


\proof{Proof of Lemma~\ref{lemma:block-entrance}.}
Let \(C_0,K_0\) be the constants from Proposition~\ref{prop:expectation}, and let \(R_0\) be a threshold
from Lemma~\ref{lemma:block-arithmetic}.  Choose \(R\geq R_0\) so that \(n_r\geq K_0\) for all
\(r\geq R\).
In particular, \(n_{r+1}\leq2n_r\).  Applying Markov's inequality and Proposition~\ref{prop:expectation} yields 
\begin{align*}
\pr(\mathcal E_r^{\mathrm{ent}}\mid\F_0)
= \pr(\lVert\theta_{n_r}-\theta^\star\rVert \geq \psi_r/8)
\leq
64c^{-2}n_{r+1}^{2\beta}
\E[\lVert\theta_{n_r}-\theta^\star\rVert^2\mid\F_0]
\leq
C U_2(W_0)(1+\log n_r)n_r^{-\delta+2\beta},
\end{align*}
where \(C:=2^{2\beta+6}c^{-2}C_0.\)
\Halmos\endproof

\subsection{Accumulated Mean Field}
\label{subsec:ec-poly-block-drift}

\begin{lemma}
\label{lemma:block-drift}
Suppose Assumptions~\ref{assump:learning-rate}--\ref{assump:qsm} hold.
Fix \(1/2<\delta\leq1\), \(0\leq\beta<\delta-1/2\), and
\(c>0\). When \(\delta=1\), also suppose
\(2\zeta\alpha_0>1\). Then there are positive constants \(C\) and 
\(R\) such that, for all \(r\geq R\),
\begin{equation*}
\pr(\mathcal E_r^{\mathrm{field}}\mid\F_0)
\leq C U_2(W_0)(1+\log n_r)n_r^{-\delta+2\beta}.
\end{equation*}
\end{lemma}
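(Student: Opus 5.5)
We bound the maximum of the accumulated mean field by a single, path-independent random quantity. Then Markov's inequality in second-moment form finishes the argument. Write \(I:=n_r\), \(J:=n_{r+1}-1\), \(L:=n_{r+1}-n_r\), \(\ell:=\ell_{n_r}\) and \(x:=\psi_r/8=(c/8)n_{r+1}^{-\beta}\). Because \(\bar F(\theta^\star)=0\) by Assumption~\ref{assump:interior-target}, Lemma~\ref{lemma:ec-stationary-mean-field-lipschitz} gives \(\lVert\bar F(\theta)\rVert\leq C_0\lVert\theta-\theta^\star\rVert\) for all \(\theta\in\Theta\). The triangle inequality then yields, for every \(m\in\mathcal I_r\),
\[
\lVert S_r^{\mathrm{field}}(m)\rVert
\leq
C_0\sum_{k=I}^{J}\alpha_k\lVert\theta_{k-\ell}-\theta^\star\rVert
=:Z_r,
\]
so that \(\mathcal E_r^{\mathrm{field}}\subseteq\{Z_r>x\}\). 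The maximum over \(m\) costs nothing, since \(Z_r\) dominates every partial sum.

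Next, apply Markov's inequality to \(Z_r^2\), and then Cauchy--Schwarz over the \(L\) summands:
\[
\pr(\mathcal E_r^{\mathrm{field}}\mid\F_0)
\leq x^{-2}\E[Z_r^2\mid\F_0]
\leq x^{-2}C_0^2L\sum_{k=I}^{J}\alpha_k^2
\E[\lVert\theta_{k-\ell}-\theta^\star\rVert^2\mid\F_0].
\]
Take \(R\) large, using Lemma~\ref{lemma:block-arithmetic}, so that \(I\geq2\ell\), \(n_{r+1}\leq2I\), \(L\leq C_1I^\delta\), and \(I-\ell\geq K\), where \(K\) is the threshold in Proposition~\ref{prop:expectation}. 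Every lagged index \(k-\ell\) with \(k\in\mathcal I_r\) then lies in \([I/2,2I]\). Proposition~\ref{prop:expectation}, combined with monotonicity of \((1+\log j)j^{-\delta}\) up to constants on this range, bounds each conditional MSE by \(C_2U_2(W_0)(1+\log I)I^{-\delta}\). Also \(\alpha_k\leq\alpha_0I^{-\delta}\).

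Substituting these bounds gives
\[
\pr(\mathcal E_r^{\mathrm{field}}\mid\F_0)
\lesssim n_{r+1}^{2\beta}\,L^2I^{-2\delta}\,U_2(W_0)(1+\log I)I^{-\delta}
\lesssim U_2(W_0)(1+\log n_r)L^2n_r^{-3\delta+2\beta}.
\]
This matches the display \eqref{eq:remaining-block-event-bounds}. Inserting \(L\lesssim n_r^\delta\) produces \(U_2(W_0)(1+\log n_r)n_r^{-\delta+2\beta}\), which is the claimed bound; the constants \(C\) and \(R\) are collected from the steps above.

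The step that needs care is making the MSE bound apply uniformly to the lagged indices \(k-\ell\). This requires \(k-\ell\geq K\) and \(k-\ell\asymp n_r\), and both follow from \(\ell\lesssim\log n_r\) and \(n_{r+1}\leq2n_r\) for large \(r\). The case \(\delta=1\) needs no separate treatment: the same estimates hold with dyadic blocks, since Lemma~\ref{lemma:block-arithmetic} covers both block formulations and Proposition~\ref{prop:expectation} supplies the MSE bound under \(2\zeta\alpha_0>1\). The hypothesis \(\beta<\delta-1/2\) is not used here; it matters only when these block bounds are summed over \(r\).
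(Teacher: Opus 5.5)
Your proposal is correct and follows the paper's proof step for step. Both arguments use Lipschitz continuity of $\bar F$ with $\bar F(\theta^\star)=0$ and dominate the running maximum by the full weighted sum. Both then apply Cauchy--Schwarz over the block, invoke the uniform MSE bound of Proposition~\ref{prop:expectation} at lagged indices confined to $[I/2,2I]$ via Lemma~\ref{lemma:block-arithmetic}, and finish with a second-moment Markov inequality using $L^2\alpha_I^2=\mathcal O(1)$.
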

\proof{Proof of Lemma~\ref{lemma:block-drift}.}
Let 
\(
I:=n_r\), \(J:=n_{r+1}-1\),
\(L:=J-I+1=n_{r+1}-n_r\),
 \(\ell:=\ell_{n_r}\), and 
\(x:=\psi_r/8=(c/8)n_{r+1}^{-\beta}\).
Let \(C_0,K_0\) be the constants in
Proposition~\ref{prop:expectation}.
Let \(C_1,R_0\) be the constants in 
Lemma~\ref{lemma:block-arithmetic}.
Choose \(R\geq R_0\) so that \(I-\ell\geq K_0\) for all \(r\geq R\).

Fix \(r\geq R\).  For \(I\leq k\leq J\), 
Lemma~\ref{lemma:block-arithmetic} implies 
\( 
I/2\leq k-\ell \leq2I\). 
Consequently,
\(
(k-\ell)^{-\delta}\leq2^\delta I^{-\delta}\), and 
\(
1+\log(k-\ell)\leq2(1+\log I).
\)
Proposition~\ref{prop:expectation} therefore implies, uniformly over
\(I\leq k\leq J\),
\begin{equation}\label{eq:MSE-unif-bound}
\E[\lVert\theta_{k-\ell}-\theta^\star\rVert^2\mid\F_0]
\leq
C_2U_2(W_0)(1+\log I)I^{-\delta},
\end{equation}
where \(C_2:=2^{\delta+1}C_0\).

Let \(C_3\) be the constant in
Lemma~\ref{lemma:ec-stationary-mean-field-lipschitz}.
Since \(\bar F(\theta^\star)=0\), we have 
\(
\lVert\bar F(\theta_{k-\ell})\rVert
\leq C_3\lVert\theta_{k-\ell}-\theta^\star\rVert.
\)
Hence, by Cauchy--Schwarz,
\begin{align*}
\max_{I\leq m\leq J}
\lVert S_r^{\mathrm{field}}(m)\rVert^2
& = \max_{I\leq m\leq J} \biggl\lVert \sum_{k=n_r}^{m}
\alpha_k\bar F(\theta_{k-\ell_{n_r}}) \biggr\rVert \\ 
& \leq
C_3^2
\biggl(
\sum_{k=I}^{J}
\alpha_k\lVert\theta_{k-\ell}-\theta^\star\rVert
\biggr)^2
\leq
C_3^2L\sum_{k=I}^{J}
\alpha_k^2\lVert\theta_{k-\ell}-\theta^\star\rVert^2.
\end{align*}
Taking conditional expectations and using the uniform MSE bound \eqref{eq:MSE-unif-bound} gives
\begin{align*}
\E\biggl[
\max_{I\leq m\leq J}
\lVert S_r^{\mathrm{field}}(m)\rVert^2
\,\biggm|\,\F_0
\biggr]
&\leq
C_2C_3^2U_2(W_0)(1+\log I)I^{-\delta}
L\sum_{k=I}^{J}\alpha_k^2\\
&\leq
C_4U_2(W_0)(1+\log I)I^{-\delta},
\end{align*}
where
\(
C_4:=C_2C_3^2C_1^2\alpha_0^2,
\)
and the second inequality holds because 
\(
L\sum_{k=I}^{J}\alpha_k^2
\leq L^2\alpha_I^2
\leq C_1^2\alpha_0^2.
\)

Finally, Markov's inequality,
\(x^{-2}=64c^{-2}n_{r+1}^{2\beta}\), and
\(n_{r+1}\leq2I\) yield
\begin{align*}
\pr(\mathcal E_r^{\mathrm{field}}\mid\F_0)
\leq
x^{-2}
\E\biggl[
\max_{I\leq m\leq J}
\lVert S_r^{\mathrm{field}}(m)\rVert^2
\,\biggm|\,\F_0
\biggr]\leq
2^{2\beta+6}c^{-2}C_4
U_2(W_0)(1+\log I)I^{-\delta+2\beta}.
\end{align*}
Setting \(C:=2^{2\beta+6}c^{-2}C_4\) and recalling
\(I=n_r\) completes the proof.
\Halmos\endproof

\subsection{Parameter-Lag Remainder}
\label{subsec:ec-poly-block-adaptation}

\begin{lemma}
\label{lemma:block-adaptation}
Suppose Assumptions~\ref{assump:learning-rate}, \ref{assump:compact},
\ref{assump:ergodic}\ref{part:ergodic-reference}, \ref{assump:growth}, and
\ref{assump:interior-target} hold. Fix \(1/2<\delta\leq1\),
\(0\leq\beta<\delta-1/2\), and \(c>0\). Then there are positive constants \(C\) and
\(R\) such that, for all \(r\geq R\),
\begin{equation*}
\pr(\mathcal E_r^{\mathrm{lag}}\mid\F_0)
\leq C U_2(W_0)\ell_{n_r}n_r^{-\delta+\beta}.
\end{equation*}
\end{lemma}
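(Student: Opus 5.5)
The plan is a first-moment (Markov inequality) argument. The lag remainder is bounded pathwise by a sum of weighted parameter movements, and the expectation of each movement is controlled by Lemma~\ref{lemma:ec-canonical-movement}. Set \(I:=n_r\), \(J:=n_{r+1}-1\), \(L:=n_{r+1}-n_r\), \(\ell:=\ell_{n_r}\), and \(x:=\psi_r/8=(c/8)n_{r+1}^{-\beta}\). Take \(R\) at least the threshold in Lemma~\ref{lemma:block-arithmetic}. Then for \(r\geq R\) we have \(L\leq C_0I^\delta\), \(n_{r+1}\leq 2I\), \(I\geq 2\ell\), and \(\alpha_{I-\ell}\leq 2^\delta\alpha_I\).

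First, remove the maximum. By the Lipschitz condition \eqref{eq:Lipschitz-F} and the triangle inequality, for every \(m\in\mathcal I_r\),
\[
\lVert S_r^{\mathrm{lag}}(m)\rVert\leq \sum_{k=I}^{J}\alpha_k L_F U_1(W_k)\lVert\theta_k-\theta_{k-\ell}\rVert .
\]
The right-hand side does not depend on \(m\), so it also dominates \(\max_{m}\lVert S_r^{\mathrm{lag}}(m)\rVert\). No maximal inequality is needed here.

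Second, take conditional expectations given \(\F_0\). Apply Lemma~\ref{lemma:ec-canonical-movement} with starting index \(k-\ell\) and horizon \(s=\ell\):
\[
\E\bigl[U_1(W_k)\lVert\theta_k-\theta_{k-\ell}\rVert\bigm|\F_0\bigr]\leq C U_2(W_0)\sum_{i=k-\ell}^{k-1}\alpha_i\leq C U_2(W_0)\,\ell\,\alpha_{I-\ell}\leq 2^\delta C U_2(W_0)\,\ell\,\alpha_I .
\]
The last two inequalities use monotonicity of \(\alpha_i\), the fact that \(k-\ell\geq I-\ell\), and the block arithmetic above. Summing over the \(L\) indices in the block gives
\[
\E\Bigl[\max_m\lVert S_r^{\mathrm{lag}}(m)\rVert\Bigm|\F_0\Bigr]\lesssim U_2(W_0)\,\ell\,L\,\alpha_I^2\lesssim U_2(W_0)\,\ell\, I^{\delta}I^{-2\delta}=U_2(W_0)\,\ell\, I^{-\delta}.
\]

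Third, apply Markov's inequality at threshold \(x\). Since \(x^{-1}=(8/c)n_{r+1}^{\beta}\leq (8/c)2^\beta I^\beta\), this yields
\[
\pr(\mathcal E_r^{\mathrm{lag}}\mid\F_0)\leq C U_2(W_0)\,\ell_{n_r}\,n_r^{-\delta+\beta},
\]
which is the claimed bound.

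The only step needing care is index bookkeeping. We need \(k-\ell\geq 0\) for all \(k\in\mathcal I_r\), which holds because \(I\geq 2\ell\) for \(r\geq R\). We also need the uniform comparison \(\alpha_{k-\ell}\leq 2^\delta\alpha_I\) across the whole block, which follows from \(k-\ell\geq I/2\). Beyond that, there is no real obstacle. This lemma does not need the MSE bound or QSM; only the movement lemma, the growth/Lipschitz conditions, and the block arithmetic enter.
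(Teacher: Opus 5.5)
Your proposal is correct and follows the paper's proof essentially step for step. The steps are: the pathwise bound from \eqref{eq:Lipschitz-F} that removes the maximum; Lemma~\ref{lemma:ec-canonical-movement} applied to each pair \((k-\ell,k)\), combined with Lemma~\ref{lemma:block-arithmetic}, to obtain \(\E[\max_m\lVert S_r^{\mathrm{lag}}(m)\rVert\mid\F_0]\lesssim U_2(W_0)\ell L\alpha_I^2\lesssim U_2(W_0)\ell I^{-\delta}\); and Markov's inequality with \(x^{-1}\lesssim n_{r+1}^{\beta}\leq 2^{\beta}I^{\beta}\), using the same bookkeeping \(I\geq2\ell\) that the paper uses.
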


\proof{Proof of Lemma~\ref{lemma:block-adaptation}.}
Write \(I:=n_r\), \(J:=n_{r+1}-1\),
\(L:=J-I+1=n_{r+1}-n_r\), and \(\ell:=\ell_{n_r}\). Set
\(x:=\psi_r/8=(c/8)n_{r+1}^{-\beta}\).
Let \(C_0,R_0\) be constants from
Lemma~\ref{lemma:block-arithmetic}.  For \(r\geq R_0\), in particular,
\(I\geq2\ell\).  By the triangle inequality and
\eqref{eq:Lipschitz-F},
\begin{equation*}
\max_{I\leq m\leq J}\lVert S_r^{\mathrm{lag}}(m)\rVert
= 
\max_{I\leq m\leq J}\biggl\lVert 
\sum_{k=n_r}^{m}
\alpha_k\bigl(
F(\theta_k,W_k)-F(\theta_{k-\ell_{n_r}},W_k)\bigr) \biggr\rVert
\leq L_F\sum_{k=I}^{J}
\alpha_kU_1(W_k)\lVert\theta_k-\theta_{k-\ell}\rVert.
\end{equation*}
Let \(C_1\) be the constant in
Lemma~\ref{lemma:ec-canonical-movement}.  Taking
\(\F_0\)-conditional expectations gives
\begin{align*}
\E\biggl[
\max_{I\leq m\leq J}\lVert S_r^{\mathrm{lag}}(m)\rVert
\,\biggm|\,\F_0
\biggr]
\leq L_FC_1U_2(W_0)
\sum_{k=I}^{J}\alpha_k
\sum_{i=k-\ell}^{k-1}\alpha_i
\leq C_2U_2(W_0)\ell L\alpha_I^2
\leq C_3U_2(W_0)\ell I^{-\delta}.
\end{align*}
The first inequality applies Lemma~\ref{lemma:ec-canonical-movement} to
each pair \((k-\ell,k)\).  For the second inequality, monotonicity and
Lemma~\ref{lemma:block-arithmetic} give
\(\sum_{i=k-\ell}^{k-1}\alpha_i
\leq\ell\alpha_{I-\ell}\leq2^\delta\ell\alpha_I\) and
\(\sum_{k=I}^{J}\alpha_k\leq L\alpha_I\); thus, set
\(C_2:=2^\delta L_FC_1\).  The third inequality uses
\(L\leq C_0I^\delta\) and
\(\alpha_I\leq\alpha_0I^{-\delta}\); thus, set
\(C_3:=C_0\alpha_0^2C_2\).

Finally, Markov's inequality and
\(x^{-1}=8c^{-1}n_{r+1}^\beta\) give
\begin{align*}
\pr(\mathcal E_r^{\mathrm{lag}}\mid\F_0)
\leq x^{-1}
\E\biggl[
\max_{I\leq m\leq J}\lVert S_r^{\mathrm{lag}}(m)\rVert
\,\biggm|\,\F_0
\biggr]
\leq 8c^{-1}n_{r+1}^\beta C_3
U_2(W_0)\ell I^{-\delta}
\leq 2^{\beta+3}c^{-1}C_3
U_2(W_0)\ell I^{-\delta+\beta},
\end{align*}
where the last inequality uses \(n_{r+1}\leq2I\). Setting
 \(C:=2^{\beta+3}c^{-1}C_3\) and \(R:=R_0\) completes the proof.
\Halmos\endproof

\subsection{Block Exit Bounds and Summation}
\label{subsec:ec-one-block-aggregation}

\begin{corollary}
\label{cor:block-failure}
Suppose Assumptions~\ref{assump:learning-rate}--\ref{assump:qsm} hold. Fix
\(1/2<\delta\leq1\), \(0\leq\beta<\delta-1/2\), and
\(c>0\). When \(\delta=1\), also suppose
\(2\zeta\alpha_0>1\). Then there are positive constants \(C\) and \(R\) such that, for all
\(r\geq R\),
\begin{equation*}
\pr\biggl(
\max_{k\in\mathcal I_r}\lVert\theta_k-\theta^\star\rVert
>c n_{r+1}^{-\beta}
\,\biggm|\,\F_0\biggr)
\leq C U_2(W_0)(1+\log n_r)^2n_r^{-\delta+2\beta}.
\end{equation*}
\end{corollary}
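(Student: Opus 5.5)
The corollary follows by assembling the four component bounds already established. First I reduce to a small tube constant. Since the event on the left of the corollary shrinks as \(c\) grows, it is enough to prove the bound for \(\widetilde c:=\min\{c,d_\star/2\}\). With this choice, \(\psi_r=\widetilde c\,n_{r+1}^{-\beta}\) satisfies the hypothesis of Lemma~\ref{lemma:canonical-block-containment}. The constant \(C\) obtained for \(\widetilde c\) then works for \(c\), with only the factor \(\widetilde c^{-2}\) entering. Lemma~\ref{lemma:canonical-block-containment} gives the inclusion \eqref{eq:full-block-failure-decomposition}. Taking conditional probabilities and using the union bound yields
\[
\pr\Bigl(\max_{k\in\mathcal I_r}\lVert\theta_k-\theta^\star\rVert>\psi_r\,\Bigm|\,\F_0\Bigr)\leq\pr(\mathcal E_r^{\mathrm{ent}}\mid\F_0)+\pr(\mathcal E_r^{\mathrm{markov}}\mid\F_0)+\pr(\mathcal E_r^{\mathrm{field}}\mid\F_0)+\pr(\mathcal E_r^{\mathrm{lag}}\mid\F_0).
\]
The component events are defined with the lag \(\ell_{n_r}\). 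This requires \(n_r\geq\ell_{n_r}\), which holds for \(r\) beyond the threshold in Lemma~\ref{lemma:block-arithmetic}.

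Next I apply the four component lemmas, each with the constant \(\widetilde c\). Let \(R\) be the maximum of their thresholds and the threshold of Lemma~\ref{lemma:block-arithmetic}. For \(r\geq R\):
\begin{itemize}
\item Lemma~\ref{lemma:block-entrance} gives \(C U_2(W_0)(1+\log n_r)n_r^{-\delta+2\beta}\).
\item Lemma~\ref{lemma:block-noise} gives \(C U_2(W_0)\ell_{n_r}^2 n_r^{-\delta+2\beta}\).
\item Lemma~\ref{lemma:block-drift} gives \(C U_2(W_0)(1+\log n_r)n_r^{-\delta+2\beta}\).
\item Lemma~\ref{lemma:block-adaptation} gives \(C U_2(W_0)\ell_{n_r}n_r^{-\delta+\beta}\).
\end{itemize}
The block-length factors \(L/n_r^\delta\) from the paper's overview are already absorbed into these lemmas through \(L\leq Cn_r^\delta\).

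The remaining step is to bring every term to the common form \((1+\log n_r)^2 n_r^{-\delta+2\beta}\). By Lemma~\ref{lemma:block-arithmetic}, \(\ell_{n_r}\leq C(1+\log n_r)\). Hence \(\ell_{n_r}^2\leq C^2(1+\log n_r)^2\) and \(\ell_{n_r}\leq C(1+\log n_r)^2\). Also \(1+\log n_r\leq(1+\log n_r)^2\). Since \(\beta\geq0\) and \(n_r\geq1\), we have \(n_r^{-\delta+\beta}\leq n_r^{-\delta+2\beta}\). Summing the four bounds and taking \(C\) to be the sum of the resulting constants proves the corollary.

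No step is a real obstacle; the only care needed is bookkeeping. The thresholds must be compatible: \(I\geq2\ell\) for Lemma~\ref{lemma:ec-canonical-block-maximal}, and \(I-\ell\geq K\) for the MSE bound inside Lemma~\ref{lemma:block-drift}. The reduction to \(c\leq d_\star/2\) must also be stated explicitly, because Lemma~\ref{lemma:canonical-block-containment} needs it and the corollary is asserted for arbitrary \(c>0\).
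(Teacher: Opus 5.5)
Your proposal is correct and follows the paper's proof, which simply combines the four-event containment of Lemma~\ref{lemma:canonical-block-containment} with the component bounds of Lemmas~\ref{lemma:block-noise}--\ref{lemma:block-adaptation}, then absorbs $\ell_{n_r}\lesssim 1+\log n_r$ and $n_r^{-\delta+\beta}\leq n_r^{-\delta+2\beta}$ into the common rate. Your explicit reduction to $\widetilde c=\min\{c,d_\star/2\}$ correctly reconciles the arbitrary $c>0$ in the statement with the hypothesis $c\leq d_\star/2$ of the containment lemma; the paper handles this only through the remark preceding that lemma.
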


\proof{Proof of Corollary~\ref{cor:block-failure}.} 
Combining  Lemmas~\ref{lemma:canonical-block-containment} and \ref{lemma:block-noise}--\ref{lemma:block-adaptation} finishes the proof. 
\Halmos\endproof

\proof{Proof of Theorem~\ref{thm:trajectory}.}
Use the polynomial blocks \eqref{eq:block-definition} when
\(1/2<\delta<1\) and the dyadic blocks 
\eqref{eq:dyadic-block-definition} when \(\delta=1\).  For an integer
\(k_0\geq n_1\), let \(r_0=r_0(k_0)\) be the index of the block containing
\(k_0\).  Let \(C_0\) and \(R_0\) be the bound constant and block-index
threshold, respectively, from Corollary~\ref{cor:block-failure}.  Whenever
\(r_0\geq R_0\),
Lemma~\ref{lemma:blockwise-tube-reduction}, Corollary~\ref{cor:block-failure}, and the union
bound yield
\begin{align}
\pr\biggl(
\lVert\theta_k-\theta^\star\rVert>c k^{-\beta}
\text{ for some }k\geq k_0
\,\biggm|\,\F_0
\biggr)
\leq
C_0 U_2(W_0)\sum_{r\geq r_0}
n_r^{-\delta+2\beta}(1+\log n_r)^2.
\label{eq:trajectory-common-block-tail}
\end{align}

It remains to bound this deterministic series in terms of \(k_0\).

Suppose first that \(1/2<\delta<1\).  Define
\[
p:=\frac{\delta-2\beta}{1-\delta}
=1+\frac{2(\delta-\beta)-1}{1-\delta}>1.
\]
Because \(r^{1/(1-\delta)}\leq n_r\leq
2r^{1/(1-\delta)}\) and \(\delta-2\beta>0\), there is a positive constant \(C_1\) such that, for all \(r\geq1\),
\[
n_r^{-\delta+2\beta}(1+\log n_r)^2
\leq C_1 r^{-p}(1+\log r)^2.
\]
Choose a block-index threshold \(R_1\geq\max\{R_0,1\}\) so that
\(r^{-p}(1+\log r)^2\) is decreasing for \(r\geq R_1\).

The relations \(n_{r_0}\leq k_0<n_{r_0+1}\) and the polynomial block definition \eqref{eq:block-definition} imply
\[
k_0\leq n_{r_0+1}-1<(r_0+1)^{1/(1-\delta)} \quad \mbox{and}\quad 
r_0^{1/(1-\delta)}\leq n_{r_0}\leq k_0.
\]
Thus, \(k_0^{1-\delta}-1<r_0\leq k_0^{1-\delta}\). 
In particular, \(r_0(k_0)\to\infty\).  Choose an iteration threshold
\(K_0\geq n_1\) so that 
\(r_0(k_0)\geq R_1\) and \(k_0^{1-\delta}\geq2\) for all \(k_0\geq K_0\).  Therefore,
\[
\frac12k_0^{1-\delta}<r_0\leq k_0^{1-\delta}\leq k_0.
\]

The integral comparison gives a constant \(C_2\), independent of \(k_0\) and
absorbing \(C_1\), for which the following holds whenever \(k_0\geq K_0\)
(and hence \(r_0(k_0)\geq R_1\)):
\begin{align*}
\sum_{r\geq r_0}
n_r^{-\delta+2\beta}(1+\log n_r)^2
&\leq C_2 r_0^{1-p}(1+\log r_0)^2\\
&\leq
2^{p-1}C_2
k_0^{(1-\delta)(1-p)}(1+\log k_0)^2\\
&=
2^{p-1}C_2
k_0^{1-2(\delta-\beta)}(1+\log k_0)^2.
\end{align*}
The second inequality uses \(1-p<0\): the lower bound on \(r_0\) controls
\(r_0^{1-p}\), while \(r_0\leq k_0^{1-\delta}\leq k_0\) gives
\(1+\log r_0\leq1+\log k_0\).

Now suppose \(\delta=1\), so \(0\leq\beta<1/2\).  Set
\(q:=1-2\beta>0\) and
\[
A_q:=\sum_{i\geq0}(1+i)^2 2^{-qi}<\infty.
\]
Since \(n_r = 2^r\),  \(n_{r_0}\leq k_0<n_{r_0+1}=2n_{r_0}\)  for all 
\(k_0\geq2\). Hence, 
\[
\frac{k_0}{2}<n_{r_0}\leq k_0 
\quad\mbox{and}\quad
\log_2(k_0/2)<r_0\leq\log_2 k_0.
\]
Thus \(r_0(k_0)\to\infty\).  Choose an iteration
threshold \(K_1\geq2\) so that \(r_0(k_0)\geq R_0\) whenever
\(k_0\geq K_1\).
Since \(n_{r_0+i}=2^i n_{r_0}\) and
\(1+\log n_{r_0+i}\leq(1+i)(1+\log n_{r_0})\), writing
\(r=r_0+i\) gives
\begin{align*}
\sum_{r\geq r_0}(1+\log n_r)^2n_r^{-q}
&=\sum_{i\geq0}(1+\log n_{r_0+i})^2n_{r_0+i}^{-q}\\
&\leq
(1+\log n_{r_0})^2n_{r_0}^{-q}
 A_q\\
&\leq
2^q A_q(1+\log k_0)^2k_0^{-q},
\end{align*}
where \(q=2(\delta-\beta)-1\) and the second inequality uses \(k_0/2<n_{r_0}\leq k_0\).

Set \((C,K):= (2^{p-1}C_0C_2,K_0)\) if \(1/2<\delta<1\) and set \((C,K):=(2^qC_0A_q,K_1)\) if \(\delta=1\). 
Inserting the corresponding deterministic tail estimate into
\eqref{eq:trajectory-common-block-tail} completes the proof.
\Halmos\endproof

\section{Sharpness of the Polynomial Exponent}
\label{sec:EC-sharpness-proofs}

Fix the step-size parameters \(\alpha_0,\delta\) and tube parameters
\(\beta,c\) as in Theorem~\ref{thm:trajectory}. Also fix dimensions
\(d,d'\geq1\), a target \(\theta^\star\in\R^d\), a projection-ball radius
\(\kappa>c\), and a QSM coefficient
\(\zeta>0\), with \(2\zeta\alpha_0>1\) when \(\delta=1\). Set
\(
\Theta:=\{\theta\in\R^d:\|\theta-\theta^\star\|\leq\kappa\}\) and 
\(\W:=\R^{d'}\). 

For Assumption~\ref{assump:ergodic}, fix a positive integer \(N\), a
minorization coefficient in \((0,1]\), and constants \(0\leq\lambda<1\)
and \(b\geq2\).
For Assumptions~\ref{assump:kernel-Lipschitz} and~\ref{assump:growth},
fix
\(L_P>0\), \(L_F\geq\zeta\), and 
\(C_F\geq\max\{1,\zeta\kappa\}\).

Let \(\mu_{\mathrm{init}}\) be any prescribed probability distribution
on \(\Theta\times\W\) satisfying the moment condition
\(\int_{\Theta\times\W}U_2(w)\,
\mu_{\mathrm{init}}(\dd\theta,\dd w)<\infty\).

The class \(\mathfrak M\) consists of all data-generating processes
following the SA recursion~\eqref{eq:update-scheme-projection} with these
spaces, target, step
sizes, and initial distribution, satisfying
Assumptions~\ref{assump:learning-rate}--\ref{assump:qsm} with the prescribed
numerical constants.
The update map \(F\), the kernel family \(\{P_\theta\}\), and the
minorization set and measure may vary across processes. The prescribed parameters,
numerical constants, and joint initial distribution do not depend on
\(\epsilon\) or \(k_0\).

\begin{theorem}
\label{thm:ec-sharpness-fixed-class}
For the class \(\mathfrak M\) defined above and any \(\epsilon>0\),
there are positive constants \(C_\epsilon\) and \(K_\epsilon\)
such that, for all \(k_0\geq K_\epsilon\),
\begin{equation}\label{eq:ec-sharpness-worst-case-lower-bound}
\operatorname*{ess\,sup}_{\mathcal M\in\mathfrak M}
\pr_{\mathcal M}\Bigl(
\exists k\geq k_0:
\|\theta_k-\theta^\star\|>c k^{-\beta}
\,\Bigm|\,\F_0
\Bigr)
\geq
C_\epsilon
k_0^{-\left(2(\delta-\beta)-1+\epsilon\right)}.
\end{equation}
Furthermore, there is a single process \(\mathcal M_\epsilon\in\mathfrak M\) under which the conditional exit probability satisfies the same lower bound for all \(k_0\geq K_\epsilon\).
\end{theorem}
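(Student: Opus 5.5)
The plan is to prove the second (single-process) claim; the essential-supremum bound \eqref{eq:ec-sharpness-worst-case-lower-bound} then follows at once. Fix \(\epsilon>0\) and set \(a_\epsilon:=2+\epsilon/(\delta-\beta)>2\). Let \(\mu_\epsilon\) be the law of a symmetric signed-Pareto variable \(Z\) with tail \(\pr(|Z|>z)=c_\epsilon z^{-a_\epsilon}\) for \(z\geq z_\epsilon\), normalized so that \(\E Z=0\) and \(\E Z^2=1\). The process \(\mathcal M_\epsilon\) is defined as follows.
\begin{itemize}
\item Take the constant kernel \(P_\theta(w,\cdot):=\mathrm{Law}(Z e_1)\) on \(\W=\R^{d'}\), where \(e_1\) is the first basis vector. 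Then \(W_1,W_2,\ldots\) are iid \(Ze_1\) and independent of \(\F_0\), and \((\theta_0,W_0)\sim\mu_{\mathrm{init}}\).
\item Take the update map \(F(\theta,w):=-\zeta(\theta-\theta^\star)+w_1e_1\), with \(e_1\in\R^d\).
\end{itemize}

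\textbf{Step 1: membership in \(\mathfrak M\) with the prescribed constants.} Each assumption is checked directly, and none of the constants may depend on \(\epsilon\).
\begin{itemize}
\item The kernel does not depend on \(\theta\), so Assumption~\ref{assump:kernel-Lipschitz} holds with any \(L_P\).
\item The chain is iid, hence irreducible and aperiodic. The minorization holds with \(\W_0=\W\), \(\varphi=\mathrm{Law}(Ze_1)\), any \(N\), and any coefficient in \((0,1]\).
\item For the drift, \(P_\theta U_2\equiv 1+\E Z^2=2\leq\lambda U_2+b\) because \(b\geq2\). Unit variance makes this bound \(\epsilon\)-free.
\item For Assumption~\ref{assump:growth}: \(\|F(\theta,w)-F(\theta',w)\|=\zeta\|\theta-\theta'\|\leq L_F(1+\|w\|)\|\theta-\theta'\|\), and \(\|F(\theta,w)\|\leq\zeta\kappa+\|w\|\leq C_F(1+\|w\|)\).
\item Since \(\E Z=0\), the mean field is \(\bar F(\theta)=-\zeta(\theta-\theta^\star)\), which gives QSM with coefficient \(\zeta\). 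The target \(\theta^\star\) is the center of the ball \(\Theta\), so it is interior.
\end{itemize}

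\textbf{Step 2: a single large jump forces an exit.} Fix \(M\) with \(\alpha_0M>(2+\alpha_0\zeta)c\) and define \(E_k:=\{|W_{k,1}|>Mk^{\delta-\beta}\}\). I would show
\[
\{\exists k\geq k_0:\|\theta_k-\theta^\star\|>ck^{-\beta}\}\supseteq\bigcup_{k\geq k_0}E_k .
\]
Take \(k\geq k_0\) with \(E_k\) occurring. If \(\theta_k\) is already outside the tube, the exit event holds. Otherwise \(\|\theta_k-\theta^\star\|\leq ck^{-\beta}\), and the preprojection point \(u_{k+1}\) satisfies
\[
\|u_{k+1}-\theta^\star\|\geq\alpha_k|W_{k,1}|-(1+\alpha_k\zeta)ck^{-\beta}>\bigl(\alpha_0M-(1+\alpha_0\zeta)c\bigr)k^{-\beta}>ck^{-\beta}.
\]
Since \(\Theta\) is a ball centered at \(\theta^\star\), projecting onto it gives \(\|\theta_{k+1}-\theta^\star\|=\min\{\kappa,\|u_{k+1}-\theta^\star\|\}\). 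Both quantities exceed \(c(k+1)^{-\beta}\), because \(\kappa>c\). So the trajectory exits at \(k+1\).

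\textbf{Step 3: probability bound.} The \(W_k\) with \(k\geq1\) are independent of \(\F_0\). Hence
\[
\pr(\text{exit}\mid\F_0)\geq1-\prod_{k\geq k_0}(1-p_k)\geq1-\exp\Bigl(-\sum_{k\geq k_0}p_k\Bigr),
\qquad p_k=c_\epsilon M^{-a_\epsilon}k^{-(\delta-\beta)a_\epsilon}=c'_\epsilon k^{-(2(\delta-\beta)+\epsilon)}.
\]
Because \(2(\delta-\beta)+\epsilon>1\), the sum is of exact order \(k_0^{-(2(\delta-\beta)-1+\epsilon)}\), and it tends to zero. For \(k_0\geq K_\epsilon\) the sum is at most \(1\), and \(1-e^{-x}\geq x/2\) on \([0,1]\) then gives the bound with some \(C_\epsilon>0\). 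This holds for every realization of \(\F_0\), so it also bounds the essential supremum over \(\mathfrak M\).

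\textbf{Main obstacle.} The delicate point is Step 1: the class constants must stay uniform as \(\epsilon\to0\), while the tail index is pushed toward the finite-variance boundary. Fixing the variance at exactly one keeps the drift pair \((\lambda,b)\), \(C_F\), and \(L_F\) free of \(\epsilon\); only \(C_\epsilon\) and \(K_\epsilon\) may depend on \(\epsilon\). A secondary point is the projection in Step 2, which might pull an escaping point back inside the tube. Choosing a ball centered at \(\theta^\star\) with \(\kappa>c\) rules this out.
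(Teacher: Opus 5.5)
Your proposal is correct and follows essentially the same route as the paper. It uses the same unit-variance signed-Pareto construction with index \(2+\epsilon/(\delta-\beta)\), the same constant kernel and linear field, and the same ball \(\Theta\) centered at \(\theta^\star\) with \(\kappa>c\), so that one jump of order \(k^{\delta-\beta}\) forces an exit that the projection cannot undo. The only difference is presentational: you bound the no-exit event by \(\bigcap_{k\geq k_0}E_k^{\complement}\) and use the mutual independence of the \(W_k\) to obtain \(\prod_{k\geq k_0}(1-p_k)\) directly, whereas the paper iterates a conditional one-step survival bound on \(\mathcal S_n\) to reach the same product. One small addition: \(K_\epsilon\) should also be large enough that \(Mk_0^{\delta-\beta}\geq z_\epsilon\), so that the Pareto tail formula for \(p_k\) applies.
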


\proof{Proof of Theorems~\ref{thm:sharpness}
and~\ref{thm:ec-sharpness-fixed-class}.}

We first consider the case \(d=d'=1\). For each \(\epsilon>0\), let \(\mu_\epsilon\) be the
distribution of a signed Pareto random variable \(Z\) satisfying
\begin{align}
\pr(Z>x)
=
\pr(Z<-x)
=
\frac12
\left(\frac{x_\epsilon}{x}\right)^{2+\epsilon/(\delta-\beta)},
\qquad x\geq x_\epsilon
:=
\sqrt{\frac{\epsilon}{2(\delta-\beta)+\epsilon}}.
\label{eq:sharpness-pareto-law}
\end{align}
Then \(\E[Z]=0\) and \(\E[Z^2]=1\). Thus, the constructed noise
distributions have unit variance for all \(\epsilon>0\), while the Pareto index
\(2+\epsilon/(\delta-\beta)\) can approach the finite-variance boundary
as \(\epsilon\downarrow0\).

Consider the scalar SA recursion with
\(F(\theta,w)=-\zeta(\theta-\theta^\star)+w\) and
\(P_\theta(w,\cdot)=\mu_\epsilon(\cdot)\).
Let \((\theta_0,W_0)\) have the prescribed joint distribution
\(\mu_{\mathrm{init}}\), and let \(W_1,W_2,\ldots\) be iid with
distribution \(\mu_\epsilon\), independently of \((\theta_0,W_0)\).
The recursion then becomes
\[
\theta_{k+1}
=
\Pi_\Theta\!\left[
\theta_k-\alpha_k\zeta(\theta_k-\theta^\star)+\alpha_kW_k
\right],
\]
where \(\Theta=[\theta^\star-\kappa,\theta^\star+\kappa]\), and the
mean field is \(\bar F(\theta)=-\zeta(\theta-\theta^\star)\).
Denote this process by \(\mathcal M_\epsilon\).

Fix \(\epsilon>0\) and define \(p:=2(\delta-\beta)+\epsilon>1\).
It suffices to identify \(C_\epsilon>0\) and an iteration threshold
\(K_\epsilon\) such that, for all \(k_0\geq K_\epsilon\),
\begin{equation}\label{eq:sharpness-survival-upper-bound}
\pr\left(
|\theta_k-\theta^\star|\leq c k^{-\beta}
\text{ for all }k\geq k_0
\,|\,\F_0
\right)
\leq
\exp\left(-2C_\epsilon k_0^{1-p}\right),
\end{equation}
and \(2C_\epsilon k_0^{1-p}\leq1\).
Indeed,
using \(1-e^{-x}\geq x/2\) for \(x\in[0,1]\) yields
the claimed lower bound under \(\mathcal M_\epsilon\), because
\(p-1=2(\delta-\beta)-1+\epsilon\).

For an integer \(k_0\geq1\) and \(n\geq k_0\), define
\begin{equation}\label{eq:sharpness-survival-event}
\mathcal S_n:=
\{|\theta_i-\theta^\star|\leq c i^{-\beta}
\text{ for all }i=k_0,\ldots,n\}.
\end{equation}
The bound \eqref{eq:sharpness-survival-upper-bound} will follow from the
one-step estimate
\begin{equation}\label{eq:sharpness-one-step-survival-bound}
\pr(\mathcal S_{n+1}\mid\F_0)
\leq
\pr(\mathcal S_n\mid\F_0)
\exp\left[-2C_\epsilon(p-1)n^{-p}\right],
\end{equation}
for \(n\geq k_0\geq K_\epsilon\). Indeed, assuming
\eqref{eq:sharpness-one-step-survival-bound}, it follows from the
definition of \(\mathcal S_n\) in \eqref{eq:sharpness-survival-event}
that
\begin{align*}
&\pr\left(
|\theta_k-\theta^\star|\leq c k^{-\beta}
\text{ for all }k\geq k_0
\,\bigm|\,\F_0
\right)
=\lim_{N\to\infty}\pr(\mathcal S_N\mid\F_0)\\
&\leq
\exp\biggl(
-2C_\epsilon(p-1)\sum_{n=k_0}^{\infty}n^{-p}
\biggr)
\leq
\exp\left(
-2C_\epsilon(p-1)\int_{k_0}^{\infty}x^{-p}\,\dd x
\right)
=\exp\left(-2C_\epsilon k_0^{1-p}\right),
\end{align*}
where the first inequality follows by iterating
\eqref{eq:sharpness-one-step-survival-bound} and using
\(\pr(\mathcal S_{k_0}\mid\F_0)\leq1\), and the second inequality is the
integral comparison for \(p>1\).

We next identify \(C_\epsilon\) and
\(K_\epsilon\), prove \eqref{eq:sharpness-one-step-survival-bound}, and
verify \(2C_\epsilon k_0^{1-p}\leq1\).
For \(n\geq1\), define
\begin{equation}\label{eq:sharpness-proof-quantities}
\psi_n:=c n^{-\beta},
\quad
L_n:=\frac{\psi_{n+1}+|1-\zeta\alpha_n|\psi_n}{\alpha_n},
\quad
C_0:=\frac{c(2+\zeta\alpha_0)}{\alpha_0},
\quad\mbox{and}\quad
C_\epsilon
:=\frac{(x_\epsilon/C_0)^{2+\epsilon/(\delta-\beta)}}{2(p-1)}.
\end{equation}
The threshold \(L_n\) is sufficient for an observation to cause a
next-step exit when the current iterate lies in the tube.
Since \(\psi_{n+1}\leq\psi_n\),
\(\alpha_n=\alpha_0n^{-\delta}\), and
\(|1-\zeta\alpha_n|\leq1+\zeta\alpha_0\),
\eqref{eq:sharpness-proof-quantities} implies
\begin{equation}\label{eq:sharpness-crossing-threshold-bound}
L_n\leq C_0n^{\delta-\beta}.
\end{equation}
Choose \(K_\epsilon\geq1\) such that, for all \(n\geq K_\epsilon\),
\begin{equation}\label{eq:sharpness-eventual-conditions}
C_0n^{\delta-\beta}\geq x_\epsilon
\quad\mbox{and}\quad
2C_\epsilon n^{1-p}\leq1.
\end{equation}
Such a threshold can be chosen because \(p>1\) and \(\delta-\beta>0\).

Fix \(k_0\geq K_\epsilon\) and \(n\geq k_0\).
Because
\(\Theta=\theta^\star+[-\kappa,\kappa]\), the SA recursion can be written as
\begin{equation}\label{eq:scalar-SA-recursion}
\theta_{n+1}-\theta^\star
=\Pi_{[-\kappa,\kappa]}\!\Bigl[
(1-\zeta\alpha_n)(\theta_n-\theta^\star)+\alpha_nW_n
\Bigr].
\end{equation}
On the event \(\mathcal S_n\) defined by
\eqref{eq:sharpness-survival-event}, we have
\(|\theta_n-\theta^\star|\leq\psi_n\), so
\(|W_n|>L_n\) implies
\begin{equation}\label{eq:unprojected-bound}
\bigl|(1-\zeta\alpha_n)(\theta_n-\theta^\star)+\alpha_nW_n\bigr|
\geq
\alpha_n|W_n|-|1-\zeta\alpha_n|\psi_n
>
\alpha_nL_n-|1-\zeta\alpha_n|\psi_n
=\psi_{n+1}.
\end{equation}
It then follows from \eqref{eq:scalar-SA-recursion} that
\begin{align*}
|\theta_{n+1}-\theta^\star|
&=
\min\left\{\kappa,\bigl|
(1-\zeta\alpha_n)(\theta_n-\theta^\star)+\alpha_nW_n
\bigr|\right\}
>\psi_{n+1},
\end{align*}
where the inequality follows from the fact that
\(\kappa>c\geq\psi_{n+1}\) and \eqref{eq:unprojected-bound}.
Therefore,
\[
\mathcal S_n\cap\{|W_n|>L_n\}
\subseteq
\{|\theta_{n+1}-\theta^\star|>\psi_{n+1}\}
\subseteq\mathcal S_{n+1}^{\complement},
\]
so
\(\ind_{\mathcal S_{n+1}}
\leq
\ind_{\mathcal S_n}
\ind_{\{|W_n|\leq L_n\}}\).
Here \(\mathcal S_n\in\F_{n-1}\) and \(W_n\) is independent of
\(\F_{n-1}\), including the initial pair, by construction.
Hence,
\begin{align}
\pr(\mathcal S_{n+1}\mid\F_{n-1})
\leq
\ind_{\mathcal S_n}\pr(|W_n|\leq L_n).
\label{eq:sharpness-one-step-survival-bound-2}
\end{align}
The bound \eqref{eq:sharpness-crossing-threshold-bound}, the first
condition in \eqref{eq:sharpness-eventual-conditions}, and
\eqref{eq:sharpness-pareto-law} imply that
\begin{equation}\label{eq:tail-bound}
\pr(|W_n|>L_n)
\geq
\pr(|W_n|>C_0n^{\delta-\beta})
=
\biggl(\frac{x_\epsilon}{C_0}\biggr)^{2+\epsilon/(\delta-\beta)}
n^{-\left(2(\delta-\beta)+\epsilon\right)}
=
2C_\epsilon(p-1)n^{-p},
\end{equation}
where the last identity uses the definition of \(C_\epsilon\) in
\eqref{eq:sharpness-proof-quantities}.

The tower property yields
\begin{align*}
\pr(\mathcal S_{n+1}\mid\F_0)
=\E\left[
\pr(\mathcal S_{n+1}\mid\F_{n-1})
\,\middle|\,\F_0
\right]&\leq
\pr(\mathcal S_n\mid\F_0)
\left(1-2C_\epsilon(p-1)n^{-p}\right)\\
&\leq
\pr(\mathcal S_n\mid\F_0)
\exp\left[-2C_\epsilon(p-1)n^{-p}\right],
\end{align*}
which proves \eqref{eq:sharpness-one-step-survival-bound}.
Here, the first inequality follows from
\eqref{eq:sharpness-one-step-survival-bound-2} and \eqref{eq:tail-bound},
and the second holds because \(1-x\leq e^{-x}\). Furthermore, the second
condition in \eqref{eq:sharpness-eventual-conditions} gives
\(2C_\epsilon k_0^{1-p}\leq1\).
The preceding reduction proves the lower bound for \(d=d'=1\).
The calculation uses only \(\pr(\mathcal S_{k_0}\mid\F_0)\leq1\), so
\(C_\epsilon\) and \(K_\epsilon\) do not depend on the realized initial pair.

We now extend the construction to arbitrary \(d,d'\geq1\).
Let \(e_1:=(1,0,\ldots,0)^\intercal\) be the first standard basis vector in
\(\R^d\), and let \(\nu_\epsilon\) be the distribution of
\((Z,0,\ldots,0)^\intercal\in\R^{d'}\), where \(Z\) has distribution
\(\mu_\epsilon\). For \(w\in\R^{d'}\), write \(w^{(1)}\) for its first
coordinate, and set
\(
F(\theta,w)=-\zeta(\theta-\theta^\star)+w^{(1)}e_1\) and
\(P_\theta(w,\cdot)=\nu_\epsilon(\cdot)\).
Let \((\theta_0,W_0)\) have the prescribed joint distribution
\(\mu_{\mathrm{init}}\), and let \(W_1,W_2,\ldots\) be iid with
distribution \(\nu_\epsilon\), independently of \((\theta_0,W_0)\).
Define \(\mathcal S_n\) by
\eqref{eq:sharpness-survival-event} with the Euclidean norm in place of
the absolute value, and retain \(\psi_n\) and \(L_n\) from
\eqref{eq:sharpness-proof-quantities}. On \(\mathcal S_n\),
\(|W_n^{(1)}|>L_n\) implies
\[
\begin{aligned}
\bigl\|(1-\zeta\alpha_n)(\theta_n-\theta^\star)
+\alpha_nW_n^{(1)}e_1\bigr\|
&\geq \alpha_n|W_n^{(1)}|
-|1-\zeta\alpha_n|\|\theta_n-\theta^\star\|\\
&\geq \alpha_n|W_n^{(1)}|-|1-\zeta\alpha_n|\psi_n
>\psi_{n+1}.
\end{aligned}
\]
Projection onto \(\Theta\) replaces this norm by its minimum with
\(\kappa\). Since \(\kappa>c\geq\psi_{n+1}\), we again have
\(\mathcal S_n\cap\{|W_n^{(1)}|>L_n\}
\subseteq\mathcal S_{n+1}^{\complement}\).
Hence \eqref{eq:sharpness-one-step-survival-bound-2} applies with
\(W_n^{(1)}\) in place of \(W_n\). Since \(W_n^{(1)}\) has distribution
\(\mu_\epsilon\), the Pareto-tail calculation and the iteration of the survival
bound are identical to the scalar case, and give the conditional lower
bound with the same \(C_\epsilon\) and \(K_\epsilon\).

For each \(\epsilon>0\), the constructed process belongs to
\(\mathfrak M\) and is fixed independently of \(k_0\) and the realized
initial pair. These processes share the prescribed numerical assumption constants across
\(\epsilon\). Each conditional exit probability satisfies
\eqref{eq:sharpness-failure-lower-bound}, proving
Theorem~\ref{thm:sharpness}. Since this probability is bounded above by
the essential supremum in
\eqref{eq:ec-sharpness-worst-case-lower-bound}, both assertions of
Theorem~\ref{thm:ec-sharpness-fixed-class} follow.
\Halmos\endproof

\section{MSE for the Extended SA Recursion}

Throughout this section, the iterates follow the extended SA recursion
\eqref{eq:extension-recursion}.  

\subsection{Extended Moments and Parameter Changes}
\label{subsec:ec-extension-moments-movement}

For \(p\in\{1,2\}\), Lemma~\ref{lemma:derived-U1-drift} and
\eqref{eq:extension-post-update-transition} give, for all \(k\geq0\),
\[
\E[U_p(W_{k+1})\mid\mathscr G_k^+]
\leq\lambda_pU_p(W_k)+b_p,
\]
where \(0\leq\lambda_p<1\).  Iterating by the tower property yields, for
\(0\leq t<k\),
\begin{equation}\label{eq:extension-post-update-moment}
\E[U_p(W_k)\mid\mathscr G_t^+]
\leq
\lambda_p^{k-t}U_p(W_t)
+\frac{b_p}{1-\lambda_p}(1-\lambda_p^{k-t}).
\end{equation}
Setting \(t=0\) in \eqref{eq:extension-post-update-moment} gives the required
bounds for \(k\geq1\).  At \(k=0\), the variable \(W_0\) is
\(\mathscr G_0^+\)-measurable, and \(U_1\leq2U_2\) and
\(U_1^2\leq2U_2\) give the same bounds.  Hence there is a positive constant \(C\) such
that
\begin{equation}\label{eq:EC-structured-post-moments}
\begin{aligned}
\sup_{k\geq0}\E[U_1(W_k)\mid\mathscr G_0^+]
&\leq C U_2(W_0),
&
\sup_{k\geq0}\E[U_2(W_k)\mid\mathscr G_0^+]
&\leq C U_2(W_0),\\
\sup_{k\geq0}\E[U_1(W_k)^2\mid\mathscr G_0^+]
&\leq C U_2(W_0).
\end{aligned}
\end{equation}

For \(k\geq1\), the inclusions in \eqref{eq:extension-interlacing}, the
centering identity in
Assumption~\ref{assump:structured-errors}\textup{(i)}, displayed in
\eqref{eq:extension-martingale-conditions}, and the tower property give
\[
\E[M_{k+1}\mid\mathscr G_{k-1}^+]
=\E\!\left[
\E[M_{k+1}\mid\mathscr G_k^-]
\,\middle|\,\mathscr G_{k-1}^+
\right]=0.
\]
Assumption~\ref{assump:structured-errors}\textup{(i)} also makes
\(M_{k+1}\) \(\mathscr G_k^+\)-measurable.  Hence
\(
(\sum_{i=1}^n\alpha_iM_{i+1},\mathscr G_n^+)_{n\geq1}
\)
is a martingale.

Corollary~\ref{corollary:lagged-update-field-comparison} also requires bounds
on parameter changes.  The following lemma incorporates the
martingale-difference noise and predictable bias.

\begin{lemma}
\label{lemma:ec-extension-movement}
Suppose Assumptions~\ref{assump:learning-rate}, \ref{assump:compact},
\ref{assump:ergodic}\ref{part:ergodic-reference}, and
\ref{assump:growth} hold, together with
parts~\textup{(i)}--\textup{(ii)} of
Assumption~\ref{assump:structured-errors}.
Then there is a positive constant \(C\) such that, for all \(0\leq u<v\),
\begin{equation}\label{eq:extension-movement-budget}
\E[U_1(W_v)\lVert\theta_v-\theta_u\rVert\mid\mathscr G_0^+]
\leq C U_2(W_0)\sum_{i=u}^{v-1}
\alpha_i(1+\SFm_i+\SFb_i).
\end{equation}
\end{lemma}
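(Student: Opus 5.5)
I would follow the proof of Lemma~\ref{lemma:ec-canonical-movement} and add one extra increment for the added error. For each \(i\geq0\), \(\theta_i\in\Theta\), so \(\theta_i=\Pi_\Theta(\theta_i)\). Nonexpansiveness of \(\Pi_\Theta\) then bounds a single step:
\[
\lVert\theta_{i+1}-\theta_i\rVert
\leq\alpha_i\bigl(\lVert F(\theta_i,W_i)\rVert+\lVert M_{i+1}\rVert+\lVert B_i\rVert\bigr)
\leq\alpha_i\bigl(C_FU_1(W_i)+\lVert M_{i+1}\rVert+\SFb_i\bigr).
\]
Summing from \(i=u\) to \(v-1\) and multiplying by \(U_1(W_v)\) reduces the claim to three kinds of terms, each to be bounded by a constant times \(U_2(W_0)\) times the matching weight \(1\), \(\SFm_i\), or \(\SFb_i\).

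The \(F\)-term and the bias term are routine. For the \(F\)-term, I would use \(\E[U_1(W_v)U_1(W_i)\mid\mathscr G_0^+]\) and bound it by Cauchy--Schwarz together with the third bound in \eqref{eq:EC-structured-post-moments}. This gives \(CU_2(W_0)\), with no further input needed. For the bias term, \(\SFb_i\) is deterministic, and the first bound in \eqref{eq:EC-structured-post-moments} gives \(\E[U_1(W_v)\mid\mathscr G_0^+]\leq CU_2(W_0)\).

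The main step is the martingale term \(\E[U_1(W_v)\lVert M_{i+1}\rVert\mid\mathscr G_0^+]\). I would again apply Cauchy--Schwarz conditionally on \(\mathscr G_0^+\), which splits it into \(\E[U_1(W_v)^2\mid\mathscr G_0^+]^{1/2}\) and \(\E[\lVert M_{i+1}\rVert^2\mid\mathscr G_0^+]^{1/2}\).
\begin{itemize}
\item The first factor is at most \((CU_2(W_0))^{1/2}\) by \eqref{eq:EC-structured-post-moments}.
\item For the second factor, \(\mathscr G_0^+\subseteq\mathscr G_i^-\) when \(i\geq1\), so the tower property and \eqref{eq:extension-martingale-conditions} give \(\E[\lVert M_{i+1}\rVert^2\mid\mathscr G_0^+]\leq\SFm_i^2\E[U_2(W_i)\mid\mathscr G_0^+]\leq C\SFm_i^2U_2(W_0)\).
\end{itemize}
Multiplying the two factors yields \(C\SFm_iU_2(W_0)\), which is exactly the required weight.

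The index \(i=0\) needs separate care and is the only real obstacle, because \(M_1\) is \(\mathscr G_0^+\)-measurable and so is not integrated out by conditioning on \(\mathscr G_0^+\). My first attempt would be to condition on \(\mathscr G_0^+\) only for \(W_v\). Then \(\E[U_1(W_v)\mid\mathscr G_0^+]\leq\lambda_1^vU_1(W_0)+C\) by \eqref{eq:extension-post-update-moment}, and the remaining factor \(\lVert M_1\rVert\) is handled pathwise. This does not give the stated \(U_2(W_0)\) form, however, since \(\lVert M_1\rVert\) is random given \(\mathscr G_0^+\).

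The cleaner route is to bound the \(i=0\) term directly in terms of \(\theta_1-\theta_0\), which is \(\mathscr G_0^+\)-measurable, and to use compactness: \(\lVert\theta_v-\theta_u\rVert\leq d_\Theta\). For \(u=0\), split off the first step using \(\lVert\theta_v-\theta_0\rVert\leq\lVert\theta_v-\theta_1\rVert+\min\{d_\Theta,\alpha_0\lVert\cdots\rVert\}\). Bound the first piece by the \(i\geq1\) argument above. Bound the second piece by \(d_\Theta\E[U_1(W_v)\mid\mathscr G_0^+]\leq d_\Theta CU_2(W_0)\), which is at most \(C'U_2(W_0)\alpha_0\) because \(\alpha_0>0\) is a fixed constant. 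Since \(1+\SFm_0+\SFb_0\geq1\), this fits into the \(i=0\) summand after enlarging \(C\). Summing the three bounds over \(i\) then gives \eqref{eq:extension-movement-budget}.
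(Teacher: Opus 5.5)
Your proposal is correct and matches the paper's proof: the same nonexpansive one-step bound, conditional Cauchy--Schwarz with \eqref{eq:EC-structured-post-moments} for the \(F\)- and \(M_{i+1}\)-terms (using \(\mathscr G_0^+\subseteq\mathscr G_i^-\) for \(i\geq1\)), and the same handling of the \(\mathscr G_0^+\)-measurable first update through \(\lVert\theta_1-\theta_0\rVert\leq d_\Theta\) with \(d_\Theta/\alpha_0\) absorbed into \(C\). As you do, the paper splits at \(\theta_1\) when \(u=0\) and \(v>1\), and notes that the case \(v=1\) reduces to the diameter bound alone.
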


\proof{Proof of Lemma~\ref{lemma:ec-extension-movement}.}
Fix integers \(1\leq u<v\).  Choose \(C_0\) so that
\eqref{eq:EC-structured-post-moments} holds with \(C_0\).  For \(i\geq1\),
the inclusions in \eqref{eq:extension-interlacing} give
\(\mathscr G_0^+\subseteq\mathscr G_i^-\).  The tower property and the
conditional second-moment bound in
Assumption~\ref{assump:structured-errors}\textup{(i)},
displayed in \eqref{eq:extension-martingale-conditions}, yield
\[
\E[\lVert M_{i+1}\rVert^2\mid\mathscr G_0^+]
\leq\SFm_i^2\E[U_2(W_i)\mid\mathscr G_0^+]
\leq C_0U_2(W_0)\SFm_i^2.
\]
For each \(i=u,\ldots,v-1\), \(\Pi_\Theta\) is nonexpansive by
Assumption~\ref{assump:compact}, while the linear-growth bound
\eqref{eq:LinearGrowth-F} in Assumption~\ref{assump:growth} yields
\[
\lVert\theta_{i+1}-\theta_i\rVert
\leq\alpha_i\bigl(
C_FU_1(W_i)+\lVert M_{i+1}\rVert+\lVert B_i\rVert
\bigr).
\]
After multiplying this bound by \(U_1(W_v)\), summing over \(i\), and taking
conditional expectations, we obtain
\begin{align}
&\E[U_1(W_v)\lVert\theta_v-\theta_u\rVert
\mid\mathscr G_0^+]\nonumber\\
\leq{}&
\sum_{i=u}^{v-1}\alpha_i\biggl(
C_F\E[U_1(W_v)U_1(W_i)\mid\mathscr G_0^+]
+\E[U_1(W_v)\lVert M_{i+1}\rVert\mid\mathscr G_0^+]
+\E[U_1(W_v)\lVert B_i\rVert\mid\mathscr G_0^+]
\biggr)\nonumber\\
\leq{}&
C_0\max\{C_F,1\}U_2(W_0)
\sum_{i=u}^{v-1}\alpha_i(1+\SFm_i+\SFb_i),
\label{eq:extension-weighted-movement-decomposition}
\end{align}
The last inequality follows from Cauchy--Schwarz,
\eqref{eq:EC-structured-post-moments}, and the conditional second-moment bound
in Assumption~\ref{assump:structured-errors}\textup{(i)}, displayed in
\eqref{eq:extension-martingale-conditions}; part~\textup{(ii)} of that
assumption bounds \(\lVert B_i\rVert\).

The case \(u=0\) includes the \(\mathscr G_0^+\)-measurable first update.
Assumption~\ref{assump:compact} and the definition of \(d_\Theta\) control
this first parameter change directly:
\(\lVert\theta_1-\theta_0\rVert\leq d_\Theta\), and hence, for \(v\geq1\),
\[
\E[U_1(W_v)\lVert\theta_1-\theta_0\rVert\mid\mathscr G_0^+]
\leq C_0d_\Theta U_2(W_0)
\leq\frac{C_0d_\Theta}{\alpha_0}U_2(W_0)
\alpha_0(1+\SFm_0+\SFb_0).
\]
For \(v>1\), split the parameter change at \(\theta_1\) and apply
\eqref{eq:extension-weighted-movement-decomposition} with \(u=1\).  The case
\(v=1\) is covered by the last display.  Thus
\eqref{eq:extension-movement-budget} holds for all
\(0\leq u<v\) with
\(C:=C_0\max\{C_F,1,d_\Theta/\alpha_0\}\).
\Halmos\endproof

\subsection{Proof of Proposition~\ref{thm:structured-mse}}

\begin{lemma}
\label{lemma:ec-extension-lagged-restoring-drift}
Suppose Assumptions~\ref{assump:learning-rate}--\ref{assump:qsm} hold,
together with parts~\textup{(i)}--\textup{(ii)} of
Assumption~\ref{assump:structured-errors}.  Then there is a positive constant \(C\) such that, for all \(k>\ell\geq1\),
\begin{equation}\label{eq:shared-lagged-mean-field}
\E[\langle\theta_k-\theta^\star,F(\theta_k,W_k)\rangle
\mid\mathscr G_0^+]\leq
-\zeta\E[\lVert\theta_k-\theta^\star\rVert^2\mid\mathscr G_0^+]
+C U_2(W_0)\biggl(
\rho^\ell+\sum_{i=k-\ell}^{k-1}
\alpha_i(1+\SFm_i+\SFb_i)
\biggr).
\end{equation}
\end{lemma}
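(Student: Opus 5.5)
The plan is to repeat the proof of Lemma~\ref{lemma:lagged-restoring-drift} with two substitutions: conditioning on \(\mathscr G_0^+\) instead of \(\F_0\), and the extended movement bound of Lemma~\ref{lemma:ec-extension-movement} in place of Lemma~\ref{lemma:ec-canonical-movement}. Fix \(k>\ell\geq1\), set \(t:=k-\ell\), and use the same five-term decomposition \eqref{eq:inner-product-decomp} into \(\Psi_1,\dots,\Psi_5\). This decomposition is purely algebraic, so it carries over verbatim. The martingale and bias terms \(M_{k+1},B_k\) do not appear in the inner product \(\langle\theta_k-\theta^\star,F(\theta_k,W_k)\rangle\). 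They enter only through the parameter movement \(\lVert\theta_{t+s}-\theta_t\rVert\), and that is exactly what produces the factor \(1+\SFm_i+\SFb_i\) in the remainder.

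The key step is \(\Psi_1\), which needs an analogue of Corollary~\ref{corollary:lagged-update-field-comparison} under the enlarged filtration.

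1. Apply the backward kernel replacement of Lemma~\ref{lemma:kernel-telescoping} conditionally on \(\mathscr G_t^-\), with hybrids \(\Lambda_s:=\E[(P_{\theta_t}^{\ell-s}f)(W_{t+s})\mid\mathscr G_t^-]\).
2. The one-step identity \eqref{eq:single-replacement-identity} requires \(\E[g(W_{j+1})\mid\mathscr G_j^+]=(P_{\theta_j}g)(W_j)\). This is exactly the post-update transition condition \eqref{eq:extension-post-update-transition}. Since \(\mathscr G_j^-\subseteq\mathscr G_j^+\), the tower property gives the same identity for conditioning through \(\mathscr G_j^-\) or \(\mathscr G_j^+\).
3. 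The endpoint identity \(\Lambda_1=(P^\ell_{\theta_t}f)(W_t)\) holds because the transition from \(W_t\) to \(W_{t+1}\) uses \(P_{\theta_t}\), again by \eqref{eq:extension-post-update-transition} together with the tower property from \(\mathscr G_t^+\) down to \(\mathscr G_t^-\).
4. With these identities, the uniform geometric ergodicity of Lemma~\ref{lemma:exp-ergodic} and the weighted kernel continuity of Assumption~\ref{assump:kernel-Lipschitz} yield the corollary's bound verbatim, now conditional on \(\mathscr G_t^-\).
5. Take expectations given \(\mathscr G_0^+\). For \(t\geq1\) we have \(\mathscr G_0^+\subseteq\mathscr G_t^-\). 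For \(t=0\), condition on \(\mathscr G_0^+\) directly; the same argument works because \(\theta_0,W_0\) are measurable there.
6. Bound \(\E[U_1(W_t)\mid\mathscr G_0^+]\) by \(CU_2(W_0)\) using \eqref{eq:EC-structured-post-moments}, and bound each weighted movement term using Lemma~\ref{lemma:ec-extension-movement}.
7. The geometric summation \eqref{eq:canonical-fixed-state-comparison-2} then gives \(CU_2(W_0)\bigl(\rho^\ell+\sum_{i=t}^{k-1}\alpha_i(1+\SFm_i+\SFb_i)\bigr)\).

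The remaining terms are handled pointwise and then averaged.

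1. \(\Psi_2\), \(\Psi_3\), \(\Psi_4\): bound each by a constant times \(U_1(W_k)\lVert\theta_k-\theta_t\rVert\), exactly as before. This uses \(d_\Theta\), Assumption~\ref{assump:growth}, Lemma~\ref{lemma:ec-stationary-mean-field-lipschitz}, \(\sup_\theta\lVert\bar F(\theta)\rVert<\infty\), and \(U_1\geq1\).
2. Apply Lemma~\ref{lemma:ec-extension-movement} with \(u=t\) and \(v=k\) to each of these, giving the same extended remainder.
3. \(\Psi_5\): QSM gives \(-\zeta\lVert\theta_k-\theta^\star\rVert^2\) pointwise, so taking \(\mathscr G_0^+\)-conditional expectations yields the leading term.
4. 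Sum all five bounds to obtain \eqref{eq:shared-lagged-mean-field}.

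The main obstacle is verifying the filtration bookkeeping in the time-shifted replacement argument. The fact that \(M_{k+1}\) is revealed between \(W_k\) and \(W_{k+1}\) must not break the Markov transition identity. This is where \eqref{eq:extension-post-update-transition}, not merely conditioning on \(\mathscr G_k^-\), is needed: the test functions are evaluated at \(W_{j+1}\) after the update information has been revealed. The \(t=0\) boundary case, where the first update is \(\mathscr G_0^+\)-measurable, also needs separate care; Lemma~\ref{lemma:ec-extension-movement} already covers this.
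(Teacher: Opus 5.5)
Your proposal is correct and follows the paper's proof. It uses the same five-term decomposition, transports the lagged update-field comparison to the enlarged filtration via the post-update transition identity \eqref{eq:extension-post-update-transition}, and bounds every movement term with Lemma~\ref{lemma:ec-extension-movement}. The only difference is that you run the replacement conditionally on $\mathscr G_t^-$, which forces your separate $t=0$ case; the paper conditions on $\mathscr G_{k-\ell}^+$, which contains $\mathscr G_0^+$ for every $k-\ell\geq0$, so the tower property applies uniformly without a case split.
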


\proof{Proof of Lemma~\ref{lemma:ec-extension-lagged-restoring-drift}.}
Fix integers \(k>\ell\geq1\).  Decompose
\begin{align}
\langle\theta_k-\theta^\star,F(\theta_k,W_k)\rangle 
={}&
\langle\theta_{k-\ell}-\theta^\star,
F(\theta_{k-\ell},W_k)-\bar F(\theta_{k-\ell})\rangle \nonumber\\
&+\langle\theta_{k-\ell}-\theta^\star,
F(\theta_k,W_k)-F(\theta_{k-\ell},W_k)\rangle \nonumber\\
&+\langle\theta_{k-\ell}-\theta^\star,
\bar F(\theta_{k-\ell})-\bar F(\theta_k)\rangle \nonumber\\
&+\langle\theta_k-\theta_{k-\ell},
F(\theta_k,W_k)-\bar F(\theta_k)\rangle
+\langle\theta_k-\theta^\star,\bar F(\theta_k)\rangle.
\label{eq:extension-lagged-restoring-decomposition}
\end{align}
Assumption~\ref{assump:qsm} bounds the last term by
\(-\zeta\lVert\theta_k-\theta^\star\rVert^2\).  We next control the first
term using Corollary~\ref{corollary:lagged-update-field-comparison} and the
middle three terms using Lemma~\ref{lemma:ec-extension-movement}.

Under \eqref{eq:extension-post-update-transition}, the proof of
Corollary~\ref{corollary:lagged-update-field-comparison} applies with
\((\mathscr G_j^+)_{j\geq0}\) in place of \((\F_j)_{j\geq0}\).
Equations~\eqref{eq:extension-recursion} and
\eqref{eq:extension-interlacing} make \(\theta_{k-\ell}\)
\(\mathscr G_{k-\ell}^+\)-measurable.
Assumption~\ref{assump:interior-target} gives \(\theta^\star\in\Theta\), so the diameter
bound associated with Assumption~\ref{assump:compact} gives
\(\lVert\theta_{k-\ell}-\theta^\star\rVert\leq d_\Theta\).  Applying the
corollary conditionally on \(\mathscr G_{k-\ell}^+\) and then using the tower
property gives a constant \(C_0\) such that
\begin{align*}
&\bigl|\E[\langle\theta_{k-\ell}-\theta^\star,
F(\theta_{k-\ell},W_k)-\bar F(\theta_{k-\ell})\rangle
\mid\mathscr G_0^+]\bigr|\\
&\quad\leq
C_0\rho^\ell\E[U_1(W_{k-\ell})\mid\mathscr G_0^+]
+C_0\sum_{s=1}^{\ell-1}\rho^{\ell-s-1}
\E[U_1(W_{k-\ell+s})
\lVert\theta_{k-\ell+s}-\theta_{k-\ell}\rVert
\mid\mathscr G_0^+]\\
&\quad\leq
C_0 U_2(W_0)\biggl(
\rho^\ell+\sum_{i=k-\ell}^{k-1}\alpha_i(1+\SFm_i+\SFb_i)
\biggr).
\end{align*}
Equation~\eqref{eq:EC-structured-post-moments} and
Lemma~\ref{lemma:ec-extension-movement} imply the last inequality: each
partial sum bounding a parameter change is bounded by the full sum from \(k-\ell\) to \(k-1\), and the
geometric weights sum to at most \((1-\rho)^{-1}\).

Lemmas~\ref{lemma:ec-stationary-mean-field-lipschitz}
and~\ref{lemma:update-field-bounds} give
\(\lVert\bar F(\theta)-\bar F(\theta')\rVert
\lesssim\lVert\theta-\theta'\rVert\) and
\(\lVert\bar F(\theta)\rVert\lesssim1\) for
\(\theta,\theta'\in\Theta\).  Combining these conclusions with the two bounds
in Assumption~\ref{assump:growth}, the diameter bound from
Assumption~\ref{assump:compact}, and \(U_1\geq1\) bounds the absolute value of
the sum of the middle three terms in
\eqref{eq:extension-lagged-restoring-decomposition} by a constant times
\(U_1(W_k)\lVert\theta_k-\theta_{k-\ell}\rVert\).
Lemma~\ref{lemma:ec-extension-movement} therefore gives a constant \(C_1\)
that bounds the \(\mathscr G_0^+\)-conditional expectation of this absolute
value by
\[
C_1 U_2(W_0)\sum_{i=k-\ell}^{k-1}
\alpha_i(1+\SFm_i+\SFb_i).
\]
Combining these estimates in
\eqref{eq:extension-lagged-restoring-decomposition} proves
\eqref{eq:shared-lagged-mean-field} with \(C:=C_0+C_1\).
\Halmos\endproof

\proof{Proof of Proposition~\ref{thm:structured-mse}.}
Define
\(x_k:=\E[\lVert\theta_k-\theta^\star\rVert^2\mid\mathscr G_0^+]\).
For \(k\geq1\), the projection is nonexpansive by
Assumption~\ref{assump:compact}, while
Assumption~\ref{assump:interior-target} gives
\(\theta^\star\in\Theta\) and hence
\(\Pi_\Theta(\theta^\star)=\theta^\star\).  Equations
\eqref{eq:extension-recursion} and
\eqref{eq:extension-interlacing}, together with
Assumption~\ref{assump:structured-errors}\textup{(ii)}, make
\(\theta_k\), \(W_k\), and \(B_k\) \(\mathscr G_k^-\)-measurable.  The
centering and second-moment bounds in
Assumption~\ref{assump:structured-errors}\textup{(i)} then eliminate the
conditional expectations of the terms linear in \(M_{k+1}\) and control its
quadratic term.  Consequently,
\begin{equation}\label{eq:EC-structured-conditional-step}
\E[\lVert\theta_{k+1}-\theta^\star\rVert^2\mid\mathscr G_k^-]
\leq
\bigl\lVert\theta_k-\theta^\star
+\alpha_k\bigl(F(\theta_k,W_k)+B_k\bigr)\bigr\rVert^2
+\alpha_k^2\SFm_k^2U_2(W_k).
\end{equation}

When \(0<\delta<1\), Young's inequality and
Assumption~\ref{assump:structured-errors}\textup{(ii)} give
\(2\langle\theta_k-\theta^\star,B_k\rangle
\leq\zeta\lVert\theta_k-\theta^\star\rVert^2
+\zeta^{-1}\SFb_k^2\).  When \(\delta=1\), the proposition hypothesis
\(2\zeta\alpha_0>1\) implies
\(\zeta-1/(2\alpha_0)>0\).  Young's inequality and
Assumption~\ref{assump:structured-errors}\textup{(ii)} then give
\(2\langle\theta_k-\theta^\star,B_k\rangle
\leq(\zeta-1/(2\alpha_0))\lVert\theta_k-\theta^\star\rVert^2
+(\zeta-1/(2\alpha_0))^{-1}\SFb_k^2\).

The linear-growth bound \eqref{eq:LinearGrowth-F} in
Assumption~\ref{assump:growth} and
\eqref{eq:EC-structured-post-moments} imply
\(\E[\lVert F(\theta_k,W_k)\rVert^2\mid\mathscr G_0^+]
\lesssim U_2(W_0)\).  Assumption~\ref{assump:structured-errors}\textup{(ii)}
bounds \(\lVert B_k\rVert\), while
Assumption~\ref{assump:learning-rate} gives \(\alpha_k\leq\alpha_0\).
Let \(C_0\) dominate the constants in the two preceding Young inequalities,
the moment bound obtained from \eqref{eq:LinearGrowth-F} and
\eqref{eq:EC-structured-post-moments}, the constant in
Lemma~\ref{lemma:ec-extension-lagged-restoring-drift}, and the deterministic
factors obtained below from \eqref{eq:log-lag},
Assumption~\ref{assump:learning-rate}, and
Assumption~\ref{assump:structured-errors}\textup{(iii)}, displayed in
\eqref{eq:extension-power-laws}.  Iterating the inclusions in
\eqref{eq:extension-interlacing} gives
\(\mathscr G_0^+\subseteq\mathscr G_k^-\) for \(k\geq1\).  Taking
\(\mathscr G_0^+\)-conditional expectations in
\eqref{eq:EC-structured-conditional-step} and using the tower property yields
the following inequality for all \(k\geq1\) when \(0<\delta<1\):
\begin{equation}\label{eq:EC-structured-one-step-mse}
x_{k+1}
\leq(1+\zeta\alpha_k)x_k
+2\alpha_k\E[\langle\theta_k-\theta^\star,F(\theta_k,W_k)\rangle
\mid\mathscr G_0^+] 
+C_0 U_2(W_0)\bigl[
\alpha_k^2(1+\SFm_k^2)+\alpha_k\SFb_k^2
\bigr].
\end{equation}
When \(\delta=1\), the same inequality holds for all \(k\geq1\) with
\((1+\zeta\alpha_k)x_k\) replaced by
\(\bigl(1+(\zeta-1/(2\alpha_0))\alpha_k\bigr)x_k\).
The quadratic estimate
\(\lVert F(\theta_k,W_k)+B_k\rVert^2
\leq2\lVert F(\theta_k,W_k)\rVert^2+2\SFb_k^2\) and
\(\alpha_k\leq\alpha_0\) place the quadratic bias contribution on the scale
\(\alpha_k\SFb_k^2\).

Recall from \eqref{eq:log-lag} that
\(\ell_k=\max\{1,\lceil\delta\log k/|\log\rho|\rceil\}\).
Choose an iteration threshold \(K_0\geq2\) so that, for all \(k\geq K_0\),
\(k-\ell_k\geq k/2\) and the applicable contraction factor is positive:
\(\zeta\alpha_0k^{-\delta}<1\) when \(0<\delta<1\), and
\((\zeta\alpha_0+1/2)k^{-1}<1\) when \(\delta=1\).
For \(k-\ell_k\leq i\leq k-1\), we then have
\(\alpha_i\leq2^\delta\alpha_k\) by
Assumption~\ref{assump:learning-rate}.
Assumption~\ref{assump:structured-errors}\textup{(iii)}, displayed in
\eqref{eq:extension-power-laws}, gives
\(\SFm_i\leq C_{\mathsf M}k^{\omega_{\mathsf M}}\) and
\(\SFb_i\leq C_{\mathsf B}\).
Also, \(\ell_k\lesssim1+\log k\) and
\(\rho^{\ell_k}\leq k^{-\delta}=\alpha_k/\alpha_0\).  Consequently,
\begin{equation}\label{eq:EC-structured-lagged-budget-reduction}
\begin{aligned}
\alpha_k\biggl(
\rho^{\ell_k}+\sum_{i=k-\ell_k}^{k-1}
\alpha_i(1+\SFm_i+\SFb_i)
\biggr)
&\leq C_0\alpha_k^2(1+\log k)(1+k^{\omega_{\mathsf M}})\\
&\leq C_0\alpha_k^2(1+\log k)(1+k^{2\omega_{\mathsf M}}).
\end{aligned}
\end{equation}
The last inequality uses the restriction \(\omega_{\mathsf M}\geq0\) in
Assumption~\ref{assump:structured-errors}\textup{(iii)}.  The predictable
bias contributes at most \(C_0\alpha_k^2(1+\log k)\) through the bound on parameter
changes, while its direct contribution remains on the scale
\(\alpha_k\SFb_k^2\).

For \(k\geq1\),
\((k+1)^{2\omega_{\mathsf M}}
\leq2^{2\omega_{\mathsf M}}k^{2\omega_{\mathsf M}}\) and
\((k+1)^{-2\omega_{\mathsf B}}\leq k^{-2\omega_{\mathsf B}}\); the
resulting numerical factors are included in \(C_0\).

Combining \eqref{eq:EC-structured-one-step-mse} with
Lemma~\ref{lemma:ec-extension-lagged-restoring-drift},
\eqref{eq:EC-structured-lagged-budget-reduction},
Assumption~\ref{assump:learning-rate}, and
Assumption~\ref{assump:structured-errors}\textup{(iii)}, displayed in
\eqref{eq:extension-power-laws}, gives the following recursion for all
\(k\geq K_0\) when \(0<\delta<1\):
\begin{equation}\label{eq:EC-structured-mse-recursion}
\begin{aligned}
x_{k+1}
\leq{}&(1-\zeta\alpha_0k^{-\delta})x_k
+C_0U_2(W_0)\bigl[
(1+\log k)k^{-2\delta}
+(1+\log k)k^{-2\delta+2\omega_{\mathsf M}}
+k^{-\delta-2\omega_{\mathsf B}}
\bigr].
\end{aligned}
\end{equation}
When \(\delta=1\), the same recursion holds for all \(k\geq K_0\) with
\((1-\zeta\alpha_0k^{-1})x_k\) replaced by
\(\bigl(1-(\zeta\alpha_0+1/2)k^{-1}\bigr)x_k\).

We finish by induction.  The direct bias term satisfies
\(k^{-\delta-2\omega_{\mathsf B}}
\leq k^{-\delta}k^{-\min\{2\omega_{\mathsf B},\delta\}}\): equality
holds when \(2\omega_{\mathsf B}\leq\delta\), while
\(k^{-\delta-2\omega_{\mathsf B}}\leq k^{-2\delta}\) when
\(2\omega_{\mathsf B}>\delta\).  Thus, the additive term in
\eqref{eq:EC-structured-mse-recursion} satisfies
\begin{align}
&C_0U_2(W_0)\bigl[
(1+\log k)k^{-2\delta}
+(1+\log k)k^{-2\delta+2\omega_{\mathsf M}}
+k^{-\delta-2\omega_{\mathsf B}}
\bigr] \nonumber\\
\leq{}& C_0U_2(W_0)k^{-\delta}\bigl[
(1+\log k)\bigl(k^{-\delta}+k^{-\delta+2\omega_{\mathsf M}}\bigr)
+k^{-\min\{2\omega_{\mathsf B},\delta\}}
\bigr].
\label{eq:EC-structured-rate-profile}
\end{align}
The proposition hypotheses imply that the three decay exponents
\(\delta\), \(\delta-2\omega_{\mathsf M}\), and
\(\min\{2\omega_{\mathsf B},\delta\}\) belong to \((0,\delta]\).  The
elementary power comparison therefore shows that each associated power at
\(k+1\) is at least \(1-\delta/k\) times its value at \(k\).  Since
\(1+\log k\) is increasing, the same comparison holds for the complete rate
profile.  Thus, for all \(k\geq1\),
\begin{align}
&(1+\log(k+1))\bigl((k+1)^{-\delta}
+(k+1)^{-\delta+2\omega_{\mathsf M}}\bigr)
+(k+1)^{-\min\{2\omega_{\mathsf B},\delta\}} \nonumber\\
\geq{}&\left(1-\frac{\delta}{k}\right)\bigl[
(1+\log k)\bigl(k^{-\delta}+k^{-\delta+2\omega_{\mathsf M}}\bigr)
+k^{-\min\{2\omega_{\mathsf B},\delta\}}
\bigr].
\label{eq:EC-structured-rate-profile-step}
\end{align}

When \(0<\delta<1\), choose the iteration threshold \(K\geq K_0\) so that
\(\delta K^{\delta-1}\leq\zeta\alpha_0/2\).  Comparing
\eqref{eq:EC-structured-rate-profile-step} with the applicable contraction in
\eqref{eq:EC-structured-mse-recursion} leaves at least
\((\zeta\alpha_0/2)k^{-\delta}\) times the bracket on the right-hand side of
\eqref{eq:EC-structured-rate-profile}.  When \(\delta=1\), set
\(K:=K_0\).  The corresponding lower bound is
\((\zeta\alpha_0-1/2)k^{-1}\) times that bracket, and
\(\zeta\alpha_0-1/2>0\) by the proposition hypothesis.

Choose \(C_1\) so that its product with the applicable positive gap,
\(\zeta\alpha_0/2\) when \(0<\delta<1\) or
\(\zeta\alpha_0-1/2\) when \(\delta=1\), is at least \(C_0\), and so that
\(C_1\) times the bracket on the right-hand side of
\eqref{eq:EC-structured-rate-profile}, evaluated at \(K\), is at least
\(d_\Theta^2\).  We prove by induction that, for all \(k\geq K\),
\[
x_k\leq C_1U_2(W_0)\bigl[
(1+\log k)\bigl(k^{-\delta}+k^{-\delta+2\omega_{\mathsf M}}\bigr)
+k^{-\min\{2\omega_{\mathsf B},\delta\}}
\bigr].
\]
Assumption~\ref{assump:interior-target} gives \(\theta^\star\in\Theta\), so
the diameter bound associated with Assumption~\ref{assump:compact}, together
with \(U_2(W_0)\geq1\), establishes the claim at \(k=K\).  If it holds at
some \(k\geq K\), the applicable contraction factor is nonnegative by the
choice of \(K_0\), so the induction hypothesis may be substituted.  The inequalities 
\eqref{eq:EC-structured-rate-profile} and
\eqref{eq:EC-structured-rate-profile-step}, and the choice of \(C_1\) then
establish the claim at \(k+1\).

Finally,
\(k^{-\min\{2\omega_{\mathsf B},\delta\}}
\leq k^{-2\omega_{\mathsf B}}+k^{-\delta}\).  Thus, for all \(k\geq K\),
\[
x_k\leq2C_1U_2(W_0)\bigl[
(1+\log k)k^{-\delta}
+(1+\log k)k^{-\delta+2\omega_{\mathsf M}}
+k^{-2\omega_{\mathsf B}}
\bigr].
\]
This proves the proposition with \(C:=2C_1\).
\Halmos\endproof

\section{Shrinking-Tube Concentration for the Extended SA Recursion}

Throughout this section, the iterates follow the extended SA
recursion~\eqref{eq:extension-recursion}.

\subsection{Bias-Adjusted Checkpoint Control}
\label{subsec:ec-extension-trajectory}

In Proposition~\ref{thm:structured-mse}, the term of order
\(k^{-2\omega_{\mathsf B}}\) bounds the contribution of predictable bias to the mean-squared
parameter error.  To obtain the block-entrance estimate required by
Theorem~\ref{thm:structured-trajectory} under its condition
\(\beta<\omega_{\mathsf B}\), we control the squared distance beyond a
deterministic radius around the target.

For \(a\geq0\), define \(\Phi_a(e):=(\lVert e\rVert-a)_+^2\), set
\(h_a(0):=0\), and set
\(h_a(e):=(1-a/\lVert e\rVert)_+e\) for \(e\neq0\).  Then
\(\Phi_a(e)=\lVert h_a(e)\rVert^2\), and the map \(h_a\) is 1-Lipschitz.
Moreover, \(\lVert x-h_a(x)\rVert\leq a\), so the triangle inequality gives
\((\lVert x+u\rVert-a)_+\leq\lVert h_a(x)+u\rVert\).  Squaring and
expanding yields
\begin{equation}\label{eq:EC-structured-distance-smoothness}
\Phi_a(x+u)
\leq
\Phi_a(x)+2\langle h_a(x),u\rangle+\lVert u\rVert^2.
\end{equation}
Assumption~\ref{assump:interior-target} gives
\(d_\star:=\inf_{\theta\in\Theta^{\complement}}
\lVert\theta^\star-\theta\rVert>0\).

\begin{lemma}
\label{lemma:ec-extension-bias-radius}
Suppose Assumptions~\ref{assump:learning-rate}--\ref{assump:structured-errors} hold, with \(1/2<\delta\leq1\).  When
\(\delta=1\), also suppose \(2\zeta\alpha_0>1\).  Fix
\(0\leq\omega_{\mathsf M}<\delta-1/2\) and
\(0\leq\beta<\min\{\omega_{\mathsf B},
\delta-\omega_{\mathsf M}-1/2\}\).  If \(B_k=0\) almost surely for all
\(k\), drop the restriction involving \(\omega_{\mathsf B}\); if
\(M_{k+1}=0\) almost surely for all \(k\), take
\(\omega_{\mathsf M}=0\).  The deterministic bias scale in
Assumption~\ref{assump:structured-errors} may be chosen nonincreasing.  Then
there are a positive iteration threshold \(K\) and a nonnegative deterministic
sequence \(\{\mathsf g_k\}_{k\geq0}\)
such that, for all \(k\geq K\),
\begin{equation}\label{eq:EC-structured-bias-radius-recursion}
\mathsf g_{k+1}=(1-\zeta\alpha_k)\mathsf g_k+\alpha_k\SFb_k,
\end{equation}
and
\[
\mathsf g_{k+1}\leq \mathsf g_k,
\qquad
\mathsf g_k\geq\SFb_k/\zeta,
\qquad
\mathsf g_k=o(k^{-\beta}),
\qquad
\mathsf g_k<d_\star.
\]
Moreover, for all \(k\geq K\) and \(r\geq0\),
\begin{align}
(r-\mathsf g_{k+1})_+^2
-2\zeta\alpha_k r(r-\mathsf g_{k+1})_+
+2\alpha_k\SFb_k(r-\mathsf g_{k+1})_+
\leq
(1-2\zeta\alpha_k)(r-\mathsf g_k)_+^2.
\label{eq:EC-structured-radial-bias-absorption}
\end{align}
If \(B_k=0\) almost surely for all \(k\), the bias scale and radius may be
taken as \(\SFb_k=\mathsf g_k=0\).
\end{lemma}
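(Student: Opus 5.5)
The plan is to construct \(\mathsf g_k\) as the solution of the linear recursion \eqref{eq:EC-structured-bias-radius-recursion}, started at its own lower barrier, and then verify the listed properties one at a time.

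\emph{Setup and choice of \(K\).} First replace the bias scale by a convenient nonincreasing majorant. Pick \(\omega\) with \(\beta<\omega\leq\omega_{\mathsf B}\), and additionally \(\omega<\zeta\alpha_0\) when \(\delta=1\); this is possible because \(\zeta\alpha_0>1/2>\beta\). Redefine \(\SFb_k:=C_{\mathsf B}(k+1)^{-\omega}\). This still dominates \(\lVert B_k\rVert\), is nonincreasing, and satisfies \eqref{eq:extension-power-laws} with exponent \(\omega\). Choose \(K\) so that \(2\zeta\alpha_k\leq1\) for all \(k\geq K\). Set \(\mathsf g_k:=\SFb_K/\zeta\) for \(k\leq K\) (any nonnegative values would do for \(k<K\)), and define \(\mathsf g_{k+1}\) by \eqref{eq:EC-structured-bias-radius-recursion} for \(k\geq K\). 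If \(B_k=0\), take \(\SFb_k=\mathsf g_k=0\); every claim is then trivial, and \eqref{eq:EC-structured-radial-bias-absorption} reduces to \(r^2(1-2\zeta\alpha_k)\) on both sides.

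\emph{Barrier, monotonicity, and rate.} The barrier and monotonicity follow by a joint induction.
\begin{itemize}
\item If \(\mathsf g_k\geq\SFb_k/\zeta\), then \(\mathsf g_{k+1}-\mathsf g_k=-\alpha_k(\zeta\mathsf g_k-\SFb_k)\leq0\).
\item Also \(\mathsf g_{k+1}\geq(1-\zeta\alpha_k)\SFb_k/\zeta+\alpha_k\SFb_k=\SFb_k/\zeta\geq\SFb_{k+1}/\zeta\), using \(\zeta\alpha_k\leq1\) and that \(\SFb\) is nonincreasing.
\end{itemize}
For the rate, unroll the recursion:
\[
\mathsf g_k=\mathsf g_K\prod_{i=K}^{k-1}(1-\zeta\alpha_i)+\sum_{i=K}^{k-1}\alpha_i\SFb_i\prod_{j=i+1}^{k-1}(1-\zeta\alpha_j),
\]
and bound the products by exponentials as in Propositions~\ref{prop:theta_upper_bound} and~\ref{prop:theta_upper_bound2}.
\begin{itemize}
\item When \(\delta<1\), the first term decays like \(\exp(-ck^{1-\delta})\). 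The sum is \(O(k^{-\omega})\) by the same induction comparison used in Proposition~\ref{prop:theta_upper_bound}, because \(\omega\) is fixed and \(\zeta\alpha_k k^{\delta}\) dominates \(\omega k^{\delta-1}\) for large \(k\).
\item When \(\delta=1\), the products are at most \((i/k)^{\zeta\alpha_0}\) up to constants. The first term is \(O(k^{-\zeta\alpha_0})\), and the sum is \(O\bigl(k^{-\zeta\alpha_0}\sum_i i^{\zeta\alpha_0-1-\omega}\bigr)=O(k^{-\omega})\) since \(\omega<\zeta\alpha_0\).
\end{itemize}
In both cases \(\mathsf g_k=O(k^{-\min\{\omega,\zeta\alpha_0\}})=o(k^{-\beta})\). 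In particular \(\mathsf g_k\to0\), so enlarging \(K\) gives \(\mathsf g_k<d_\star\). Enlarging \(K\) does not change the sequence after \(K\) in any way that breaks the argument: one simply restarts with the same construction at the larger \(K\).

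\emph{The absorption inequality \eqref{eq:EC-structured-radial-bias-absorption}.} This is pure algebra; I expect it to be the only delicate step. Put \(s:=(r-\mathsf g_{k+1})_+\), \(d:=\alpha_k(\zeta\mathsf g_k-\SFb_k)\geq0\) (by the barrier) and \(t:=r-\mathsf g_k\). If \(s=0\) the left side vanishes and the claim holds. Otherwise \(s=r-\mathsf g_{k+1}=t+d>0\). Since \(\zeta r-\SFb_k=\zeta t+d/\alpha_k\), the left side equals
\[
s^2-2\alpha_k s(\zeta r-\SFb_k)=(t+d)\bigl((1-2\zeta\alpha_k)t-d\bigr)=(1-2\zeta\alpha_k)t^2-2\zeta\alpha_k dt-d^2 .
\]
There are two cases.
\begin{itemize}
\item If \(t\geq0\), the last expression is at most \((1-2\zeta\alpha_k)t^2=(1-2\zeta\alpha_k)(r-\mathsf g_k)_+^2\).
\item If \(t<0\), the right side of \eqref{eq:EC-structured-radial-bias-absorption} is \(0\). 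On the left, \(t+d>0\) while \((1-2\zeta\alpha_k)t-d<0\), because \(1-2\zeta\alpha_k\geq0\) by the choice of \(K\). Hence the left side is negative.
\end{itemize}
This completes the plan.
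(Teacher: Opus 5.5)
Your proposal is correct and follows the paper's proof essentially step for step. It uses the same recursion started at the barrier value $\SFb_K/\zeta$, the same joint induction for the barrier and monotonicity, and the same case analysis for the absorption inequality: your factorization $(t+d)\bigl((1-2\zeta\alpha_k)t-d\bigr)$ reproduces the paper's two nontrivial cases. The only difference is cosmetic---you lower the bias exponent to some $\omega\in(\beta,\omega_{\mathsf B}]$ and unroll the recursion, whereas the paper keeps $\omega_{\mathsf B}$ and compares $\mathsf g_k$ with a supersolution $C_1k^{-\beta_0}$---but your summary bound should read $O(k^{-\omega})$ rather than $O(k^{-\min\{\omega,\zeta\alpha_0\}})$, since for $\delta<1$ nothing guarantees $\zeta\alpha_0>\beta$ (your own $\delta<1$ bullet already gives $O(k^{-\omega})$).
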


\proof{Proof of Lemma~\ref{lemma:ec-extension-bias-radius}.}
Suppose first that \(B_k\) is nonzero with positive probability for at least
one \(k\).  Because the deterministic scale bounding \(B_k\) is not unique,
Assumption~\ref{assump:structured-errors}\textup{(ii)}--\textup{(iii)} allows
us to take \(\SFb_k=C_{\mathsf B}(k+1)^{-\omega_{\mathsf B}}\) for
\(k\geq0\).  This scale is nonincreasing and satisfies
\(\lVert B_k\rVert\leq\SFb_k\).  Choose an iteration threshold \(K_0\) so that
\(0<2\zeta\alpha_k<1\) for all \(k\geq K_0\) and
\(\SFb_{K_0}/\zeta<d_\star\).  Set
\(\mathsf g_{K_0}:=\SFb_{K_0}/\zeta\) and define \(\mathsf g_k\) recursively by
\eqref{eq:EC-structured-bias-radius-recursion} for \(k\geq K_0\).
Choose arbitrary nonnegative values for \(\mathsf g_k\), \(k<K_0\).

We first verify the monotonicity properties.  If
\(\mathsf g_k\geq\SFb_k/\zeta\), then
\[
\mathsf g_{k+1}
=\mathsf g_k-\alpha_k(\zeta\mathsf g_k-\SFb_k)
\leq\mathsf g_k,
\]
and, because \(1-\zeta\alpha_k\geq0\) and \(\SFb_{k+1}\leq\SFb_k\),
\[
\mathsf g_{k+1}
\geq
(1-\zeta\alpha_k)\SFb_k/\zeta
+\alpha_k\SFb_k
=\SFb_k/\zeta
\geq\SFb_{k+1}/\zeta.
\]
Induction therefore gives
\(\mathsf g_{k+1}\leq\mathsf g_k\),
\(\mathsf g_k\geq\SFb_k/\zeta\), and
\(\mathsf g_k\leq\mathsf g_{K_0}<d_\star\) for all \(k\geq K_0\).

It remains to compare \(\mathsf g_k\) with the tube radius.  Choose
\(\beta_0\) so that
\[
\beta<\beta_0<
\begin{cases}
\omega_{\mathsf B},&1/2<\delta<1,\\
\min\{\omega_{\mathsf B},\zeta\alpha_0\},&\delta=1.
\end{cases}
\]
Such a choice is possible because \(\beta<\omega_{\mathsf B}\) and, when
\(\delta=1\), \(\zeta\alpha_0>1/2>\beta\).  For any \(C_1>0\), the
inequality \((1+1/k)^{-\beta_0}\geq1-\beta_0/k\) gives
\[
C_1(k+1)^{-\beta_0}
-(1-\zeta\alpha_k)C_1k^{-\beta_0}
\geq
C_1k^{-\beta_0}
\bigl(\zeta\alpha_0k^{-\delta}-\beta_0/k\bigr).
\]
The expression in parentheses is bounded below by a positive constant times
\(k^{-\delta}\) for all sufficiently large \(k\).  Since
\(\SFb_k=\mathcal O(k^{-\omega_{\mathsf B}})=o(k^{-\beta_0})\), first
choose an iteration threshold \(K\geq K_0\) so that, for all \(k\geq K\),
\[
(k+1)^{-\beta_0}
\geq(1-\zeta\alpha_k)k^{-\beta_0}+\alpha_k\SFb_k.
\]
Next choose \(C_1\geq1\) so that
\(C_1K^{-\beta_0}\geq\mathsf g_K\).  Multiplying the preceding comparison by
\(C_1\) preserves it, and induction gives
\(\mathsf g_k\leq C_1k^{-\beta_0}\) for all \(k\geq K\).  Hence
\(\mathsf g_k=o(k^{-\beta})\).

We next prove the one-step comparison.  Fix \(k\geq K\) and \(r\geq0\).
The recursion gives
\(\mathsf g_k-\mathsf g_{k+1}
=\alpha_k(\zeta\mathsf g_k-\SFb_k)\geq0\).  If
\(r>\mathsf g_k\), then
\(r-\mathsf g_{k+1}=r-\mathsf g_k+\mathsf g_k-\mathsf g_{k+1}\), and
the difference between the left- and
right-hand sides of
\eqref{eq:EC-structured-radial-bias-absorption} equals
\[
-(\mathsf g_k-\mathsf g_{k+1})^2
-2\zeta\alpha_k(r-\mathsf g_k)
(\mathsf g_k-\mathsf g_{k+1})\leq0.
\]
If \(\mathsf g_{k+1}<r\leq\mathsf g_k\), then
\(0<r-\mathsf g_{k+1}\leq\mathsf g_k-\mathsf g_{k+1}\).  In this case,
the left-hand side of
\eqref{eq:EC-structured-radial-bias-absorption} is
\[
(r-\mathsf g_{k+1})
\bigl[(1-2\zeta\alpha_k)(r-\mathsf g_{k+1})
-2(1-\zeta\alpha_k)(\mathsf g_k-\mathsf g_{k+1})\bigr]
\leq-(\mathsf g_k-\mathsf g_{k+1})(r-\mathsf g_{k+1})\leq0.
\]
If \(r\leq\mathsf g_{k+1}\), both positive-part terms vanish.  This proves
the comparison.

When \(B_k=0\) almost surely for all \(k\), set
\(\mathsf g_k=\SFb_k=0\) and choose an iteration threshold \(K\) so that
\(0<2\zeta\alpha_k<1\) for all \(k\geq K\).  All conclusions then hold
directly.
\Halmos\endproof

For the remainder of the proof of Theorem~\ref{thm:structured-trajectory},
fix the sequence \(\{\mathsf g_k\}_{k\geq0}\) provided by
Lemma~\ref{lemma:ec-extension-bias-radius} and the corresponding choice of
the deterministic bias scale \(\{\SFb_k\}_{k\geq0}\).

\begin{lemma}
\label{lemma:ec-extension-radius-restoring-drift}
Suppose Assumptions~\ref{assump:learning-rate}--\ref{assump:qsm} hold,
together with parts~\textup{(i)}--\textup{(ii)} of
Assumption~\ref{assump:structured-errors}.  Then there is a positive constant \(C\),
independent of \(a\geq0\), such that, for all \(k>\ell\geq1\),
\begin{align}
&\E[\langle h_a(\theta_k-\theta^\star),F(\theta_k,W_k)\rangle
\mid\mathscr G_0^+]
\nonumber\\
\leq{}&
-\zeta\E\bigl[
\lVert\theta_k-\theta^\star\rVert
(\lVert\theta_k-\theta^\star\rVert-a)_+
\mid\mathscr G_0^+\bigr]
+C U_2(W_0)\biggl(
\rho^\ell+\sum_{i=k-\ell}^{k-1}
\alpha_i(1+\SFm_i+\SFb_i)
\biggr).
\label{eq:EC-structured-radius-restoring-drift}
\end{align}
\end{lemma}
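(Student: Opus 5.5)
The plan is to mirror the proof of Lemma~\ref{lemma:ec-extension-lagged-restoring-drift}, with \(h_a(\theta_k-\theta^\star)\) in place of \(\theta_k-\theta^\star\). Two properties of \(h_a\) drive everything. First, \(h_a\) is 1-Lipschitz. Second, since \(h_a(0)=0\), it satisfies \(\lVert h_a(e)\rVert\leq\lVert e\rVert\leq d_\Theta\) on \(\Theta-\theta^\star\). Write \(e_j:=\theta_j-\theta^\star\) and \(t:=k-\ell\). We decompose
\[
\langle h_a(e_k),F(\theta_k,W_k)\rangle
=\langle h_a(e_t),F(\theta_t,W_k)-\bar F(\theta_t)\rangle
+\langle h_a(e_t),F(\theta_k,W_k)-F(\theta_t,W_k)\rangle
+\langle h_a(e_t),\bar F(\theta_t)-\bar F(\theta_k)\rangle
+\langle h_a(e_k)-h_a(e_t),F(\theta_k,W_k)-\bar F(\theta_k)\rangle
+\langle h_a(e_k),\bar F(\theta_k)\rangle .
\]

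\emph{Restoring term.} For \(e_k\neq0\), \(h_a(e_k)=\bigl((\lVert e_k\rVert-a)_+/\lVert e_k\rVert\bigr)e_k\) is a nonnegative multiple of \(e_k\). QSM (Assumption~\ref{assump:qsm}) then gives
\[
\langle h_a(e_k),\bar F(\theta_k)\rangle\leq-\zeta\frac{(\lVert e_k\rVert-a)_+}{\lVert e_k\rVert}\lVert e_k\rVert^2=-\zeta\lVert e_k\rVert(\lVert e_k\rVert-a)_+ .
\]
The case \(e_k=0\) is trivial. Taking \(\mathscr G_0^+\)-conditional expectations yields the main term of \eqref{eq:EC-structured-radius-restoring-drift}.

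\emph{Lagged bias term.} The factor \(h_a(e_t)\) is \(\mathscr G_t^+\)-measurable and bounded by \(d_\Theta\). We condition on \(\mathscr G_t^+\) and apply Corollary~\ref{corollary:lagged-update-field-comparison} in its \(\mathscr G^+\) form, which is justified by \eqref{eq:extension-post-update-transition} as in the proof of Lemma~\ref{lemma:ec-extension-lagged-restoring-drift}. The tower property, \eqref{eq:EC-structured-post-moments}, and Lemma~\ref{lemma:ec-extension-movement} then bound this term by \(C U_2(W_0)\bigl(\rho^\ell+\sum_{i=t}^{k-1}\alpha_i(1+\SFm_i+\SFb_i)\bigr)\). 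The geometric weights are summed using \((1-\rho)^{-1}\).

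\emph{Middle terms.} By \(\lVert h_a(e_t)\rVert\leq d_\Theta\) and \(\lVert h_a(e_k)-h_a(e_t)\rVert\leq\lVert\theta_k-\theta_t\rVert\), each of the three middle terms is bounded by a constant times \(U_1(W_k)\lVert\theta_k-\theta_t\rVert\). For this we use Assumption~\ref{assump:growth}, Lemma~\ref{lemma:ec-stationary-mean-field-lipschitz}, Lemma~\ref{lemma:update-field-bounds}, and \(U_1\geq1\). Lemma~\ref{lemma:ec-extension-movement} with \(u=t\) and \(v=k\) converts this into the same budget.

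None of the constants involves \(a\), because only the two properties of \(h_a\) above are used, so \(C\) is uniform in \(a\). The main point to check is the measurability and uniformity in the lagged-bias step: whether the \(\mathscr G^+\)-version of the corollary applies with test function \(u^\intercal F(\theta_t,\cdot)\) while \(h_a(e_t)\) is held fixed. The rest is a direct transcription of the earlier proof.
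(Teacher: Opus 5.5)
Your proposal is correct and follows the paper's proof essentially line by line. The paper uses the same decomposition (it merges your second and third terms into a single term), the same QSM rescaling for the restoring term, and the $\mathscr G^+$-version of Corollary~\ref{corollary:lagged-update-field-comparison}, conditioned on $\mathscr G_{k-\ell}^+$, for the lagged bias; it then uses the 1-Lipschitz property of $h_a$, the bound $\lVert h_a(\cdot)\rVert\leq d_\Theta$, and Lemma~\ref{lemma:ec-extension-movement} for the remaining terms, and it resolves the measurability point you flag exactly as you suggest, by noting that $h_a(\theta_{k-\ell}-\theta^\star)$ is $\mathscr G_{k-\ell}^+$-measurable and bounded by $d_\Theta$.
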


\proof{Proof of Lemma~\ref{lemma:ec-extension-radius-restoring-drift}.}
Fix \(a\geq0\) and integers \(k>\ell\geq1\).  Adding and subtracting the
lagged field terms gives the decomposition
\begin{align*}
\langle h_a(\theta_k-\theta^\star),F(\theta_k,W_k)\rangle
={}&
\underbrace{\langle h_a(\theta_k-\theta^\star),
\bar F(\theta_k)\rangle}_{\Psi_1}\\
&+\underbrace{\langle h_a(\theta_{k-\ell}-\theta^\star),
F(\theta_{k-\ell},W_k)-\bar F(\theta_{k-\ell})\rangle}_{\Psi_2}\\
&+\underbrace{\langle h_a(\theta_k-\theta^\star)
-h_a(\theta_{k-\ell}-\theta^\star),
F(\theta_k,W_k)-\bar F(\theta_k)\rangle}_{\Psi_3}\\
&+\underbrace{\langle h_a(\theta_{k-\ell}-\theta^\star),
F(\theta_k,W_k)-F(\theta_{k-\ell},W_k)
-\bar F(\theta_k)+\bar F(\theta_{k-\ell})\rangle}_{\Psi_4}.
\end{align*}

For \(\Psi_1\), the definition of \(h_a\) and
Assumption~\ref{assump:qsm} give
\[
\E[\Psi_1\mid\mathscr G_0^+]
\leq
-\zeta\E\bigl[
\lVert\theta_k-\theta^\star\rVert
(\lVert\theta_k-\theta^\star\rVert-a)_+
\mid\mathscr G_0^+\bigr].
\]

We next control \(\Psi_2\).  Under
\eqref{eq:extension-post-update-transition}, the proof of
Corollary~\ref{corollary:lagged-update-field-comparison} applies with
\((\mathscr G_j^+)_{j\geq0}\) in place of \((\F_j)_{j\geq0}\).
Equations~\eqref{eq:extension-recursion} and
\eqref{eq:extension-interlacing} make
\(h_a(\theta_{k-\ell}-\theta^\star)\)
\(\mathscr G_{k-\ell}^+\)-measurable.  Assumptions~\ref{assump:compact}
and~\ref{assump:interior-target} also give
\(\lVert h_a(\theta_{k-\ell}-\theta^\star)\rVert\leq d_\Theta\).
Applying the corollary conditionally on \(\mathscr G_{k-\ell}^+\) and then
using the tower property, \eqref{eq:EC-structured-post-moments},
Lemma~\ref{lemma:ec-extension-movement}, and
\(\sum_{j=1}^{\ell-1}\rho^{\ell-j-1}\leq(1-\rho)^{-1}\) gives a constant
\(C_0\), independent of \(a\), such that
\begin{align*}
\bigl|\E[\Psi_2\mid\mathscr G_0^+]\bigr|
&\leq
C_0\rho^\ell\E[U_1(W_{k-\ell})\mid\mathscr G_0^+]
+C_0\sum_{j=1}^{\ell-1}\rho^{\ell-j-1}
\E[U_1(W_{k-\ell+j})
\lVert\theta_{k-\ell+j}-\theta_{k-\ell}\rVert
\mid\mathscr G_0^+]\\
&\leq
C_0 U_2(W_0)\biggl(
\rho^\ell+\sum_{i=k-\ell}^{k-1}
\alpha_i(1+\SFm_i+\SFb_i)
\biggr).
\end{align*}
In the second inequality, each partial sum bounding a parameter change is bounded by the full sum from
\(k-\ell\) to \(k-1\).

For \(\Psi_3\), the 1-Lipschitz property of \(h_a\), the linear-growth bound
\eqref{eq:LinearGrowth-F} in Assumption~\ref{assump:growth}, and
\eqref{eq:F_bar_bound} in Lemma~\ref{lemma:update-field-bounds}, together
with \(U_1\geq1\), give a constant \(C_1\), independent of \(a\), such that
\[
|\Psi_3|
\leq C_1 U_1(W_k)\lVert\theta_k-\theta_{k-\ell}\rVert.
\]
Lemma~\ref{lemma:ec-extension-movement} therefore gives a constant \(C_2\),
independent of \(a\), such that
\[
\E[|\Psi_3|\mid\mathscr G_0^+]
\leq
C_2 U_2(W_0)\sum_{i=k-\ell}^{k-1}
\alpha_i(1+\SFm_i+\SFb_i).
\]

For \(\Psi_4\), Assumptions~\ref{assump:compact}
and~\ref{assump:interior-target} give
\(\lVert h_a(\theta_{k-\ell}-\theta^\star)\rVert\leq d_\Theta\).
The weighted Lipschitz bound \eqref{eq:Lipschitz-F} in
Assumption~\ref{assump:growth},
Lemma~\ref{lemma:ec-stationary-mean-field-lipschitz}, and \(U_1\geq1\) give
a constant \(C_3\), independent of \(a\), such that
\[
|\Psi_4|
\leq C_3 U_1(W_k)\lVert\theta_k-\theta_{k-\ell}\rVert.
\]
Lemma~\ref{lemma:ec-extension-movement} therefore gives a constant \(C_4\),
independent of \(a\), such that
\[
\E[|\Psi_4|\mid\mathscr G_0^+]
\leq
C_4 U_2(W_0)\sum_{i=k-\ell}^{k-1}
\alpha_i(1+\SFm_i+\SFb_i).
\]
Combining the four bounds proves
\eqref{eq:EC-structured-radius-restoring-drift} with
\(C:=C_0+C_2+C_4\).
\Halmos\endproof

\begin{lemma}
\label{lemma:ec-extension-bias-aware-entrance}
Suppose Assumptions~\ref{assump:learning-rate}--\ref{assump:structured-errors} hold, with \(1/2<\delta\leq1\).  When
\(\delta=1\), also suppose \(2\zeta\alpha_0>1\).  Fix
\(0\leq\omega_{\mathsf M}<\delta-1/2\) and
\(0\leq\beta<\min\{\omega_{\mathsf B},
\delta-\omega_{\mathsf M}-1/2\}\).  If \(B_k=0\) almost surely for all
\(k\), drop the restriction involving \(\omega_{\mathsf B}\); if
\(M_{k+1}=0\) almost surely for all \(k\), take
\(\omega_{\mathsf M}=0\).  Let \(\{\mathsf g_k\}_{k\geq0}\) be the sequence
fixed above.  Then there are positive constants \(C\) and \(K\), with \(K\) an iteration threshold, such
that, for all \(k\geq K\),
\begin{equation}\label{eq:EC-structured-excess-mse}
\E[\Phi_{\mathsf g_k}(\theta_k-\theta^\star)\mid\mathscr G_0^+]
\leq
C U_2(W_0)(1+\log k)
k^{-(\delta-2\omega_{\mathsf M})}.
\end{equation}
For either block construction, write \(I:=n_r\).  For
all \(x>0\), there is a positive iteration threshold \(K_x\geq K\) such
that every block with \(I\geq K_x\) satisfies
\begin{equation}\label{eq:EC-structured-entrance-rate}
\pr\Bigl(
\lVert\theta_I-\theta^\star\rVert>x n_{r+1}^{-\beta}
\,\Bigm|\,\mathscr G_0^+\Bigr)
\leq
C x^{-2}U_2(W_0)(1+\log I)
I^{-\delta+2\omega_{\mathsf M}+2\beta}.
\end{equation}
\end{lemma}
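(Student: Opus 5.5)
The plan is to run a one-step recursion for the excess-distance functional \(y_k:=\E[\Phi_{\mathsf g_k}(\theta_k-\theta^\star)\mid\mathscr G_0^+]\), mirroring the proof of Proposition~\ref{thm:structured-mse}. The difference is that the bias is absorbed by the moving radius \(\mathsf g_k\), so it never appears additively.

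First, for \(k\ge K\), note that \(\theta^\star\in\Theta\) and projections are nonexpansive, so \((\lVert\theta_{k+1}-\theta^\star\rVert-\mathsf g_{k+1})_+\le(\lVert e_k+\alpha_k(F_k+M_{k+1}+B_k)\rVert-\mathsf g_{k+1})_+\), where \(e_k:=\theta_k-\theta^\star\) and \(F_k:=F(\theta_k,W_k)\). Applying \eqref{eq:EC-structured-distance-smoothness} with \(a=\mathsf g_{k+1}\), \(x=e_k\), and \(u=\alpha_k(F_k+M_{k+1}+B_k)\) gives a linear and a quadratic term. Conditioning on \(\mathscr G_k^-\) kills the \(M_{k+1}\) linear term. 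The quadratic term is at most \(3\alpha_k^2(\lVert F_k\rVert^2+\SFm_k^2U_2(W_k)+\SFb_k^2)\), and its \(\mathscr G_0^+\)-expectation is \(\lesssim U_2(W_0)\alpha_k^2(1+k^{2\omega_{\mathsf M}})\) by \eqref{eq:EC-structured-post-moments}.

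The bias term \(2\alpha_k\langle h_{\mathsf g_{k+1}}(e_k),B_k\rangle\) is bounded by \(2\alpha_k\SFb_k(r-\mathsf g_{k+1})_+\), where \(r:=\lVert e_k\rVert\). For the field term, apply Lemma~\ref{lemma:ec-extension-radius-restoring-drift} with \(a=\mathsf g_{k+1}\) and \(\ell=\ell_k\). This yields \(-2\zeta\alpha_k\E[r(r-\mathsf g_{k+1})_+]\), plus a remainder that \eqref{eq:EC-structured-lagged-budget-reduction} bounds by \(\lesssim U_2(W_0)\alpha_k^2(1+\log k)(1+k^{2\omega_{\mathsf M}})\). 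Collecting the pieces, the pointwise bound \eqref{eq:EC-structured-radial-bias-absorption} of Lemma~\ref{lemma:ec-extension-bias-radius} turns \((r-\mathsf g_{k+1})_+^2-2\zeta\alpha_kr(r-\mathsf g_{k+1})_++2\alpha_k\SFb_k(r-\mathsf g_{k+1})_+\) into \((1-2\zeta\alpha_k)(r-\mathsf g_k)_+^2\). The result is
\[
y_{k+1}\le(1-2\zeta\alpha_0k^{-\delta})y_k+CU_2(W_0)(1+\log k)k^{-2\delta+2\omega_{\mathsf M}} .
\]

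The main obstacle is the bookkeeping. The radius in Lemma~\ref{lemma:ec-extension-radius-restoring-drift} must be \(\mathsf g_{k+1}\) rather than \(\mathsf g_k\), because the absorption inequality needs it, and the lemma's constant is uniform in \(a\), which is what makes this choice legitimate. The claimed bound then follows by the same induction as in Propositions~\ref{prop:theta_upper_bound} and \ref{prop:theta_upper_bound2}. The exponent \(\delta-2\omega_{\mathsf M}\in(0,\delta]\) satisfies the power-comparison step. When \(\delta=1\), the condition \(2\zeta\alpha_0>1\) handles \(k^{-1+2\omega_{\mathsf M}}\), since \(2\zeta\alpha_0>1\ge1-2\omega_{\mathsf M}\). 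The base case uses \(\Phi\le d_\Theta^2\).

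For \eqref{eq:EC-structured-entrance-rate}, use \(\mathsf g_I=o(I^{-\beta})\) and \(n_{r+1}\le2I\), which hold eventually by Lemma~\ref{lemma:block-arithmetic}. Choose \(K_x\) so that \(\mathsf g_I\le(x/2)n_{r+1}^{-\beta}\) for \(I\ge K_x\). The event \(\lVert\theta_I-\theta^\star\rVert>xn_{r+1}^{-\beta}\) then implies \(\Phi_{\mathsf g_I}(\theta_I-\theta^\star)>(x/2)^2n_{r+1}^{-2\beta}\ge(x^2/4)2^{-2\beta}I^{-2\beta}\). Markov's inequality and \eqref{eq:EC-structured-excess-mse} give the stated rate with an adjusted constant.
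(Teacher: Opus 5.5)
Your proposal is correct and follows the paper's proof essentially step for step. It uses the same one-step recursion for $\E[\Phi_{\mathsf g_k}(\theta_k-\theta^\star)\mid\mathscr G_0^+]$ via \eqref{eq:EC-structured-distance-smoothness}, Lemma~\ref{lemma:ec-extension-radius-restoring-drift} with $a=\mathsf g_{k+1}$ and $\ell=\ell_k$, absorption of the bias through \eqref{eq:EC-structured-radial-bias-absorption}, a power-comparison induction, and Markov's inequality at the block entrance. The differences are cosmetic and do not affect the argument: you apply nonexpansiveness about $\theta^\star$ directly, and you use a cruder $3(\cdot)$ bound on the quadratic term. Also, $\mathsf g_I=o(I^{-\beta})$ comes from Lemma~\ref{lemma:ec-extension-bias-radius}, not Lemma~\ref{lemma:block-arithmetic}.
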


\proof{Proof of Lemma~\ref{lemma:ec-extension-bias-aware-entrance}.}
Define
\(y_k:=\E[\Phi_{\mathsf g_k}(\theta_k-\theta^\star)
\mid\mathscr G_0^+]\).
Fix \(k\) for which the conclusions of
Lemma~\ref{lemma:ec-extension-bias-radius} hold.  Since
\(\mathsf g_{k+1}<d_\star\), the point
\(\theta_k-h_{\mathsf g_{k+1}}(\theta_k-\theta^\star)\) lies in
\(\Theta\).  Projection nonexpansiveness and the comparison used in
\eqref{eq:EC-structured-distance-smoothness} give
\begin{align*}
\Phi_{\mathsf g_{k+1}}(\theta_{k+1}-\theta^\star)
\leq{}&
\Phi_{\mathsf g_{k+1}}(\theta_k-\theta^\star)
+2\alpha_k\langle h_{\mathsf g_{k+1}}(\theta_k-\theta^\star),
F(\theta_k,W_k)+M_{k+1}+B_k\rangle\\
&+\alpha_k^2
\lVert F(\theta_k,W_k)+M_{k+1}+B_k\rVert^2.
\end{align*}
Conditioning first on \(\mathscr G_k^-\),
Assumption~\ref{assump:structured-errors}\textup{(i)} eliminates the terms linear in the martingale-difference noise
and controls its conditional second moment.  Hence
\begin{align*}
\E[\Phi_{\mathsf g_{k+1}}(\theta_{k+1}-\theta^\star)
\mid\mathscr G_k^-]
\leq{}&
(\lVert\theta_k-\theta^\star\rVert-\mathsf g_{k+1})_+^2
+2\alpha_k\langle h_{\mathsf g_{k+1}}(\theta_k-\theta^\star),
F(\theta_k,W_k)+B_k\rangle\\
&+\alpha_k^2\lVert F(\theta_k,W_k)+B_k\rVert^2
+\alpha_k^2\SFm_k^2U_2(W_k).
\end{align*}
Assumption~\ref{assump:structured-errors}\textup{(ii)} and the identity
\(\lVert h_a(e)\rVert=(\lVert e\rVert-a)_+\) give
\[
\langle h_{\mathsf g_{k+1}}(\theta_k-\theta^\star),B_k\rangle
\leq
\SFb_k(\lVert\theta_k-\theta^\star\rVert-\mathsf g_{k+1})_+.
\]
Assumption~\ref{assump:growth},
Assumption~\ref{assump:structured-errors}\textup{(ii)}, and
\eqref{eq:EC-structured-post-moments} control the quadratic terms.  Choose a
constant \(C_0\) that dominates the constants in these estimates and in
Lemma~\ref{lemma:ec-extension-radius-restoring-drift}.  Then
\[
\E[\lVert F(\theta_k,W_k)+B_k\rVert^2
+\SFm_k^2U_2(W_k)\mid\mathscr G_0^+]
\leq
C_0 U_2(W_0)(1+\SFm_k^2+\SFb_k^2).
\]
Taking conditional expectations given \(\mathscr G_0^+\), applying
Lemma~\ref{lemma:ec-extension-radius-restoring-drift} with
\(a=\mathsf g_{k+1}\), and then using the one-step comparison in
Lemma~\ref{lemma:ec-extension-bias-radius} give, for all
\(k>\ell\geq1\),
\begin{align}
y_{k+1}
\leq{}&(1-2\zeta\alpha_k)y_k
+C_0 U_2(W_0)\alpha_k\biggl(
\rho^\ell+\sum_{i=k-\ell}^{k-1}
\alpha_i(1+\SFm_i+\SFb_i)
\biggr)
\nonumber\\
&+C_0 U_2(W_0)\alpha_k^2
(1+\SFm_k^2+\SFb_k^2).
\label{eq:EC-structured-adjusted-mse-recursion}
\end{align}

Set \(\ell=\ell_k\), where \(\ell_k\) is defined in
\eqref{eq:log-lag}.  That definition, Assumption~\ref{assump:learning-rate},
and Assumption~\ref{assump:structured-errors}\textup{(iii)} give a constant
\(C_1\geq C_0\) such that, for all sufficiently large \(k\),
\(k-\ell_k\geq k/2\), \(\ell_k\leq C_1(1+\log k)\), and
\(\rho^{\ell_k}\leq k^{-\delta}\), while, for all
\(k-\ell_k\leq i\leq k-1\),
\(\alpha_i\leq C_1k^{-\delta}\),
\(\SFm_i\leq C_1k^{\omega_{\mathsf M}}\), and \(\SFb_i\leq C_1\).
Consequently,
\begin{align*}
\alpha_k\biggl(
\rho^{\ell_k}+\sum_{i=k-\ell_k}^{k-1}
\alpha_i(1+\SFm_i+\SFb_i)
\biggr)
+\alpha_k^2(1+\SFm_k^2+\SFb_k^2)\leq
C_1(1+\log k)k^{-(2\delta-2\omega_{\mathsf M})}.
\end{align*}
Thus \eqref{eq:EC-structured-adjusted-mse-recursion}, with
\(\ell=\ell_k\), becomes
\begin{equation}\label{eq:EC-structured-adjusted-mse-power-recursion}
y_{k+1}
\leq
(1-2\zeta\alpha_0k^{-\delta})y_k
+C_1 U_2(W_0)(1+\log k)
k^{-(2\delta-2\omega_{\mathsf M})}
\end{equation}
for all sufficiently large \(k\).

The condition \(\omega_{\mathsf M}<\delta-1/2\) gives
\(\delta-2\omega_{\mathsf M}>0\).  Hence
\((1+1/k)^{-(\delta-2\omega_{\mathsf M})}
\geq1-(\delta-2\omega_{\mathsf M})/k\), and
\(1+\log(k+1)\geq1+\log k\), so
\begin{align*}
& (1+\log(k+1))(k+1)^{-(\delta-2\omega_{\mathsf M})}
-(1-2\zeta\alpha_0k^{-\delta})
(1+\log k)k^{-(\delta-2\omega_{\mathsf M})}\\
\geq{}&
\bigl[2\zeta\alpha_0
-(\delta-2\omega_{\mathsf M})k^{\delta-1}\bigr]
(1+\log k)k^{-(2\delta-2\omega_{\mathsf M})}.
\end{align*}
When \(\delta<1\), the coefficient in brackets is at least
\(\zeta\alpha_0\) for all sufficiently large \(k\).  When \(\delta=1\),
it equals
\(2\zeta\alpha_0-(1-2\omega_{\mathsf M})>0\).

Choose an iteration threshold \(K\) so that the conclusions of
Lemma~\ref{lemma:ec-extension-bias-radius},
the recursion in \eqref{eq:EC-structured-adjusted-mse-power-recursion}, the
nonnegativity of its contraction coefficient, and the applicable positive
comparison gap all hold for \(k\geq K\).  Choose \(C_2\) large
enough that
\(C_2\zeta\alpha_0\geq C_1\) when \(\delta<1\),
\(C_2[2\zeta\alpha_0-(1-2\omega_{\mathsf M})]\geq C_1\) when
\(\delta=1\), and
\[
y_K\leq C_2U_2(W_0)(1+\log K)
K^{-(\delta-2\omega_{\mathsf M})}.
\]
Such a deterministic choice is possible because \(y_K\leq d_\Theta^2\)
and \(U_2(W_0)\geq1\).  Induction in
\eqref{eq:EC-structured-adjusted-mse-power-recursion} gives, for all
\(k\geq K\),
\[
y_k\leq C_2U_2(W_0)(1+\log k)
k^{-(\delta-2\omega_{\mathsf M})}.
\]
This proves \eqref{eq:EC-structured-excess-mse}.

Finally, fix \(x>0\), and let \(I:=n_r\).  By
Lemma~\ref{lemma:ec-extension-bias-radius},
\(\mathsf g_I=o(I^{-\beta})\), while
Lemma~\ref{lemma:block-arithmetic} gives \(n_{r+1}/I\leq2\) for all sufficiently
large blocks.  Choose an iteration threshold \(K_x\geq K\) so that
\(\mathsf g_I\leq x n_{r+1}^{-\beta}/2\) for all blocks with
\(I\geq K_x\).
Then
\[
\{\lVert\theta_I-\theta^\star\rVert>x n_{r+1}^{-\beta}\}
\subseteq
\biggl\{
\Phi_{\mathsf g_I}(\theta_I-\theta^\star)
>x^2n_{r+1}^{-2\beta}/4
\biggr\}.
\]
Markov's inequality and \eqref{eq:EC-structured-excess-mse} yield
\begin{align*}
\pr\Bigl(
\lVert\theta_I-\theta^\star\rVert>x n_{r+1}^{-\beta}
\,\Bigm|\,\mathscr G_0^+\Bigr)
&\leq
4x^{-2}n_{r+1}^{2\beta}y_I\\
&\leq
2^{2\beta+2}C_2 x^{-2}U_2(W_0)(1+\log I)
I^{-\delta+2\omega_{\mathsf M}+2\beta}.
\end{align*}
Taking \(C:=2^{2\beta+2}C_2\), the constant is independent of \(x\), while
only the iteration threshold \(K_x\) depends on \(x\).  This proves
\eqref{eq:EC-structured-entrance-rate}.
\Halmos\endproof

\subsection{Concentration Within a Block}

The block argument for the SA recursion in
\eqref{eq:update-scheme-projection} extends after accounting for two
additional sums.  Martingale-difference noise and predictable bias increase the bound on
parameter changes over the logarithmic lag interval, and their
accumulated contributions must also be controlled within each block.

\begin{lemma}
\label{lemma:ec-extension-block-comparison}
Suppose Assumptions~\ref{assump:learning-rate}--\ref{assump:growth} hold,
together with parts~\textup{(i)}--\textup{(ii)} of
Assumption~\ref{assump:structured-errors}.  Let
\(I\leq J\), \(\ell\geq1\), \(L:=J-I+1\), and \(I\geq2\ell\).  Define the
accumulated Markovian noise by
\[
S^{\mathrm{markov}}(m)
:=
\sum_{k=I}^{m}\alpha_k
\bigl(F(\theta_{k-\ell},W_k)-\bar F(\theta_{k-\ell})\bigr),
\qquad I\leq m\leq J.
\]
There is a positive constant \(C\) such that, for all \(x>0\),
\begin{align}
&\pr\biggl(
\max_{I\leq m\leq J}\lVert S^{\mathrm{markov}}(m)\rVert\geq x
\,\biggm|\,\mathscr G_0^+
\biggr)
\nonumber\\
&\quad\leq
C U_2(W_0)\biggl[
 x^{-2}\ell^2L\alpha_I^2
+x^{-1}L\alpha_I\biggl(
\rho^\ell+
\max_{I\leq k\leq J}\sum_{i=k-\ell}^{k-1}
\alpha_i(1+\SFm_i+\SFb_i)
\biggr)
\biggr].
\label{eq:extension-block-maximal-bound}
\end{align}
\end{lemma}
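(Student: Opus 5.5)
The proof will follow Lemma~\ref{lemma:ec-canonical-block-maximal} almost line by line, with the filtration \((\F_j)\) replaced by the post-update fields \((\mathscr G_j^+)_{j\geq0}\) and the parameter-movement bound of Lemma~\ref{lemma:ec-canonical-movement} replaced by Lemma~\ref{lemma:ec-extension-movement}.

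\textbf{Lagged centering.} For \(I\leq k\leq J\), set \(\xi_{k,\ell}:=F(\theta_{k-\ell},W_k)-\bar F(\theta_{k-\ell})\). Define
\[
\xi_{k,\ell}^{\mathrm{pred}}:=\E[\xi_{k,\ell}\mid\mathscr G_{k-\ell}^+],\qquad
\xi_{k,\ell}^{\mathrm{cent}}:=\xi_{k,\ell}-\xi_{k,\ell}^{\mathrm{pred}}.
\]
Since \(\theta_{k-\ell}\) is \(\mathscr G_{k-\ell}^+\)-measurable and \eqref{eq:extension-post-update-transition} holds, Corollary~\ref{corollary:lagged-update-field-comparison} applies conditionally on \(\mathscr G_{k-\ell}^+\). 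Taking \(\mathscr G_0^+\)-expectations and using \eqref{eq:EC-structured-post-moments} together with Lemma~\ref{lemma:ec-extension-movement}, I expect
\[
\E\bigl[\lVert\xi_{k,\ell}^{\mathrm{pred}}\rVert\mid\mathscr G_0^+\bigr]
\lesssim U_2(W_0)\biggl(\rho^\ell+\sum_{i=k-\ell}^{k-1}\alpha_i(1+\SFm_i+\SFb_i)\biggr).
\]
Here the geometric weights \(\rho^{\ell-s-1}\) sum to at most \((1-\rho)^{-1}\), and each partial movement sum is dominated by the full window sum. Applying the triangle inequality and Markov's inequality to \(S^{\mathrm{pred}}(m)\), and bounding the window sum by its maximum over \(k\in[I,J]\), gives the \(x^{-1}\) term in \eqref{eq:extension-block-maximal-bound}.

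\textbf{Centered part.} I will reprove the second-moment statements of Lemma~\ref{lemma:ec-canonical-lagged-centering} in the new filtration. The cross terms vanish when \(|k-i|\geq\ell\), because \(\xi_{i,\ell}^{\mathrm{cent}}\) is \(\mathscr G_{i}^+\subseteq\mathscr G_{k-\ell}^+\)-measurable. The conditional second moments satisfy the same two-case bound, with \(U_2(W_t)\) evaluated under \(\mathscr G_t^+\). This bound uses only the growth bound \(\lVert\xi_{k,\ell}\rVert^2\lesssim U_2(W_k)\) and the drift iteration \eqref{eq:extension-post-update-moment}. Notably, \(\SFm\) and \(\SFb\) do not enter here, since \(\xi\) involves only \(F\) and \(\bar F\).

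Next I will apply Lemma~\ref{lemma:conditional-levy-remainder} with \(\mathcal G_q:=\mathscr G_{I+q-1}^+\) and \(\mathcal H:=\mathscr G_0^+\). This requires \(I\geq1\), which holds, and requires \(X_q\) to be \(\mathcal G_q\)-measurable. The latter holds because \(W_k\) and \(\theta_{k-\ell}\) are \(\mathscr G_k^-\subseteq\mathscr G_k^+\)-measurable, and \(\xi^{\mathrm{pred}}_{k,\ell}\) is \(\mathscr G_{k-\ell}^+\)-measurable. The pair counting then gives \(\E[\lVert S^{\mathrm{cent}}(J)\rVert^2\mid\mathscr G_0^+]\lesssim U_2(W_0)\ell L\alpha_I^2\). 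For the remainder variances \(V_m\), the bound \(\max_m V_m\) has tail \(\lesssim U_2(W_0)y^{-1}\ell^2L\alpha_I^2\), exactly as in \eqref{eq:V-max-probability}, including the case split on whether \(C_1\ell(L+\ell)\alpha_I^2\leq y/2\).

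\textbf{Main obstacle.} The only step needing care is the filtration bookkeeping for the near-pair terms with \(m<k<m+\ell\). There one needs \(\xi_{k,\ell}^{\mathrm{pred}}\) to be \(\mathscr G_m^+\)-measurable, which holds since \(k-\ell<m\). One also needs the conditional moment bounds on \(U_2(W_i)\) to hold given \(\mathscr G_0^+\), including at \(i=0\). This case is covered by \eqref{eq:EC-structured-post-moments}. Combining the centered and predictable bounds at threshold \(x/2\) then yields \eqref{eq:extension-block-maximal-bound}.
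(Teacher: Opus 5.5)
Your proposal is correct and follows the paper's proof essentially step for step. It uses the same lagged centering relative to \(\mathscr G_{k-\ell}^+\), and the same combination of Corollary~\ref{corollary:lagged-update-field-comparison} and Lemma~\ref{lemma:ec-extension-movement} for the predictable part; it then transplants the canonical centered-sum and conditional L\'evy estimates to the filtration \((\mathscr G_j^+)\), with \eqref{eq:EC-structured-post-moments} supplying the moment bounds, and your measurability checks are exactly the ones the paper's argument relies on.
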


\proof{Proof of Lemma~\ref{lemma:ec-extension-block-comparison}.}
For \(k=I,\ldots,J\), define the lagged Markovian noise and its
lagged-centering decomposition by
\[
\xi_{k,\ell}
:=F(\theta_{k-\ell},W_k)-\bar F(\theta_{k-\ell}),
\quad
\xi_{k,\ell}^{\mathrm{pred}}
:=\E[\xi_{k,\ell}\mid\mathscr G_{k-\ell}^+],
\quad\mbox{and}\quad
\xi_{k,\ell}^{\mathrm{cent}}
:=\xi_{k,\ell}-\xi_{k,\ell}^{\mathrm{pred}}.
\]
Equation~\eqref{eq:extension-post-update-transition} has the kernel form
required by Corollary~\ref{corollary:lagged-update-field-comparison}.
That corollary and Lemma~\ref{lemma:ec-extension-movement} give a constant
\(C_0\) such that
\begin{equation}\label{eq:EC-extension-block-predictable-input}
\E[\lVert\xi_{k,\ell}^{\mathrm{pred}}\rVert\mid\mathscr G_0^+]
\leq
C_0 U_2(W_0)\biggl(
\rho^\ell+
\sum_{i=k-\ell}^{k-1}\alpha_i(1+\SFm_i+\SFb_i)
\biggr).
\end{equation}

Equation~\eqref{eq:EC-structured-post-moments} implies that these random
vectors are square integrable.  Assumption~\ref{assump:growth} and Jensen's
inequality give a constant \(C_1\) such that
\begin{equation}\label{eq:EC-extension-block-centered-input}
\E[\lVert\xi_{k,\ell}^{\mathrm{cent}}\rVert^2
\mid\mathscr G_t^+]
\leq
C_1
\begin{cases}
\lambda_2^{k-t}U_2(W_t)+1,
&0\leq t\leq k-\ell,\\
U_2(W_t)+U_2(W_{k-\ell})+1,
&k-\ell<t<k.
\end{cases}
\end{equation}
Finally, if \(I\leq i<k\leq J\), \(0\leq t\leq i\), and
\(k-i\geq\ell\), then \eqref{eq:extension-interlacing} makes
\(\xi_{i,\ell}^{\mathrm{cent}}\)
\(\mathscr G_{k-\ell}^+\)-measurable.  Conditional centering and the tower
property yield
\begin{equation}\label{eq:EC-extension-block-cross-terms}
\E[(\xi_{k,\ell}^{\mathrm{cent}})^\intercal
\xi_{i,\ell}^{\mathrm{cent}}\mid\mathscr G_t^+]=0.
\end{equation}

Define
\[
S^{\mathrm{pred}}(m)
:=\sum_{k=I}^{m}\alpha_k\xi_{k,\ell}^{\mathrm{pred}},
\qquad
S^{\mathrm{cent}}(m)
:=\sum_{k=I}^{m}\alpha_k\xi_{k,\ell}^{\mathrm{cent}},
\]
for \(I\leq m\leq J\), and set both sums equal to zero at \(m=I-1\).  Thus
\(S^{\mathrm{markov}}=S^{\mathrm{pred}}+S^{\mathrm{cent}}\), and
\(S^{\mathrm{cent}}(m)\) is adapted to \((\mathscr G_m^+)\).
Here \(\xi_{k,\ell}^{\mathrm{cent}}\) is centered Markovian noise relative to
\(\mathscr G_{k-\ell}^+\).

The centered-sum estimates from
\eqref{eq:cen-full-block-second-moment} through \eqref{eq:cen-max-final} in
the proof of Lemma~\ref{lemma:ec-canonical-block-maximal} use the
conditional second-moment bound and the disappearance of cross terms at
separations of at least \(\ell\).  Use those estimates with
\(\F_t\) replaced by \(\mathscr G_t^+\),
\eqref{eq:lag-centered-conditional-moment} replaced by
\eqref{eq:EC-extension-block-centered-input},
\eqref{eq:lag-covariance-cancellation} replaced by
\eqref{eq:EC-extension-block-cross-terms}, and the moment bound replaced by
\eqref{eq:EC-structured-post-moments}.  Choose a constant \(C_2\) that
dominates the constants in the resulting full-block and future-sum estimates
and in the conditional-L\'evy reduction below.  Then
\[
\E[\lVert S^{\mathrm{cent}}(J)\rVert^2\mid\mathscr G_0^+]
\leq
C_2 U_2(W_0)\ell L\alpha_I^2.
\]
Applying the covariance-band count to each future interval
\(\{m+1,\ldots,J\}\) gives, for all \(y>0\), the following bound. The
remainder at \(m=J\) is zero, so the associated union bound has exactly
\(L\) nonterminal indices:
\begin{align*}
&\pr\biggl(
\max_{I-1\leq m\leq J}
\E[\lVert S^{\mathrm{cent}}(J)-S^{\mathrm{cent}}(m)\rVert^2
\mid\mathscr G_m^+]
\geq y
\,\biggm|\,\mathscr G_0^+
\biggr)
\leq
C_2 U_2(W_0)y^{-1}\ell^2L\alpha_I^2.
\end{align*}
Apply Lemma~\ref{lemma:conditional-levy-remainder} with
\(n=L\), \(\mathcal G_q:=\mathscr G_{I+q-1}^+\) for
\(q=0,\ldots,L\), \(\mathcal H:=\mathscr G_0^+\), and
\(X_q:=\alpha_{I+q-1}\xi_{I+q-1,\ell}^{\mathrm{cent}}\) for
\(q=1,\ldots,L\). This gives
\begin{equation}\label{eq:EC-extension-centered-block-maximum}
\pr\biggl(
\max_{I\leq m\leq J}\lVert S^{\mathrm{cent}}(m)\rVert\geq x
\,\biggm|\,\mathscr G_0^+
\biggr)
\leq
C_2 U_2(W_0)x^{-2}\ell^2L\alpha_I^2.
\end{equation}

For \(S^{\mathrm{pred}}\), the triangle inequality, Markov's inequality,
and \eqref{eq:EC-extension-block-predictable-input} give a constant \(C_3\)
such that
\begin{align}
\pr\biggl(
\max_{I\leq m\leq J}\lVert S^{\mathrm{pred}}(m)\rVert\geq x
\,\biggm|\,\mathscr G_0^+
\biggr)
\leq
C_3 U_2(W_0)x^{-1}L\alpha_I\biggl(
\rho^\ell+
\max_{I\leq k\leq J}\sum_{i=k-\ell}^{k-1}
\alpha_i(1+\SFm_i+\SFb_i)
\biggr).
\label{eq:EC-extension-predictable-block-maximum}
\end{align}
Applying \eqref{eq:EC-extension-centered-block-maximum} and
\eqref{eq:EC-extension-predictable-block-maximum} with threshold \(x/2\)
proves \eqref{eq:extension-block-maximal-bound} with
\(C:=4C_2+2C_3\).
\Halmos\endproof

\begin{lemma}
\label{lemma:ec-extension-one-block}
Suppose Assumptions~\ref{assump:learning-rate}--\ref{assump:structured-errors} hold, with \(1/2<\delta\leq1\).  When
\(\delta=1\), also suppose \(2\zeta\alpha_0>1\).  Fix
\(0\leq\omega_{\mathsf M}<\delta-1/2\),
\(0\leq\beta<\min\{\omega_{\mathsf B},
\delta-\omega_{\mathsf M}-1/2\}\), and \(c>0\).  If \(B_k=0\) almost
surely for all \(k\), drop the restriction involving
\(\omega_{\mathsf B}\); if \(M_{k+1}=0\) almost surely for all \(k\), take
\(\omega_{\mathsf M}=0\).  Define
\(s:=\delta-2\beta-2\omega_{\mathsf M}\).  There are positive constants \(C\) and \(R\), with \(R\) a
block-index threshold, such that, for all \(r\geq R\), the applicable
block with \(I:=n_r\) and \(J:=n_{r+1}-1\) satisfies
\begin{equation}\label{eq:EC-structured-one-block-failure}
\pr\biggl(
\max_{I\leq k\leq J}\lVert\theta_k-\theta^\star\rVert
>c n_{r+1}^{-\beta}
\,\biggm|\,\mathscr G_0^+
\biggr)
\leq
C U_2(W_0)(1+\log I)^2I^{-s}.
\end{equation}
\end{lemma}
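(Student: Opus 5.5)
The proof extends the four-event argument of Lemma~\ref{lemma:canonical-block-containment} to six component events. Assume without loss of generality that \(c\leq d_\star/2\), and set \(\psi_r:=cn_{r+1}^{-\beta}\). Inside the tube the projections are inactive, so the increments add up. Thus, for \(m\in\mathcal I_r\),
\[
\theta_{m+1}-\theta_{n_r}=S_r^{\mathrm{markov}}(m)+S_r^{\mathrm{field}}(m)+S_r^{\mathrm{lag}}(m)+S_r^{\mathrm{mart}}(m)+S_r^{\mathrm{bias}}(m),
\]
where \(S_r^{\mathrm{mart}}(m):=\sum_{k=I}^m\alpha_kM_{k+1}\) and \(S_r^{\mathrm{bias}}(m):=\sum_{k=I}^m\alpha_kB_k\). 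The first-exit induction of Lemma~\ref{lemma:canonical-block-containment} carries over verbatim with threshold \(\psi_r/12\) in place of \(\psi_r/8\). The block-exit event is then contained in the union of an entrance event and five partial-sum maximum events, and the union bound reduces the proof to bounding each of the six probabilities by \(CU_2(W_0)(1+\log I)^2I^{-s}\).

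\textbf{Routine terms.} These are handled by earlier results.
\begin{itemize}
\item[(a)] Entrance: apply Lemma~\ref{lemma:ec-extension-bias-aware-entrance} with \(x=c/12\). This gives order \((1+\log I)I^{-s}\).
\item[(b)] Markovian noise: apply Lemma~\ref{lemma:ec-extension-block-comparison} with \(\ell=\ell_{n_r}\) and \(L\lesssim I^\delta\). Over the lag window, \(\alpha_i\lesssim I^{-\delta}\) and \(\SFm_i\lesssim I^{\omega_{\mathsf M}}\), so \(\max_k\sum_{i=k-\ell}^{k-1}\alpha_i(1+\SFm_i+\SFb_i)\lesssim\ell I^{-\delta+\omega_{\mathsf M}}\). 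The bound becomes
\[
\ell^2I^{2\beta-\delta}+\ell I^{\beta}\bigl(I^{-\delta}+\ell I^{-\delta+\omega_{\mathsf M}}\bigr)\lesssim(1+\log I)^2I^{-s},
\]
using \(\beta+\omega_{\mathsf M}\leq 2\beta+2\omega_{\mathsf M}\).
\item[(c)] Lag remainder: use the Lipschitz bound \eqref{eq:Lipschitz-F}, Lemma~\ref{lemma:ec-extension-movement} and Markov's inequality. This gives \(I^{\beta}L\alpha_I\,\ell I^{-\delta+\omega_{\mathsf M}}\lesssim\ell I^{-s}\).
\item[(d)] Martingale sum: \((S^{\mathrm{mart}}_r(m),\mathscr G_m^+)\) is a martingale. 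Doob's \(L^2\) inequality together with \eqref{eq:EC-structured-post-moments} gives
\[
\E\Bigl[\max_m\lVert S^{\mathrm{mart}}_r(m)\rVert^2\,\Bigm|\,\mathscr G_0^+\Bigr]\lesssim U_2(W_0)\,L\alpha_I^2I^{2\omega_{\mathsf M}}\lesssim U_2(W_0)\,I^{-\delta+2\omega_{\mathsf M}}.
\]
Chebyshev's inequality at \(\psi_r/12\) then yields \(I^{-s}\).
\end{itemize}

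\textbf{Main obstacle: the bias and field sums.} For the bias term, the naive estimate is deterministic: \(\lVert S^{\mathrm{bias}}_r\rVert\leq L\alpha_I\SFb_I\lesssim I^{-\omega_{\mathsf B}}\). Since \(\omega_{\mathsf B}>\beta\), this is eventually below \(\psi_r/12\), so the event is empty for large \(r\) and costs nothing.

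The field term is the real difficulty. In Lemma~\ref{lemma:block-drift} it was controlled through the full mean-squared error, but here Proposition~\ref{thm:structured-mse} contains a term \(k^{-2\omega_{\mathsf B}}\), and \(2\beta-2\omega_{\mathsf B}\) need not be below \(-s\). The plan is to replace the error by its excess over the bias radius:
\[
\lVert\bar F(\theta_{k-\ell})\rVert\lesssim\mathsf g_{k-\ell}+\bigl(\lVert\theta_{k-\ell}-\theta^\star\rVert-\mathsf g_{k-\ell}\bigr)_+,
\]
and to split the field sum accordingly.
\begin{itemize}
\item The deterministic part \(L\alpha_I\,\mathsf g_{I-\ell}=o(I^{-\beta})\) by Lemma~\ref{lemma:ec-extension-bias-radius}. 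It is therefore eventually below \(\psi_r/24\) and contributes nothing.
\item For the random part, Cauchy--Schwarz followed by \eqref{eq:EC-structured-excess-mse} (with \(k-\ell\asymp I\)) gives a second moment of order \(U_2(W_0)(1+\log I)I^{-\delta+2\omega_{\mathsf M}}\). Chebyshev's inequality then gives \((1+\log I)I^{-s}\).
\end{itemize}
I expect this bias-radius splitting to be the delicate step. Two points need checking: that \(\mathsf g\) evaluated at the lagged index is still \(o(I^{-\beta})\), which holds by monotonicity since \(k-\ell\geq I/2\); and that the constants allow a single block threshold \(R\). Summing the six estimates gives \eqref{eq:EC-structured-one-block-failure}.
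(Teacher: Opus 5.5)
Your proposal is correct and follows essentially the same route as the paper. It uses the same six-event first-exit decomposition; the paper takes threshold \(\psi_r/8\), which already closes the induction since \(6/8<1\), so your \(\psi_r/12\) is only a cosmetic change. The entrance, Markovian-noise, lag, martingale (via Doob) and bias terms are handled as in the paper, and the field sum uses the same bias-radius splitting \(\lVert\bar F(\theta_{k-\ell})\rVert\lesssim\mathsf g_{k-\ell}+\sqrt{\Phi_{\mathsf g_{k-\ell}}(\theta_{k-\ell}-\theta^\star)}\) combined with the excess-MSE bound \eqref{eq:EC-structured-excess-mse}.
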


\proof{Proof of Lemma~\ref{lemma:ec-extension-one-block}.}
Set \(\widetilde c:=\min\{c,d_\star/2\}\).  Since \(\widetilde c\leq c\),
an exit from the tube of radius \(c n_{r+1}^{-\beta}\) is also an exit from
the tube of radius \(\widetilde c n_{r+1}^{-\beta}\).  It therefore suffices
to bound the latter event.  Fix a block from the applicable polynomial or dyadic
construction, set \(I:=n_r\), \(J:=n_{r+1}-1\),
\(L:=J-I+1=n_{r+1}-n_r\), and set \(\ell:=\ell_I\) using
\eqref{eq:log-lag}.

Let \(R_0\) be the block-index threshold in
Lemma~\ref{lemma:block-arithmetic}, and let \(C_0\) be
\(\max\{1,\alpha_0\}\) times the constant in that lemma.  Then, for all
\(r\geq R_0\),
\(I\geq2\ell\), \(n_{r+1}/I\leq2\), \(L\leq C_0I^\delta\),
\(\ell\leq C_0(1+\log I)\), and \(\rho^\ell\leq I^{-\delta}\).
Since \(\alpha_k\leq\alpha_I\) for \(k\geq I\) by
Assumption~\ref{assump:learning-rate}, the original bound on \(L\) in
Lemma~\ref{lemma:block-arithmetic} and the definition of \(C_0\) also give
\(\sum_{k=I}^{J}\alpha_k\leq L\alpha_I\leq C_0\).
Moreover, \(I\geq2\ell\) and \(n_{r+1}/I\leq2\) imply
\(I/2\leq i<2I\) whenever \(I\leq k\leq J\) and \(k-\ell\leq i<k\).

For \(I\leq m\leq J\), define
\begin{align*}
S_r^{\mathrm{markov}}(m)
&:=\sum_{k=I}^{m}\alpha_k
\bigl(F(\theta_{k-\ell},W_k)-\bar F(\theta_{k-\ell})\bigr),\\
S_r^{\mathrm{field}}(m)
&:=\sum_{k=I}^{m}\alpha_k\bar F(\theta_{k-\ell}),\\
S_r^{\mathrm{lag}}(m)
&:=\sum_{k=I}^{m}\alpha_k
\bigl(F(\theta_k,W_k)-F(\theta_{k-\ell},W_k)\bigr),\\
S_r^{\mathrm{mart}}(m)
&:=\sum_{k=I}^{m}\alpha_kM_{k+1},\\
S_r^{\mathrm{bias}}(m)
&:=\sum_{k=I}^{m}\alpha_kB_k.
\end{align*}
These five sums are, respectively, accumulated Markovian noise, accumulated
mean field, parameter-lag remainder, accumulated martingale-difference
noise, and accumulated predictable bias.
Set all five sums equal to zero at \(m=I-1\), and let
\(\psi_r:=\widetilde c n_{r+1}^{-\beta}\).  Define
\[
\mathcal E_r^{\mathrm{ent}}
:=\{\lVert\theta_I-\theta^\star\rVert>\psi_r/8\}
\quad\mbox{and}\quad
\mathcal E_r^{\mathord\bullet}
:=\biggl\{
\max_{I\leq m\leq J}\lVert S_r^{\mathord\bullet}(m)\rVert
>\psi_r/8
\biggr\},
\]
where \(\mathord\bullet\) stands for
\(\mathrm{markov}\), \(\mathrm{field}\), or \(\mathrm{lag}\).  Define
separately
\[
\mathcal E_r^{\mathrm{mart}}
:=\biggl\{
\max_{I\leq m\leq J}\lVert S_r^{\mathrm{mart}}(m)\rVert
>\psi_r/8
\biggr\}
\quad\mbox{and}\quad
\mathcal E_r^{\mathrm{bias}}
:=\biggl\{
\max_{I\leq m\leq J}\lVert S_r^{\mathrm{bias}}(m)\rVert
>\psi_r/8
\biggr\}.
\]

We first verify that these six events cover an exit from the block.  Fix an
outcome outside their union and suppose that the first exit occurs at
\(\tau\in\{I,\ldots,J\}\).  Because the outcome lies outside
\(\mathcal E_r^{\mathrm{ent}}\), we have \(\tau>I\).  Every
iterate before \(\tau\) lies within distance
\(\psi_r\leq\widetilde c<d_\star\) of \(\theta^\star\), and hence lies in
the interior of \(\Theta\).  Every completed projection before \(\tau\) has
an interior output and is therefore inactive; nonexpansiveness controls the
projection in the update producing \(\theta_\tau\).  Consequently,
\begin{align*}
\lVert\theta_\tau-\theta^\star\rVert
\leq{}&
\lVert\theta_I-\theta^\star\rVert
+\lVert S_r^{\mathrm{markov}}(\tau-1)\rVert
+\lVert S_r^{\mathrm{field}}(\tau-1)\rVert\\
&+\lVert S_r^{\mathrm{lag}}(\tau-1)\rVert
+\lVert S_r^{\mathrm{mart}}(\tau-1)\rVert
+\lVert S_r^{\mathrm{bias}}(\tau-1)\rVert
\leq6\psi_r/8<\psi_r,
\end{align*}
which contradicts the definition of \(\tau\).  Therefore
\begin{align}
\Bigl\{\max_{I\leq k\leq J}\lVert\theta_k-\theta^\star\rVert
>\psi_r\Bigr\}
\subseteq{}&
\mathcal E_r^{\mathrm{ent}}
\cup\mathcal E_r^{\mathrm{markov}}
\cup\mathcal E_r^{\mathrm{field}}
\cup\mathcal E_r^{\mathrm{lag}}
\cup\mathcal E_r^{\mathrm{mart}}
\cup\mathcal E_r^{\mathrm{bias}}.
\label{eq:EC-structured-six-event-containment}
\end{align}

We now bound the six component events in turn.  The condition
\(\beta<\delta-\omega_{\mathsf M}-1/2\) gives
\(s-(1-\delta)=2(\delta-\beta-\omega_{\mathsf M})-1>0\), so \(s>0\).
Assumption~\ref{assump:learning-rate} and
Assumption~\ref{assump:structured-errors}\textup{(iii)}, together with
\(I/2\leq i<2I\), give a constant \(C_1\) and a block-index threshold
\(R_1\geq R_0\) such that, for all \(r\geq R_1\), \(I\leq k\leq J\), and
\(k-\ell\leq i<k\),
\begin{equation}\label{eq:EC-extension-lookback-envelope}
\alpha_i(1+\SFm_i+\SFb_i)
\leq C_1I^{-\delta+\omega_{\mathsf M}}.
\end{equation}

For \(\mathcal E_r^{\mathrm{ent}}\),
Lemma~\ref{lemma:ec-extension-bias-aware-entrance}, applied with
\(x=\widetilde c/8\), gives a constant \(C_2\).  Choose a block-index
threshold \(R_2\geq R_1\) so that \(n_r\geq K_{\widetilde c/8}\) for all
\(r\geq R_2\).  Then
\begin{equation}\label{eq:EC-structured-block-entrance-bound}
\pr(\mathcal E_r^{\mathrm{ent}}\mid\mathscr G_0^+)
\leq
C_2 U_2(W_0)(1+\log I)I^{-s}.
\end{equation}

For \(\mathcal E_r^{\mathrm{markov}}\), summing
\eqref{eq:EC-extension-lookback-envelope} over
\(i=k-\ell,\ldots,k-1\) gives
\[
\max_{I\leq k\leq J}\sum_{i=k-\ell}^{k-1}
\alpha_i(1+\SFm_i+\SFb_i)
\leq C_1\ell I^{-\delta+\omega_{\mathsf M}}.
\]
Applying Lemma~\ref{lemma:ec-extension-block-comparison} at threshold
\(\psi_r/8=(\widetilde c/8)n_{r+1}^{-\beta}\) and
Lemma~\ref{lemma:block-arithmetic} gives a constant
\(C_3\) such that
\begin{align}
\pr(\mathcal E_r^{\mathrm{markov}}\mid\mathscr G_0^+)
&\leq
C_3 U_2(W_0)\Bigl[
\ell^2I^{-\delta+2\beta}
+I^{-\delta+\beta}
+\ell I^{-\delta+\beta+\omega_{\mathsf M}}
\Bigr]
\nonumber\\
&\leq
C_3 U_2(W_0)(1+\log I)^2I^{-s}.
\label{eq:EC-structured-block-noise-bound}
\end{align}
The three decay exponents are at least \(s\), with differences
\(2\omega_{\mathsf M}\), \(\beta+2\omega_{\mathsf M}\), and
\(\beta+\omega_{\mathsf M}\), respectively.  Martingale-difference noise
and predictable bias enter this estimate only through the sum bounding parameter changes
\(\sum_{i=k-\ell}^{k-1}\alpha_i(1+\SFm_i+\SFb_i)\) in
Lemma~\ref{lemma:ec-extension-block-comparison}.

For \(\mathcal E_r^{\mathrm{field}}\), let \(K_0\) be the iteration threshold
in Lemma~\ref{lemma:ec-extension-bias-aware-entrance}, and choose a
block-index threshold \(R_3\geq R_2\) so that \(I-\ell\geq K_0\) for all
\(r\geq R_3\).  Choose a constant \(C_4\) that dominates the constants
from \eqref{eq:EC-structured-excess-mse},
Lemma~\ref{lemma:block-arithmetic}, and
Lemma~\ref{lemma:ec-stationary-mean-field-lipschitz}.  Then, for all
\(r\geq R_3\) and uniformly over \(I\leq k\leq J\),
\begin{equation}\label{eq:EC-extension-uniform-excess-mse}
\E[\Phi_{\mathsf g_{k-\ell}}(\theta_{k-\ell}-\theta^\star)
\mid\mathscr G_0^+]
\leq
C_4 U_2(W_0)(1+\log I)I^{-\delta+2\omega_{\mathsf M}}.
\end{equation}
Lemma~\ref{lemma:ec-stationary-mean-field-lipschitz} and
Assumption~\ref{assump:interior-target}, which gives
\(\bar F(\theta^\star)=0\), imply
\[
\lVert\bar F(\theta_{k-\ell})\rVert
\leq
C_4\bigl(
\sqrt{\Phi_{\mathsf g_{k-\ell}}(\theta_{k-\ell}-\theta^\star)}
+\mathsf g_{k-\ell}
\bigr).
\]
Consequently,
\begin{align*}
\max_{I\leq m\leq J}\lVert S_r^{\mathrm{field}}(m)\rVert
\leq C_4\sum_{k=I}^{J}\alpha_k
\sqrt{\Phi_{\mathsf g_{k-\ell}}(\theta_{k-\ell}-\theta^\star)}
+C_4\sum_{k=I}^{J}\alpha_k\mathsf g_{k-\ell}.
\end{align*}
Lemma~\ref{lemma:ec-extension-bias-radius} gives
\(\mathsf g_{I-\ell}=o(I^{-\beta})\).  Since the radius sequence is
nonincreasing,
\[
\sum_{k=I}^{J}\alpha_k\mathsf g_{k-\ell}
\leq
\biggl(\sum_{k=I}^{J}\alpha_k\biggr)\mathsf g_{I-\ell}
=o(I^{-\beta})
=o(n_{r+1}^{-\beta}).
\]
Choose a block-index threshold \(R_4\geq R_3\) so that
\(C_4\sum_{k=I}^{J}\alpha_k\mathsf g_{k-\ell}
\leq\psi_r/16\) for all
\(r\geq R_4\).
Weighted Cauchy--Schwarz and
\eqref{eq:EC-extension-uniform-excess-mse} give
\begin{align*}
\E\biggl[\biggl(
\sum_{k=I}^{J}\alpha_k
\sqrt{\Phi_{\mathsf g_{k-\ell}}(\theta_{k-\ell}-\theta^\star)}
\biggr)^2\,\biggm|\,\mathscr G_0^+\biggr]
&\leq
\biggl(\sum_{k=I}^{J}\alpha_k\biggr)
\sum_{k=I}^{J}\alpha_k
\E[\Phi_{\mathsf g_{k-\ell}}(\theta_{k-\ell}-\theta^\star)
\mid\mathscr G_0^+]\\
&\leq
C_4 U_2(W_0)(1+\log I)I^{-\delta+2\omega_{\mathsf M}}.
\end{align*}
If \(\mathcal E_r^{\mathrm{field}}\) occurs, the excess-distance sum exceeds
\(\psi_r/(16C_4)\).  Markov's inequality therefore gives a constant
\(C_5\) such that
\begin{equation}\label{eq:EC-structured-block-drift-bound}
\pr(\mathcal E_r^{\mathrm{field}}\mid\mathscr G_0^+)
\leq
C_5 U_2(W_0)(1+\log I)I^{-s}.
\end{equation}

For \(\mathcal E_r^{\mathrm{lag}}\), Assumption~\ref{assump:growth} and the
triangle inequality give
\[
\max_{I\leq m\leq J}\lVert S_r^{\mathrm{lag}}(m)\rVert
\leq
L_F\sum_{k=I}^{J}\alpha_kU_1(W_k)
\lVert\theta_k-\theta_{k-\ell}\rVert.
\]
Choose a constant \(C_6\) that dominates the constants obtained by taking
conditional expectations, applying
Lemma~\ref{lemma:ec-extension-movement} and
\eqref{eq:EC-extension-lookback-envelope}, and then converting the resulting
bound by Markov's inequality.  Then
\[
\E\biggl[
\max_{I\leq m\leq J}\lVert S_r^{\mathrm{lag}}(m)\rVert
\,\biggm|\,\mathscr G_0^+
\biggr]
\leq
C_6 U_2(W_0)\ell I^{-\delta+\omega_{\mathsf M}}.
\]
Markov's inequality at threshold \(\psi_r/8\) and
Lemma~\ref{lemma:block-arithmetic} now give
\begin{align}
\pr(\mathcal E_r^{\mathrm{lag}}\mid\mathscr G_0^+)
&\leq
C_6 U_2(W_0)\ell I^{-\delta+\beta+\omega_{\mathsf M}}
\leq
C_6 U_2(W_0)(1+\log I)I^{-s}.
\label{eq:EC-structured-block-adaptation-bound}
\end{align}
Here \(\delta-\beta-\omega_{\mathsf M}\geq s\) because
\((\delta-\beta-\omega_{\mathsf M})-s
=\beta+\omega_{\mathsf M}\geq0\).

For \(\mathcal E_r^{\mathrm{mart}}\), the accumulated martingale-difference
noise \(S_r^{\mathrm{mart}}\) is controlled as follows.
Assumption~\ref{assump:structured-errors}\textup{(i)}, displayed in
\eqref{eq:extension-martingale-conditions},
\eqref{eq:extension-interlacing}, and the tower property show that
\((S_r^{\mathrm{mart}}(m))_{m=I-1}^{J}\) is a martingale relative to the
filtration \((\mathscr G_m^+)_{m=I-1}^{J}\).  Choose a constant
\(C_7\) that dominates the deterministic factors below.  Doob's maximal
inequality, martingale orthogonality, and
\eqref{eq:extension-martingale-conditions}, followed by
\eqref{eq:EC-structured-post-moments},
Assumption~\ref{assump:learning-rate},
Assumption~\ref{assump:structured-errors}\textup{(iii)}, and
Lemma~\ref{lemma:block-arithmetic}, give
\begin{align}
\pr(\mathcal E_r^{\mathrm{mart}}\mid\mathscr G_0^+)
&\leq
C_7 n_{r+1}^{2\beta}
\E[\lVert S_r^{\mathrm{mart}}(J)\rVert^2
\mid\mathscr G_0^+]
\nonumber\\
&=
C_7 n_{r+1}^{2\beta}\sum_{k=I}^{J}\alpha_k^2
\E[\lVert M_{k+1}\rVert^2\mid\mathscr G_0^+]
\nonumber\\
&\leq
C_7 U_2(W_0)n_{r+1}^{2\beta}
\sum_{k=I}^{J}\alpha_k^2\SFm_k^2
\leq
C_7 U_2(W_0)I^{-s}.
\label{eq:EC-structured-block-martingale-bound}
\end{align}

For \(\mathcal E_r^{\mathrm{bias}}\), if \(B_k=0\) almost surely for all
\(k\), then
\(S_r^{\mathrm{bias}}\equiv0\).  Otherwise,
Assumption~\ref{assump:structured-errors}\textup{(ii)}--\textup{(iii)} and
Lemma~\ref{lemma:block-arithmetic} give a constant \(C_8\) such that
\[
\max_{I\leq m\leq J}\lVert S_r^{\mathrm{bias}}(m)\rVert
\leq
\sum_{k=I}^{J}\alpha_k\SFb_k
\leq
C_8I^{-\omega_{\mathsf B}}
=o(n_{r+1}^{-\beta}).
\]
Indeed, since \(n_{r+1}/I\leq2\) and \(\beta<\omega_{\mathsf B}\),
\(I^{-\omega_{\mathsf B}}=o(n_{r+1}^{-\beta})\).  Choose a block-index threshold
\(R_5\geq R_4\) so that \(\mathcal E_r^{\mathrm{bias}}\) is empty for all
\(r\geq R_5\); in the zero-bias case, take \(R_5:=R_4\).

Set \(R:=R_5\) and
\(C:=C_2+C_3+C_5+C_6+C_7\).  The containment
\eqref{eq:EC-structured-six-event-containment}, the union bound, and
\eqref{eq:EC-structured-block-entrance-bound}--\eqref{eq:EC-structured-block-noise-bound}, together with
\eqref{eq:EC-structured-block-drift-bound}--\eqref{eq:EC-structured-block-martingale-bound}, give the stated bound for
exits from the tube of radius \(\psi_r\).  Since an exit from the
tube of radius \(c n_{r+1}^{-\beta}\) is contained in this event,
\eqref{eq:EC-structured-one-block-failure} follows.
\Halmos\endproof

\subsection{Proof of Theorem~\ref{thm:structured-trajectory}}

\proof{Proof of Theorem~\ref{thm:structured-trajectory}.}
Set \(s:=\delta-2\beta-2\omega_{\mathsf M}\).  The assumed range of
\(\beta\) gives
\(s-(1-\delta)=2(\delta-\beta-\omega_{\mathsf M})-1>0\).
Recall the applicable polynomial or dyadic block construction.  For an
integer \(k_0\geq n_1\), let \(r_0\) be the index of the block containing
\(k_0\), so that \(n_{r_0}\leq k_0<n_{r_0+1}\).  Lemmas
\ref{lemma:blockwise-tube-reduction} and
\ref{lemma:ec-extension-one-block}, together with the union bound, give a
constant \(C_0\) and block-index threshold \(R_0\) such that, 
\begin{align}
\pr\biggl(
\lVert\theta_k-\theta^\star\rVert>c k^{-\beta}
\text{ for some }k\geq k_0
\,\biggm|\,\mathscr G_0^+\biggr)\leq
C_0 U_2(W_0)
\sum_{r\geq r_0}(1+\log n_r)^2n_r^{-s},
\label{eq:EC-structured-future-block-sum}
\end{align}
for all
\(r_0\geq R_0\).
It remains to bound this deterministic series.

Suppose first that \(1/2<\delta<1\).  Recall that
\(n_r=\lceil r^{1/(1-\delta)}\rceil\).  The ceiling definition and
\(n_{r_0}\leq k_0<n_{r_0+1}\) give
\((k_0-1)^{1-\delta}-1<r_0\leq k_0^{1-\delta}\).  Since
\(s/(1-\delta)>1\), integral comparison gives a constant \(C_1\) and a
block-index threshold \(R_1\geq R_0\) such that, for all
\(r_0\geq R_1\),
\begin{align*}
\sum_{r\geq r_0}(1+\log n_r)^2n_r^{-s}
&\leq
C_1(1+\log r_0)^2r_0^{\,1-s/(1-\delta)}\\
&\leq
C_1(1+\log k_0)^2k_0^{\,1-\delta-s}.
\end{align*}

Now suppose that \(\delta=1\).  Recall that \(n_r=2^r\).  Since
\(n_{r_0}\leq k_0<2n_{r_0}\) and \(s>0\), writing
\(r=r_0+j\) gives a constant \(C_2\) such that
\begin{align*}
\sum_{r\geq r_0}(1+\log n_r)^2n_r^{-s}
&=
\sum_{j\geq0}
\bigl(1+\log(2^j n_{r_0})\bigr)^2
(2^j n_{r_0})^{-s}\\
&\leq
C_2(1+\log n_{r_0})^2n_{r_0}^{-s}\\
&\leq
C_2(1+\log k_0)^2k_0^{-s}.
\end{align*}
In this case, set \(R_1:=R_0\).

Set \(C:=C_0C_1\) when \(\delta<1\) and \(C:=C_0C_2\) when \(\delta=1\).
In either case, for all \(r_0\geq R_1\),
\[
C_0\sum_{r\geq r_0}(1+\log n_r)^2n_r^{-s}
\leq
C(1+\log k_0)^2k_0^{-(s-(1-\delta))}.
\]
Set \(K:=n_{R_1}\).  For all \(k_0\geq K\), substituting this
deterministic-series bound into \eqref{eq:EC-structured-future-block-sum} and
using
\(s-(1-\delta)=2(\delta-\beta-\omega_{\mathsf M})-1\) give
\[
\pr\biggl(
\lVert\theta_k-\theta^\star\rVert>c k^{-\beta}
\text{ for some }k\geq k_0
\,\biggm|\,\mathscr G_0^+\biggr)
\leq
C U_2(W_0)(1+\log k_0)^2
k_0^{-\left(2(\delta-\beta-\omega_{\mathsf M})-1\right)}.
\]
Taking complements proves \eqref{eq:structured-trajectory-bound}.
\Halmos\endproof

\section{Sharpness with Growing Martingale-Difference Noise}
\label{sec:EC-structured-sharpness-proofs}

Fix the step-size and tube parameters and the exponents
\(\omega_{\mathsf M},\omega_{\mathsf B}\) as in
Theorem~\ref{thm:structured-sharpness}. Adopt the dimensions \(d,d'\),
spaces \(\Theta,\W\), target \(\theta^\star\), projection-ball radius
\(\kappa\), numerical constants for
Assumptions~\ref{assump:learning-rate}--\ref{assump:qsm}, and joint initial
distribution \(\mu_{\mathrm{init}}\) prescribed at the start of
Section~\ref{sec:EC-sharpness-proofs}, including the condition
\(2\zeta\alpha_0>1\) when \(\delta=1\).
Also fix \(b_{\mathsf B}>0\), and set \(C_{\mathsf M}:=1\) and
\(C_{\mathsf B}:=b_{\mathsf B}\).

The class \(\widetilde{\mathfrak M}\) consists of all data-generating
processes following the extended SA recursion~\eqref{eq:extension-recursion}
with these prescribed choices, satisfying
Assumptions~\ref{assump:learning-rate}--\ref{assump:structured-errors}
and the transition condition \eqref{eq:extension-post-update-transition},
with \(\chi_1=M_1=B_0=0\). Thus,
\(\mathscr G_0^+=\sigma(\theta_0,W_0)\) is defined on the common initial
space with distribution \(\mu_{\mathrm{init}}\).
The prescribed parameters, numerical constants, and joint initial
distribution do not depend on \(\epsilon\) or \(k_0\).

\begin{theorem}
\label{thm:ec-structured-sharpness-fixed-class}
For the class \(\widetilde{\mathfrak M}\) defined above and any
\(\epsilon>0\), there are positive constants \(C_\epsilon\) and
\(K_\epsilon\) such that, for all \(k_0\geq K_\epsilon\),
\begin{equation}\label{eq:ec-structured-sharpness-worst-case-lower-bound}
\operatorname*{ess\,sup}_{\mathcal M\in\widetilde{\mathfrak M}}
\pr_{\mathcal M}\Bigl(
\exists k\geq k_0:
\|\theta_k-\theta^\star\|>c k^{-\beta}
\,\Bigm|\,\mathscr G_0^+
\Bigr)
\geq
C_\epsilon
k_0^{-\left(2(\delta-\beta-\omega_{\mathsf M})-1+\epsilon\right)}.
\end{equation}
Furthermore, there is a single process
\(\widetilde{\mathcal M}_\epsilon\in\widetilde{\mathfrak M}\) under which
the conditional exit probability satisfies the same lower bound for all
\(k_0\geq K_\epsilon\).
\end{theorem}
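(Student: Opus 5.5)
We adapt the construction in the proof of Theorems~\ref{thm:sharpness} and~\ref{thm:ec-sharpness-fixed-class} by moving the heavy-tailed variable from the Markov state into the martingale-difference term. Take \(d=d'=1\) first, \(\Theta=[\theta^\star-\kappa,\theta^\star+\kappa]\), and \(F(\theta,w)=-\zeta(\theta-\theta^\star)+w\). Let the Markov states \(W_1,W_2,\ldots\) be iid with a fixed bounded, nondegenerate, centered law \(\mu_0\) supported in \([-1,1]\) (for instance \(\pm1/2\) with equal probability). Set \(P_\theta(w,\cdot)=\mu_0\) for every \(\theta\), so kernel continuity holds trivially. Assumption~\ref{assump:ergodic} holds with \(\W_0=\W\) and \(b\geq2\); the case \(\lambda=0\) is also fine. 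The mean field is \(\bar F(\theta)=-\zeta(\theta-\theta^\star)\), so QSM and the constants \(L_F\geq\zeta\) and \(C_F\geq\max\{1,\zeta\kappa\}\) hold as before.

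For the added errors, choose a deterministic bias \(B_k:=b_{\mathsf B}(k+1)^{-\omega_{\mathsf B}}\) for \(k\geq1\) and \(B_0=0\). Choose \(M_{k+1}:=(k+1)^{\omega_{\mathsf M}}Z_{k+1}\) for \(k\geq1\), and \(M_1=0\). Here the \(Z_{k+1}\) are iid signed Pareto with law \(\mu_\epsilon\) from \eqref{eq:sharpness-pareto-law}, with \(\delta-\beta\) replaced by \(\delta-\beta-\omega_{\mathsf M}\): tail index \(2+\epsilon/(\delta-\beta-\omega_{\mathsf M})\), zero mean, unit variance. Take \(\chi_{k+1}=Z_{k+1}\), independent of everything else. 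Then \(\E[M_{k+1}\mid\mathscr G_k^-]=0\) and \(\E[M_{k+1}^2\mid\mathscr G_k^-]=(k+1)^{2\omega_{\mathsf M}}\leq \SFm_k^2U_2(W_k)\) with \(C_{\mathsf M}=1\). The transition condition \eqref{eq:extension-post-update-transition} holds because the \(W\)'s are drawn independently of \(\chi\). This places the process in \(\widetilde{\mathfrak M}\), and the process does not depend on \(k_0\).

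The core step is the one-step survival bound \eqref{eq:sharpness-one-step-survival-bound} with \(p:=2(\delta-\beta-\omega_{\mathsf M})+\epsilon>1\). Let \(\mathcal S_n\) denote in-tube survival, and write
\[
e_{n+1}=\Pi_{[-\kappa,\kappa]}\bigl[(1-\zeta\alpha_n)e_n+\alpha_n(W_n+B_n)+\alpha_n(n+1)^{\omega_{\mathsf M}}Z_{n+1}\bigr].
\]
On \(\mathcal S_n\) we have \(|e_n|\leq\psi_n\), \(|W_n|\leq1\), and \(|B_n|\leq b_{\mathsf B}\). So \(|Z_{n+1}|>L_n\) forces an exit, where
\[
L_n:=\frac{\psi_{n+1}+|1-\zeta\alpha_n|\psi_n+\alpha_n(1+b_{\mathsf B})}{\alpha_n(n+1)^{\omega_{\mathsf M}}}\leq C_0 n^{\delta-\beta-\omega_{\mathsf M}}.
\]
The extra bounded term is harmless: it is \(O(n^{-\omega_{\mathsf M}})\), much smaller than \(n^{\delta-\beta-\omega_{\mathsf M}}\), and \(\kappa>c\geq\psi_{n+1}\) keeps the projection from undoing the exit. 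Since \(\mathcal S_n\) is \(\mathscr G_{n}^-\)-measurable and \(Z_{n+1}\) is independent of it, the Pareto tail gives
\[
\pr(\mathcal S_{n+1}\mid\mathscr G_n^-)\leq\ind_{\mathcal S_n}\bigl(1-c_\epsilon n^{-p}\bigr).
\]
Iterating with the tower property down to \(\mathscr G_0^+\) and using an integral comparison, exactly as in the earlier proof, yields a survival probability of at most \(\exp(-2C_\epsilon k_0^{1-p})\). Then \(1-e^{-x}\geq x/2\) gives \eqref{eq:ec-structured-sharpness-worst-case-lower-bound}, with constants independent of the initial pair.

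The multidimensional case embeds everything in the first coordinate, with \(M_{k+1}=(k+1)^{\omega_{\mathsf M}}Z_{k+1}e_1\), as before. The essential-supremum statement follows because the single process is in the class.

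The main obstacle is verifying membership in \(\widetilde{\mathfrak M}\) with the prescribed fixed constants. Two points need checking. First, the drift constants \((\lambda,b)\) and the minorization (\(N\), coefficient, with \(\varphi=\mu_0\)) must be met by the iid bounded law; this requires \(\mu_0\)'s second moment to fit \(b\geq2\), which holds since \(U_2\leq2\) on its support. Second, the timing conventions \(\chi_1=M_1=B_0=0\) must be compatible with the filtrations in \eqref{eq:extension-interlacing}.
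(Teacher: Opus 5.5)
Your proposal is correct and follows the paper's proof essentially step for step. The paper also uses iid bounded Markov states with a \(\theta\)-independent kernel, places unit-variance signed-Pareto variables with tail index \(2+\epsilon/(\delta-\beta-\omega_{\mathsf M})\), scaled by \((k+1)^{\omega_{\mathsf M}}\), in \(M_{k+1}\), and takes the deterministic bias \(b_{\mathsf B}(k+1)^{-\omega_{\mathsf B}}\). It then absorbs the bounded state and bias into an exit threshold of order \(n^{\delta-\beta-\omega_{\mathsf M}}\), iterates the one-step survival bound conditional on \(\mathscr G_n^-\), and embeds the construction in the first coordinate. The remaining differences are cosmetic: you use support \(\pm1/2\) where the paper uses \(\pm1\), you take the opposite sign of \(w\) in \(F\), and you index the Pareto variables as \(Z_{k+1}\) rather than \(Z_k\).
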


\proof{Proof of Theorems~\ref{thm:structured-sharpness}
and~\ref{thm:ec-structured-sharpness-fixed-class}.}
We first consider the case \(d=d'=1\), with the scalar data-generating process
\begin{equation}\label{eq:structured-sharpness-scalar-model}
\begin{aligned}
\Theta&=[\theta^\star-\kappa,\theta^\star+\kappa],
&\W&=\R,
&\nu&=\tfrac12\delta_{-1}+\tfrac12\delta_1,\\
P_\theta(w,\cdot)&=\nu,
&F(\theta,w)&=-\zeta(\theta-\theta^\star)-w.
\end{aligned}
\end{equation}
Let \((\theta_0,W_0)\) have the prescribed joint distribution
\(\mu_{\mathrm{init}}\), and let \(W_1,W_2,\ldots\) be iid with distribution
\(\nu\), independently of the initial pair. The chain reaches its stationary
distribution in one step, and
\(
\bar F(\theta)=-\zeta(\theta-\theta^\star).
\)

For each \(\epsilon>0\), let \(\widetilde\mu_\epsilon\) be the distribution of
a signed Pareto random variable \(Z\) satisfying
\begin{align}
\pr(Z>x)
=
\pr(Z<-x)
=
\frac12
\left(\frac{\widetilde x_\epsilon}{x}\right)^{
2+\epsilon/(\delta-\beta-\omega_{\mathsf M})},
\qquad x\geq \widetilde x_\epsilon
:=
\sqrt{\frac{\epsilon}{
2(\delta-\beta-\omega_{\mathsf M})+\epsilon}}.
\label{eq:structured-sharpness-pareto-law}
\end{align}
Then, \(\E[Z]=0\) and \(\E[Z^2]=1\).  Let
\(Z_1^{(\epsilon)},Z_2^{(\epsilon)},\ldots\) be iid with distribution
\(\widetilde\mu_\epsilon\), independently of the initial pair and the future
state sequence. With the prescribed initialization and the information
fields in \eqref{eq:extension-interlacing}, set
\begin{equation}\label{eq:structured-sharpness-scaled-pareto-innovation}
\chi_{k+1}:=Z_k^{(\epsilon)},\quad 
M_{k+1}:=-(k+1)^{\omega_{\mathsf M}}Z_k^{(\epsilon)},
\quad \mbox{and} \quad 
B_k:=b_{\mathsf B}(k+1)^{-\omega_{\mathsf B}},
\end{equation}
for all \(k\geq1\). 
The independence and unit-variance normalization of
\(Z_k^{(\epsilon)}\) give
\[
\E[M_{k+1}\mid\mathscr G_k^-]=0\quad\mbox{and} \quad 
\E[M_{k+1}^2\mid\mathscr G_k^-]
=(k+1)^{2\omega_{\mathsf M}}
\leq (k+1)^{2\omega_{\mathsf M}}U_2(W_k),
\]
for all \(k\geq1\). 
Assumption~\ref{assump:structured-errors}
therefore holds with
\(
\SFm_k=(k+1)^{\omega_{\mathsf M}}\),
\(\SFb_k=b_{\mathsf B}(k+1)^{-\omega_{\mathsf B}}\), \(
C_{\mathsf M}=1\),
and 
\(C_{\mathsf B}=b_{\mathsf B}\).
Denote the resulting process by \(\widetilde{\mathcal M}_\epsilon\).

Fix \(\epsilon>0\) and define
\(p:=2(\delta-\beta-\omega_{\mathsf M})+\epsilon>1\).
It suffices to identify \(C_\epsilon>0\) and an iteration threshold
\(K_\epsilon\) such that, for all \(k_0\geq K_\epsilon\),
\begin{equation}\label{eq:structured-sharpness-survival-upper-bound}
\pr\left(
|\theta_k-\theta^\star|\leq c k^{-\beta}
\text{ for all }k\geq k_0
\,\middle|\,
\mathscr G_0^+
\right)
\leq
\exp\left(-2C_\epsilon k_0^{1-p}\right),
\end{equation}
and \(2C_\epsilon k_0^{1-p}\leq1\).  Indeed, using
\(1-e^{-x}\geq x/2\) for \(x\in[0,1]\) yields exactly
\eqref{eq:structured-sharpness-martingale-lower-bound}, because
\(p-1=2(\delta-\beta-\omega_{\mathsf M})-1+\epsilon\).

For an integer \(k_0\geq1\) and \(n\geq k_0\), define
\begin{equation}\label{eq:structured-sharpness-survival-event}
\mathcal S_n:=
\{|\theta_i-\theta^\star|\leq c i^{-\beta}
\text{ for all }i=k_0,\ldots,n\}.
\end{equation}
The bound \eqref{eq:structured-sharpness-survival-upper-bound} will follow
from the one-step estimate
\begin{equation}\label{eq:structured-sharpness-one-step-survival-bound}
\pr(\mathcal S_{n+1}\mid\mathscr G_0^+)
\leq
\pr(\mathcal S_n\mid\mathscr G_0^+)
\exp\left[-2C_\epsilon(p-1)n^{-p}\right],
\end{equation}
for \(n\geq k_0\geq K_\epsilon\). Indeed, the iteration and integral
comparison following \eqref{eq:sharpness-one-step-survival-bound} in
Section~\ref{sec:EC-sharpness-proofs} apply with \(\mathscr G_0^+\) in
place of \(\F_0\) and the present value of \(p\).

We next identify \(C_\epsilon\) and \(K_\epsilon\), prove
\eqref{eq:structured-sharpness-one-step-survival-bound}, and verify
\(2C_\epsilon k_0^{1-p}\leq1\). For \(n\geq1\),
define
\begin{equation}\label{eq:structured-sharpness-proof-quantities}
\begin{gathered}
\psi_n:=c n^{-\beta},\quad
L_n:=\frac{\psi_{n+1}+|1-\zeta\alpha_n|\psi_n}{\alpha_n}, \\  
\quad C_0:=\frac{c(2+\zeta\alpha_0)}{\alpha_0},
\quad
C_1:=C_0+1+b_{\mathsf B},
\quad\mbox{and}\quad
C_\epsilon
:=\frac{(\widetilde x_\epsilon/C_1)^{
2+\epsilon/(\delta-\beta-\omega_{\mathsf M})}}{2(p-1)}.
\end{gathered}
\end{equation}
The calculation in \eqref{eq:sharpness-crossing-threshold-bound} applies to
the definitions of \(\psi_n\), \(L_n\), and \(C_0\) in
\eqref{eq:structured-sharpness-proof-quantities} and gives
\(L_n\leq C_0n^{\delta-\beta}\).  Consequently, for all \(n\geq1\),
\begin{equation}\label{eq:structured-sharpness-effective-threshold}
\frac{L_n+1+b_{\mathsf B}(n+1)^{-\omega_{\mathsf B}}}
{(n+1)^{\omega_{\mathsf M}}}
\leq
C_1 n^{\delta-\beta-\omega_{\mathsf M}}.
\end{equation}
Choose \(K_\epsilon\geq1\) such that, for all \(n\geq K_\epsilon\),
\begin{equation}\label{eq:structured-sharpness-eventual-conditions}
C_1n^{\delta-\beta-\omega_{\mathsf M}}\geq \widetilde x_\epsilon
\quad\mbox{and}\quad
2C_\epsilon n^{1-p}\leq1.
\end{equation}
Such a threshold can be chosen because
\(\delta-\beta-\omega_{\mathsf M}>0\) and \(p>1\).

Fix \(k_0\geq K_\epsilon\) and \(n\geq k_0\).  Because
\(\Theta=\theta^\star+[-\kappa,\kappa]\), the extended SA recursion can be written as
\begin{equation}\label{eq:structured-sharpness-SA-recursion}
\theta_{n+1}-\theta^\star=
\Pi_{[-\kappa,\kappa]}\!\left[
(1-\zeta\alpha_n)(\theta_n-\theta^\star)
-\alpha_nW_n
-\alpha_n(n+1)^{\omega_{\mathsf M}}Z_n^{(\epsilon)}
+\alpha_n b_{\mathsf B}(n+1)^{-\omega_{\mathsf B}}
\right].
\end{equation}
On the event \(\mathcal S_n\) defined by
\eqref{eq:structured-sharpness-survival-event}, we have
\(|\theta_n-\theta^\star|\leq\psi_n\) and \(|W_n|=1\), so
\(
(n+1)^{\omega_{\mathsf M}}|Z_n^{(\epsilon)}|
>L_n+1+b_{\mathsf B}(n+1)^{-\omega_{\mathsf B}}
\)
implies
\begin{equation}\label{eq:structured-sharpness-unprojected-bound}
\begin{aligned}
&\bigl|
(1-\zeta\alpha_n)(\theta_n-\theta^\star)
-\alpha_nW_n
-\alpha_n(n+1)^{\omega_{\mathsf M}}Z_n^{(\epsilon)}
+\alpha_n b_{\mathsf B}(n+1)^{-\omega_{\mathsf B}}
\bigr|\\
\geq{}&
\alpha_n(n+1)^{\omega_{\mathsf M}}|Z_n^{(\epsilon)}|
-|1-\zeta\alpha_n|\psi_n
-\alpha_n
-\alpha_n b_{\mathsf B}(n+1)^{-\omega_{\mathsf B}}\\
>{}&
\alpha_nL_n-|1-\zeta\alpha_n|\psi_n
=\psi_{n+1}.
\end{aligned}
\end{equation}
Thus, it follows from \eqref{eq:structured-sharpness-SA-recursion} that
\[
\begin{aligned}
|\theta_{n+1}-\theta^\star|
&=\min\!\biggl\{\kappa,
\bigl|
(1-\zeta\alpha_n)(\theta_n-\theta^\star)
-\alpha_nW_n
-\alpha_n(n+1)^{\omega_{\mathsf M}}Z_n^{(\epsilon)}
+\alpha_n b_{\mathsf B}(n+1)^{-\omega_{\mathsf B}}
\bigr|
\biggr\}
>\psi_{n+1},
\end{aligned}
\]
where the inequality follows from \(\kappa>c\geq\psi_{n+1}\) and
\eqref{eq:structured-sharpness-unprojected-bound}.  Therefore,
\eqref{eq:structured-sharpness-effective-threshold} gives
\begin{align*}
\mathcal S_n\cap
\{|Z_n^{(\epsilon)}|>C_1n^{\delta-\beta-\omega_{\mathsf M}}\}
& \subseteq
\mathcal S_n \cap\{(n+1)^{\omega_{\mathsf M}}|Z_n^{(\epsilon)}|
>L_n+1+b_{\mathsf B}(n+1)^{-\omega_{\mathsf B}} \} \\ 
& \subseteq
\{|\theta_{n+1}-\theta^\star|>\psi_{n+1}\}
\subseteq\mathcal S_{n+1}^{\complement},
\end{align*}
so
\(
\ind_{\mathcal S_{n+1}}
\leq
\ind_{\mathcal S_n}
\ind_{\{|Z_n^{(\epsilon)}|\leq
C_1n^{\delta-\beta-\omega_{\mathsf M}}\}}
\).
Here \(\mathcal S_n\in\mathscr G_n^-\) and
\(Z_n^{(\epsilon)}\) is independent of \(\mathscr G_n^-\).  Hence,
\begin{align}
\pr(\mathcal S_{n+1}\mid\mathscr G_n^-)
&\leq
\ind_{\mathcal S_n}
\pr\left(
|Z_n^{(\epsilon)}|\leq
C_1n^{\delta-\beta-\omega_{\mathsf M}}
\right).
\label{eq:structured-sharpness-one-step-survival-bound-2}
\end{align}
The first condition in \eqref{eq:structured-sharpness-eventual-conditions}
and \eqref{eq:structured-sharpness-pareto-law} imply that
\begin{equation}\label{eq:structured-sharpness-tail-bound}
\pr\left(
|Z_n^{(\epsilon)}|>C_1n^{\delta-\beta-\omega_{\mathsf M}}
\right)
=\left(\frac{\widetilde x_\epsilon}{C_1}\right)^{
2+\epsilon/(\delta-\beta-\omega_{\mathsf M})}
n^{-\left(2(\delta-\beta-\omega_{\mathsf M})+\epsilon\right)}
=2C_\epsilon(p-1)n^{-p}.
\end{equation}
Here, the last identity uses the definition of \(C_\epsilon\) in
\eqref{eq:structured-sharpness-proof-quantities}.
The tower property yields
\[
\begin{aligned}
\pr(\mathcal S_{n+1}\mid\mathscr G_0^+)
=\E\left[
\pr(\mathcal S_{n+1}\mid\mathscr G_n^-)
\,\middle|\,\mathscr G_0^+
\right]&\leq
\pr(\mathcal S_n\mid\mathscr G_0^+)
\left[1-2C_\epsilon(p-1)n^{-p}\right]\\
&\leq
\pr(\mathcal S_n\mid\mathscr G_0^+)
\exp\left[-2C_\epsilon(p-1)n^{-p}\right],
\end{aligned}
\]
which proves \eqref{eq:structured-sharpness-one-step-survival-bound}.  Here,
the first inequality follows from
\eqref{eq:structured-sharpness-one-step-survival-bound-2} and
\eqref{eq:structured-sharpness-tail-bound}, and the second holds because
\(1-x\leq e^{-x}\).
Moreover, the second
condition in \eqref{eq:structured-sharpness-eventual-conditions} gives
\(2C_\epsilon k_0^{1-p}\leq1\). 
The preceding reduction proves the lower bound for \(d=d'=1\).
The calculation uses only \(\pr(\mathcal S_{k_0}\mid\mathscr G_0^+)\leq1\),
so \(C_\epsilon\) and \(K_\epsilon\) do not depend on the realized initial pair.

For general \(d,d'\geq1\), let \(e_1\) be the first standard basis vector
in \(\R^d\), and let \(\nu\) assign probability \(1/2\) to each of
\(\pm(1,0,\ldots,0)^\intercal\in\R^{d'}\). Write \(w^{(1)}\) for the
first coordinate of \(w\), and set
\(F(\theta,w)=-\zeta(\theta-\theta^\star)-w^{(1)}e_1\) and
\(P_\theta(w,\cdot)=\nu(\cdot)\).
Use the prescribed joint initial distribution and the same independence
conditions for the future states and scalar Pareto variables. Retain
\(\chi_{k+1}=Z_k^{(\epsilon)}\), and set
\(M_{k+1}=-(k+1)^{\omega_{\mathsf M}}Z_k^{(\epsilon)}e_1\) and
\(B_k=b_{\mathsf B}(k+1)^{-\omega_{\mathsf B}}e_1\) for \(k\geq1\).
Define \(\mathcal S_n\) by \eqref{eq:structured-sharpness-survival-event}
with the Euclidean norm in place of the absolute value, and retain
\(\psi_n,L_n,C_1,C_\epsilon,K_\epsilon\) from the scalar argument.
On \(\mathcal S_n\), the same threshold condition gives
\[
\begin{aligned}
&\bigl\|(1-\zeta\alpha_n)(\theta_n-\theta^\star)
-\alpha_nW_n^{(1)}e_1
-\alpha_n(n+1)^{\omega_{\mathsf M}}Z_n^{(\epsilon)}e_1
+\alpha_n b_{\mathsf B}(n+1)^{-\omega_{\mathsf B}}e_1\bigr\|\\
&\quad\geq
\alpha_n(n+1)^{\omega_{\mathsf M}}|Z_n^{(\epsilon)}|
-|1-\zeta\alpha_n|\psi_n-\alpha_n
-\alpha_n b_{\mathsf B}(n+1)^{-\omega_{\mathsf B}}
>\psi_{n+1},
\end{aligned}
\]
where \(|W_n^{(1)}|=1\) for \(n\geq1\). Projection onto \(\Theta\)
replaces this norm by its minimum with \(\kappa\), which still exceeds
\(\psi_{n+1}\) because \(\kappa>c\geq\psi_{n+1}\). Thus,
\[
\mathcal S_n\cap
\{|Z_n^{(\epsilon)}|>C_1n^{\delta-\beta-\omega_{\mathsf M}}\}
\subseteq\mathcal S_{n+1}^{\complement}.
\]
The conditional estimate
\eqref{eq:structured-sharpness-one-step-survival-bound-2} and the subsequent
scalar argument therefore give the lower bound with the same
\(C_\epsilon\) and \(K_\epsilon\).

For each \(\epsilon>0\), the constructed process belongs to
\(\widetilde{\mathfrak M}\), and these processes share the prescribed
numerical assumption constants. Their conditional exit probabilities
satisfy \eqref{eq:structured-sharpness-martingale-lower-bound}, proving
Theorem~\ref{thm:structured-sharpness}. Each probability is bounded above
by the essential supremum in
\eqref{eq:ec-structured-sharpness-worst-case-lower-bound}, and the same
process applies for all \(k_0\geq K_\epsilon\). Both assertions of
Theorem~\ref{thm:ec-structured-sharpness-fixed-class} follow.
\Halmos\endproof

\section{Proofs for the Inventory Application}
\label{sec:EC-inventory-proofs}

We verify Assumptions~\ref{assump:learning-rate} and \ref{assump:structured-errors}
for Section~\ref{sec:inventory-application}. 
The step sizes and projection interval give
Assumptions~\ref{assump:learning-rate}--\ref{assump:compact}. 
Assumption~\ref{assump:ergodic}\textup{(ii)} on \((\theta_0,W_0)\) is imposed in 
Theorem~\ref{thm:inventory-accuracy}. 
We focus on the rest below.

\subsection{Reference Chains and Kernel Continuity}
\label{subsec:ec-inventory-foundations}

\begin{lemma}
\label{lemma:inventory-kernel}
The inventory kernels satisfy Assumptions~\ref{assump:ergodic}\textup{(i)}
and~\ref{assump:kernel-Lipschitz} on \(\Theta\). For all
\(\theta\geq0\), they have a unique stationary distribution
\(\nu_\theta\).
\end{lemma}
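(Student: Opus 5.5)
We need to verify, for the augmented chain \(W=(D,L)\) on \([0,\infty)\times[0,1]\) with kernel \(P_\theta\) given by \eqref{eq:inventory-demand}--\eqref{eq:inventory-sensitivity}, four things: irreducibility and aperiodicity, a minorization on a small set uniform in \(\theta\in\Theta\), the \(U_2\)-drift \eqref{eq:drift-condition}, and the \(U_1\)-weighted Lipschitz continuity of Assumption~\ref{assump:kernel-Lipschitz}. Uniqueness of \(\nu_\theta\) for every \(\theta\geq0\) then follows from \citet[Theorem 16.0.1]{MeynTweedie09}, since the drift and minorization arguments below do not use \(\theta\in\Theta\) except for uniformity of constants.

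\textbf{Drift.} Because \(L\in[0,1]\), we have \(\lVert w\rVert^2\leq D^2+1\), so it suffices to control \(\E[D'^2\mid D]\). From \eqref{eq:inventory-demand},
\[
D'\leq a\min\{D+u,\theta\}^+ +(1-a)m+|\varepsilon|\leq a\theta+(1-a)m+|\varepsilon|.
\]
Hence \(D'\) is bounded in \(L^2\) uniformly in \(D\) and \(\theta\in\Theta\). This gives \(P_\theta U_2(w)\leq C\) for some constant \(C\). Fix \(\lambda\in(0,1)\) and take \(\W_0:=\{\lVert w\rVert\leq R\}\) with \(R\) large enough that \(C\leq\lambda(1+R^2)\); then \eqref{eq:drift-condition} holds with \(b:=C\). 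For \(\theta\geq0\) outside \(\Theta\), the same argument works with \(\theta\)-dependent constants, which is enough for uniqueness.

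\textbf{Minorization, irreducibility, and aperiodicity.} I would use \(N=2\). The obstacle is that \(L\) is a deterministic function of the indicators, so it has no density. I would therefore construct a reference measure \(\varphi\) supported on the slice \(\{L=a\}\). For \(w\in\W_0\), on the event \(\{D+u_1>\theta\}\cap\{D_1>0\}\) we get \(L_1=a\) and \(D_1=[a\theta+(1-a)m+\varepsilon_1]^+\). That event has probability bounded below uniformly, since \(D\leq R\) and \(\theta\leq\theta_{\max}\) imply \(\pr(u_1>\theta_{\max})>0\). Repeating the construction in the second step yields \(W_2=(D_2,a)\), where \(D_2\) has a Gaussian density in \(\varepsilon_2\), shifted by \(a\theta+(1-a)m\). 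For \(\theta\in\Theta\) this shift lies in a compact set, so \(D_2\) has a density bounded below by \(\epsilon'>0\) on \([1,2]\). This gives \(\varphi:=\mathrm{Unif}[1,2]\otimes\delta_a\) and \(P_\theta^2(w,\cdot)\geq\epsilon\varphi\) on \(\W_0\). Even a single step already minorizes, which gives aperiodicity. Irreducibility (\(\varphi\)-irreducibility) follows because every state reaches \(\W_0\) with positive probability, the drift ensuring return.

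\textbf{Kernel continuity.} This is the main technical step. Write \((P_\theta f)(w)=\E f(D'_\theta,L'_\theta)\) using common inputs \((u,\varepsilon)\). Changing \(\theta\) to \(\theta'\) affects the outcome only on the event where either the cap binds differently or the indicator \(\ind\{D+u>\theta\}\) flips. I would condition on \(u\) and integrate out \(\varepsilon\) first. Given \(u\) and the cap indicator, \(D'\) equals \([x+\varepsilon]^+\), where the location \(x\) depends on \(\theta\) in a Lipschitz way with constant at most \(a\). The total-variation distance between two shifted Gaussian laws, and their \(U_1\)-weighted difference, are \(O(|x-x'|)\) uniformly, because Gaussian tails absorb the weight \(1+\lVert\cdot\rVert\). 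The indicator flip occurs on \(\{u\in(\theta-D,\theta'-D]\}\), which has probability at most \(\phi_\sigma(0)|\theta-\theta'|\). On that event, \(|f|\leq U_1\) and the uniform \(L^1\) bound on \(D'\) give a contribution \(O(|\theta-\theta'|)\). Adding the two pieces gives \(|(P_\theta f-P_{\theta'}f)(w)|\leq L_P|\theta-\theta'|\). This is even a bound uniform in \(w\), hence at most \(L_PU_1(w)|\theta-\theta'|\). The delicate point is the \(L\)-coordinate: it jumps between \(a\) and \(aL\) under an indicator flip rather than varying smoothly. That is why I treat it through the flip probability rather than pathwise.
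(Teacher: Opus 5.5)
Your proof is correct, but it takes a different route from the paper in both the minorization and the continuity step.

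\emph{Minorization.} The paper sidesteps the difficulty you flag, that \(L\) has no density, by using the atom \((0,0)\). Whenever \(\varepsilon_1\leq-(a\theta_{\max}+(1-a)m)\), the next demand is zero, and this forces \(L_1=0\). Hence \(P_\theta(w,\{(0,0)\})\geq\Phi(-(a\theta_{\max}+(1-a)m)/\sigma)\) for every \(w\). Combined with the uniform bound on \(P_\theta U_2\), this permits \(N=1\), \(\W_0=\W\), \(\lambda=0\), and \(\varphi=\delta_{(0,0)}\). Irreducibility and aperiodicity then follow at once from an accessible atom with positive one-step return probability. Your slice measure \(\mathrm{Unif}[1,2]\otimes\delta_a\) also works. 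However, the bounded \(\W_0\), the choice \(\lambda>0\), and the second step are unnecessary, because the event \(\{u>\theta_{\max}\}\) already forces the cap from every state.

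\emph{Kernel continuity.} The paper substitutes \(s=u-\theta\) and \(t=\varepsilon+a\theta\). This makes the next state a \(\theta\)-free function of \((s,t)\) and moves all parameter dependence into the input density \(\phi_\sigma(s+\theta)\phi_\sigma(t-a\theta)\). A single \(U_1\)-weighted bound on the \(\theta\)-derivative of that density then covers the cap shift and the indicator flip, including the jump of \(L'\), simultaneously. The same device is reused in Lemma~\ref{lemma:inventory-stationary-gradient} to obtain twice continuous differentiability of \(P_\theta\) in total variation. Your common-input coupling uses a three-way split: with no cap the paths are identical; with the cap binding for both parameters you compare shifted censored Gaussians in weighted total variation; a flip has probability at most \(\phi_\sigma(0)|\theta-\theta'|\). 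This yields the same \(w\)-uniform Lipschitz constant and makes the source of non-smoothness transparent, but it gives only Lipschitz continuity, not the differentiability the paper later needs.

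One step should be stated precisely. On the flip event you need the pathwise bound \(U_1(W')\leq 2+a\theta_{\max}+(1-a)m+|\varepsilon|\), with \(\varepsilon\) independent of \(u\). Equivalently, you need a bound on \(\E[U_1(W')\mid u]\) that is uniform in \(u\). An unconditional moment bound on \(D'\) combined with Cauchy--Schwarz would give only \(O(|\theta-\theta'|^{1/2})\).
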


\proof{Proof of Lemma~\ref{lemma:inventory-kernel}.}
Use \(\W=[0,\infty)\times[0,1]\) with the Euclidean norm and
\(U_p(w)=1+\lVert w\rVert^p\).
For \(\theta\in\Theta\), let \(w=(d,l)\) be the state before the
transition. Then
\begin{gather*}
0\leq D_1\leq[a\theta_{\max}+(1-a)m+\varepsilon_1]^+,
\qquad 0\leq L_1\leq a,\\
P_\theta(w,\{(0,0)\})
\geq\Phi(-(a\theta_{\max}+(1-a)m)/\sigma)>0.
\end{gather*}
Consequently,
\[
\sup_{\theta\in\Theta,\,w\in\W}(P_\theta U_2)(w)
\leq C_0:=1+a^2+
\E\bigl[([a\theta_{\max}+(1-a)m+\varepsilon_1]^+)^2\bigr]<\infty.
\]
In Assumption~\ref{assump:ergodic}\textup{(i)}, take
\(\W_0=\W\), \(N=1\),
\(\epsilon=\Phi(-(a\theta_{\max}+(1-a)m)/\sigma)\),
\(\varphi=\delta_{(0,0)}\), \(\lambda=0\), and drift constant
\(C_0\). The state \((0,0)\) can be reached in one step from every
state and has positive one-step return probability, giving
\(\delta_{(0,0)}\)-irreducibility and aperiodicity.

To check weighted kernel continuity, change the Gaussian inputs to
\(s=u-\theta\) and \(t=\varepsilon+a\theta\). The next state has the
parameter-independent coordinates
\[
D'=[a\min\{d+s,0\}+(1-a)m+t]^+
\quad\mbox{and}\quad
L'=a\ind\{D'>0\}\bigl[\ind\{d+s>0\}+l\ind\{d+s\leq0\}\bigr].
\]
Their input density is \(\phi_\sigma(s+\theta)\phi_\sigma(t-a\theta)\),
whose derivative is
\[
\partial_\theta[\phi_\sigma(s+\theta)\phi_\sigma(t-a\theta)]
=\frac{-(s+\theta)+a(t-a\theta)}{\sigma^2}
  \phi_\sigma(s+\theta)\phi_\sigma(t-a\theta).
\]
Since \(U_1((D',L'))\leq1+a+(1-a)m+|t|\), independent
\(u,\varepsilon\sim N(0,\sigma^2)\) give the state-independent bound
\begin{align}\label{eq:ec-inventory-kernel-constant}
\sup_{\theta\in\Theta}\int_{\mathbb R^2}
&(1+a+(1-a)m+|t|)
\bigl|\partial_\theta[\phi_\sigma(s+\theta)
                     \phi_\sigma(t-a\theta)]\bigr|\,ds\,dt\notag\\
&\leq L_P:=\frac{1}{\sigma^2}
\E[(1+a+(1-a)m+a\theta_{\max}+|\varepsilon|)
          (|u|+a|\varepsilon|)]<\infty.
\end{align}
Integration of the density derivative along the interval between
\(\theta\) and \(\theta'\), followed by Tonelli's theorem, therefore
shows, for all \(|v|\leq U_1\),
\[
|(P_\theta v)(w)-(P_{\theta'}v)(w)|
\leq L_P|\theta-\theta'|
\leq L_PU_1(w)|\theta-\theta'|.
\]
Thus Assumption~\ref{assump:kernel-Lipschitz} holds.

The same transition-probability and moment bounds, with \(\theta_{\max}\)
replaced by any \(v>0\), give a unique stationary distribution
\(\nu_\theta\) for every \(\theta\in[0,v]\).
Lemma~\ref{lemma:exp-ergodic}\ref{part:exp-ergodic-invariant-moment}
gives \(\nu_\theta(U_2)<\infty\).
Since \(v\) is arbitrary, both conclusions hold for all \(\theta\geq0\).
\Halmos\endproof

\subsection{Mean Field}

\begin{lemma}
\label{lemma:inventory-field}
The limit in \eqref{eq:inventory-ideal-direction} exists and defines
\(F\) satisfying Assumption~\ref{assump:growth} on \(\Theta\).
\end{lemma}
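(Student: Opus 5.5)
The plan is to compute the limit in \eqref{eq:inventory-ideal-direction} explicitly by conditioning on the operating input \(u\). I will then read off both parts of Assumption~\ref{assump:growth} from the closed form. I expect the constants to come out uniform in \(w\), which is stronger than the weighted bounds required.

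\emph{Structure of one transition.} Fix \(\theta\in\Theta\) and \(w=(d,l)\in[0,\infty)\times[0,1]\), and write \(W'=(D',L')\sim P_\theta(w,\cdot)\) with independent inputs \(u,\varepsilon\sim N(0,\sigma^2)\). Set \(x_\theta(u):=a\min\{d+u,\theta\}+(1-a)m\), so that \(D'=[x_\theta(u)+\varepsilon]^+\). On \(\{D'>0\}\),
\[
L'=\ell_\theta(u):=a\bigl[\ind\{d+u>\theta\}+l\ind\{d+u\le\theta\}\bigr],
\]
which depends on \(u\) only. Conditional on \(u\), the variable \(D'\) restricted to \((0,\infty)\) has the Gaussian density \(\phi_\sigma(\cdot-x_\theta(u))\), which is bounded by \(\phi_\sigma(0)\).

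\emph{Existence of the limit.} Split the difference quotient \([\mathcal C(\theta+\eta,D')-\mathcal C(\theta,D')]/\eta\) into three parts.
\begin{itemize}
\item The holding and shortage parts are bounded by \(h+b\). They converge pointwise to \(h\ind\{D'<\theta\}-b\ind\{D'>\theta\}\) off the event \(\{D'=\theta\}\), which is null for \(\theta>0\). Since \(0\le 1-L'\le 1\), dominated convergence applies.
\item The fixed-cost part equals \(-q\ind\{\theta<D'\le\theta+\eta\}/\eta\). Conditioning on \(u\) gives
\[
\E\bigl[(1-L')\ind\{\theta<D'\le\theta+\eta\}\mid u\bigr]/\eta=(1-\ell_\theta(u))\,\eta^{-1}\int_\theta^{\theta+\eta}\phi_\sigma(y-x_\theta(u))\,dy .
\]
This is bounded by \(\phi_\sigma(0)\) and converges to \((1-\ell_\theta(u))\phi_\sigma(\theta-x_\theta(u))\). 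A second dominated convergence, over \(u\), passes the limit through the outer expectation.
\end{itemize}
Combining the three parts yields
\[
F(\theta,w)=-\E\bigl[(1-\ell_\theta(u))\bigl(h\,\pr(D'<\theta\mid u)-b\,\pr(D'>\theta\mid u)\bigr)\bigr]+q\,\E\bigl[(1-\ell_\theta(u))\phi_\sigma(\theta-x_\theta(u))\bigr].
\]
On \(\{D'=0\}\) we have \(L'=0\) and \(D'<\theta\), so that event simply adds the holding term. Hence \(\sup_\theta\|F(\theta,w)\|\le h+b+q\phi_\sigma(0)\), which gives \eqref{eq:LinearGrowth-F}.

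\emph{Lipschitz step (main obstacle).} The integrand is not smooth in \(\theta\). The indicator \(\ind\{d+u>\theta\}\) jumps at \(u=\theta-d\), and the kink of \(\min\{d+u,\theta\}\) sits at the same point. I will write each term as an integral over \(u\) against \(\phi_\sigma(u)\), split at \(u=\theta-d\), and compare \(\theta\) with \(\theta'\) piece by piece.
\begin{itemize}
\item On each side of the split, the functions \(\theta\mapsto\pr(D'<\theta\mid u)=\Phi((\theta-x_\theta(u))/\sigma)\) and \(\theta\mapsto\phi_\sigma(\theta-x_\theta(u))\) are Lipschitz with constant at most \((1+a)\sup|\phi_\sigma'|\) or \((1+a)\phi_\sigma(0)\), because \(|\partial_\theta x_\theta(u)|\le a\).
\item Moving the split point from \(\theta-d\) to \(\theta'-d\) changes the integral only over a \(u\)-interval of length \(|\theta-\theta'|\). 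There the integrand is bounded by \(h+b+q\phi_\sigma(0)\) and the density of \(u\) is at most \(\phi_\sigma(0)\).
\end{itemize}
Summing these contributions gives \(|F(\theta,w)-F(\theta',w)|\le L|\theta-\theta'|\) with \(L\) independent of \(w\). This implies \eqref{eq:Lipschitz-F}, since \(1+\lVert w\rVert\ge 1\).

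The step that needs the most care is the fixed-cost term. There the jump of \(\ell_\theta\) at \(u=\theta-d\) must be handled through the boundedness of the density of \(u\), not by differentiation. I would first check that the two one-sided pieces of \(\phi_\sigma(\theta-x_\theta(u))\) agree at the split, which holds because \(x_\theta\) is continuous in \(u\). Only \((1-\ell_\theta(u))\) jumps there, and its jump is bounded by \(a\).
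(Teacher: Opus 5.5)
Your proof is correct, and it reaches the lemma by a somewhat different route from the paper.

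The paper integrates out \(u\) first. It introduces the weighted positive-demand density \(p_\theta(y\mid w)\), bounds it and its \(y\)-derivative uniformly, computes the exact estimator mean \(\bar g_\eta(\theta,w)\), and reads off \(F(\theta,w)=-(P_\theta g_0(\theta,\cdot))(w)+q\,p_\theta(\theta\mid w)\). For the Lipschitz bound it differentiates \(\theta\mapsto p_\theta(\theta\mid w)\); the moving limit \(u=\theta-d\) produces the Leibniz boundary term \(a(1-l)\phi_\sigma(\theta-d)\phi_\sigma((1-a)(\theta-m))\). It then splits the change in \(P_\theta g_0(\theta,\cdot)\) into a threshold part, bounded by the density, and a kernel part, bounded through the constant \(L_P\) verified in Lemma~\ref{lemma:inventory-kernel}.

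You instead keep the conditional-on-\(u\) integrand and use branchwise Lipschitz bounds. You charge the discontinuity at \(u=\theta-d\) to the \(u\)-mass, at most \(\phi_\sigma(0)|\theta-\theta'|\), of the strip between the two split points. This is the difference-quotient counterpart of the paper's boundary term. On that strip your constant \(h+b+q\phi_\sigma(0)\) does bound the difference of integrands, since each value lies in \([-h,\,b+q\phi_\sigma(0)]\).

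Your version is self-contained, treats the holding/shortage and fixed-cost parts in one pass, and never needs \(L_P\). The paper's version buys reuse. The exact formula for \(\bar g_\eta\) and the bound on \(\partial_y p_\theta\) give the \(O(\eta)\) bias bound in Lemma~\ref{lemma:inventory-estimator}, and \(p_\theta(\theta\mid\cdot)\) reappears in \(s'(\theta)=-\nu_\theta(p_\theta(\theta\mid\cdot))\). Your dominated-convergence existence step gives no rate, so that bias bound would need separate work.

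One small slip: your displayed formula for \(F\) attaches \(1-\ell_\theta(u)\) to the whole probability \(\pr(D'<\theta\mid u)\). But \(L'=0\) on the atom \(\{D'=0\}\), so the holding term is \(h\bigl[(1-\ell_\theta(u))\Phi((\theta-x_\theta(u))/\sigma)+\ell_\theta(u)\Phi(-x_\theta(u)/\sigma)\bigr]\), as your next sentence indicates. The extra piece is bounded and branchwise Lipschitz, so the rest of your argument is unchanged.
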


\proof{Proof of Lemma~\ref{lemma:inventory-field}.}
Define the continuous-cost direction
\[
g_0(\theta,(d,l))
:=\bigl[h\ind\{d<\theta\}-b\ind\{d\geq\theta\}\bigr](1-l).
\]
For \(\theta\geq0\), \(w\in\W\), and
\(W'=(D',L')\sim P_\theta(w,\cdot)\), define the weighted
positive-demand density by
\(\E[(1-L')\ind\{D'\in dy\}]=p_\theta(y\mid w)\,dy\),
\(y>0\). Conditional on \(u\), when the next demand from
\(w=(d,l)\) is positive, its sensitivity is \(al\) if \(u\leq\theta-d\)
and \(a\) otherwise. Its conditional demand density is Gaussian, so
\begin{align}\label{eq:ec-inventory-weighted-density}
p_\theta(y\mid(d,l))
={}&(1-al)\int_{-\infty}^{\theta-d}
\phi_\sigma(u)\phi_\sigma(y-a(d+u)-(1-a)m)\,du\notag\\
&+(1-a)[1-\Phi((\theta-d)/\sigma)]
       \phi_\sigma(y-a\theta-(1-a)m).
\end{align}
The weighted probabilities sum to at most one. Since
\(\sup_x\phi_\sigma(x)=\phi_\sigma(0)\) and
\(\sup_x|\phi_\sigma'(x)|=\phi_\sigma(\sigma)/\sigma\),
bounding the density and its derivative under the integral gives,
uniformly for \(\theta\in\Theta\),
\(w\in\W\), and \(y>0\),
\[
p_\theta(y\mid w)\leq\phi_\sigma(0)\quad\mbox{and}\quad
|\partial_y p_\theta(y\mid w)|\leq\phi_\sigma(\sigma)/\sigma.
\]

For \(\theta\in\Theta\), \(w\in\W\), and \(\eta>0\), define
\(\bar g_\eta(\theta,w)\) as the mean of the estimator in
\eqref{eq:inventory-fd-update} under \(W'\sim P_\theta(w,\cdot)\),
with \(\theta_k,W_k,\eta_k\) replaced by \(\theta,w,\eta\).
Except when \(D'=\theta\), which has conditional probability zero,
the continuous forward quotient differs from its limiting slope only on
\((\theta,\theta+\eta]\), where the correction is
\((h+b)(1-(D'-\theta)/\eta)\). The fixed-cost quotient is
\(-q\ind\{\theta<D'\leq\theta+\eta\}/\eta\).
Multiplying by \(1-L'\) and integrating gives
\begin{align}
\bar g_\eta(\theta,w)
=(P_\theta g_0(\theta,\cdot))(w)
+\frac{h+b}{\eta}\int_0^\eta(\eta-s)p_\theta(\theta+s\mid w)\,ds
-\frac{q}{\eta}\int_0^\eta p_\theta(\theta+s\mid w)\,ds.
\label{eq:ec-inventory-estimator-mean}
\end{align}
When \(D'=0\), demand lies below both thresholds, and its contribution
is included in the first term. The density bounds give the limit in
\eqref{eq:inventory-ideal-direction} and
\begin{equation}\label{eq:ec-inventory-ideal-formula}
F(\theta,w)
=-(P_\theta g_0(\theta,\cdot))(w)
  +q p_\theta(\theta\mid w).
\end{equation}

Differentiating \eqref{eq:ec-inventory-weighted-density} at \(y=\theta\)
gives the boundary contribution
\(a(1-l)\phi_\sigma(\theta-d)\phi_\sigma((1-a)(\theta-m))\).
The remaining Gaussian-derivative terms have total coefficient mass
at most one, so
\[
\left|\frac{d}{d\theta}p_\theta(\theta\mid(d,l))\right|
\leq a\phi_\sigma(0)^2+\phi_\sigma(\sigma)/\sigma.
\]
Splitting the change in \(P_\theta g_0\) into its threshold and
kernel contributions gives
\[
|(P_\theta g_0(\theta,\cdot))(w)
 -(P_{\theta'}g_0(\theta',\cdot))(w)|
\leq [(h+b)\phi_\sigma(0)+\max\{h,b\}L_P]|\theta-\theta'|.
\]
Here the density bounds the threshold contribution, and weighted
kernel continuity bounds the second, since \(|g_0|\leq\max\{h,b\}\).
Together with \(p_\theta\leq\phi_\sigma(0)\) and \(U_1\geq1\),
these estimates verify Assumption~\ref{assump:growth} with 
\begin{align*} 
C_F:=\max\{h,b\}+q\phi_\sigma(0),\quad\mbox{and}\quad
L_F:=(h+b)\phi_\sigma(0)+\max\{h,b\}L_P
 +q\left[\phi_\sigma(\sigma)/\sigma
                       +a\phi_\sigma(0)^2\right]. \Halmos
\end{align*}
\endproof

\subsection{Stationary Objective and Projection Interval}

\begin{lemma}
\label{lemma:inventory-stationary-gradient}
The marginal functions
\(g_{\mathrm{h}},g_{\mathrm{b}},g_{\mathrm{s}}\) are twice continuously
differentiable on \((0,\infty)\), have finite right limits at zero,
and satisfy \(g_{\mathrm{b}}>0\) on \([0,\infty)\). Moreover,
\(
f'(\theta)
=h g_{\mathrm{h}}(\theta)-b g_{\mathrm{b}}(\theta)-q g_{\mathrm{s}}(\theta),
\)
for all \(\theta\geq0\),
where the derivative at zero is from the right.
On \(\Theta\), \(\bar F=-f'\).
\end{lemma}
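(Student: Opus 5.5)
The plan is to prove all four assertions from one device: a stationary coupling of the fixed-stock demand process, together with the kernel representation already set up in Lemmas~\ref{lemma:inventory-kernel} and~\ref{lemma:inventory-field}.

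\emph{Coupling.} Fix \(v>0\) and work with \(\theta\in[0,v]\). Let \(\{(u_k,\varepsilon_k)\}_{k\in\mathbb Z}\) be one bi-infinite sequence of Gaussian inputs, and run \eqref{eq:inventory-demand} from the past with the stock level frozen at \(\theta\). Lemma~\ref{lemma:inventory-kernel} shows that \((0,0)\) is reached in one step with probability bounded below uniformly in \(\theta\in[0,v]\). Hence the coupling-from-the-past construction regenerates at a.s.\ finite times and yields \(D_0(\theta)\sim D_\infty(\theta)\) simultaneously for all \(\theta\).

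\emph{Pathwise properties.} Along this coupling the map \(\theta\mapsto D_0(\theta)\) is nondecreasing and Lipschitz. Its a.e.\ derivative is the stationary sensitivity \(L_0(\theta)\in[0,a]\) obtained from \eqref{eq:inventory-sensitivity}. Consequently \(\theta-D_0(\theta)\) is strictly increasing, with slope \(1-L_0\geq1-a>0\).

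\emph{Formula for \(f'\).} Differentiating \(f(\theta)=h\E(\theta-D_\infty)^++b\E(D_\infty-\theta)^++q\,s(\theta)\) gives \(f'=hg_{\mathrm h}-bg_{\mathrm b}-qg_{\mathrm s}\) directly from the definitions in \eqref{eq:inventory-marginal-functions}. This only needs the three expectations to be differentiable, which the regularity step below supplies.

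\emph{Positivity of \(g_{\mathrm b}\).} Apply dominated convergence to the Lipschitz, monotone coupling. This gives
\[
g_{\mathrm h}(\theta)=\E[(1-L_\infty)\ind\{D_\infty<\theta\}]
\quad\mbox{and}\quad
g_{\mathrm b}(\theta)=\E[(1-L_\infty)\ind\{D_\infty>\theta\}],
\]
where the atom at \(D=\theta\) has probability zero for \(\theta>0\), because positive demand has a Gaussian-convolution density. It follows that \(g_{\mathrm b}(\theta)\geq(1-a)\pr(D_\infty(\theta)>\theta)\). The right-hand side is positive since the Gaussian shock \(\varepsilon\) is unbounded.

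\emph{Stockout term.} Write \(s(\theta)=\pr(D_0(\theta)>\theta)\). The event changes only where \(D_0(\theta)-\theta\) crosses zero, and it does so with slope \(L_0-1\neq0\). A co-area/smoothing argument therefore gives
\[
g_{\mathrm s}(\theta)=q_\theta^{w}(\theta),
\]
the \((1-L)\)-weighted stationary density of positive demand evaluated at \(\theta\). This density equals \(\nu_\theta\bigl(p_\theta(\theta\mid\cdot)\bigr)\) with \(p_\theta\) from \eqref{eq:ec-inventory-weighted-density}. To make this rigorous I would replace the indicator by a smooth step \(\chi_\eta\), differentiate pathwise, and let \(\eta\downarrow0\), using the uniform density bounds of Lemma~\ref{lemma:inventory-field}.

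\emph{Identification \(\bar F=-f'\).} Integrate \eqref{eq:ec-inventory-ideal-formula} against \(\nu_\theta\) and use invariance, \(\nu_\theta P_\theta=\nu_\theta\). The term \(\nu_\theta(P_\theta g_0(\theta,\cdot))\) becomes \(\E[g_0(\theta,W_\infty)]=hg_{\mathrm h}-bg_{\mathrm b}\), and the term \(q\,\nu_\theta(p_\theta(\theta\mid\cdot))\) becomes \(qg_{\mathrm s}\). Hence \(\bar F=-(hg_{\mathrm h}-bg_{\mathrm b}-qg_{\mathrm s})=-f'\).

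\emph{Regularity.} For \(C^2\) regularity I would not use the pathwise route. Instead I would use the change of variables from the proof of Lemma~\ref{lemma:inventory-kernel}, under which \(P_\theta\) has a parameter-free transition map and an input density smooth in \(\theta\), with \(\theta\)-derivatives of every order dominated by polynomial moments of Gaussians. Geometric ergodicity is uniform on \([0,v]\) by Lemma~\ref{lemma:exp-ergodic}. The Poisson-series representation used in Lemma~\ref{lemma:stationary-Lipschitz} then gives the derivative formula \(\partial_\theta\nu_\theta(f)=\nu_\theta\bigl((\partial_\theta P_\theta)h_{\theta,f}\bigr)\), and iterating it shows that \(\theta\mapsto\nu_\theta(f_\theta)\) is \(C^3\) for test functions \(f_\theta\) that are smooth in \(\theta\) with \(U_1\)-bounds.

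\emph{Applying the regularity.} Write each marginal function as such a stationary functional:
\[
g_{\mathrm h}(\theta)=\nu_\theta\Bigl(\int_0^\theta q_\theta(y\mid\cdot)\,dy+\text{kernel term}\Bigr),
\]
and similarly for \(g_{\mathrm b}\) and \(g_{\mathrm s}\). Here the one-step demand density \(q_\theta(y\mid w)\) is a Gaussian mixture, jointly smooth in \((\theta,y)\). This yields \(C^2\) regularity of \(g_{\mathrm h},g_{\mathrm b},g_{\mathrm s}\) on \((0,\infty)\). The same bounds extend continuously to \(\theta\downarrow0\), which gives the finite right limits.

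\emph{Main obstacle.} The hardest part is this \(C^2\) step, specifically justifying two further differentiations of \(\nu_\theta\) with only the first-order kernel continuity that the paper has proved. I would first try to establish the operator-norm differentiability of \(\theta\mapsto P_\theta\) on the \(U_1\)-weighted space \(\mathsf B\) from the explicit Gaussian likelihood-ratio derivatives, extending \eqref{eq:ec-inventory-kernel-constant} to second and third derivatives. I would then differentiate the resolvent identity \((I-P_\theta)h=f-\nu_\theta(f)\) three times.
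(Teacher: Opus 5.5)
Your proposal is correct and follows the paper's skeleton. It uses the same coupling from the past with $0\le D_j(\theta')-D_j(\theta)\le a(\theta'-\theta)$, and the same dominated-convergence formulas $g_{\mathrm h}(\theta)=\E[(1-L_0)\ind\{D_0<\theta\}]$ and $g_{\mathrm b}(\theta)=\E[(1-L_0)\ind\{D_0>\theta\}]$, with positivity from $1-L_0\geq1-a$. It also shares the identification $g_{\mathrm s}(\theta)=\nu_\theta(p_\theta(\theta\mid\cdot))$ and the use of invariance of $\nu_\theta$ to get $\bar F=-f'$. You depart from the paper in two technical steps.

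\emph{Stockout term.} The paper avoids your smoothing limit by conditioning on all inputs before $\varepsilon_0$. With $A(\theta)=a\min\{D_{-1}(\theta)+u_0,\theta\}+(1-a)m$, one has $s(\theta)=\E[1-\Phi((\theta-A(\theta))/\sigma)]$. This integrand is already smooth, its difference quotients are bounded by $\phi_\sigma(0)$, and a single dominated-convergence step gives $s'(\theta)=-\E[(1-A'(\theta))\phi_\sigma(\theta-A(\theta))]$. Your co-area route also works, but it needs an extra interchange of $\eta\downarrow0$ with differentiation. That interchange requires uniform convergence of $s_\eta'$ (from the bound on $\partial_y p_\theta$), together with either differentiability of $s_\eta$ at every $\theta$ or continuity of $\theta\mapsto\nu_\theta(p_\theta(\theta\mid\cdot))$.

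\emph{The $C^2$ step.} The paper works in total variation rather than in $\mathsf B$. The one-step minorization at $(0,0)$ makes $P_\theta$ a strict contraction on signed measures of mass zero, so $(I-P_\theta)^{-1}$ is bounded there. The translated input density $\phi_\sigma(s+\theta)\phi_\sigma(t-a\theta)$ makes $P_\theta$ twice continuously differentiable in TV operator norm. The identity $\nu_{\theta'}-\nu_\theta=\nu_{\theta'}(P_{\theta'}-P_\theta)(I-P_\theta)^{-1}$ then gives $\nu_\theta\in C^2$ in TV. After substituting $u_0=s+\theta$, the conditional mean and the sensitivity factor no longer depend on $\theta$. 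The conditional marginal functions are then bounded and $C^2$ in the supremum norm, and pairing them with $\nu_\theta$ finishes the argument. So the obstacle you flag is lighter than you expect: bounded test functions make TV sufficient, and only two derivatives of $P_\theta$ are needed, not three. Your weighted Poisson-series version is also valid, since the translated density is state-independent and $U_1(D',L')\leq1+a+(1-a)m+|t|$. It would extend to unbounded test functions, but it buys nothing here.

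A few points to tighten.
\begin{itemize}
\item In your representation of $g_{\mathrm h}$, the density must be the $(1-L')$-weighted one, $p_\theta$, not the plain demand density. The ``kernel term'' is the zero-demand atom $P_\theta(w,\{D'=0\})$, and its $\theta$-smoothness relies on the boundary terms at $u=\theta-d$ cancelling. The paper's substitution $u_0=s+\theta$ sidesteps this.
\item Dominated convergence at a fixed $\theta$ uses a.s.\ pathwise differentiability at that $\theta$, meaning no ties at the cap or at zero in the positive part; the paper checks this. Differentiability merely a.e.\ in $\theta$ would need an extra continuity argument.
\item At $\theta=0$, state right-continuity of $f$ and use the mean value theorem to identify the right derivative with the right limits.
\item Also at $\theta=0$, obtain $g_{\mathrm b}(0)>0$ from a lower bound on $\pr(D_\infty(\theta)>\theta)$ that is uniform as $\theta\downarrow0$, for instance via $D\geq a\min\{u,0\}+(1-a)m+\varepsilon$.
\end{itemize}
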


\proof{Proof of Lemma~\ref{lemma:inventory-stationary-gradient}.}
To differentiate the stationary objective, fix \(v>0\) and use the
same independent Gaussian input pairs, indexed by all integers, for every
\(\theta\in[0,v]\). If \(\varepsilon_j\leq-[av+(1-a)m]\), then
\((D_j,L_j)=(0,0)\), regardless of the previous state, since
\(\min\{d+u,\theta\}\leq v\). The probability that no input in
\(n\) consecutive periods satisfies this inequality is
\(\Phi((av+(1-a)m)/\sigma)^n\),
which tends to zero. Thus, almost surely, each integer \(j\) has a most
recent time at or before it satisfying the inequality. Starting from
\((0,0)\) at that time and applying finitely many updates defines
\(W_j(\theta)=(D_j(\theta),L_j(\theta))\).
Shifting the input indices shifts this construction by the same amount
without changing the input distribution. Hence each process is stationary,
with marginal distribution \(\nu_\theta\) by
Lemma~\ref{lemma:inventory-kernel}.

If \(0\leq\theta\leq\theta'\leq v\) and
\(0\leq d'-d\leq a(\theta'-\theta)\), then
\[
0\leq\min\{d'+u,\theta'\}-\min\{d+u,\theta\}
\leq\theta'-\theta.
\]
Starting both chains from \((0,0)\) at the same time and applying
this inequality at each update gives
\begin{equation}\label{eq:ec-inventory-stationary-coupling}
0\leq D_j(\theta')-D_j(\theta)\leq a(\theta'-\theta)
\quad\mbox{and}\quad 
0\leq L_j(\theta)\leq a.
\end{equation}
For each deterministic starting time with state \((0,0)\) and each
fixed parameter, Gaussian conditioning gives probability zero to equality
at the stock-level cap or to a zero argument of the positive-part operation.
A countable union over starting and ending times gives the same conclusion
for the random starting time chosen above. Choose \(v>\theta\). The
finitely many updates up to \(j\) then keep the same strict comparisons
for nearby parameter values. Differentiating these updates from initial
derivative zero gives \eqref{eq:inventory-sensitivity}, and hence
derivative \(L_j(\theta)\), from the right at zero.
When \(D_j(\theta)=0\), the argument of the final positive-part operation
is strictly negative, so demand is locally zero and \(L_j(\theta)=0\).
These derivatives concern the stationary reference process; the
algorithm allows any admissible \(L_0\in[0,1]\).

For \(\theta'>\theta\geq0\), choosing \(v>\theta'\), the residual
inventory satisfies
\[
(1-a)(\theta'-\theta)
\leq[\theta'-D_0(\theta')]-[\theta-D_0(\theta)]
\leq\theta'-\theta.
\]
Thus the holding and shortage quantities have difference quotients
bounded in absolute value by one. For \(\theta>0\),
\(\pr(D_0(\theta)=\theta)=0\), so the chain rule and dominated convergence
give
\begin{equation}\label{eq:ec-inventory-marginals}
g_{\mathrm{h}}(\theta)=\E[(1-L_0(\theta))\ind\{D_0(\theta)<\theta\}]
\quad\mbox{and}\quad
g_{\mathrm{b}}(\theta)=\E[(1-L_0(\theta))\ind\{D_0(\theta)>\theta\}].
\end{equation}
Consequently,
\(h g_{\mathrm{h}}(\theta)-b g_{\mathrm{b}}(\theta)=\nu_\theta(g_0(\theta,\cdot))\),
with \(g_0\) defined in the proof of Lemma~\ref{lemma:inventory-field}.

To differentiate \(s\), define the random untruncated conditional mean
\[
A(\theta)=a\min\{D_{-1}(\theta)+u_0,\theta\}+(1-a)m.
\]
Conditioning on all inputs before \(\varepsilon_0\) yields,
for all \(\theta\geq0\),
\[
s(\theta)=\E\!\left[1-\Phi
             \left(\frac{\theta-A(\theta)}{\sigma}\right)\right].
\]
For each fixed parameter, equality at the stock-level cap has
probability zero, so
\[
A'(\theta)
=a\bigl[\ind\{D_{-1}(\theta)+u_0>\theta\}
+L_{-1}(\theta)\ind\{D_{-1}(\theta)+u_0\leq\theta\}\bigr]
\in[0,a].
\]
Moreover, \(0\leq A(\theta')-A(\theta)\leq a(\theta'-\theta)\)
for \(\theta'\geq\theta\). The difference quotients of the
conditional Gaussian tail are consequently bounded by \(\phi_\sigma(0)\).
For \(\theta>0\), dominated convergence proves
\begin{equation}\label{eq:ec-inventory-stockout-derivative}
s'(\theta)
=-\E[(1-A'(\theta))
                    \phi_\sigma(\theta-A(\theta))]
=-\nu_\theta(p_\theta(\theta\mid\cdot)).
\end{equation}
The last equality follows by conditioning on \(W_{-1}(\theta)\)
as in \eqref{eq:ec-inventory-weighted-density}; at the positive
threshold the sensitivity \(L_0(\theta)\) equals \(A'(\theta)\).
Conditioning on the final Gaussian input also gives
\begin{equation}\label{eq:ec-inventory-conditional-marginals}
\begin{aligned}
g_{\mathrm{h}}(\theta)
&=\E\!\left[(1-A'(\theta))
       \Phi\!\left(\frac{\theta-A(\theta)}{\sigma}\right)
       +A'(\theta)\Phi\!\left(-\frac{A(\theta)}{\sigma}\right)\right],\\
g_{\mathrm{b}}(\theta)
&=\E\!\left[(1-A'(\theta))
       \left[1-\Phi\!\left(\frac{\theta-A(\theta)}{\sigma}\right)\right]\right],\\
g_{\mathrm{s}}(\theta)
&=\E[(1-A'(\theta))\phi_\sigma(\theta-A(\theta))].
\end{aligned}
\end{equation}
The second term in the holding formula accounts for zero demand.
Invariance and \eqref{eq:ec-inventory-stockout-derivative} now give
\(\nu_\theta(F(\theta,\cdot))=-f'(\theta)\).

For the marginal regularity, the Gaussian translation in the proof of
Lemma~\ref{lemma:inventory-kernel} makes \(P_\theta\) twice continuously
differentiable in total-variation operator norm: the first two derivatives
of its input density have locally uniform integrable bounds independent
of the current state. The lower bound for the probability of moving
to \((0,0)\) in Lemma~\ref{lemma:inventory-kernel} gives a strict contraction
in total variation on signed measures of total mass zero.
Hence \(I-P_\theta\) has a locally bounded inverse on that space,
and stationarity gives
\[
\nu_{\theta'}-\nu_\theta
=\nu_{\theta'}(P_{\theta'}-P_\theta)(I-P_\theta)^{-1}.
\]
This identity first gives continuity of \(\nu_\theta\), including from
the right at zero, and then, by difference quotients,
\(\frac{d\nu_\theta}{d\theta}
=\nu_\theta(\partial_\theta P_\theta)(I-P_\theta)^{-1}\) for \(\theta>0\).
The identity for differences of inverses makes the inverse continuously
differentiable, so this derivative is continuously differentiable as well.
In \eqref{eq:ec-inventory-conditional-marginals}, condition on
\(W_{-1}=(d,l)\) and substitute \(u_0=s+\theta\). The conditional mean
becomes \(a\min\{d+s,0\}+a\theta+(1-a)m\), while the sensitivity factor
becomes \(a[\ind\{d+s>0\}+l\ind\{d+s\leq0\}]\), independent of
\(\theta\) for fixed \(d,l,s\). Gaussian derivative bounds justify
two differentiations under the integral, uniformly in \((d,l)\).
The resulting conditional marginal functions are twice continuously
differentiable in the supremum norm; averaging against \(\nu_\theta\)
proves the claimed marginal regularity.

At zero, the same Gaussian bounds and total-variation continuity of
\(\nu_\theta\) identify finite right limits with
\eqref{eq:ec-inventory-conditional-marginals} evaluated at zero.
Since \(1-A'(\theta)\geq1-a>0\) and Gaussian tails are strictly positive,
the shortage formula gives \(g_{\mathrm{b}}(\theta)>0\), including at zero.
The bound in \eqref{eq:ec-inventory-stationary-coupling} and the
conditional Gaussian tail formula give continuity of the cost components
at zero. The mean value theorem then identifies their
right derivatives with \(g_{\mathrm{h}}(0),-g_{\mathrm{b}}(0),
-g_{\mathrm{s}}(0)\), respectively.
\Halmos\endproof

\begin{lemma}
\label{lemma:inventory-stability}
Under the conditions \eqref{eq:inventory-stability-conditions}, the construction
\eqref{eq:ec-inventory-projection-levels} is well-defined and gives an
interval \(\Theta\subset(0,\bar\theta)\) on which
\(\bar F=-f'\) satisfies
Assumptions~\ref{assump:interior-target} and~\ref{assump:qsm}, with
\(\zeta>0\) defined in \eqref{eq:ec-inventory-qsm-coefficient}.
The target \(\theta^\star\in\operatorname{int}(\Theta)\) is the unique
minimizer of \(f\) on \(\Theta\).
\end{lemma}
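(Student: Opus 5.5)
The plan is to work with $G=(hg_{\mathrm h}-qg_{\mathrm s})/g_{\mathrm b}$, which is well defined and $C^2$ on $[0,\infty)$ because Lemma~\ref{lemma:inventory-stationary-gradient} gives $g_{\mathrm b}>0$ and twice continuous differentiability of the marginal functions. The sign of $f'=g_{\mathrm b}(G-b)$ is then the sign of $G-b$.

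\textbf{Step 1: locate a crossing of level $b$.} The first condition in \eqref{eq:inventory-stability-conditions} gives $G(0)<b$. I will also show $f'>0$ on $[\bar\theta,\infty)$. From \eqref{eq:ec-inventory-conditional-marginals} we have $g_{\mathrm s}\le\phi_\sigma(0)\E[(1-A')e^{-(\theta-A)^2/(2\sigma^2)}]$ with $A\le a\theta+(1-a)m$, so $\theta-A\ge(1-a)(\theta-m)$. Similarly $g_{\mathrm b}$ is bounded by a Gaussian tail, and $g_{\mathrm h}\ge\E[(1-A')\Phi((\theta-A)/\sigma)]$. Comparing these with the definition \eqref{eq:inventory-upper-stock} should give $hg_{\mathrm h}>bg_{\mathrm b}+qg_{\mathrm s}$ for $\theta\ge\bar\theta$, hence $G(\bar\theta)>b$. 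By continuity $G-b$ has a zero in $(0,\bar\theta)$. Let $\theta^\star$ be the first upcrossing.

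\textbf{Step 2: show the crossing is nondegenerate.} I need $G'(\theta^\star)>0$, so that $\theta^\star$ sits inside a maximal open interval on which $G'>0$. The roles of the determinant set $\Theta_{\det}$ and the second condition are as follows. If $G'$ vanished at a point where $G=b$, then $G=b$ and $G'=0$ would impose two linear relations on the vector $(h,-b,-q)$ against $(g_{\mathrm h},g_{\mathrm b},g_{\mathrm s})$ and its derivative. The degenerate-extremum analysis would add a third relation involving the second derivatives, forcing the determinant in \eqref{eq:inventory-critical-levels} to vanish, i.e.\ $\theta\in\Theta_{\det}$. But at such a $\theta$ the second condition forbids $f'=0$, a contradiction.

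I expect this step to be the main obstacle. The Wronskian-type argument is delicate because a tangential zero need not produce all three relations. My first attempt will use a case split on the second derivative, $G''=0$ versus $G''\ne0$. When $G''\ne0$, $\theta^\star$ is a strict local extremum of $G-b$, which is incompatible with $G$ upcrossing from below at the first crossing; a strict local minimum touching $b$ from above is also excluded by the choice of the first crossing. This should reduce the remaining degeneracy to the determinant case.

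\textbf{Step 3: define $\Theta$ and verify the assumptions.} With the leftmost interval $(\theta_-,\theta_+)$ fixed, $G$ is strictly increasing on it and $G(\theta_-)<b<G(\theta_+)$. The intermediate value theorem gives unique $\theta_{\min}<\theta^\star<\theta_{\max}$ solving \eqref{eq:ec-inventory-projection-levels}, so $\theta^\star\in\operatorname{int}(\Theta)$ and it is the unique zero of $f'$ on $\Theta$. Monotonicity of $G$ then gives $f'<0$ to the left of $\theta^\star$ and $f'>0$ to the right, so $\theta^\star$ is the unique minimizer. This gives Assumption~\ref{assump:interior-target}, using $\bar F=-f'$ from Lemma~\ref{lemma:inventory-stationary-gradient}.

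For QSM, write
\[
(\theta-\theta^\star)\bar F(\theta)=-g_{\mathrm b}(\theta)(\theta-\theta^\star)\bigl(G(\theta)-G(\theta^\star)\bigr)\le-g_{\mathrm b}(\theta)\min_\Theta G'\,(\theta-\theta^\star)^2 .
\]
It then remains to lower-bound $g_{\mathrm b}$ on $\Theta$. Using \eqref{eq:ec-inventory-conditional-marginals} with $1-A'\ge1-a$ and $A\ge(1-a)m$ only after conditioning on the event $D_{-1}+u_0>\theta$ (whose probability is at least $1-\Phi(\theta_{\max}/\sigma)$ since $D_{-1}\ge0$), and then applying $1-\Phi((\theta-A)/\sigma)\ge1-\Phi((1-a)(\theta_{\max}-m)/\sigma)$ on that event, I expect to obtain exactly the coefficient $\zeta$ in \eqref{eq:ec-inventory-qsm-coefficient}. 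Its positivity follows because $\min_\Theta G'>0$ on the compact subinterval.
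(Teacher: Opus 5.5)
Your argument is correct in substance. It reaches the nondegenerate crossing by a different route from the paper.

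\textbf{The paper's route.} It applies the determinant condition globally. Since $(f',f'',f''')^\intercal$ is the matrix in \eqref{eq:inventory-critical-levels} applied to $(h,-b,-q)^\intercal$, the three derivatives cannot vanish together in $(0,\bar\theta)$. Taylor's theorem then makes every stationary point isolated, and confinement to a compact subinterval makes them finitely many. A global minimizer $\widehat\theta$ of $f$ on $[0,\bar\theta]$ is interior with $f''(\widehat\theta)>0$, so its $G'>0$ component qualifies. Finiteness of the stationary points guarantees that a leftmost qualifying component exists.

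\textbf{Your route.} You use the same fact only at one point, which spares you the isolation, finiteness and global-minimization steps. State explicitly the link that makes this work: at a point where $G=b$, the product rule applied to $f'=g_{\mathrm b}(G-b)$ makes $G'=G''=0$ equivalent to $f''=f'''=0$. This is needed because $\Theta_{\det}$ is defined through $g_{\mathrm h},g_{\mathrm b},g_{\mathrm s}$, not through $G$.

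\textbf{Two points to make precise.}
\begin{enumerate}
\item Define $\theta^\star:=\inf\{\theta>0:G(\theta)>b\}$, which lies in $(0,\bar\theta)$ because $G(0)<b<G(\bar\theta)$. Do not take the first zero of $G-b$. The hypotheses allow a tangential zero with $G'=0$ and $G''<0$; there $f'''\neq0$, so the determinant condition says nothing, and a first zero could be such a point. With the infimum:
\begin{itemize}
\item $G\le b=G(\theta^\star)$ on $(0,\theta^\star]$ gives $G'(\theta^\star)\ge0$;
\item if $G'(\theta^\star)=0$, the same inequality excludes $G''(\theta^\star)>0$;
\item points with $G>b$ accumulating at $\theta^\star$ from the right exclude $G''(\theta^\star)<0$.
\end{itemize}
\item Existence of a leftmost qualifying interval is part of the well-definedness claim, so Step 3 cannot simply assume it. With your choice of $\theta^\star$ it is immediate. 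Every qualifying component contains points with $G>b$ near its right endpoint, hence points to the right of $\theta^\star$. So no qualifying component lies to the left of the one containing $\theta^\star$, and that component is the leftmost. No finiteness argument is needed.
\end{enumerate}

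Step 1 and the QSM lower bound on $g_{\mathrm b}$ match the paper's computations up to presentation. In the QSM step, what you actually use on $\{D_{-1}+u_0>\theta\}$ is $A=a\theta+(1-a)m$ and $A'=a$, not the bound $A\ge(1-a)m$.
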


\proof{Proof of Lemma~\ref{lemma:inventory-stability}.}
For arbitrary positive costs and \(\theta\geq\bar\theta\), apply
\(1-\Phi(y)\leq\tfrac12e^{-y^2/2}\) for \(y\geq0\) in
\eqref{eq:ec-inventory-conditional-marginals}. Dropping the nonnegative
second term in the holding formula and using
\(\theta-A(\theta)\geq(1-a)(\theta-m)\) and \(1-A'(\theta)\geq1-a\) give
\begin{equation}\label{eq:ec-inventory-positive-gradient-tail}
\begin{aligned}
f'(\theta)
\geq(1-a)\left\{h-
 \left[\frac{h+b}{2}+q\phi_\sigma(0)\right]
 \exp\!\left(-\frac{(1-a)^2(\theta-m)^2}{2\sigma^2}\right)\right\}
\geq\frac{h(1-a)}2>0,
\end{aligned}
\end{equation}
for all \(\theta\geq\bar\theta\).
This places every positive stationary point below \(\bar\theta\).

The marginal right limits and the first condition in
\eqref{eq:inventory-stability-conditions} give
\(\lim_{\theta\downarrow0}f'(\theta)<0\).
Together with \(f'(\bar\theta)>0\) and continuity of \(f'\), this
confines the stationary points to a compact subinterval of
\((0,\bar\theta)\). By Lemma~\ref{lemma:inventory-stationary-gradient},
\(f\) is three times continuously differentiable on \((0,\infty)\).
At a stationary point, the equalities
\(f''=f'''=0\) would make the matrix in
\eqref{eq:inventory-critical-levels} annihilate
\((h,-b,-q)^\intercal\), contradicting the second condition in
\eqref{eq:inventory-stability-conditions}.
Taylor's theorem therefore makes every stationary point isolated;
compactness makes their number finite.
By continuity, \(f\) attains a minimum on \([0,\bar\theta]\).
The endpoint signs place a minimizer \(\widehat\theta\) in the interior,
where Taylor's theorem and the preceding exclusion give
\(f''(\widehat\theta)>0\).

The factorization \(f'=g_{\mathrm{b}}(G-b)\) gives
\(G(\widehat\theta)=b\) and
\(G'(\widehat\theta)=f''(\widehat\theta)/g_{\mathrm{b}}(\widehat\theta)>0\).
The component of
\(\{\theta\in(0,\bar\theta):G'(\theta)>0\}\)
containing \(\widehat\theta\) has endpoint values on opposite sides of \(b\).
Each qualifying component contains a unique stationary point, so
finitely many qualify and the leftmost component
\((\theta_-,\theta_+)\) exists.
Strict increase gives unique solutions to
\eqref{eq:ec-inventory-projection-levels}, with
\(
\theta_-<\theta_{\min}<\theta^\star<\theta_{\max}<\theta_+\) and \(G(\theta^\star)=b\).
In particular, \(\Theta\subset(0,\bar\theta)\). The factorization makes
\(f'\) negative to the left and positive to the right
of \(\theta^\star\), proving uniqueness of the minimizer and
Assumption~\ref{assump:interior-target}.

Since \(\Theta\) lies inside this component,
\(\min_{t\in\Theta}G'(t)>0\). The mean value theorem gives
\[
(\theta-\theta^\star)f'(\theta)
\geq g_{\mathrm{b}}(\theta)\left[\min_{t\in\Theta}G'(t)\right]
       |\theta-\theta^\star|^2,
\]
for all \(\theta\in\Theta\).
Let \(u_1,\varepsilon_1\) be independent Gaussian inputs, independent
of the current state. If
\(
u_1>\theta_{\max}\) and 
\(\varepsilon_1>(1-a)(\theta_{\max}-m)\),
then the stock-level cap is active for any nonnegative current demand and
\(\theta\in\Theta\), giving
\(D_1(\theta)=a\theta+(1-a)m+\varepsilon_1>\theta\) and
\(L_1(\theta)=a\). On this event, the integrand defining
\(g_{\mathrm{b}}\) in \eqref{eq:ec-inventory-marginals} equals \(1-a\).
Its probability is the product of the two Gaussian tail probabilities.
Combining this contribution with the preceding inequality proves
Assumption~\ref{assump:qsm} with the coefficient in
\eqref{eq:ec-inventory-qsm-coefficient}.
\Halmos\endproof

\subsection{Gradient Estimator}

\begin{lemma}
\label{lemma:inventory-estimator}
Let \(\eta_k=\eta_0 k^{-\gamma}\) for all \(k\geq 1\), with
\(\eta_0,\gamma>0\). The decomposition \eqref{eq:inventory-decomposition}
satisfies Assumption~\ref{assump:structured-errors} with
\(\omega_{\mathsf M}=\gamma/2\) and \(\omega_{\mathsf B}=\gamma\).
\end{lemma}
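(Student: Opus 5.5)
The plan is to verify each part of Assumption~\ref{assump:structured-errors} directly from the explicit formulas for the estimator mean in \eqref{eq:ec-inventory-estimator-mean} and for the limiting direction in \eqref{eq:ec-inventory-ideal-formula}. The information structure matches \eqref{eq:extension-interlacing}: \(\mathscr G_k^-\) contains \((\theta_k,W_k)\), the auxiliary draw \(\widetilde W_{k+1}\sim P_{\theta_k}(W_k,\cdot)\) is revealed in \(\chi_{k+1}\), and the operating inputs for \(W_{k+1}\) are independent of \(\mathscr G_k^+\). This independence yields \eqref{eq:extension-post-update-transition}. Measurability is immediate, since \(M_{k+1}\) is a function of \(\theta_k,\widetilde W_{k+1},\eta_k\) and \(B_k\) is \(\mathscr G_k^-\)-measurable. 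Conditional centering \(\E[M_{k+1}\mid\mathscr G_k^-]=0\) holds by construction. Because \(\widetilde W_{k+1}\) is conditionally distributed as \(P_{\theta_k}(W_k,\cdot)\), we also have \(\E[\widehat g_k\mid\mathscr G_k^-]=\bar g_{\eta_k}(\theta_k,W_k)\).

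For the bias, subtract \eqref{eq:ec-inventory-ideal-formula} from \eqref{eq:ec-inventory-estimator-mean}. This gives
\[
B_k=-\frac{h+b}{\eta_k}\int_0^{\eta_k}(\eta_k-s)p_{\theta_k}(\theta_k+s\mid W_k)\,ds
+q\biggl[\frac1{\eta_k}\int_0^{\eta_k}p_{\theta_k}(\theta_k+s\mid W_k)\,ds-p_{\theta_k}(\theta_k\mid W_k)\biggr].
\]
The first term is bounded by \((h+b)\phi_\sigma(0)\eta_k/2\) using \(p_\theta\leq\phi_\sigma(0)\). For the second term, the uniform bound \(|\partial_yp_\theta(y\mid w)|\leq\phi_\sigma(\sigma)/\sigma\) from the proof of Lemma~\ref{lemma:inventory-field} controls the averaged difference by \(q\phi_\sigma(\sigma)\eta_k/(2\sigma)\). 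Hence \(\lVert B_k\rVert\leq C\eta_0k^{-\gamma}\leq C'(k+1)^{-\gamma}\), which gives \(\omega_{\mathsf B}=\gamma\), and the bound is deterministic and uniform in the state.

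For the martingale term, bound the conditional second moment by \(\E[\widehat g_k^2\mid\mathscr G_k^-]\). Write \(\widehat g_k\) as the sum of a continuous-cost quotient and a fixed-cost quotient. The continuous-cost quotient is bounded by \(\max\{h,b\}\), since \(0\leq1-\widetilde L_{k+1}\leq1\) and \(\mathcal C\) without the fixed cost is \(\max\{h,b\}\)-Lipschitz. The fixed-cost quotient equals \(-q(1-\widetilde L_{k+1})\ind\{\theta_k<\widetilde D_{k+1}\leq\theta_k+\eta_k\}/\eta_k\). Its conditional second moment is at most \(q^2\eta_k^{-2}\int_0^{\eta_k}p_{\theta_k}(\theta_k+s\mid W_k)\,ds\), where one bounds \((1-L)^2\leq 1-L\) so that the weighted density applies. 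This is at most \(q^2\phi_\sigma(0)/\eta_k\). Therefore \(\E[M_{k+1}^2\mid\mathscr G_k^-]\leq 2\max\{h,b\}^2+2q^2\phi_\sigma(0)\eta_0^{-1}k^{\gamma}\leq \SFm_k^2U_2(W_k)\) with \(\SFm_k=C(k+1)^{\gamma/2}\), since \(U_2\geq1\). This gives \(\omega_{\mathsf M}=\gamma/2\).

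The main obstacle is the small-\(k\) and indexing bookkeeping. One must treat \(k=0\) separately, since \(\eta_k\) is defined only for \(k\geq1\); taking \(\eta_0\) as the value used at \(k=0\) handles this. One must also confirm that the weighted density \(p_\theta(\cdot\mid w)\) is the correct object for the second moment, which is where the bound \((1-L)^2\leq1-L\) enters. Otherwise the argument is a direct calculation.
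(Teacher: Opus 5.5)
Your proposal is correct and follows the paper's proof essentially step for step. You obtain the same explicit bias formula from \eqref{eq:ec-inventory-estimator-mean} and \eqref{eq:ec-inventory-ideal-formula}, bounded by \(\eta_k/2\) times the uniform density and density-derivative bounds. You also obtain the same second-moment bound through \((1-L')^2\le 1-L'\) and the weighted density, which together give \(\omega_{\mathsf M}=\gamma/2\) and \(\omega_{\mathsf B}=\gamma\). One wording slip: \(B_k=-F(\theta_k,W_k)-\bar g_{\eta_k}(\theta_k,W_k)\) comes from subtracting the estimator mean from the negative limiting direction, not the other way round, but your displayed formula is the correct one.
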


\proof{Proof of Lemma~\ref{lemma:inventory-estimator}.}
For \(\theta\in\Theta\), \(w\in\W\), and \(0<\eta\leq \eta_0\), define
\(B(\theta,w;\eta):=-F(\theta,w)-\bar g_\eta(\theta,w)\).
Subtracting \eqref{eq:ec-inventory-estimator-mean} from the negative
mean field in \eqref{eq:ec-inventory-ideal-formula} gives
\begin{align*}
B(\theta,w;\eta)
=-\frac{h+b}{\eta}\int_0^\eta(\eta-s)p_\theta(\theta+s\mid w)\,ds
+\frac{q}{\eta}\int_0^\eta
 [p_\theta(\theta+s\mid w)-p_\theta(\theta\mid w)]\,ds.
\end{align*}
The uniform density and derivative bounds in the proof of
Lemma~\ref{lemma:inventory-field} yield
\begin{equation}\label{eq:ec-inventory-bias-bound}
|B(\theta,w;\eta)|
\leq\frac{\eta}{2}\left[(h+b)\phi_\sigma(0)
             +q\phi_\sigma(\sigma)/\sigma\right].
\end{equation}
For \(W'=(D',L')\sim P_\theta(w,\cdot)\), the inequality
\((1-L')^2\leq1-L'\) and the weighted density bound give
\[
\E\!\left[\frac{q^2}{\eta^2}(1-L')^2
          \ind\{\theta<D'\leq\theta+\eta\}\right]
\leq\frac{q^2}{\eta^2}\int_\theta^{\theta+\eta}p_\theta(y\mid w)\,dy
\leq\frac{q^2\phi_\sigma(0)}{\eta}.
\]
The continuous contribution to \(\widehat g_k\) is bounded in absolute
value by \(\max\{h,b\}\). Thus \((x+y)^2\leq2x^2+2y^2\) and centering give
\[
\E[M_{k+1}^2\mid\mathscr G_k^-]
\leq2\max\{h,b\}^2+2q^2\phi_\sigma(0)/\eta_k.
\]
The update input \(\chi_{k+1}\) is the auxiliary Gaussian pair, and
\((\theta_k,W_k)\) is \(\mathscr G_k^-\)-measurable.
Independence of the auxiliary inputs from \(\mathscr G_k^-\) gives
\[
\E[\widehat g_k\mid\mathscr G_k^-]
=\bar g_{\eta_k}(\theta_k,W_k)
\quad\mbox{and}\quad B_k=B(\theta_k,W_k;\eta_k).
\]
Hence \(B_k\) is \(\mathscr G_k^-\)-measurable, while
\(M_{k+1}\) is \(\mathscr G_k^+\)-measurable and conditionally
centered, with finite conditional second moment. These facts verify
Assumption~\ref{assump:structured-errors}\textup{(i)--(ii)} with the
corresponding deterministic bounds.

Set
\(\SFm_k=C_{\mathsf M}(k+1)^{\gamma/2}\) and
\(\SFb_k=C_{\mathsf B}(k+1)^{-\gamma}\), where
\begin{align*}
C_{\mathsf M}=\left(2\max\{h,b\}^2
                   +2q^2\phi_\sigma(0)/\eta_0\right)^{1/2}
\quad\mbox{and}\quad
C_{\mathsf B}=2^{\gamma-1}\eta_0\left[(h+b)\phi_\sigma(0)
                   +q\phi_\sigma(\sigma)/\sigma\right]. 
\end{align*}
Then, for all \(k\geq0\),
\[
\eta_k^{-1}\leq\eta_0^{-1}(k+1)^\gamma \quad\mbox{and}\quad
\eta_k\leq2^\gamma\eta_0(k+1)^{-\gamma}.
\]
Together with \((k+1)^\gamma\geq1\) and \(U_2(W_k)\geq1\),
these inequalities show that \(\SFm_k^2U_2(W_k)\) bounds the
conditional second moment and \(\SFb_k\) bounds the bias, completing
Assumption~\ref{assump:structured-errors}.
\Halmos\endproof

\subsection{Application of the Extended SA Results}

\proof{Proof of Theorem~\ref{thm:inventory-accuracy}.}
All assumptions have been verified. The operating inputs are
independent of \(\mathscr G_k^+\), so the transition under stored
\(\theta_k\) satisfies \eqref{eq:extension-post-update-transition}.
Substitute \(\omega_{\mathsf M}=\gamma/2\) and
\(\omega_{\mathsf B}=\gamma\) into Proposition~\ref{thm:structured-mse} and Theorem~\ref{thm:structured-trajectory} yields  and 
\eqref{eq:inventory-mse}, respectively.
\Halmos\endproof

\end{document}